\definecolor{darkred}{rgb}{0.5,0,0}
\definecolor{darkgreen}{rgb}{0,0.5,0}
\definecolor{darkblue}{rgb}{0,0,0.5}
\newtheorem{theorem}{Theorem}[chapter]
\newtheorem{corollary}[theorem]{Corollary}
\newtheorem{conjecture}[theorem]{Conjecture}
\newtheorem{proposition}[theorem]{Proposition}
\newtheorem{lemma}[theorem]{Lemma}
\theoremstyle{definition}
\newtheorem{definition}[theorem]{Definition}
\theoremstyle{remark}
\newtheorem{remark}[theorem]{Remark}
\newtheorem{example}[theorem]{Example}
\numberwithin{section}{chapter}
\numberwithin{equation}{chapter}
\newcommand\A{\mathcal{A}}
\newcommand\M{\mathcal{M}}
\renewcommand\S{\mathcal{S}}
\newcommand{\T}{\mathcal{T}}
\newcommand{\J}{\mathcal{J}}
\newcommand{\U}{\mathcal{U}}
\newcommand{\F}{\mathcal{F}}
\newcommand{\N}{\mathbb{N}}
\newcommand{\R}{\mathbb{R}}
\renewcommand{\H}{\mathbb{H}}
\newcommand{\RR}{\mathcal{R}}
\newcommand{\WW}{\mathcal{W}}
\newcommand{\C}{\mathbb{C}}
\newcommand{\CC}{C\kern-1.3ex|}
\newcommand{\cC}{\mathcal{C}}
\newcommand{\bB}{\mathcal{B}}
\newcommand{\Z}{\mathbb{Z}}
\newcommand{\Q}{\mathbb{Q}}
\newcommand{\ddt}{\frac{d}{dt}}
\newcommand{\dds}{\frac{d}{ds}}
\newcommand{\ppt}{\frac{\partial}{\partial t}}
\renewcommand{\P}{\mathbb{P}}
\newcommand{\PP}{\mathcal{P}}
\newcommand\lie[1]{\mathfrak{#1}}
\newcommand{\h}{\lie{h}}
\newcommand{\g}{\lie{g}}
\renewcommand{\t}{\lie{t}}
\newcommand{\so}{\lie{so}}
\newcommand{\on}{\operatorname}
\newcommand{\thin}{\on{thin}}
\newcommand{\thick}{\on{thick}}
\newcommand{\Crit}{\on{Crit}} 
\newcommand{\Critval}{\on{Critval}} 
\newcommand{\univ}{\on{univ}}
\newcommand{\ainfty}{{$A_\infty$\ }}
\newcommand{\dist}{\on{dist}}
\newcommand{\pre}{{\on{pre}}}
\newcommand{\triv}{{\on{triv}}}
\newcommand{\dual}{\vee}
\newcommand{\ab}{\on{ab}}
\newcommand{\Edge}{\on{Edge}}
\newcommand{\Lag}{\on{Lag}}
\newcommand{\loc}{{\on{loc}}}
\newcommand{\Ver}{\on{Vert}}
\newcommand{\Ve}{\on{Vert}}
\newcommand\B{\mathcal{B}}
\newcommand{\End}{\on{End}}
\newcommand{\de}{\delta}
\newcommand{\Aut}{ \on{Aut} }
\newcommand{\Ad}{ \on{Ad} }
\newcommand{\Hom}{ \on{Hom}}
\newcommand{\Ind}{ \on{Ind}}
\renewcommand{\ker}{ \on{ker}}
\newcommand{\coker}{ \on{coker}}
\newcommand{\Spin}{ \on{Spin}}
\newcommand{\Vol}{  \on{Vol}}
\newcommand{\diag}{  \on{diag}}
\newcommand{\codim}{\on{codim}}
\newcommand\dirac{/\kern-1.2ex\partial} % Dirac operator
\newcommand\qu{/\kern-.7ex/} % Categorical quotients
\newcommand\lqu{\backslash \kern-.7ex \backslash} % Categorical
\newcommand\dr{r_+ \kern-.7ex - \kern-.7ex r_-}
\renewcommand{\d}{{\on{d}}}
\newcommand{\ol}{\overline}
\newcommand{\olp}{\ol{\partial}}
\newcommand\Phinv{\Phi^{-1}}
\newcommand\phinv{\phi^{-1}}
\newcommand\eps{\epsilon}
\newcommand{\f}{\frac}
\newcommand{\lan}{\langle}
\newcommand{\ran}{\rangle}
\newcommand{\hh}{{\f{1}{2}}}
\newcommand{\qq}{{\f{1}{4}}}
\newcommand{\ti}{\tilde}
\newcommand\cE{\mathcal{E}}
\newcommand\E{\mathcal{E}}
\newcommand\cL{\mathcal{L}}
\newcommand\cF{\mathcal{F}}
\newcommand\cO{\mathcal{O}}
\newcommand\cG{\mathcal{G}}
\newcommand\cR{\mathcal{R}}
\newcommand\cP{\mathcal{P}}
\newcommand\cT{\mathcal{T}}
\newcommand\TT{\mathcal{T}}
\newcommand\cI{\mathcal{I}}
\renewcommand{\ss}{{\on{ss}}}
\newcommand\mE{\mathcal{E}}
\newcommand\curv{\on{curv}}
\newcommand\Map{\on{Map}}
\newcommand\rank{\on{rank}}
\newcommand\Pic{\on{Pic}}
\newcommand\Vect{\on{Vect}}
\newcommand\ul{\underline}
\newcommand\mO{\mathcal{O}}
\newcommand\G{\mathcal{G}}
\newcommand\Ker{\on{Ker}}
\newcommand\grad{\on{grad}}
\newcommand\reg{{\on{reg}}}
\newcommand\llabel[1]{{}} % Categorical quotients
\newcommand\bdefn{\begin{definition}}
\newcommand\edefn{\end{definition}}
\newcommand\bea{\begin{eqnarray*}}
\newcommand\eea{\end{eqnarray*}}
\newcommand\bcv{\left[ \begin{array}{r} }
\newcommand\ecv{\end{array} \right] }
\newcommand\bma{\left[ \begin{array}{l} }
\newcommand\ema{\end{array} \right]}
\newcommand\ben{\begin{enumerate}}
\newcommand\een{\end{enumerate}}
\newcommand\beq{\begin{equation}}
\newcommand\eeq{\end{equation}}
\newcommand\bex{\begin{example}}
\newcommand\bsj{\left\{ \begin{array}{rrr} }
\newcommand\esj{\end{array} \right\}}
\newcommand\Id{\on{Id}}
\newcommand\XX{\mathbb{X}}
\newcommand\DD{\mathbb{D}}
\newcommand\Fuk{\on{Fuk}}
\newcommand\eex{\end{example}}
\newcommand\crit{{\on{crit}}}
\newcommand\sx{*\kern-.5ex_X}
\newcommand\white{\includegraphics[width=.05in]{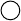}}
\newcommand\black{\includegraphics[width=.05in]{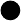}}
\newcommand\whitet{\includegraphics[width=.05in]{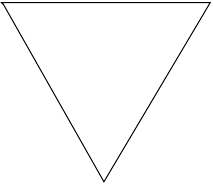}}
\newcommand\greyt{\includegraphics[width=.05in]{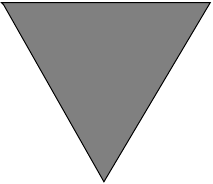}}
\newcommand\blackt{\includegraphics[width=.05in]{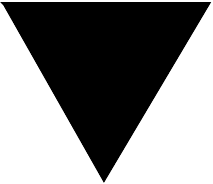}}
\newcommand\lldots{\hbox to 1em{.\hss.\hss.}}
\begin{document}

\title[Floer cohomology and flips]{Floer cohomology and flips}

\author{Fran\c{c}ois Charest}

\address{Department of Mathematics, Barnard College - Columbia University,
MC 4433, 2990 Broadway, 
New York, NY 10027}

\email {charest@math.columbia.edu}

\author{Chris T. Woodward}

\address{Mathematics-Hill Center,
Rutgers University, 110 Frelinghuysen Road, Piscataway, NJ 08854-8019,
U.S.A.}  

\email{ctw@math.rutgers.edu}

\thanks{This work was partially supported by NSF grants DMS 1207194
  and 
  DMS 1711070  and a Simons Fellowship. }

\date{July 2, 2022}

\subjclass[2010]{Primary 53D40}

\maketitle

\begin{abstract} 
  We show that blow-ups or reverse flips (in the sense of the minimal
  model program) of rational symplectic manifolds with point centers
  create Floer-non-trivial Lagrangian tori. \label{abschange} These
  results are part of a conjectural decomposition of the Fukaya
  category of a compact symplectic manifold with a singularity-free
  running of the minimal model program, analogous to the description
  of Bondal-Orlov \cite{bondal:der} and Kawamata \cite{kaw:der} of the
  bounded derived category of coherent sheaves on a compact complex
  manifold.
\end{abstract}

\tableofcontents

\chapter{Introduction} 

Lagrangian Floer cohomology was introduced in \cite{floer:lag} as a
version of Morse theory for the space of paths from a Lagrangian
submanifold to itself.  Although the theory was introduced decades
ago, it has been far from clear which Lagrangian submanifolds have
well-defined or non-trivial Floer cohomology.  Similarly, one would
like to know whether a compact symplectic manifold contains
Lagrangians with non-trivial Floer cohomology.  The purpose of this
paper is to describe a method for producing Floer-non-trivial
Lagrangians via the surgeries that appear in the minimal model
program, namely {\em flips} and {\em blow-ups}.  In particular, if one
has knowledge of a sufficiently nice minimal model program (mmp) for
the symplectic manifold (in a suitably modified sense) then one can
read off a list of Floer non-trivial Lagrangians that conjecturally
generate the Fukaya category.  At each stage in the mmp running the
Lagrangians that break off have the same slope, in the sense of ratio
of the Maslov class to the disks of minimal area.  In this sense any
such mmp running provide a filtration of the Fukaya category analogous
to the Harder-Narasimhan filtration of a complex vector bundle.  The
results are analogous to those of Bondal-Orlov \cite{bondal:der} and
Kawamata \cite{kaw:der} on the behavior of the bounded derived
category of coherent sheaves on a compact complex manifold under
flips.  The approach taken here is inspired by the work of
Fukaya-Oh-Ohta-Ono \cite{fooo:toric1}, \cite{fooo:toric2} on the toric
case.  Fukaya et al \cite[Theorem 1.4]{fooo:toric1} prove the existence of a Floer
non-trivial torus in any rational toric manifold by shrinking the moment polytope.  Our results
describe a similar mechanism that produces a Floer non-trivial torus
in birationally-Fano smooth projective varieties, by shrinking the
manifold via a topological version of the Kahler-Ricci flow.  The
connection of the location of the Floer non-trivial tori in Fukaya et
al \cite{fooo:toric1} to the minimal model program was noted in work
with Gonz\'alez \cite{gw:surject}.

To describe the results more precisely, recall
that a running of the mmp for a smooth birationally-Fano projective
variety $X$ is a sequence of birational varieties
$X = X_0,X_1,\ldots, X_k$ such that each $X_{i+1}$ is obtained from
$X_i$ by an {\em mmp transition}.  The simplest mmp transition is a
blow-down of a divisor, or more generally a divisorial contraction.
The other operations are {\em flips} which are birational isomorphisms
on the complement of codimension four exceptional loci.  The minimal
model program ends (in the birationally-Fano case) with a {\em Mori
  fibration} which here means a fibration with Fano fiber.  Each
blow-up or flip $X_i \dashrightarrow X_{i+1}$ has a {\em center}
$Z_{i}$ which is a subquotient of both $X_i$ and $X_{i+1}$.  A flip,
in the cases considered here, replaces a weighted-projective bundle
$\P(N_i^+) \to Z_i$ over the center with another weighted-projective
bundle $\P(N_{i+1}^-) \to Z_i$:
\[ \begin{diagram} 
\node{X_i} 
\node{ \P(N_i^+) } \arrow{w}  \arrow{se} 
\node[2]{\P(N_{i+1}^-)}  \arrow{e}  \arrow{sw}
\node{X_{i+1}}
  \\
\node[3]{Z_i}
\end{diagram} .\]
A {\em weighted-projective bundle} means a bundle whose fibers
are the git quotient of a vector space by an action of the
multiplicative group $\C^\times = \C - \{ 0 \}$.  Each
weighted-projective bundle $\P(N_i^\pm)$ is obtained by removing the
zero section $0$ from a vector bundle $N_i^\pm$ over the center $Z_i$
and taking the quotient of $N_i^\pm - 0$ by a fiber-wise linear
$\C^\times$-action with only the zero section $0$ as fixed point set.
In the case of a divisorial contraction, the map
$\P(N_{i+1}^-) \to Z_i$ would be an isomorphism, while in the case of
a Mori fibration, one may think of the center as the result of the
transition: $\P(N_i^+) \cong X_i, Z_i \cong X_{i+1} .$ Sometimes
singularities in the spaces $X_i$ are unavoidable; however, in good
cases (such as toric varieties) one may assume that the $X_i$'s are
smooth orbifolds.  In this case we say that $X$ has a {\em smooth
  running} of the mmp.  Existence of mmp runnings is known for
varieties of low dimension and in many explicit examples.  For toric
varieties, mmp runnings exist by work of Reid \cite{reid:decomp}.

Each of the minimal model transitions has a symplectic analog.  The
local change in the symplectic manifold follows the algebraic model,
and the symplectic class varies in the anticanonical direction.  The
symplectic analog of a blow-up was studied in McDuff \cite{mc:ex},
without the anticanonical variation.  Note that the anticanonical
variation of the symplectic class in our definition of blow-up allows
blow-ups to be infinitely large, that is, there exist reverse
symplectic runnings of the mmp with a single blow-up transition and
infinite duration.  The definitions of symplectic flip and symplectic
Mori fibration are similar, see Definition \ref{sflip}.  We say that a
symplectic manifold $X_+$ is obtained from a symplectic manifold $X_-$
{\em by a symplectic blow-up resp. flip} if there exists a symplectic
running of the mmp with a single transition given by a blow-up
resp. flip with point center.  In this case we say that the {\em
  multiplicity} of the transition is
\[m = \dim(QH(X_+)) -
\dim(QH(X_-)) .\]
A simple argument using Mayer-Vietoris implies that $m$ is always
positive in the cases considered in this paper.

The version of Floer theory that we use requires the notion of {\em
  bounding cochain} of Fukaya-Oh-Ohta-Ono \cite{fooo}.  We begin with
a brief discussion of what we mean by the Fukaya algebra $CF(L)$ of a
Lagrangian $L \subset X$.  There are several foundational systems
which at the moment are not known to be equivalent (or in some cases,
completely written down.)  Working in the Morse cochain model, the
structure coefficients in these Fukaya algebras count rigid elements
$u: C \to X$ in the moduli space of {\em treed holomorphic disks} with
Lagrangian boundary condition given by $L$.  We regularize these
moduli spaces using a {\em stabilizing divisor} as in Cieliebak-Mohnke
\cite{cm:trans}, which means a codimension two symplectic hypersurface
$D \subset X$ which meets any non-constant holomorphic sphere in $X$
in finitely many but at least three points, and making the Lagrangian
$L$ exact in the complement of the hypersurface $D$.  The
intersections $u^{-1}(D)$ of any pseudoholomorphic disk or sphere
$u: C \to X$ stabilize the domain.  Domain-dependent almost complex
structures and Morse functions suffice to make all moduli spaces of
low expected dimension regular for generic perturbations.  While this
foundational scheme is somewhat less general than the other
approaches, it requires no discussion of virtual fundamental classes
and so makes the necessary foundational arguments substantially
shorter.

The Fukaya algebra also depends on the choice of local system with
values in the units in the Novikov ring.  Let $\Lambda$ denote the
universal Novikov field and $\Lambda^\times \subset \Lambda$ the group
of elements with vanishing valuation with respect to the formal
variable.  For a connected Lagrangian $L$ denote by
$\RR(L) = \Hom(\pi_1(L), \Lambda^\times)$ the group of
$\Lambda^\times$-local systems.  For any compact oriented spin
Lagrangian submanifold $L$ equipped with a local system $y \in \RR(L)$
there is a {\em strictly unital Fukaya algebra} $CF(L,{y})$, defined
by counting {\em weighted treed pseudoholomorphic disks}, independent
of all choices up to homotopy equivalence.  The resulting moduli space
of solutions $MC(L,{y})$ to the {\em projective Maurer-Cartan equation} is
independent of all choices and for each solution $b \in MC(L,{y})$ the
{\em Floer cohomology group} $HF(L,{y},b)$ is independent of the
choice of perturbations up to isomorphism.  The main result of this
paper is the existence of Floer non-trivial Lagrangians near mmp
transitions:

\begin{theorem} \label{result} Suppose that $X_+$ is a compact
  rational symplectic manifold obtained from a compact rational
  symplectic manifold $X_-$ by a reverse simple flip or blow-up with
  trivial center with multiplicity $m$, with sufficiently small
  exceptional locus.  In a contractible neighborhood of the
  exceptional locus there exists a Lagrangian torus $L \subset X_+$
  with $m$ distinct local systems and Maurer-Cartan solutions
\[ {y}_k \in \RR(L), \quad 
b_k \in {MC}(L,{y}_k), \quad 
 k =1\ldots, m, \] 
 with non-trivial Floer cohomology
\[  HF(L,b_k,{y}_k) \cong H(L,\Lambda) \neq \{ 0 \} . \] 
\end{theorem} 

Existence of Floer non-trivial Lagrangians in compact symplectic
manifolds is known only in a special cases.  For symplectic manifolds
with anti-symplectic involutions, Floer cohomology of the fixed point
sets is studied in Fukaya-Oh-Ohta-Ono \cite{fooo:anti}.  Cho-Oh
\cite{chooh:toric} and Fukaya-Oh-Ohta-Ono \cite{fooo:toric1} show that
any compact toric manifold contains at least one moment fiber with
non-trivial Floer cohomology.  In semi-Fano hypersurfaces, at least
one Lagrangian has non-trivial Floer cohomology by work of Seidel
\cite{se:ho} and Sheridan \cite{sh:hmsfano}.  Related discussions of
blow-ups can be found in Smith \cite{smith:pq}.

\label{removedthm}
We conjecture that the Lagrangians appearing in Theorem \ref{result},
applied iteratively, split-generate the Fukaya category.  Denote by
$\Fuk(X)$ the Fukaya category of $X$, independent of all choices up to
homotopy equivalence of curved \ainfty algebras.  For any element $w$
in the universal Novikov ring denoted by $\Fuk(X,w)$ the category
whose objects are pairs $(L,b)$ where $L$ is an object of $X$ and $b$
is a weakly bounding cochain satisfying the projective Maurer-Cartan
equation with value $w$ and whose morphism spaces are explained in
\cite{fooo}.  Let $D^\pi \Fuk(X,w)$ be the idempotent completion of
the derived Fukaya category and define
\[ D^\pi \Fuk(X) = \bigsqcup_w D^\pi \Fuk(X,w) .\]

\begin{conjecture} \label{mainconj} Suppose
  $X = X_0,X_1,\ldots, X_k = X'$ is a sequence of compact symplectic
  manifolds such that each $X_{i+1}$ is obtained from $X_i$ by an mmp
  transition with center $Z_i$.  Then the idempotent-closure of the
  derived Fukaya category $D^\pi \Fuk(X_0)$ is isomorphic to the
  disjoint union of categories of centers of the mmp transitions:
\begin{equation} \label{decomp}
D^\pi \Fuk(X) \cong D^\pi \Fuk(X') \sqcup \bigsqcup_{i=1}^k D^\pi
\Fuk(Z_i)^{m_i}
\end{equation}
where
\[m_i = \dim(QH(X_{i})) - \dim(QH(X_{i+1})) \]
is the {\em multiplicity} of the $i$-th mmp transition given as the
difference of the quantum cohomology rings $QH(X_{i+1}), QH(X_i)$.
\end{conjecture}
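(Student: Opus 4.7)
The plan is to proceed by induction on the length $k$ of the mmp running. The base case $k=0$ is trivial, and the inductive step reduces to the single-transition statement
\[ D^\pi \Fuk(X_i) \cong D^\pi \Fuk(X_{i+1}) \sqcup D^\pi \Fuk(Z_i)^{m_i}, \]
after which iteration gives the full decomposition \eqref{decomp}. So the core of the argument lies in the transverse/local geometry near the exceptional locus of a single small reverse flip or blow-up with center $Z_i$.

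For such a single transition from $X_{i+1}$ to $X_i$, I would build two families of objects in $D^\pi \Fuk(X_i)$. First, apply Theorem \ref{result} to obtain a torus $L \subset X_i$ in a contractible neighborhood of the exceptional locus, together with $m_i$ bounding cochains $b_1, \ldots, b_{m_i} \in \widehat{MC}(L)$ giving $m_i$ Floer-non-trivial objects, grouped by the value of the Maurer--Cartan functional $w_j$. Second, for every object $(L_{i+1}, b_{i+1})$ of $D^\pi \Fuk(X_{i+1})$, use Theorem \ref{survival} to transport it across the transition by canonical variation, yielding an object $(L_i, b_i)$ of $D^\pi \Fuk(X_i)$; the naturality of canonical variation together with its invariance under small perturbations should upgrade this assignment to an $A_\infty$-functor $\Phi : D^\pi \Fuk(X_{i+1}) \to D^\pi \Fuk(X_i)$. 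A neck-stretching and action-filtration argument, exploiting the fact that the canonical variation is Hamiltonian outside a neighborhood of the exceptional locus, should then show that $\Phi$ is fully faithful and that Hom spaces between its image and the exceptional objects $(L, b_j)$ vanish, so that these two families sit in orthogonal direct summands of $D^\pi \Fuk(X_i)$.

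Next I would identify the subcategory generated by $(L, b_1), \ldots, (L, b_{m_i})$ with $D^\pi \Fuk(Z_i)^{m_i}$. In the local model for a small flip or blow-up the exceptional locus fibers as a weighted projective bundle $\P(N_i^\pm) \to Z_i$ and $L$ can be described as a torus bundle over $Z_i$; a Morse--Bott / family-Floer computation in the fiber direction, analogous to the toric computations of Fukaya--Oh--Ohta--Ono, should identify the endomorphism $A_\infty$-algebra of each $(L, b_j)$ with a deformation of the Fukaya algebra of $Z_i$, with the $m_i$ choices of $b_j$ accounting for the $m_i$ summands. Finally, to upgrade from generation-by-these-Lagrangians to \emph{split}-generation of all of $D^\pi \Fuk(X_i)$, I would invoke an Abouzaid-type split-generation criterion: check that the restriction of the open--closed map $HH_\ast(\Fuk(X_i)) \to QH(X_i)$ to the subcategory assembled in the previous steps hits the unit of $QH(X_i)$. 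The dimension identity $\dim QH(X_i) = \dim QH(X_{i+1}) + m_i$ built into the definition of $m_i$ is consistent with this count and suggests no further generating objects are needed.

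The main obstacle will be the split-generation step. Even though Abouzaid's criterion reduces split-generation to hitting the unit under the open--closed map, making this precise in the stabilizing-divisor foundational setup used here requires developing the open--closed map, its compatibility with bounding cochains, and its behavior under the mmp deformation of the symplectic form. A closely related difficulty is the identification of the subcategory generated by the exceptional Lagrangians with $D^\pi \Fuk(Z_i)^{m_i}$: this amounts to a relative version of the main theorem for the local weighted-projective-bundle model, and the $A_\infty$-level computation identifying the endomorphism algebra with a deformation of $\Fuk(Z_i)$ is likely the most technically involved part of the program. By contrast, the orthogonality and fully-faithfulness assertions should follow from fairly standard area and neck-stretching arguments once the objects are in hand.
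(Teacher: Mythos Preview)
The statement you are attempting to prove is explicitly labeled a \emph{Conjecture} in the paper, and the paper does not offer a proof of it. The surrounding discussion presents it as a guiding expectation motivated by the toric case, by the eigenvalue decomposition of $c_1(X)\star$ on $QH(X)$, and by analogy with Bondal--Orlov/Kawamata results on derived categories of coherent sheaves; Theorems \ref{result} and \ref{survival} are offered as evidence (existence and persistence of the expected Floer-non-trivial objects), not as ingredients of a complete argument. So there is no paper proof to compare your proposal against.

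As a strategy your outline is reasonable and in line with what the paper suggests, but you have correctly located the genuine obstructions rather than overcome them. The two steps you flag as hardest --- (i) identifying the subcategory generated by the exceptional torus with $D^\pi\Fuk(Z_i)^{m_i}$ via a family-Floer computation over $Z_i$, and (ii) split-generation via an open--closed map in the stabilizing-divisor framework --- are precisely the missing ingredients that keep this a conjecture. Note also that Theorem \ref{survival} as stated only transports a \emph{single} Lagrangian brane with a specific bounding cochain across a transition; upgrading this to a fully faithful $A_\infty$-functor $\Phi: D^\pi\Fuk(X_{i+1}) \to D^\pi\Fuk(X_i)$ on all objects requires substantially more than what is proved. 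The orthogonality claim between $\Phi$-images and the exceptional objects is likewise not addressed in the paper. In short: your plan is a plausible roadmap, but it is a research program, not a proof, and the paper does not claim otherwise.
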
  

\noindent By the disjoint union of categories we mean the category
whose objects are the disjoint union, and whose morphism groups
between elements of different sets in the disjoint union are trivial.
In the case of toric manifolds, the Lagrangians have been announced to
generate the Fukaya category by Abouzaid-Fukaya-Oh-Ohta-Ono, based on
the technique in Abouzaid \cite{abouzaid:gen}.  Since our evidence is
mostly in the birationally-Fano case, it is possible that the
conjecture \ref{mainconj} needs some similar restriction.  A special
case is shown in Venugopalan-Woodward-Xu \cite{vwx}.

The decomposition \eqref{decomp} should be related to the
decomposition by quantum multiplication of the first Chern class in
the following sense: Recall that quantum multiplication by the first
Chern class $c_1(X) \in H^2(X)$ induces an endomorphism
\[c_1(X) \star :QH(X) \to QH(X), \quad \alpha \mapsto c_1(X) \star \alpha  .\]  
The eigenvalues $\lambda_1,\ldots,\lambda_k \in \Lambda$ of
$c_1(X) \star$ that are non-zero have a well-defined $q$-valuation
$\on{val}_q(\lambda_i) \in \R$, given by the exponent of the leading
order term.  We expect that these valuations (when non-zero) are the
transition times in the minimal model program.  The conjecture
\ref{mainconj} provides a method of attack on another conjecture of
Kontsevich, that the Fukaya category $\Fuk(X)$ of a symplectic
manifold $X$ is expected to be a categorification of the quantum
cohomology $QH(X)$, at least in many cases, in the following sense
\cite{kon:hom}: there should be an isomorphism
\begin{equation} \label{konts} H(\Fuk(X)) := \bigoplus_{w}
  H(\Fuk(X,w)) \cong QH(X) \end{equation}
from the Hochschild cohomology $H(\Fuk(X))$ of the Fukaya category to
the quantum cohomology $QH(X)$; here $\Fuk(X,w)$ is the summand of the
Fukaya category with curvature $w$.  If the centers of the mmp
transitions have the property \eqref{konts} then the manifold should
have the same property and hence also the quantum cohomology
decomposes into summands corresponding to transitions.  

The idea that the quantum cohomology of a symplectic manifold should
behave well under minimal model transitions is not new; see for
example Ruan \cite{ruan:surgery}, Lee-Lin-Wang \cite{lee:fmi}, Bayer
\cite{bayer:blowups}, Acosta-Shoemaker \cite{as:qctb}; we also heard
related results in talks of H. Iritani.
\label{minorp} \llabel{minor} See Li \cite{li:disj} for related result
in the case of open symplectic manifolds.  Including  bulk
deformations one should obtain a filtration rather than a splitting of
the Fukaya category, analogous to the situation in complex geometry
where mmp transitions provide a semi-orthogonal decomposition.  

The analogy with results of Bondal-Orlov \cite{bondal:der} and
Kawamata \cite{kaw:der} is somewhat mysterious, since the mirror of
the minimal model program on the algebraic side is not expected to be
the symplectic version of the minimal model program considered here.
Rather, the minimal model program on the symplectic side should
correspond under mirror symmetry to a deformation of the mirror
potential by a change of variables in the potential
\[  W \mapsto \phi_t^* W, \ \phi_t(y) := yq^{-tc_1(X)} . \] 
Because the mirror should be understood as a formal completion at
$q = 0$, such a deformation changes the mirror by eliminating some
critical loci via the formal completion.  This flow
should be related to the renormalization group flow in the physics of
non-linear sigma models as discussed in, for example, Hori-Vafa
\cite{ho:mi}.

The proof of Theorem \ref{result} combines a neck-stretching argument
with local toric computations.  Near the exceptional locus the reverse
flip is toric.  The results in the toric case imply the existence of a
Floer-non-trivial torus in the corresponding toric variety.  This
Lagrangian torus collapses at the singularity of a running of the
minimal model program; in this sense the Lagrangian is a ``vanishing
cycle''.  The exceptional locus of the flip is separated from the rest
of the symplectic manifold by a coisotropic submanifold fibered over a
toric variety.  Stretching the neck, as in symplectic field theory,
produces a homotopy-equivalent {\em broken Fukaya algebra} associated
to the Lagrangian which counts maps to the pieces combined with Morse
trajectories on the toric variety.  Similar arguments are common in
the literature, for example, in the work of Iwao-Lee-Lin-Wang
\cite{lee:fmi}, \cite{lee:flop}.  One computes explicitly, using a
Morse function arising as component of a moment map, that the
resulting broken Fukaya algebra is weakly unobstructed and that the
broken Floer cohomology of the Lagrangian is non-vanishing.  Moduli
spaces of pseudoholomorphic disks in toric varieties with invariant
constraints are never isolated, and this implies the unobstructedness
of Floer cohomology.  The classification of disks of small area
implies the existence of a critical point of the potential.

As applications of Theorem \ref{result} we show in Chapter
\ref{flexamples} that various symplectic manifolds contain Hamiltonian
non-displaceable Lagrangian tori.  For example, in the case of toric
manifolds we reproduce in Chapter \ref{trans} some of the results of
Fukaya-Oh-Ohta-Ono \cite{fooo:toric1}.  We also show existence of
Floer non-trivial tori in symplectic quotients of products of
two-spheres (Lemma \ref{reglab}) and moduli spaces of rank two
parabolic bundles in genus zero (Lemma \ref{reglabm}).

We thank Kai Cieliebak, Octav Cornea, Sheel Ganatra, Fran\c{c}ois
Lalonde, Dusa McDuff, and Klaus Mohnke for helpful discussions.  We
also thank Sushmita Venugopalan and Guangbo Xu for pointing out
mistakes in earlier versions.

\chapter{Symplectic flips}

\label{examplessec} 

The goal of the minimal model program (mmp for short) is to classify
algebraic varieties by finding a {\em minimal model} in each
birational equivalence class, see \cite{km:mmp}.  In the
birationally-Fano case, the hoped-for minimal model is a {\em Mori
  fibration}: a fibration with Fano fiber.  In general, singularities
play an important role in the minimal model program.  Here we assume
that the variety admits a {\em singularity-free} running of the mmp.
While this case is considered somewhat trivial by algebraic geometers,
it includes a number of smooth projective varieties whose symplectic
geometry is poorly understood.  We introduce a symplectic version of
the mmp given as a path (with singularities corresponding to
surgeries) of symplectic manifolds.  By a suggestion of Song-Tian and
others \cite{song:krflow}, an mmp running is conjecturally equivalent
to running K\"ahler-Ricci flow with surgery; the paths in our
symplectic mmp are a ``topological version'' of the K\"ahler-Ricci
flow.

\section{Symplectic mmp runnings} 

Each mmp transition is a special kind of {\em birational
  transformation}.  For the sake of completeness we recall the
terminology.  Let $X_\pm$ be normal projective varieties.  A {\em
  rational map} from $X_+$ to $X_-$ is a Zariski dense subset
$U \subset X_+$ and a morphism $\phi: U \to X_-$; we write
$\phi: X_+ \dashrightarrow X_-$.  A rational map $\phi$ is a {\em
  birational equivalence} if $\phi$ has a rational inverse, that is, a
rational map $(V \subset X_-, \psi: V \to X_+ )$ to $X_+$ such that
the compositions
$\phi \circ \psi |_{ \psi^{-1}(U)}, \psi \circ \phi |_{\phi^{-1}(V)}$
are the identity on the domains of definition.  Birational
equivalences have a natural notion of composition, making birational
equivalence into an equivalence relation.

The minimal model program involves the following types of morphisms,
see for example Hacon-McKernan \cite{hacon:flips}.  The first two
types are birational equivalences.

\begin{definition} \label{trans} A {\em minimal model transition} of
  $X_+$ is one of the following three types:
\begin{enumerate} 
\item {\rm (Divisorial contractions)} A morphism $\tau: X_+ \to X_-$
  that is the contraction of a Cartier divisor (codimension one
  subvariety $Y \subset X_+$); a typical example is a blow-down.
\item {\rm (Flips)} Let $\tau_+: X_+ \to X_0$ be a birational
  morphism.  The morphism $\tau_+$ is {\em small} if and only if
  $\tau_+$ does not contract a divisor.  The morphism $\tau_+$ is a
  {\em flip contraction} if and only if the relative Picard number of
  $\tau_+$ is one.  The {\em flip} of $X_+$ is another small
  birational morphism $\tau_- : X_- \to X_0$ of relative Picard number
  one.
\item {\rm (Mori fibrations)} A Mori fibration is a fibration
  $\tau: X_+ \to X_-$ with Fano fiber, that is, a fibration whose
  relative anticanonical bundle $ K^{-1}_{X_+}/ \tau^* K^{-1}_{X_-}$
  is very ample on the fibers $\tau^{-1}(x) \subset X_+$.
\end{enumerate} 
\end{definition} 

Here the {\em relative Picard number} is the difference in Picard
numbers, that is, the difference in dimensions
\[ \Pic(\tau_+) = \dim(\Pic(X_+)) - \dim( \Pic(X_0) ) \]
in the moduli spaces $\Pic(X_+), \Pic(X_0)$ of line bundles.  The
relative Picard number is one if and only if every two curves
contracted by $\tau_-$ are numerical multiples of each other, that is,
define proportional linear functions on the space of degree two
cohomology classes.

\begin{definition} 
\begin{enumerate} 
\item A {\em running} of the mmp is a sequence
  of smooth projective varieties 
\[ X = X_0,\ldots, X_{k+1} \] 
such that
for $i = 0,\ldots, k$, $X_{i+1}$ is obtained from $X_i$ by a
  divisorial contraction or a flip, and
$X_{k+1}$ is obtained from $X_{k}$ by a fibration with Fano
  fiber (sometimes called Mori fibration).  
The variety $X_{k}$ may be called the {\em minimal model} of $X_0$.
The different minimal models that occur are related by the {\em
  Sarkisov program} \cite{hm:sark}.
\item An {\em extended running} of the mmp is a sequence of smooth
  projective varieties
  $X_0,\ldots, X_{k_1}, X_{k_1 + 1}, \ldots, X_{k_l + 1}$, where
  $k_1,\ldots, k_l$ is an increasing sequence of integers, such that
  $X_{i+1}$ is obtained from $X_i$ by a divisorial contraction or flip
  for $k \neq k_i$ and $X_{k_i + 1}$ is obtained from $X_{k_i}$ by a
  Mori fibration.
\end{enumerate} 
\end{definition}

It is expected that runnings of the mmp exist for all smooth
projective varieties.  This existence has proved up to dimension three
\cite{km:mmp}.  Existence of Mori fibration models is known, in the
birationally Fano case, by \cite[Corollary 1.3.2]{bchk}, but the
existence of runnings in higher dimension seems to be unknown.

\begin{example}
  The simplest example of the mmp occurs for toric surfaces.  Recall
  that any toric surface $X$ corresponds to a two-dimensional fan $\cC
  = \{ C \subset \R^2 \}$ of strictly convex cones.  Elementary
  combinatorics shows that if $\cC$ is complete and has more than four
  vertices then there always exists some invariant prime divisor $D_i$
  with $D_i^2 = -1$; blowing down $D_i$ one eventually reaches a
  Hirzebruch surface or projective plane which is the minimal model,
  see Audin \cite[Theorem VIII.2.9]{au:to}.  Note that toric surfaces
  often admit several runnings of the toric mmp, depending on the
  order in which the $-1$-curves are blown down.
\end{example} 

\begin{example}
\label{dp} 
Del Pezzo surfaces also provide elementary examples.  Let $dP_n$
denote a del Pezzo surface given by the blow up of $\P^2$ at $9-n$
generic points.  For $1 \leq n \leq 9$, $dP_n$ is Fano and admits a
running of the mmp with (in our notation) one transition, since $dP_n$
is itself a Mori fibration over a point.  However, $dP_n$ for $n < 8 $
admits multiple mmp runnings.  For example, for $dP_6$ one can blow
down a the exceptional curves
$E_i \subset dP_6, E_i.E_i = -1, i = 1,2,3$ created by the blow-ups in
any order; on the other hand, one may view $dP_6$ as the
thrice-blow-up of the {\em dual} projective plane via the Cremona
transformation.   Blowing down the exceptional curves
$E'_j, E'_j.E'_j = -1$ in any order gives another six runnings of the
mmp.  Taking the blow-ups to respect the toric structure $dP_6$ is
toric with moment polytope a hexagon shown in Figure \ref{corners}.
The $12$ runnings above correspond to the ways of ``restoring the
corners'' to make a triangle.  The dots in Figure \ref{corners}
correspond to the locations of the Floer-non-trivial Lagrangians
produced
by the main result. 
\end{example}

\begin{figure}[ht]
\includegraphics[height=1.5in]{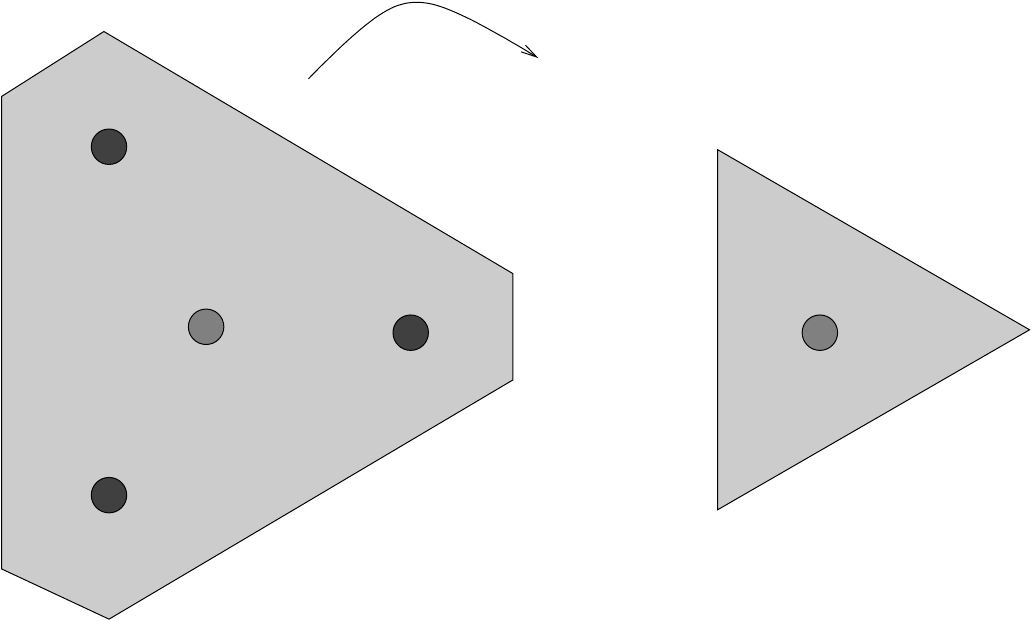} 
\caption{Blowing down $dP_6$ to $\P^2$} 
\label{corners}
\end{figure}

\begin{remark} \label{gitrun} Runnings of the mmp can often be
  obtained from geometric invariant theory as follows, as in Reid
  \cite{reid:flip} and Thaddeus \cite{th:fl}; see also \cite{br:ac}
  and \cite{do:va}.  Let $G$ be a complex reductive group and $X$ a
  smooth projective $G$-variety.  Recall that a {\em linearization} of
  $X$ is an ample $G$-line bundle $\widetilde{X}$.  Given such a
  linearization the geometric invariant theory quotient $X \qu G$ is
  the quotient of the semistable locus, defined as the set of points
  where an invariant section is non-vanishing,
\[X^{\on{ss}} := \{ x \in X \ | \ \exists k > 0 , \ s \in H^0(\widetilde{X}^k)^G, s(x) \neq
0 \}, \]
by the orbit equivalence relation 
\[ x_1 \sim x_2 \iff \ol{G x_1} \cap \ol{G x_2} \cap X^{\on{ss}} \neq
0 .\]
The git quotient $X \qu G$ depends only on the ray
$\{ \widetilde{X}^t, t > 0 \} \subset \Pic^G(X)$ generated by $\widetilde{X}$ in
the equivariant Picard group $\Pic^G(X)$.  That is, tensor powers of
$\widetilde{X}$ define the same git quotient, since if some section of
$\widetilde{X}^k$ is non-vanishing at $x \in X$ then so are all its tensor
powers.  In particular, git quotients are defined for ample elements
of the rational Picard group $\Pic^G_\Q(X) = \Pic^G(X) \otimes_\Z \Q$.

An path of linearizations gives rise to a path of git quotients with a
discrete set of transition times.  Let
$\widetilde{X} = \widetilde{X}_0$ be as above, and let
$\widetilde{X}_1$ be another $G$-equivariant line bundle.  The tensor
powers
\[\widetilde{X}_t = \widetilde{X}_0 \otimes \widetilde{X}_1^{t} \in \Pic_\Q(X), t \in
\Q \]
are ample for $t \in [0,T] \cap \Q$ for $T$ sufficiently small.  We
call $\ti{X}_0$ the {\em base point} of the variation of linearization
and $\ti{X}_1$ the {\em direction}.  Let
\[ X \qu_t G := X_t^{\ss} / \sim \]
be the family of geometric invariant theory quotients obtained from
the linearizations $\widetilde{X}_t$.  As the polarization varies the
git quotients undergo a sequence of divisorial contractions, flips
without the ampleness condition, and fibrations each obtained as
follows: The {\em transition times} are the set of values of $t$ where
the quotient $X \qu_t G$ has stabilizer groups of positive dimension:
\[ \cT := \{ t \in \R, \ \exists x \in X_t^{\ss}, \ \# G_x = \infty \}
.\]
Dolgachev-Hu \cite{do:va} and Thaddeus \cite{th:fl} reduce the study
of the wall-crossings to the case of circle actions as follows.
Define the {\em master space}
\begin{equation} \label{masterspace} X_{t_1,t_2} :=  \P(\widetilde{X}^{t_1} \oplus
\widetilde{X}^{t_2}) \qu G \end{equation} 
for some $t_1,t_2 \in [0,T]$ not in the set of transition times
$\cT$. \llabel{ttime}  \label{ttimep} 
The times $t_1,t_2$ may be taken to be
integers after replacing $\widetilde{X}$ with a tensor power, which does not
affect the computation.  The space $X_{t_1,t_2}$ is equipped with a
natural linearization, obtained by tensoring the relative hyperplane
bundle with the pull-back of $\widetilde{X}_0$.  The $\C^\times$-action on
the fibers induces a $\C^\times$-action on the quotient.
\[ X_{t_1,t_2} \qu \C^\times \cong X \qu_t  G\] 
for $t \in (t_1,t_2)$.

One can now check that variation of quotient produces a flip, with the
ampleness condition satisfied if the direction of the variation
$\widetilde{X}_1$ is the canonical bundle $K$.  In the circle group
case $G = \C^\times$ let $F \subset X_t^{\ss}$ be a component of the
fixed point set that is stable at time $t$.  Let
$\mu_1,\ldots, \mu_k \in \Z$ denote the weights of $\C^\times$ on the
normal bundle $N$ to $F$, and $N_i$ the weight space for weight
$\mu_i, i = 1,\ldots, k$.  Let
\[ N_\pm := \bigoplus_{\pm \mu_i > 0} N_i \subset N \]
denote the positive resp. negative weight subbundles of $N$.  For
$t_- < t < t_+$ with $t_\pm$ close to $t$ the Hilbert-Mumford
criterion and Luna slice theorem imply that the semistable locus
changes by replacing a variety isomorphic to $N_+$ with one isomorphic
to $N_-$, see \cite{th:fl}.  Hence $X \qu_{t_+} G$ is obtained from
$X \qu_{t_-} G$ by replacing the (weighted)-projectivized bundle
$N_+^\times/G$ of with $N_-^\times/G$:
\[ (X \qu_{t_-} G) \backslash (N_-^\times/G) \cong (X \qu_{t_+} G)
  \backslash (N_+^\times/G) .\]
  One checks easily from the local model that these morphisms are
  relatively $K$-ample resp. $-K$-ample over the center.  Thus, in the
  absence of singularities (that is, if $G$ acts freely for generic
  $t$) the spaces $X \qu_t G$ yield a smooth running of the minimal
  model program.\end{remark} 

\begin{remark}
  The symplectic version of these transitions can be described in
  terms of Morse theory of the moment map, as explained in
  Guillemin-Sternberg \cite{gu:bi}.  Let $(X,\omega)$ be a compact
  symplectic manifold equipped with Hamiltonian $U(1)$-action with
  proper moment map $\Phi:X \to \R$.  Let
\[ \Crit(\Phi) = \{ x \in X \ |  \d \Phi(x) = 0 \}, \quad 
\Critval(\Phi) = \Phi(\Crit(\Phi)) \]
denote the set of critical points resp. critical values of $\Phi$.
Given a critical value $c \in \Critval(\Phi)$, we denote by $c_\pm \in
\R$ regular values on either side of $c$, so that $c$ is the unique
critical value in $(c_-,c_+)$.
We suppose for simplicity that $\Phinv(c)$ contains a unique critical
point $x_0 \in X$, so that 
\[ \Critval(\Phi) \cap (c_-,c_+) = \{ c \}, \quad \Crit(\Phi) \cap
\Phinv(c) = \{ x_ 0 \}. \]
By the equivariant Darboux theorem, there exist Darboux coordinates
$z_1,\ldots, z_n$ near $x_0$ and {\em weights}
$\mu_1,\ldots,\mu_n \in \Z$ so that
\[ \Phi(z_1,\ldots, z_n) = c - \sum_{j=1}^n  \mu_j |z_j|^2/2 .\]
In particular, $\Phi$ is Morse and we denote by $W^{\pm}_{x_0}$ the
stable and unstable manifolds of $-\grad(\Phi)$ with respect to some
invariant metric.  The time $t$ gradient flow $\phi_t: X \to X$ of
$- \grad(\Phi)$ induces a diffeomorphism between level sets on the
complement of the stable and unstable manifolds:
\[ \Phinv(c_+) \backslash W^+_{x_0} \to \Phinv(c_-) \backslash
W^-_{x_0}, \quad x \mapsto \phi_t(x), \ t \ \text{such that }
\ \Phi(\phi_t(x)) = c_-
 .\]
Assuming the gradient vector field is defined using an invariant
metric one obtains an identification of symplectic quotients
except on the symplectic quotients of the stable and unstable manifolds:
\[ ( X \qu_{c_+} U(1) ) \backslash (W^+_{x_0}  \qu_{c_+} U(1))
\to ( X \qu_{c_-} U(1) ) \backslash (W^+_{x_0}  \qu_{c_-} U(1)) .\]
By the description from equivariant Darboux, one sees that the
symplectic quotients of the stable and unstable manifolds are weighted
projective spaces with weights given by the positive resp. negative
weights:
\[ (W^+_{x_0} \qu_{c_+} U(1)) \cong \P[ \pm \mu_i, \pm \mu_i > 0 ] .\]
Thus $X \qu_{c_-} U(1)$ is obtained from $X \qu_{c_+} U(1)$ by replacing
one weighted projective space with another.

The change in the symplectic class under variation of symplectic
quotient is described by Duistermaat-Heckman theory \cite{du:on}.  Let
$c_0 \in \R - \Critval(\Phi)$ be a regular value of $\Phi$.  Consider
the product $\Phinv(c_0) \times [c_-,c_+]$ for $c_\pm$ close to $c_0$.
Let $\pi_C, \pi_\R$ be the projections on the factors of
$ \Phinv(c_0) \times [c_-,c_+]$.  Choose a connection one-form and
denote its curvature
\[\alpha \in \Omega^1(\Phinv(c_0))^{U(1)}, \quad \curv(\alpha) \in
\Omega^2(X \qu_{c_0} U(1)).\]
Define a closed two-form on $ \Phinv(c_0) \times [c_-,c_+]$ by
\[ \omega_0 = \pi_C^* \omega_c + \d (\alpha, \pi_\R - c_0)  \in
\Omega^2(\Phinv(c_0) \times [c_-,c_+] ).\]
For $c_-,c_+$ sufficiently close to $c_0$, $\omega_0$ is symplectic
and has moment map given by $\pi_\R$.  By the coisotropic embedding
theorem, for $c_-,c_+$ sufficiently small there exists an equivariant
symplectomorphism 
$ \psi: U \to \Phinv(c_0) \times [c_-,c_+] $
of a neighborhood $U$ of $\Phinv(c_0)$ in $X$ with $ \Phinv(c_0)
\times [c_-,c_+]$.  Hence the symplectic quotients $X \qu_{c} U(1)$
for $c $ close to $c_0$ are diffeomorphic to $X \qu_{c_0} C$, with
symplectic form $\omega_c + (c - c_0) \curv(\alpha)$. 
This ends the Remark.
\end{remark} 

The following describes a symplectic version of an mmp running.  It
requires that the family of symplectic manifolds be given locally by
variation of symplectic quotient, and globally the change in
symplectic class is the canonical class.

\begin{definition} \label{sflip}
\begin{enumerate}
\item {\rm (Simple symplectic flips)} Two symplectic manifolds
  $V_+, V_-$ of dimension $2n$ are related by a {\em simple symplectic
    flip} if there exists a symplectic vector space
  $\ti{V}\cong \C^{n+1}$ with a Hamiltonian action of the circle $S^1$
  with moment map $\Psi: \ti{V} \to \R$ such that
  $V_\pm = \ti{V} \qu_\pm U(1) $ are the symplectic quotients at small
  values $\pm \eps$ for some $\eps > 0$, with the following
  properties:
\begin{enumerate} 
\item the sum of the weights is positive: 
\[ \sum_{i=1}^{n+1} \mu_i > 0; \] 
\item all of the weights are non-zero:
\[ \mu_i \neq 0, \quad  \forall i = 1,\ldots, n+1; \]
\item at least two weights are positive, and at least two weights are
  negative: 
\[ n_\pm := \# \{ \mu_i > 0 \} \ge 2  .\]
\end{enumerate} 
We write $\ti{V}_\pm$ for the sum of the positive resp. negative
weight spaces in $\ti{V}$, so that
\[\ti{V} = \ti{V}_+ \oplus \ti{V}_- .\]
The semi-stable loci are then 
\begin{equation} \label{ssloci} 
 \ti{V}^{\on{ss},-} = (\ti{V}_-  - \{ 0 \} ) \times \ti{V}_+, \quad 
\ti{V}^{\on{ss},+} = \ti{V}_- \times (\ti{V}_+ - \{ 0 \} )
.\end{equation} 
In particular, $V_+$ is obtained from $V_-$ by replacing a
weighted-projective space of dimension $n_- - 1$ with a
weighted-projective space of dimension $n_+ - 1$, as in Remark
\ref{gitrun}, where $n_\pm$ are the number of positive resp. negative
weights.
\item \label{fflip} {\rm (Fibered flips)} More generally, suppose
  that $\ti{V}$ is a vector bundle over a base $Z$ equipped with a
  symplectic structure on the total space and a Hamiltonian
  $U(1)$-action with the same properties as in the simple case.  Then
  the quotients $\ti{V} \qu_+ U(1)$ and $\ti{V} \qu_- U(1)$ are
  related by a symplectic flip {\em with center $Z$}.
%P
\item {\rm (Symplectic flips)} Let $X_\pm$ be non-empty symplectic
  manifolds.  We say $X_+$ is obtained from $X_-$ by a {\em symplectic
    flip} if $X_+$ is obtained {\em locally} by a symplectic flip:
  that is, there exist open covers
\[ X_\pm = U_\pm \cup V_\pm \]
such that the following hold:
\begin{enumerate}
\item $U_+$ is diffeomorphic to $U_-$ and admits a family of symplectic forms
  \[\omega_{U,t} \in \Omega^2(U_\pm), t \in [-\eps,\eps]\]
and embeddings
\[i_\pm :U_\pm \to X_\pm, \quad i_\pm^* \omega_\pm = \omega_{U,\pm
  \eps}.\]
\item The manifolds $V_\pm$ are related by a simple flip as
  in part \ref{sflip} \eqref{fflip}.
\item The difference between symplectic classes on $X_+$ and $X_-$ is
  a positive multiple of the first Chern class in the following sense:
  Under the canonical identification $H^2(X_-) \to H^2(X_+)$ induced
  by the diffeomorphism in codimension at least four the first Chern
  class $c_1(X_-)$ maps to $c_1(X_+)$.  Then for some $\eps > 0$
  \llabel{c1s} \label{c1sp}
  \[ [\omega_+] - [\omega_-] = 2\eps c_1(X_-) .\]
\end{enumerate} 
\item {\rm (Symplectic divisorial contraction)} Symplectic divisorial
  contractions are defined in the same way as symplectic flips, but in
  this case all but one weight is positive.  In this case there exists
  a projection $\pi_+ : X_+ \to X_-$ with exceptional locus
  $E \subset X_+$ a (weighted)-projective bundle.  We require
\[ [\omega_+] = \pi_+^* [\omega_-] + 2\eps  \pi_+^* c_1(X_-) +
  \eps (n-1) [E]^\dual  \] 
  where $[E]^\dual \in H^2(X_+)$ is the Poincar\'e dual of the
  exceptional divisor. Note that this differs from the usual
  definition of symplectic blow-down because of the presence of the
  additional change in symplectic class $c_1(X_-)$.  The term
  $\eps[E]^\dual/(n-1)$ guarantees that the path
  $[\omega_+ + (t - \eps) c_1(X_+)] = [\omega_+ + (t-\eps) \pi_+^*
  c_1(X_-) + (t-\eps)(n-1) [E]^\dual]$
  becomes singular at time $t = 0$.
\item {\rm (Symplectic Mori fibration)} By a {\em symplectic Mori
    fibration} we mean a symplectic fibration $\pi: X_+ \to X_-$ such
  that the fiber $\pi^{-1}(x), x \in X_-$ is a monotone symplectic
  manifold ( the definition of Guillemin-Lerman-Sternberg \cite{gu:sf}
  is more general.)  In all our examples flips $X_-$ will be obtained
  from $X_+$ by a variation of symplectic quotient using a global
  Hamiltonian circle action.  Symplectic Mori fibrations will then be
  Mori fibrations in the usual sense.

\item {\rm (Symplectic mmp running)} A {\em symplectic mmp running} we
  mean a sequence $X = X_0,X_1,\ldots, X_k$ together with, for each $i
  = 0,\ldots, k$, a path of symplectic forms 
  \begin{equation} \label{c1var}
 \omega_{i,t} \in 
  \Omega^2(X_i), t \in [t_i^-,t_i^+], \quad  \ddt [\omega_{i,t} ] =
    c_1(X_i) \end{equation}
and each $ (X_i, \omega_{t_i^-})$ is obtained from $(X_{i-1},
\omega_{t_{i-1}^+})$ by a symplectic mmp transition for $i = 1,\ldots,
k$.
\end{enumerate} 
\end{definition} 

\begin{example}  \label{mmpex} 
\begin{enumerate}   
\item {\rm (Disjoint unions)} If $(X'_t,\omega_t')$ and
  $(X''_t,\omega_t'')$ are symplectic mmp runnings with distinct
  transition times then so is
  $(X'_t \sqcup X_t'', \omega_t' \sqcup \omega_t'')$.  The set of
  centers of the running for $X'_t \sqcup X''_t$ is the union of the
  centers for $X'_t$ and $X''_t$.
\item {\rm (Products)} If $(X'_t,\omega_t')$ and $(X''_t,\omega_t'')$
  are symplectic mmp runnings with distinct transition times then so
  is $(X'_t \times X_t'', \omega_t' \times \omega_t'')$.  The set of
  centers of the running for $X'_t \times X''_t$ is the union of the
  products $Z'_{t_i'} \times X_{t_i'}''$ and
  $X_{t_i''}' \times Z''_{t_i''}$ where $Z'_{t_i'}, Z''_{t_i''}$ are
  the centers for the mmp runnings for $X_t', X_t''$.
\item \label{equiv} {\rm (Equivariant mmp runnings)} Let $G$ be a
  compact Lie group and $(X_t,\omega_t)$ be a symplectic mmp running
  equipped with a Hamiltonian $G$-action with moment maps
  $\Phi_t: X_t \to \g^\dual$.  A family $(X_t,\omega_t)$ is a {\em
    $G$-equivariant symplectic mmp running} if the local models
  $\ti{V}_i$ are $G \times U(1)$-Hamiltonian manifolds, so that the
  mmp transition is given locally by variation of $U(1)$ quotient, the
  variation formula \eqref{c1var} holds in equivariant cohomology, and
  at each transition the identifications $U_\pm, V_\pm \to X_\pm$ are
  identifications of Hamiltonian $G$-manifolds.
\item \label{quotients} {\rm (Quotients)} If $X_t$ is an equivariant
  symplectic mmp running, then the family of symplectic quotients
  $X_t \qu G$ is a symplectic mmp running, with local model given by
  $\ti{V} \qu G$ and centers given by the quotients of the centers of
  $X_t$, assuming that these are free.  Indeed, the family
  $(X_t,\omega_t)$ be represented as a family of $U(1)$ quotients by
  the master space construction \eqref{masterspace} which realizes
  $X_t \qu G$ as a family of $U(1)$-quotients.  Note that $X_t \qu G$
  may have mmp transitions even if $X_t$ does not.  \end{enumerate}
\end{example}

\section{Runnings for toric manifolds} 
\label{trans}  

In this and the following subsections we give a sequence of examples
of mmp runnings.  Each of these examples is in some sense obtained by
variation of quotient as in Remark \ref{gitrun}.  The minimal model
program for toric varieties was established by Reid
\cite{reid:decomp}.  Here we describe mmp runnings given by shrinking
the moment polytope, see Gonz\'alez-Woodward \cite{gw:surject} and
Pasquier \cite{pasq:mmp}.  First we recall the following.

\begin{proposition}\label{tgit}  {\rm (Delzant \cite[Section 3.2]{de:ha},
    Lerman-Tolman \cite{lt}, Coates et al \cite[Section
    3.1]{coates:kt}.)}  Any smooth toric Deligne-Mumford stack with
  projective coarse moduli space has a presentation as a geometric
  invariant theory quotient.
\end{proposition}

We review the construction.  Take $V$ be a Hermitian vector space of
dimension $k$ with an action of a torus $G$.  Denote the weights of
the action
\[\mu_1,\ldots,\mu_k \in \g_\Z^\dual = \Hom(\g_\Z,\Z) .\]  
A linearization of $V$ is determined by an equivariant K\"ahler class
\[ \omega_{V,G} \in H^2_G(V) \cong \g_\Z .\]   
The geometric invariant theory $ V \qu G$ is, if locally free, the
quotient of the semistable locus
\begin{equation} \label{ssl} 
 V^{\ss} = \{ (v_1,\ldots,v_k) \in V, \quad \on{span} \{ \mu_k |
{v_k \neq 0} \} \ni \omega_{V,G} \} \end{equation}
by the action of $G$.  Suppose $G$ is contained in a maximal torus $H$
of the unitary group of $V$; then the residual torus $T = H/G$ acts on
$X = V \qu G$ making $X$ into a toric variety.  The moment polytope
for the action of the residual torus on the quotient can be computed
from the moment polytope for the original action.  The class
$\omega_{V,G} \in H^2_G(V)$ has a lift to $H^2_H(V)$ where
$H = (\C^\times)^k$. In terms of the standard basis vectors
$\eps_i^\dual \in \h^\dual$ the cohomology class of such a lift is
given by 
\[ \omega_{V,H} = \sum c_i \eps_i^\dual \in \h^\dual
\cong H^2_{H}(V) .\]
Choose coordinates $z_1,\ldots,z_k$ on $V$ so that the symplectic form
is 
\[ \omega_V = (1/2i) \sum_{j=1}^k \d z_j \wedge \d \ol{z}_j .\] 
A moment map for the $H$-action is given by the formula
\[ (z_j)_{j = 1}^k \mapsto ( -  c_j - |z_j|^2/2 )_{j= 1}^k .\]
Let $\nu_j$ denote the image of the $j$-standard basis vector
$- \eps_j \in \h$ under the projection $\h \to \g$.  The residual
action of the torus $T = H/G$ on $X = V \qu G$ has moment image
\begin{equation} \label{cjs} P = \{ \lambda \in \t^\dual \ | \ \lan \lambda, \nu_j \ran \ge c_j,
\quad j =1,\ldots, k \} .  \end{equation}  

The description of a toric manifold as a git quotient leads to the
following description of mmp runnings.

\begin{proposition} Suppose that the constants $c_j, j = 1,\ldots, k$
  from \eqref{cjs} are generic.  A running of the mmp for the toric
  variety $X$ is given by the sequence of toric varieties $X_t$
  corresponding to the sequence of polytopes
\[P_t = \{ \mu \in \t^\dual \ | \ \lan \mu, \nu_j \ran \ge c_j + t
\quad j = 1,\ldots, k \} .\]
The transition times are the set of times
\[ \cT := \Set{t \ | \ \exists \mu \in \t^\dual, \ \{ \nu_j \ | \  \lan
 \mu, \nu_j \ran = c_j + t \} \ \text{is linearly dependent} } .\]
Every symplectic manifold $X_t, t \notin \cT $ is an orbifold.  If the
set of normal vectors $\nu_j$ to facets stay the same (that is, if the
inequalities $ \lan \mu,\nu_j \ran = c_j + t$ continue to have
solutions for every $j = 1,\ldots, k$ ) then the transition is a flip;
if a facet disappears then the transition is a divisorial contraction
(where the divisor is the preimage of the disappearing face).
\end{proposition} 

\begin{proof} The statement of the proposition is a special case of
  \ref{mmpex} \eqref{quotients}.  Indeed suppose that $X$ is a
  symplectic quotient of $\C^k$ with respect to the equivariant
  symplectic class determined by the constants
  $(c_1,\ldots, c_N) \in \R^N \cong H_2^G(\C^k)$.  Then
  $(c_1 - t,\ldots, c_N - t)$ represents a canonical variation of
  symplectic class, and so a $G$-equivariant mmp running on $\C^k$.
  By Remark \ref{mmpex} \eqref{equiv} this descends by the quotient
  construct to an mmp running for $X$.  The description of the types
  of the transitions follows from the fact that the boundary divisors
  defined by $z_i = 0$ intersect the stable loci \eqref{ssl} on both
  sides exactly if the unstable locus $V - V^{\ss}$ is complex
  codimension at least two.
\end{proof}

For example, in Figure \ref{p1p1} there are two divisorial
contractions, occurring at the dots shaded in the diagram. 
\begin{figure}[ht]
\includegraphics[height=1in]{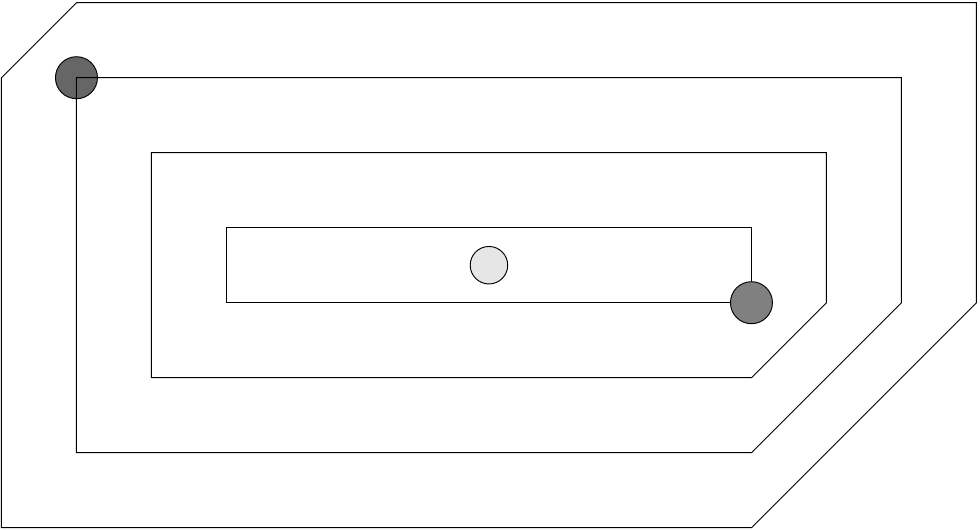}
\caption{Moment images of regular tori for the twice blow-up 
  of $\P^1 \times \P^1$}
\label{p1p1}
\end{figure}
Finally if a generic point $\mu$ in $P_{t_0}$ satisfies
$\lan \mu, \nu_j \ran = c_j + {t_0}$ then $X_t$ undergoes a Mori
fibration at $t_0$ with base given by the toric variety $X_{t_0}$ with
polytope $P_{t_0}$; one can then continue the running with $X_{t_0}$
to obtain an extended running.

\section{Runnings for polygon spaces} 
\label{polygons}

The moduli space of polygons is the quotient of a product of
two-spheres by the diagonal action of the group of Euclidean rotations
in three-space.  This moduli space is often used as one of the primary
examples of geometric invariant theory and symplectic quotients, see
for example Kirwan \cite[Example 2.2]{ki:coh}.  This space is also a special case
of the moduli space of flat bundles on a Riemann surface, which
appears in a number of constructions in mathematical physics.

First recall the Hamiltonian structure of the two-sphere via its
realization as a coadjoint orbit.  Let $S^2 \subset \R^3$ the unit
two-sphere equipped with the $SO(3)$-invariant symplectic form
\[ \omega \in \Omega^2(S^2), \quad \int_{S^2} \omega = 1. \]  
The action of $SO(3)$ on $S^2$ is naturally Hamiltonian.  Viewing
$SO(3)$ as a coadjoint orbit in $\so(3)^\dual \cong \R^3$, the moment
map is the inclusion of $S^2$ in $\R^3$, as a special case of the
Kirillov-Kostant-Souriau construction \cite{so:st}.

Consider the diagonal action on the product of spheres in the
previous paragraph.  Let $n \ge 1$ be an integer, and
$\lambda_1,\ldots, \lambda_n > 0 $ a sequence of positive real
numbers.  The product
\[ \widetilde{X} = (S^2 \times \ldots \times S^2, \lambda_1 \pi_1^* \omega +
\ldots + \lambda_n \pi_n^* \omega) \]
is naturally a symplectic manifold of dimension $2n$.  The group
$SO(3)$ acts diagonally on $\widetilde{X}$ with moment map
\[ \Psi: \widetilde{X} \to \R^3 , \quad (v_1,\ldots, v_n) \mapsto v_1 +
\ldots + v_n .\]
The symplectic quotient $X = \widetilde{X} \qu SO(3)$ is the {\em moduli
  space of $n$-gons}
\[P(\lambda_1,\ldots,\lambda_n) = \Set{ (v_1,\ldots, v_n) \in
(\R^3)^n \ | \ \Vert v_i \Vert = \lambda_i, \forall i, \quad
\sum_{i=1}^n v_i = 0 } .\]

The moduli space may be alternatively realized from the geometric
invariant theory perspective as a quotient by the complexified group.
We view each $v_i$ as a point in the projective line $\P^1$.  A tuple
$(v_1,\ldots,v_n) \in \P^1$ is semistable if and only if for each $w \in \P^1$,
the {\em slope inequality}
\[ \sum_{v_i = w} \lambda_i \leq \sum_{v_i \neq w} \lambda_i \]
holds \cite{mu:ge}.  Then $P(\lambda_1,\ldots,\lambda_n)$ is the
quotient of the semistable locus by the action of $SL(2,\C)$
by a special case of the Kempf-Ness theorem \cite{ke:le}.

A running of the mmp for the moduli space of polygons is given by
varying the lengths in a uniform way.  The first Chern class of the
product of spheres $\widetilde{X}$ is the class  
\[ c_1( \widetilde{X}) = \sum_{j=1}^n \pi_j^* c_1(\P^1) \]
where $\pi_j$ is projection onto the $j$-th factor.  It follows from
Remark \ref{gitrun} that the sequence of moduli spaces
$P(\lambda_1 - t, \ldots, \lambda_n - t)$ is a mmp (after rescaling
the time parameter by $2$) for $P(\lambda_1,\ldots, \lambda_n)$.
Transitions occur whenever there are one-dimensional polygons.  That
is,
\begin{equation} \label{stimes} 
 \cT := \Set{ t \ | \ \exists \eps_1,\ldots,\eps_n \in \{ -1,1 \},
\ \sum_{j=1}^n \eps_j (\lambda_j - t) = 0 } .\end{equation}
We may assume that the number of positive signs is greater at least
the number of minus signs, by symmetry.  For example, if the initial
configuration is $\lambda = (10,10,12,13,14)$ then there are three
transitions, at $t = 5,7,9$, corresponding to the equalities
\[ 5 + 5 + 7 = 8 + 9, \ 3 + 3 + 6 = 5 + 7, \ 1 + 1 + 5 = 3 + 4
.\]
There is a final transition when the smallest edge acquires zero
length.

Each flip or blow-down replaces a projective space of dimension equal
to the number of plus signs in \eqref{stimes}, minus one, with a
projective space of dimension equal to the number of minus signs in
\eqref{stimes}, minus one.  One can explicitly describe the projective
spaces involved in the flips as follows.  For $\pm \in \{ + , - \}$
let
\[ I_\pm = \{ i, \eps_i = \pm 1 \} \] 
denote the set of indices with positive resp. negative signs.  Let
$t_+ > t$ resp. $t_- < t$ and let
\[S_\pm = \Set{ (v_1,\ldots, v_n) \in
P(\lambda_1-t_\pm,\ldots,\lambda_n-t_\pm) \ | \ \R_{> 0} v_i = \R_{>
  0}v_j, \ \forall i,j \in I_\pm } \]
denote the locus where the vectors with indices in $I_\pm$ point in
the same direction.  (Since $\pm$ is a variable, this is a requirement
for $I_+$ or $I_-$ but not both.)  Thus $S_\pm$ is the symplectic
quotient of a submanifold $\ti{S}_\pm$ of the product of two-spheres
with only positive resp. negative weights for the circle action.  So
the space $S_+$ is a projective space and the flip replaces $S_+$ with
$S_-$:

\[ \begin{diagram} 
\node{
P(\lambda_1-t_-,\ldots,\lambda_n-t_-) 
} 
\node[4]{
P(\lambda_1-t_+,\ldots,\lambda_n-t_+) 
}
\\
\node{ S_- } \arrow{n}  \arrow{e}
\node[2]{\on{pt}} 
\node[2]{S_+}  \arrow{w}  \arrow{n}
\end{diagram} .\]
\vskip .1in For example, in the case of lengths $10,10,12,13,14$ for
the transition at $t = 5$, the configurations with edge lengths
$5+\eps,5+\eps,7+\eps,8+\eps,9+\eps$ with $9+\eps,8+\eps$ edges
approximately colinear are replaced with configurations with edges
with lengths $5- \eps,5 - \eps,7 - \eps$ approximately colinear.  The
first set of configurations is a two-sphere, a moduli space of
quadrilaterals, while the latter set of configurations is a point, a
moduli space of triangles.  It follows that the transition is a
blow-down.

There are two situations in which one obtains a Mori fibration.
First, in the case that one of the $\lambda_i$'s becomes zero, say
$\lambda_i - t$ is small in relation to the other weights, there is a
fibration
\[P(\lambda_1 - t,\ldots, \lambda_n - t) \to P(\lambda_1 - t
,\ldots,\lambda_{i-1} - t, \lambda_{i+1} - t, \ldots, \lambda_n - t)
.\]
Symplectically, this is a special case of the results of
Guillemin-Lerman-Sternberg \cite[Section 4]{gu:sf}.  From the
algebraic point of view, in this case the value of $v_i$ does not
affect the semistability condition, and forgetting $v_i$ defines the
fibration.  In the case that the moduli space becomes empty before one
of the $\lambda_i$'s reaches zero, the moduli space is a projective
space at the last stage, by the same discussion as in the case of
flips.  For example, in the case of lengths $10,10,12,13,14$ one
obtains a fibration
$ P(.1,.1,2.1,3.1,4.1)\to P(2,3,4) $
when $t = 10$ over the moduli space $P(2,3,4)$ which is a point.
Therefore, the moduli space at $t = 10 - \eps$ is diffeomorphic to a
product of two-spheres:
$ P(\eps,\eps,2 +\eps ,3 + \eps,4 + \eps) \cong S^2 \times S^2 .$
The conclusion is that $P(10,10,12,13,14)$ is a thrice-blow-up of $S^2
\times S^2$, that is, a del Pezzo surface.

The second way that one may obtain a Mori fibration is that one can
reach a chamber in which the moduli space is empty, because one of the
lengths, say $\lambda_i - t$ is so long compared to the others that
the sum of the remaining lengths is smaller than the first length:
\[ \sum_{j \neq i} \lambda_j - t < \lambda_i - t .\] 
In this case, the last non-empty moduli space admits the structure of a fiber bundle over
the space of reducible polygons corresponding to the transition, which
form a point.  The fiber is diffeomorphic to a projective space, by a repeat of the
arguments above.  See Moon-Yoo \cite{moon:birat} for more details.

\section{Runnings for moduli spaces of flat bundles} 
\label{flatmmp}

The moduli space of flat bundles on a Riemann surface is an example of
an infinite-dimensional symplectic quotient, and studied in for
example Atiyah-Bott \cite{at:mo}:

\begin{proposition} 
  Let $\Sigma$ be a compact Riemann surface and $G$ a simply-connected
  compact Lie group.  The moduli space $\cR(\Sigma)$ of flat
  $G$-bundles on $\Sigma$ has a presentation as an
  infinite-dimensional symplectic quotient of the affine space
  $\A(\Sigma)$ of $G$-connections by the Hamiltonian action of the
  group of gauge transformations $\cG(\Sigma)$.
\end{proposition} 

\begin{proof}[Recall of construction] The trivial $G$-bundle
  $P = \Sigma \times G$ has space of connections $\A(P)$ canonically
  identified with the space of $\g$-valued one-forms which are
  $G$-invariant and induce the identity on the vertical directions:
  \[ \A(P) := \{ \alpha \in \Omega^1( \Sigma \times G)^G, \quad
  \alpha(\xi_P) = \xi, \forall \xi \in \g \} \]
  where $\xi_P \in \Vect(P)$ is the vector field generated by $\xi$.
  The space $\A(P)$ is an affine space modelled on
  $\Omega^1(\Sigma,\g)$, A symplectic structure is given by wedge
  product and integration:
\[ \Omega^1(\Sigma,\g)^2 \to \R, \quad (a_1,a_2) \mapsto \int_\Sigma
(a_1 \wedge a_2) .\]
Here $(a_1 \wedge a_2) \in \Omega^2(\Sigma)$ is the result of
composition \label{resup} \llabel{resu}
\[\Omega^1(\Sigma,\g)^{\otimes 2} \to
\Omega^2(\Sigma, g^{\otimes 2}) \to 
 \Omega^2(\Sigma,\R)\] 
where the latter is induced by an invariant inner product $\g \times
\g \to \R$.  The action of the group $\G(P)$ of gauge transformations
on $\A(P)$ by pullback is Hamiltonian with moment map given by the
curvature:
\[ \A(P) \to \Omega^2(\Sigma,P(\g)), \quad \alpha \mapsto F_\alpha .\] 
The symplectic form on $\A(P)$ descends to a closed two-form on the
symplectic quotient
\[ \cR(\Sigma) := \A(P) \qu \G(P) = \{ \alpha \in \A(P) \ | \ F_\alpha
= 0 \} / \G(P) \]
the moduli space of flat connections on the trivial bundle. The
tangent space to $\cR(\Sigma)$ at the isomorphism class of a
connection $\alpha$ has a natural identification
\[ T_{[\alpha]} \cR(\Sigma) \cong H^1(\d_\alpha) \]
with the cohomology $H^1(\d_\alpha)$ of the associated covariant derivative
$\d_\alpha$ in the adjoint representation.  The Hodge star furnishes a
K\"ahler structure on the moduli space.
\end{proof}

\begin{definition} {\rm (Moduli spaces of bundles on surfaces with
    boundary)}  
  Extensions of the construction to the case that the surface has
  boundary are given in Mehta-Seshadri \cite{ms:pb}.  Suppose $\Sigma$
  is a compact oriented surface of genus $g$ with $n$ boundary
  components.  That is, $\Sigma$ is obtained from a closed compact
  oriented surface of genus $g$ by removing $n$ disjoint disks as in
  Figure \ref{surf}.  Let $Z_k \subset \Sigma$ be the $k$-th boundary
  circle, and $[Z_k] \in \pi_1(\Sigma)$ the class defined by a small
  loop around the $k$-th boundary component for $k = 1,\ldots, n$.
  Let $G = SU(2)$ denote group of special unitary $2 \times 2$
  matrices.  The space of conjugacy classes $G/\Ad(G)$ is naturally
  parametrized by an interval:
\[ [0,1/2] \to G/\Ad(G) , \quad \lambda \mapsto \diag(\exp( \pm 2\pi i  \lambda)) .\]
Let $\lambda_1,\ldots, \lambda_n \in [0,1/2]$ be {\em labels} attached
to the boundary components.
\begin{figure}[ht]
\begin{picture}(0,0)%
\includegraphics{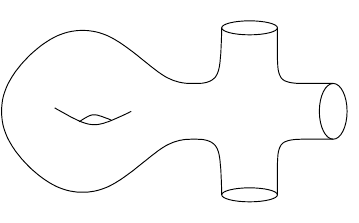}%
\end{picture}%
\setlength{\unitlength}{3947sp}%
\begingroup\makeatletter\ifx\SetFigFont\undefined%
\gdef\SetFigFont#1#2#3#4#5{%
  \reset@font\fontsize{#1}{#2pt}%
  \fontfamily{#3}\fontseries{#4}\fontshape{#5}%
  \selectfont}%
\fi\endgroup%
\begin{picture}(2856,1774)(1810,-3975)
\put(3637,-3940){\makebox(0,0)[lb]{\smash{{\SetFigFont{10}{6.0}{\rmdefault}{\mddefault}{\updefault}{\color[rgb]{0,0,0}$\lambda_1$}%
}}}}
\put(4651,-3116){\makebox(0,0)[lb]{\smash{{\SetFigFont{10}{6.0}{\rmdefault}{\mddefault}{\updefault}{\color[rgb]{0,0,0}$\lambda_2$}%
}}}}
\put(3656,-2270){\makebox(0,0)[lb]{\smash{{\SetFigFont{10}{6.0}{\rmdefault}{\mddefault}{\updefault}{\color[rgb]{0,0,0}$\lambda_3$}%
}}}}
\end{picture}%
\caption{Compact surface with labelled boundary}
\label{surf}
\end{figure} 
Choose a base point in $\Sigma$ and let $\pi_1(\Sigma)$ denote the
fundamental group of homotopy classes of based loops with respect to
some base point.  Each loop $Z_k$ defines an element
$[Z_k] \in \pi_1(\Sigma)$, by connecting $Z_k$ to the base point, and
is well-defined up to conjugacy.  For numbers $\mu_1,\mu_2$ we denote
by $\diag(\mu_1,\mu_2)$ the diagonal $2\times2$ matrix with diagonal
entries $\mu_1$ and $\mu_2$. Let $\cR(\lambda_1,\ldots,\lambda_n)$
denote the moduli space of isomorphism classes of flat bundles with
holonomy around the boundary circles given by
$\exp(2 \pi i \diag(\lambda_k,-\lambda_k)), k = 1,\ldots, n$.  Since
any flat bundle is described up to isomorphism by the associated
holonomy representation of the fundamental group, we have the explicit
description
\begin{equation} \label{explicit}
\cR(\lambda_1,\ldots,\lambda_n)
 = \Set{ \begin{array}{l} \varphi
  \in \Hom(\pi_1(\Sigma),SU(2)) \\ \varphi([Z_k]) \in SU(2) \exp(2 \pi
  i \diag(\lambda_k,-\lambda_k)) \\ k = 1,\ldots, n \end{array}
} / SU(2) .\end{equation}
By Mehta-Seshadri \cite{ms:pb}, the moduli space of flat bundles may
be identified with the moduli space of {\em parabolic bundles} with
weights $\lambda_1,\ldots,\lambda_n$.  Here a parabolic bundle means a
holomorphic bundle $ E \to \Sigma$ on closed Riemann surface $\Sigma$
with markings $z_1,\ldots,z_n \in \Sigma$ with the additional datum of
one-dimensional subspace $L_i \subset E_{z_i}$ in the fiber $E_{z_i}$
at each marking, together with the weights
$\lambda_1,\ldots, \lambda_n$.\end{definition}

\begin{remark} {\rm (Moduli of spherical polygons)} In the case of
  rank two bundles there is a simple interpretation of these moduli
  spaces in terms of {\em spherical polygons}.  Namely $\pi_1(\Sigma)$
  is generated by homotopy classes of paths
  \[ \gamma_1,\ldots, \gamma_n \in \pi_1(\Sigma), \quad \gamma_1 \ldots
  \gamma_n = 1 .\]
Thus a representation of the fundamental group corresponds to a tuple
\[g_1,\ldots,g_n \in SU(2), \quad g_1 \ldots g_n = e \]  
where $e \in SU(2)$ is the identity.  Consider the polygon in $SU(2)$
with vertices
\[ ( e,\ g_1 ,\ g_1g_2 ,\ldots, \ g_1 \ldots g_{n-1}) \in SU(2)^n. \]
Choose a metric on $SU(2) \cong S^3$ invariant under the left and
right actions.  Because the metric on $SU(2)$ is invariant under the
right action, the distance between the $j-1$-th and $j$-th vertices is
the distance between $e$ and $g_j$.  Using invariance again it
suffices to assume that
$g_j = \diag( 2 \pi i (\lambda_j, -\lambda_j))$.  The distance is then
$\lambda_j$, once the metric is normalized so the maximal torus has
volume one.  Via the identification of $S^3$ with $SU(2)$, any
representation gives rise to a polygon in $S^3$ with edge lengths
$\lambda_1,\ldots, \lambda_n$.  Conversely, any closed polygon
determines a representation assigning the edge elements to the
generators of $\pi_1(\Sigma)$, and this correspondence is bijective up
to isometries of the three-sphere.
\end{remark} 

For generic weights the moduli space of flat bundles has a smooth
running of the mmp given by varying the labels in a uniform way.
First, a result of Boden-Hu \cite{boden:var} and Thaddeus
\cite[Section 7]{th:fl} shows that varying the labels leads to a
generalized flips in the sense that all conditions are satisfied
except the condition that the morphisms to the singular quotient are
relatively ample.  For this the variation of K\"ahler class should be
in the canonical direction as we now explain.  In the case without
boundary, the anticanonical class was computed by Drezet-Narasimhan
\cite{dr:pi} and in the case of parabolic bundles by
Biswas-Raghavendra \cite{br:par}; see Meinrenken-Woodward
\cite{me:can} for a symplectic perspective.  The anticanonical class
is expressed in terms of the symplectic class and the line bundles
$L_j$ associated to the eigenspaces of the holonomy around the
boundary components by
\[ c_1( \cR(\lambda_1,\ldots,\lambda_n) ) = 4
[\omega_{\cR(\lambda_1,\ldots,\lambda_n)}] - \sum_{j=1}^n ( 4 \lambda_j
- 1) 2c_1(L_j) .\]
In particular, if all weights $\lambda_i = \qq$ then the moduli space
is Fano.

\begin{proposition}  The moduli space $\cR(\lambda,\ldots, \lambda_n)$ has a smooth running of the mmp given by
the sequence of moduli spaces
\begin{equation} \label{sequence} 
\RR \left( \frac{\lambda_1 - t}{1 - 4t}
, \ldots, \frac{\lambda_n - t}{1-
  4t} \right) .\end{equation}
\end{proposition}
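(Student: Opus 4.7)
The plan is to verify in four steps that the one-parameter family $\cR(\lambda_1(t),\ldots,\lambda_n(t))$ with $\lambda_j(t) := (\lambda_j-t)/(1-4t)$ satisfies the three requirements in Definition \ref{sflip}: the symplectic classes move in the direction of $c_1$ (possibly after reparametrization), the transitions are flips/divisorial contractions/Mori fibrations, and each intermediate stage is smooth for generic initial weights.

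First, I would realize each $\cR(\lambda_1,\ldots,\lambda_n)$ as a symplectic reduction by using the Mehta-Seshadri correspondence to identify $\cR$ with a moduli space of parabolic bundles, and then using the Atiyah-Bott picture to realize this moduli space as the symplectic quotient of the space of connections (with parabolic structure) by the gauge group. The Duistermaat-Heckman theorem then gives the cohomological variation formula $\partial[\omega]/\partial \lambda_j = 2 c_1(L_j)$, which is consistent with differentiating the anticanonical formula $c_1 = 4[\omega] - \sum_j (4\lambda_j - 1)\cdot 2 c_1(L_j)$ with respect to $\lambda_j$ while holding $c_1$ and $c_1(L_j)$ fixed as topological classes.

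Second, I would check the variation of the symplectic class along the path. The key algebraic identity is
\[
4\lambda_j(t)-1 = \frac{4\lambda_j-1}{1-4t},
\]
so the Mobius transformation fixes the Fano locus $\lambda_j=1/4$ while scaling the ``canonical'' combination $(4\lambda_j-1)$ by $(1-4t)^{-1}$. Combined with DH this gives
\[
\tfrac{d}{dt}[\omega_t] = \sum_j \tfrac{4\lambda_j-1}{(1-4t)^2}\cdot 2 c_1(L_j),
\]
and using the anticanonical formula one rewrites this (up to a positive smooth reparametrization of $t$) as $(d/dt)[\omega_t] \propto c_1$. In particular, on each smooth subinterval between transition times the class $[\omega_t]$ moves in a positive multiple of $c_1$, which is exactly what Definition \ref{sflip} requires (after reparameterizing so that the proportionality constant becomes $1$).

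Third, I would identify the transitions using Thaddeus \cite{th:fl} and Boden-Hu \cite{boden:var}: variation of the linearization in a moduli of parabolic bundles produces flips (or divisorial contractions in non-generic codimensional jumps, or Mori fibrations when a $\lambda_j(t)$ reaches zero so that the corresponding puncture can be capped off, reducing $n\mapsto n-1$). Transition times occur precisely when the transformed weights $\lambda_j(t)$ admit a reducible polygon relation $\sum_{j\in I_+}\lambda_j(t) = \sum_{j\in I_-}\lambda_j(t)$; the local structure near such a reducible representation, computed via Mehta-Seshadri deformation theory, is the quotient of a symplectic vector space by a circle with mixed-sign weights, which is exactly the model in Definition \ref{sflip}. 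Smoothness on each stratum for generic initial $(\lambda_1,\ldots,\lambda_n)$ follows because the singular parabolic weights form a codimension-one subvariety of the weight chamber, and running along the one-parameter family $t\mapsto \lambda_j(t)$ meets only isolated transition values.

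The main obstacle I anticipate is keeping careful track of the symplectic normalization, since $[\omega_t]$ is not a priori in the span of $c_1$ and the $c_1(L_j)$, and one must use DH cohomologically rather than just the anticanonical identity to control the rate of change. Once that rate is pinned down, the reparametrization step that converts $(d/dt)[\omega_t]\propto c_1$ into the normalized $c_1$ is standard, and the remaining Mori-theoretic identification of each transition (flip vs.\ divisorial contraction vs.\ Mori fibration) follows from the local weight-sign analysis as in Remark \ref{gitrun}.
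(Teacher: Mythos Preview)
Your approach differs from the paper's and contains a genuine gap in Step~2.

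The paper does not verify $\tfrac{d}{dt}[\omega_t]=c_1$ by a Duistermaat--Heckman computation. Instead it constructs a \emph{master space}: using the loop-group realization of \cite{me:can}, it forms the total space of the anticanonical bundle \eqref{anticanon} over $\cR\times\cO_{\lambda_1}\times\cdots\times\cO_{\lambda_n}$, quotients by $LG^n$, and observes that the residual $S^1$-action by fiberwise scalar multiplication has as its symplectic quotients exactly the family $\cR\bigl((\lambda_j-t)/(1-4t)\bigr)$. Since this is variation of git by the anticanonical linearization, Remark~\ref{gitrun} applies directly: the weight of $S^1$ on the anticanonical bundle at any fixed point is positive by definition, so the transitions are mmp transitions. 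No separate computation of $\tfrac{d}{dt}[\omega_t]$ is needed.

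The gap in your Step~2 is that the conclusion $(d/dt)[\omega_t]\propto c_1$ is \emph{false} for the standard symplectic forms on $\cR(\lambda_j(t))$. Your own formulas give
\[
\frac{d[\omega_t]}{dt}=\frac{1}{(1-4t)^2}\sum_j(4\lambda_j-1)\,2c_1(L_j),
\]
and substituting $\sum_j(4\lambda_j(t)-1)\,2c_1(L_j)=4[\omega_t]-c_1$ together with $4\lambda_j(t)-1=(4\lambda_j-1)/(1-4t)$ yields
\[
\frac{d[\omega_t]}{dt}=\frac{4[\omega_t]-c_1}{1-4t},
\]
which is not a scalar multiple of $c_1$ unless $[\omega_t]$ and $c_1$ are already proportional (the Fano point $\lambda_j\equiv 1/4$). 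A reparametrization of $t$ alone cannot repair this, since it only rescales the left-hand side by a scalar function of $t$. What is missing is the \emph{level shift}: the anticanonical bundle for $SU(2)$ carries level $2h^\vee=4$, so tensoring the linearization by $K^{-t}$ rescales the natural symplectic form on the quotient by $1-4t$ in addition to shifting the weights. If you set $\hat\omega_t:=(1-4t)\,\omega_t$, the same computation gives $\tfrac{d}{dt}[\hat\omega_t]=-c_1$, which (after reversing orientation of $t$) is what Definition~\ref{sflip} requires. The paper's master-space construction produces precisely these rescaled forms as the reduced symplectic forms, which is why the issue never arises there. Once you incorporate this level rescaling, your Steps~3--4 (identifying the wall-crossings via Boden--Hu/Thaddeus and checking the local flip model) are fine and parallel the paper's invocation of Remark~\ref{gitrun}.
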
 

\begin{proof} 
This family can be produced as a variation of symplectic quotient
using the construction of \cite{me:can} as follows: Let $LG$ denote
the loop group of $G = SU(2)$, $\cR$ denote the moduli space of flat
$G$-connections with framings on the boundary equipped with its
natural Hamiltonian action of $LG^n$, and
$\cO_{\lambda_1},\ldots,\cO_{\lambda_n}$ the $LG$-coadjoint orbits
through $\lambda_1,\ldots,\lambda_n$.  Then
$\cR(\lambda_1,\ldots, \lambda_n)$ has a realization as a symplectic
quotient
\[ \cR(\lambda_1,\ldots, \lambda_n) = 
( \cR \times \cO_{\lambda_1}
\times \ldots \times \cO_{\lambda_n})
 \qu LG^n .\]
Consider the product of anticanonical bundles
\begin{equation} \label{anticanon}
K_{\cR}^\dual \boxtimes K^\dual_{\cO_{\lambda_1}} \boxtimes \ldots
\boxtimes K^\dual_{\cO_{\lambda_n}} \to \cR \times \cO_{\lambda_1}
\times \ldots \times \cO_{\lambda_n} \end{equation}
in the sense of \cite{me:can}.  Its total space minus the zero section
has closed two form given by
\[ \pi^* \omega_{\cR \times \cO_{\lambda_1} \times \ldots \times
  \cO_{\lambda_n}} + \d (\alpha, \phi) \in \Omega^2 ((K^\dual_{\cR}
\boxtimes K^\dual_{\cO_{\lambda_1}} \boxtimes \ldots \boxtimes
K^\dual_{\cO_{\lambda_n}}) - \{ 0 \})\]
where $\alpha$ is a connection one-form and $\phi$ is the logarithm of
the norm on the fiber.  This two-form is non-degenerate on the region
defined by $(\lambda_i - \phi) / (1 - 4 \phi) \in (0,1/2)$ for each
$i$.  The action of $S^1$ by scalar multiplication in the fibers is
Hamiltonian with moment map $\phi$.  The quotient
\[\ti{\cR}(\lambda_1,\ldots,\lambda_n) := (K^\dual_{\cR} \boxtimes
K^\dual_{\cO_{\lambda_1}} \boxtimes \ldots \boxtimes
K^\dual_{\cO_{\lambda_n}} - \{ 0 \}) \qu LG^n\]
has a residual action of $S^1$ whose quotients are the family given
above:
\[ \ti{\cR}(\lambda_1,\ldots,\lambda_n)
 \qu_t S^1 = \cR \left(
\frac{\lambda_1 - t}{1 - 4t} , \ldots, \frac{\lambda_n - t}{1- 4t}
\right) .\]
At any fixed point the action of $S^1$ on the anticanonical bundle has
positive weight, by definition.  Note that the case $\lambda_1 =
\ldots = \lambda_n = \qq$ has a trivial mmp.  One should think of the
markings as moving away from the ``center'' $\qq$ of the Weyl alcove
$[0,1/2]$ under the mmp.\end{proof}

The flips or blow-downs occur at transition times at which there are
reducible (abelian) bundles.  More precisely, the set of transition
times is
\label{z2p} 
\begin{equation} \label{z2}  \cT = \Set{ t \ | \ \exists \eps_1,\ldots,\eps_n \in \{ -1,1 \},
\quad \sum_{i=1}^n \eps_i \frac{\lambda_i - t}{1 - 4t} \in \frac{1}{2}
\Z
} .\end{equation} 

The projective bundles involved in the flip can be explicitly
described as follows using the description of the moduli space as loop
space quotient in \cite{me:lo}; we focus on the genus zero case and
omit the proofs.  Let 
\[ I_\pm = \{ i, \eps_i = \pm 1 \} .\]  
Fix a decomposition of the curve $\Sigma$ into Riemann surfaces with
boundary $\Sigma_+, \Sigma_-$ such that $\Sigma_\pm$ contains the
markings in $I_\pm$.  Let $S_\pm$ denote the moduli space of bundles
that are abelian on $\Sigma_\pm$:
\[ S_\pm = \Set{ [A] \in \cR \left(\frac{\lambda_1 - t_\pm}{1-4t},\ldots,
    \frac{\lambda_n - t_\pm}{1-4t} \right) |  \dim( \Aut(A) ) = 1
} .\]
Then the flip replaces $S_+$ with $S_-$:
\[ {\small \begin{diagram} \node{ \cR
 \left( \left(
          \frac{\lambda_j-t_-}{1-4t} \right)_{j=1}^n \right) 
}
    \node[4]{ \cR \left( \left(\frac{\lambda_j-t_+}{1-4t}
        \right)_{j=1}^n \right) } \\ \node{ S_- } \arrow{n} \arrow{e}
    \node[2] {
\cR^{\ab} 
 \left( \left(
          \frac{\lambda_j-t_-}{1-4t} \right)_{j=1}^n \right) 
}
 \node[2]{S_+} \arrow{w}
    \arrow{n}
\end{diagram}} \]
where 
\[ 
\cR^{\ab} 
 \left( \left(
          \frac{\lambda_j-t_-}{1-4t} \right)_{j=1}^n \right) 
= \Set{ \rho \in \cR \left(\frac{\lambda_1 -
      t}{1-4t},\lldots, \frac{\lambda_n- t}{1-4t} \right) \ | \
  \rho(\pi_1(\Sigma)) \in T } \]
is the moduli space of representations in the maximal torus
$T = \{ \diag(e^{i\theta}, e^{-i \theta}) \} \subset SU(2)$.  Thus a
projective bundle over the Jacobian gets replaced with another
projective bundle.

As in the case of polygon spaces, there are two ways of obtaining Mori
fibrations: First, fibrations with $\P^1$-fiber occur whenever one of
the markings $\lambda_i - t$ reaches $0$ or $1/2$, with base the
moduli space of flat bundles with labels
\[\frac{\lambda_1 - t}{1-4t},\lldots, \frac{\lambda_{i-1} -t}{1-4t},
\frac{\lambda_{i+1} -t}{1-4t} ,\lldots, \frac{\lambda_n -t}{1-4t} \]
resp.
\[\frac{\lambda_1 - t}{1-4t},\lldots,\frac{ \lambda_{i-1} -t}{1-4t} , 1/2,
\frac{\lambda_{i+1} -t}{1-4t} ,\lldots,  \frac{ \lambda_n -
  t}{1-4t} \]
if the marking $\lambda_i$ reaches $0$ resp. $1/2$.  Second, in genus
zero the moduli space can become empty before any of the markings
reach $0$ or $1/2$.  By a special case of Agnihotri-Woodward
\cite{ag:ei}, proved earlier by Treloar \cite{tr:sph} we have
\begin{equation} \label{treloar} \cR(\lambda_1,\lldots,\lambda_n) = \emptyset \iff \exists I = \{ i_1
\neq \lldots \neq i_{2k+1} \} , \quad \sum_{i \in I} \lambda_i > k +
\sum_{i \notin I} \lambda_i .\end{equation}
Thus in the last stage there is either a fibration over a moduli space
with one less parabolic weight, with two-sphere fiber, or in genus
zero one can also have a projective space at the last stage if the
moduli space becomes empty.  One can then continue with the base to
obtain an extended running.  The mmp of this moduli
space is discussed in greater detail in Moon-Yoo \cite{moon:birat} as
well as Boden-Hu \cite{boden:var} and Thaddeus \cite[Section
  7]{th:fl}.

\begin{remark} {\rm (Runnings for flag varieties)} 
  Flag varieties admit mmp runnings given by fibrations over partial
  flag varieties.  Let $X$ be the variety of complete flags in a
  vector space of dimension $n$ with symplectic class
  $[\omega] \in H^2(X) \cong \R^n$ corresponding to an n-tuple
  $\lambda = (\lambda_1 \ge \ldots \ge \lambda_n) \in \Z^n$.  The
  space $X$ has a natural transitive action of the unitary group which
  induces a diffeomorphism $X \cong U(n)/U(1)^n$.  The unique
  polarization of $X$ is $\ti{X} = (U(n) \times \C) / U(1)^n $ where
  $U(1)^n$ acts on $U(n)$ from the right and on the left on $\C$ with
  weight $\lambda$.  We identify the Lie algebra with $\R^n$, the
  weight lattice with $\Z^n$ and let
  $\eps_1,\lldots, \eps_n \in \Z^n $ denote the standard basis of
  weights for $U(1)^n$.  We identify the set of positive weights with
  the intersection of $\Z^n$ with
\[ \t_+^\dual = \{ (\mu_1 \ge \ldots \ge \mu_n) \} \subset \R^n .\]
The tangent bundle of $X$ is the
associated fiber bundle
\[  TX \cong U(n) \times_{U(1)^n} \bigoplus_{1 \leq i < j \leq n} \C_{\eps_i - \eps_j} \]
where $\C_{\eps_i - \eps_j}$ is the space on which $U(1)^n$ acts with
weight $\eps_i - \eps_j$.  Hence the canonical bundle of $X$ is 
\[ \Lambda^{\on{top}} TX \cong U(n) \times_{U(1)^n} \C_{2\rho} \]
where $\C_{2\rho}$ denotes the one-dimensional representation
of $U(1)^n$ with weight 
\[ 2\rho := ((n-1)\eps_1 + (n-3) \eps_2 + \lldots + (1-n) \eps_n) .\]
An extended running of the mmp is the sequence of
partial flag varieties obtained as follows.  Consider the piecewise
linear path $\lambda_t$ starting with $\lambda_t = \lambda - \rho t$
and continuing as follows:  Whenever $\lambda_t$ hits a wall
$\sigma \subset \t_+^\dual$ of the positive chamber the mmp running
continues with the path
\[\lambda_t = \lambda_{t_i} - (t - t_i) \pi_\sigma \rho \]
where $\pi_\sigma$ is the projection onto $\sigma$.  Each transition
is a Mori fibration with Grassmann fiber and base the partial flag
variety corresponding to the element $\lambda_{t_i}$.

A simple example is the variety of complex flags in a
three-dimensional complex vector space which admits the structure of a
Mori fibration in two ways.  For example, let 
\[X = \on{Fl}(\C^3) := \{ V_1 \subset V_2 \subset \C^3 \ | \ \dim(V_k)
= k, k =1,2 \}\] 
be the variety of full flags in $\C^3$.  A running of the mmp is given
by $X,\P^2,{\rm pt}$.  There are no flips or divisorial contractions
in this case, so no regular Lagrangians.
\end{remark} 

\chapter{Lagrangians associated to flips} 
\label{regl}

We introduce a class of Lagrangians associated to minimal model
transitions which we call {\em regular}; based on the local models
studied in Fukaya et al. \cite{fooo:toric1}.  Regularity refers to the
fact that the Maslov index two disks in a toric neighborhood have
equal area.  These will be shown in Theorem \ref{unobs} below to be
Floer non-trivial.

\section{Regular Lagrangians} 

Each regular Lagrangian is given locally in a toric model and bounds a
collection of holomorphic disks of equal area.  We introduce the
following terminology.  
\begin{definition} \label{tori} 
Let $X$ be obtained by a reverse flip or 
blow-up with center $Z$. 
 \label{toricpiece}
 The reverse flip or blow-up replaces a projective bundle
 $\P(N_-) \to Z$ with a projective bundle $\P(N_+) \to Z$.  By the
 constant rank embedding theorem \cite{ma:so}, a neighborhood $U$ of
 $\P(N_+)$ in $X$ is symplectomorphic to a neighborhood $V$ of the
 zero section in a symplectic vector bundle $E_+$ over $\P(N_+)$.  Let
 $\phi: U \to \R_{\ge 0}$ be the canonical moment map for the
 Hamiltonian $S^1$-action on the fibers of $E_+$, so that
 $\P(N_+) = \phinv(0)$.  Let $J$ denote an almost complex structure
 equal to the toric complex structure on the fibers of $E_+$.
\begin{enumerate}
\item A Lagrangian $L \subset \P(N_+)$ is {\em toric} if the
  restriction of $\pi: \P(N_+) \to Z$ to $L$ fibers over a submanifold
  $L_Z$ of $Z$ with fiber a standard Lagrangian (in the fiber) torus
  orbit $L_F \subset \pi^{-1}(z)$ for some $z \in Z$.  A Lagrangian
  $L$ in $E_+$ is {\em toric} if the restriction of
  $\pi: E_+ \to \P(N_+)$ fibers over a toric Lagrangian in $\P(N_+)$
  with fiber a standard torus $L_F$ in the fiber of $E_+$.
\item A Lagrangian $L$ in $X$ is {\em regular} if there exists a
  constant $c > 0$ such that
  \label{critlag}
\begin{enumerate}
\item $L$ is a toric Lagrangian in $\phinv([0,c))$;
\item the holomorphic disks $u: (D,\partial D) \to (X,L)$ of Maslov
  index two contained in $\phinv([0,c))$ all have equal area $A_0$;
\item any non-constant holomorphic disk $u: (D,\partial D) \to (X,L)$
  meeting the complement of $U$ has area greater than $A_0$.
\end{enumerate}
\end{enumerate} 
\end{definition}

\begin{figure}[ht]
\includegraphics[height=1.5in]{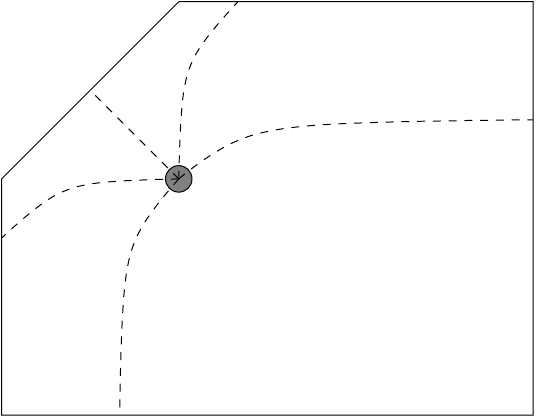}
\caption{Representation of Maslov index two disks} 
\label{threeprim}
\end{figure}

\begin{example}  
 {\rm (Blow-up at a point)} Let $X = \on{Bl}_0(\C^n)$ be the
  symplectic blow-up of $\C^n$ at a point.  The symplectic manifold
  $X$ admits a Hamiltonian torus action with all weights $-1$ and
  moment map $\Phi: X \to \t^\dual \cong \R^n$ with image
\[\Phi(X) = \{ (x_1,\ldots, x_n) \in \R_{\geq 0}^n \ | \ x_1 + \ldots
  + x_n \geq 1 . \} .\]  
  The inverse image 
\[L = \Phi^{-1}(1,\ldots, 1)\] 
is the unique regular torus.  Indeed, $X$ can be realized as the
quotient $\C^{n+1} \qu \C^\times$ of $\C^{n+1} $ by the
$\C^\times$-action with weights $-1,\ldots, -1,1$.  The only  disks of 
Maslov index two are given by 
\[u_i: B \to \C^{n+1}, \quad z \mapsto (1,\ldots, 1, z, 1,\ldots,
1) .\] 
All disks $u_i , i = 1,\ldots, n+1$ have equal area. A similar local
model applies to toric blow-ups in general; see Chapter \ref{choohsec}
for more details.  A regular Lagrangian in a blow-up of a product of
projective lines is shown in Figure \ref{threeprim}.
\end{example} 

\section{Regular Lagrangians for toric manifolds}
\label{choohsec}

The follow classification of disks in the toric case is used in the
definition.  We continue with the notation from Chapter \ref{trans}.

\begin{proposition} \label{chooh} {\rm (Cho-Oh \cite[Section
    4]{chooh:toric})} Let $X$ be a compact symplectic toric manifold
  equipped with the action of a torus $T$, whose moment polytope
  $\Phi(X)$ has $k$ facets.  Holomorphic disks with boundary in a
  Lagrangian torus orbit are classified by $k$-tuples of non-negative
  integers $\ul{d} \in \Z^k_{\ge 0}$.  The area and index of a disk
  corresponding to $\ul{d}$ are
\[ A(\ul{d}) = \sum_{j=1}^k  \lan \lambda, \nu_j \ran - c_j, \quad 
I(\ul{d}) = 2 \sum_{j=1}^k d_j . \]
\end{proposition}

  \begin{proof} We recall the classification.  Suppose $X$ is realized
    as a symplectic quotient of a vector space $V \cong \C^k$ by the
    action of a torus $G$, and let $L \subset V$ be a Lagrangian orbit
    of $T$.  Let $\ti{L} \cong (S^1)^k \subset V$ denote the lift of
    $L$ to $V$, given as the orbit of a point
    $(\ti{\mu}_1,\ldots, \ti{\mu}_k)$ under the standard torus action:
\begin{equation} \label{tiL}
 \ti{L} = \{ (e^{i\theta_1} \ti{\mu}_1,\ldots, e^{i \theta_k} \ti{\mu}_k) \ |
\ \theta_1,\ldots, \theta_k \in \R \} .\end{equation}
Let 
\[ C = \{ z \in \C \ | \ |z| \leq 1 \} \] 
denote the unit disk.  A {\em Blaschke product} of degree
$(d_1,\ldots,d_k)$ is a map from $C$ to $\C^k$ with boundary in a
toric Lagrangian prescribed by coefficients 
\[ a_{i,j} \in \C, \quad |a_{i,j}| < 1, \quad i \leq n, \quad j \leq
d_i :\]
\begin{equation} \label{blaschke}  u: C \to \C^k, \quad z \mapsto 
\left( \prod_{j=1}^{d_i} \frac{ z - a_{i,j}}{1 - z \ol{a_{i,j}}}
\right)_{i = 1,\dots,n} .\end{equation}
As in Cho-Oh \cite{chooh:toric}, the products \eqref{blaschke} are a
complete description of holomorphic disks with boundary in $L$.  Since
the image of $\ti{u}(z)$ is disjoint from the semistable locus, the
Blaschke products descend to disks $u: (C,\partial C) \to (X,L)$.  We
compute their Maslov index using the splitting
$ \ti{u}^* TV \cong u^* TX \oplus \g .$
Since the Maslov index of bundle pairs is additive, and the second
factor has Maslov index zero, the Maslov index of the disk is given by
\[ I(u) = \sum_{i=1}^k 2 d_j = 2 \# u(D). ( \sum_{i=1}^i [D_i]) \]
twice the sum of the intersection number with the anticanonical
divisor, which is the disjoint union of prime invariant divisors.  In
particular the disks of index two are those maps $u_i$ with lifts of
the form
\[ \ti{u}_i(z)= ( \ti{\mu}_1,\ldots, \ti{\mu}_{i-1}, \ti{\mu}_i z,
\ti{\mu}_{i+1}, \ldots, \ti{\mu}_k) .\]
The area of this disk is 
\[ A(u_i) = A(\ti{u}_i) = \lan \lambda, \nu_i \ran - c_ i \]
by a standard computation in Darboux coordinates.
\label{primitive}
The homology class of higher index Maslov disks
$u: C \to X, I(u) > 2 $ is a sum of these homology classes, and so the
area $A(u)$ of such a disk $u$ is the sum of the areas of disks of
index two.
\end{proof} 

The moment fibers that are regular Lagrangians in Definition
\ref{tori} are described as follows.  Let $\mu \in P$ and
\[t(\mu) = \min_j \lan \mu, \nu_j \ran - c_j .\]
The real number $t(\mu)$ is the time at which $\mu$ ``disappears''
under the mmp.  Suppose the set of normal vectors of facets meeting
the singular point
\[ N(\mu) := \{ \nu_j \ | \ \lan \mu, \nu_j \ran - c_j = t \} \]
is linearly dependent.  Then $L_\mu := \Phi^{-1}(\mu)$ satisfies the
first two parts of the definition of regularity in Definition
\ref{tori} \eqref{critlag}.  To see this we first compute the areas of
disks with boundary on the Lagrangian.  Suppose that $X$ is realized
as the git quotient of a vector space $V \cong \C^k$ by a torus $G$.
We may assume that $\dim(X) > 1$ so that the real codimension of the
unstable locus is at least four.  Let $\ti{L}_\mu$ denote the preimage
of $L_\mu$ in $C^k$.  The Lagrangian $\ti{L}_\mu$ is a Lagrangian
torus orbit of the group $U(1)^k$ acting on $\C^k$, that is,
$ \ti{L}_\mu = U(1)^k (\ti{\mu}_1,\ldots, \ti{\mu}_k) $ for some
constants $(\ti{\mu}_1,\ldots, \ti{\mu}_k)$.  Each map of a
holomorphic disk to $\C^k$ with boundary in $\ti{L}_\mu$ corresponds
to a collection of maps from disks to $\C$ with boundary on
$U(1) \ti{\mu}_j$.  If $L_\mu$ is a regular Lagrangian then the
minimal areas $A(u)$ of the holomorphic disks $u: C \to X$ with
Lagrangian boundary condition $L_\mu$ are $t$.  These correspond to
disks in the components corresponding to facets distance $t$ from
$\mu$, and each of these disks $u$ has Maslov index $I(u)$ two.  The
last assumption in Definition \ref{tori} \eqref{critlag} holds if the
other facets are sufficiently far away.  For example, in Figure
\ref{p1p1} we have two regular Lagrangians, given as the inverse
images of the darkly shaded dots in the Figure under the moment map.

\section{Regular Lagrangians for polygon spaces} 

A natural family of Lagrangian tori is generated by the {\em bending
  flows} studied in Klyachko \cite{kl:poly} and Kapovich-Millson
\cite{km:poly}.  Fix a subset $I \subset \{ 1,\ldots, n \}$ of the
edges of the polygon and define a {\em diagonal length} function
\[ \ti{\Psi}_I: S^2 \times \ldots \times S^2 \to \R_{\ge 0}, \quad
(v_1,\ldots, v_n) \mapsto \Vert v_I \Vert, \quad 
v_I := \sum_{i \in I} v_i.  \]
The diagonal length is smooth on the locus where it is positive. 
The Hamiltonian flow of the diagonal length function is given by
rotating part of the polygon around the diagonal.  
The following is,
for example, explained in \cite[Section 3]{km:poly}:

\begin{lemma} \label{bflowlem}
The function $\ti{\Psi}_I$ generates on
  $\ti{\Psi}_I^{-1}( \R_{>0 })$ a Hamiltonian circle action given by
  rotating the vectors $v_i, i \in I$ around the axis spanned by $v_I
  := \sum_{i \in I} v_i$:
  \begin{equation} \label{bflows} v_i \mapsto R_{\theta,v_I} v_i,
    \quad i = 1,\ldots, n \end{equation}
where $R_{\theta, v_I}$ is rotation by angle $\theta$ around the span
of $v_I$.  Furthermore if $\cT$ is a collection of subsets such that
for all $I ,J \in \cT$ either $I \subseteq J$ or $J \subseteq I$ then
the associated flows $R_{\theta,v_I}, R_{\theta',v_J}$ commute.
\end{lemma}

\begin{proof} We provide a proof for completeness.  By the symplectic
  cross-section theorem \cite[Theorem 26.7]{gu:sy}, the inverse of the
  interior of the positive Weyl chamber $\R_{> 0}$ under the moment
  map $\Psi$ is a symplectic submanifold $\Psi^{-1}(\R_{> 0}\times \{
  0 \} \times \{ 0 \}) \subset \widetilde{X}$.  This inverse image is
  the locus
\[ \widetilde{X}_I = \Set{ (v_1,\ldots, v_n) \in \widetilde{X} \ | \ v_I \in \R_{> 0} \times \{ 0 \} \times \{ 0 \} \subset \R^3 } .\]
Since the stabilizer of any point in $\widetilde{X}_I$ is contained in
$SO(2)$, the flow-out of $\widetilde{X}_I$ is
$SO(3) \widetilde{X}_I = SO(3) \times_{SO(2)} \widetilde{X}_I .$ As a
result, any $SO(2)$-equivariant Hamiltonian diffeomorphism of
$\widetilde{X}_I$ extends uniquely to a Hamiltonian diffeomorphism of
$SO(3) \widetilde{X}_I$ which is equivariant with respect to the
$SO(3)$-action.  Now on $\widetilde{X}_I$, the function $\ti{\Psi}_I$
is the first component of the moment map and so the flow of
$\ti{\Psi}_I$ is rotation around the first axis.  It follows that the
flow of $\ti{\Psi}_I$ is rotation around the line spanned by
$\sum_{i \in I} v_i$, as long as this vector is non-zero.  In
particular the flow of $\ti{\Psi}_I$ is $SO(3)$-equivariant and so
descends to a function $\Psi_I$ generating a circle action on a dense
subset of $P(\lambda_1,\ldots, \lambda_n ) = \widetilde{X} \qu SO(3)$.
If $I \subset J$ then we may assume that the ordering of vectors
$v_1,\ldots, v_n$ is such that $I,J$ consist of adjacent indices.  The
vectors $v_I$ and $v_J$ break the polygon into three pieces, and the
flows of $\Psi_I$ and $\Psi_J$ are rotation of the first and third
pieces around the diagonals $v_I, v_J$ respectively.  In particular,
these flows commute.
\end{proof}
\begin{figure}[ht]
\begin{picture}(0,0)%
\includegraphics{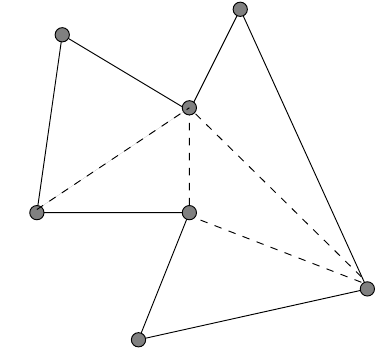}%
\end{picture}%
\setlength{\unitlength}{3947sp}%
\begingroup\makeatletter\ifx\SetFigFont\undefined%
\gdef\SetFigFont#1#2#3#4#5{%
  \reset@font\fontsize{#1}{#2pt}%
  \fontfamily{#3}\fontseries{#4}\fontshape{#5}%
  \selectfont}%
\fi\endgroup%
\begin{picture}(3004,2774)(2161,-2921)
\put(2176,-1153){\makebox(0,0)[lb]{\smash{{\SetFigFont{10}{6.0}{\rmdefault}{\mddefault}{\updefault}{\color[rgb]{0,0,0}$v_1$}%
}}}}
\put(3066,-594){\makebox(0,0)[lb]{\smash{{\SetFigFont{10}{6.0}{\rmdefault}{\mddefault}{\updefault}{\color[rgb]{0,0,0}$v_2$}%
}}}}
\put(4286,-2830){\makebox(0,0)[lb]{\smash{{\SetFigFont{10}{6.0}{\rmdefault}{\mddefault}{\updefault}{\color[rgb]{0,0,0}$v_5$}%
}}}}
\put(3142,-2449){\makebox(0,0)[lb]{\smash{{\SetFigFont{10}{6.0}{\rmdefault}{\mddefault}{\updefault}{\color[rgb]{0,0,0}$v_6$}%
}}}}
\put(2939,-1966){\makebox(0,0)[lb]{\smash{{\SetFigFont{10}{6.0}{\rmdefault}{\mddefault}{\updefault}{\color[rgb]{0,0,0}$v_7$}%
}}}}
\put(2913,-1381){\makebox(0,0)[lb]{\smash{{\SetFigFont{10}{6.0}{\rmdefault}{\mddefault}{\updefault}{\color[rgb]{0,0,0}$v_{1,2}$}%
}}}}
\put(4210,-1432){\makebox(0,0)[lb]{\smash{{\SetFigFont{10}{6.0}{\rmdefault}{\mddefault}{\updefault}{\color[rgb]{0,0,0}$v_{3,4}$}%
}}}}
\put(3727,-1610){\makebox(0,0)[lb]{\smash{{\SetFigFont{10}{6.0}{\rmdefault}{\mddefault}{\updefault}{\color[rgb]{0,0,0}$v_{1,2,7}$}%
}}}}
\put(3727,-594){\makebox(0,0)[lb]{\smash{{\SetFigFont{10}{6.0}{\rmdefault}{\mddefault}{\updefault}{\color[rgb]{0,0,0}$v_3$}%
}}}}
\put(4540,-1076){\makebox(0,0)[lb]{\smash{{\SetFigFont{10}{6.0}{\rmdefault}{\mddefault}{\updefault}{\color[rgb]{0,0,0}$v_4$}%
}}}}
\put(4108,-2220){\makebox(0,0)[lb]{\smash{{\SetFigFont{10}{6.0}{\rmdefault}{\mddefault}{\updefault}{\color[rgb]{0,0,0}$v_{5,6}$}%
}}}}
\end{picture}%
\caption{Triangulated polygon} 
\label{triangp}
\end{figure} 
Given a triangulation of a polygon we associate a moment map for a
densely-defined torus action as follows.  A triangulation
$\cE = \{ T \subset \{ 1, \ldots, n \} \} $ of the abstract $n$-gon
\llabel{triang}  \label{triangp} 
with edges $v_1,\ldots, v_n$ is a collection of
subsets, called triangles.  Each triangle $T \in \cE$ is a set of size
three $I_1,I_2,I_3$ whose elements indicate which sums
$v_{I_j} := \sum_{ i \in I_j} v_i$ form the edge.  If $I_j$ has size
more than one, the corresponding vector $v_{I_j}$ is a {\em diagonal}.
See Figure \ref{triangp}.  Any triangulation $\cE$ triangulation gives
rise to a map
\[ \Psi_{\cE}: P(\lambda_1,\ldots,\lambda_n) \to \R_{\ge 0}^{n-3},
\quad [v_1,\ldots, v_n] \to ( \Vert v_{I_j} \Vert )_{j=1}^{n-3} \]
given by taking the edge lengths of the diagonals.  By Lemma
\ref{bflowlem}, the map $\Psi_{\cE}$ is, where smooth, a moment map
for the action of an $n-3$-dimensional {\em bending torus}
$G \cong U(1)^{n-3}$ which acts as follows: Let
$ (\exp(i\theta_1), \ldots, \exp(i \theta_{n-3})) \in G$ and
$[v_1,\ldots,v_n]$ be an equivalence class of polygons.  For each
diagonal $v_I$, divide the polygon into two pieces along $v_I$, and
rotate one of those pieces, say $(v_i)_{i \in I}$ around the diagonal
by the given angle $\theta_I$.  The resulting polygon is independent
of the choice of which piece is rotated, since polygons related by an
overall rotation define the same point in the moduli space.  See
\cite[Section 3]{km:poly} for more details.

The regular Lagrangian tori are described as follows as fibers of the
Goldman map for which all triangles have the same ``looseness''.
Suppose that $P(\lambda_1 - t, \ldots, \lambda_n - t)$ is an mmp
running for $P(\lambda_1,\ldots, \lambda_n)$.  As noted in Example
\ref{trans}, mmp transitions correspond to partitions 
\[\{ 1, \ldots, n \} = I_+ \cup I_-, \quad
\sum_{i \in I_+} \lambda_i - t = \sum_{i \in I_-} \lambda_i -
  t .\]  
For each triangle $T$ in the triangulation with labels $\mu_1,\mu_2,
\mu_3$ we denote by $l(T)$ the {\em looseness} of the triangle
\[ l(T): = \min_{i,j,k \ \text{distinct}} ( \mu_i + \mu_j - \mu_k )/2.\]
In other words, the looseness measures the failure of the triangle to
be degenerate.  A labelling $\mu \in \R_{\ge 0}^{n-3}$ is called {\em
  regular} if $l(T)$ is independent of $T$ and less than
$\min_i \lambda_i$.  For example, if the edge lengths are $2,3,4,7$,
and the triangulation separates the first two edges from the last two,
then a regular triangulation is obtained by assigning $4$ to the
middle edge.  The looseness of each triangle is
$1 = 2 + 3 - 4 = 4 + 4 - 7 $.  See Figure \ref{loosequad}.

\begin{figure}[ht]
\includegraphics[height=1.5in]{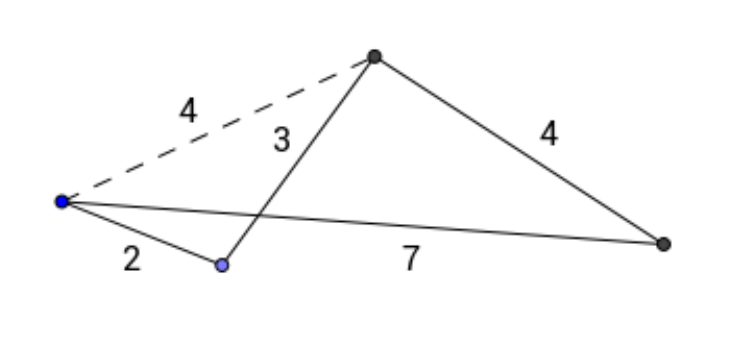}
\caption{A regular triangulation of a quadrilateral}
\label{loosequad}
\end{figure}

\begin{proposition} \label{reglab} Let $\mu \in\R_{\ge 0}^{n-3}$ be a
  regular labelling, and $\mu(t)$ the family of labellings obtained by
  replacing each $\mu_i$ with $\mu_i - t$ which becomes singular at
  first time $t = t_i$.  Then for $\eps > 0$ sufficiently small and
  $t \in (t_i - \eps, t_i)$, any labelling $\mu(t)$ has the property
  that $\Psi^{-1}(\mu)$ is regular.
\end{proposition}

\begin{proof} 
  A local toric structure on $P(\lambda_1,\ldots, \lambda_n)$ is given
  by choosing a triangulation compatible with the partition into
  positive and negative edges, and the action of the bending torus $T$
  above.  Let $v \in P(\lambda_1 - t,\ldots,\lambda_n - t)$ denote the
  one-dimensional polygon corresponding to the transition time.  Each
  triangle in the triangulation is degenerate for $v$ and so for each
  $T$, $\mu_i + \mu_j = \mu_k$ for some edges $i,j,k$ of $T$.  The
  inequalities defining $\Psi_t(T)$ near $v$ are of the form
  $ \mu_i + \mu_j \ge \mu_k $ as $i,j,k$ range over all possible
  indices.  It follows that the polytope defining the image of the map
  $\Psi$ is given locally by the triangle inequalities, $l(T) \ge 0$
  for each of the $n-2$ triangles in the triangulation:
\begin{multline} \Psi_{\cE}(P(\lambda_1,\ldots,\lambda_n)) \\ = \Set{ (\mu_1,\ldots, \mu_{n-3} )
\in \R_{\ge 0}^{n-3} \ | \   \ \forall T \in \cE, (T = \{ v_i , v_j ,v_k
\}) \implies \mu_i + \mu_j \ge \mu_k  } .\end{multline}
The Maslov index two disks $u: C \to X$ with respect to this structure
whose areas go to zero at the transition time have areas $A(u)$ given
by the differences $\mu_i + \mu_j - \mu_k$, and by assumption the
non-constant disks with lowest area all have equal area.  On the other
hand, for times close to the transition time, any other Maslov index
two disk $u: C\to X$ has larger area, since otherwise it would be
contained in the toric piece $X_\subset$ by the diameter estimate in
Sikorav \cite[4.4.1]{sikorav}.
\end{proof}

\section{Regular Lagrangians for moduli spaces of flat bundles} 

The analog of the bending flow was introduced by Goldman \cite{go:in}.
First one constructs a densely defined circle action on the moduli
space of bundles associated to any circle on the surface.  Given any
circle $C \subset \Sigma$ disjoint from the boundary, the holonomy
$\varphi(C)$ of the flat bundle $P$ around $C$ is given by an element
$\exp( \diag( \pm 2 \pi i \mu))$ up to conjugacy.  After gauge
transformation, the holonomy is  $\exp( \diag ( \pm 2 \pi i
\mu))$.  Given an element $\exp( 2\pi i \tau) \in U(1)$, one may
construct a bundle $P_\tau$ by cutting $\Sigma$ along $C$ into pieces
$\Sigma_+, \Sigma_-$ and gluing back the restrictions $P|\Sigma_+,
P|\Sigma_-$ together using the transition map $e(\tau):= \diag ( \exp
( \pm 2 \pi i \tau)) $:
\begin{equation} \label{gflow} P_\tau := (P|\Sigma_+) \bigcup_{e(\tau)} (P|\Sigma_-) .\end{equation}
See Figure \ref{twist}.

\begin{figure}[ht]
\begin{picture}(0,0)%
\includegraphics{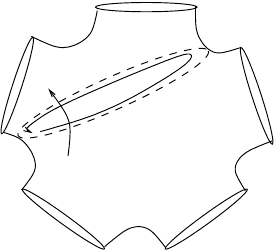}%
\end{picture}%
\setlength{\unitlength}{3947sp}%
\begingroup\makeatletter\ifx\SetFigFont\undefined%
\gdef\SetFigFont#1#2#3#4#5{%
  \reset@font\fontsize{#1}{#2pt}%
  \fontfamily{#3}\fontseries{#4}\fontshape{#5}%
  \selectfont}%
\fi\endgroup%
\begin{picture}(2191,1987)(251,-2491)
\put(1304,-1393){\makebox(0,0)[lb]{\smash{{\SetFigFont{10}{7.2}{\rmdefault}{\mddefault}{\updefault}{\color[rgb]{0,0,0}$e(\mu)$}%
}}}}
\put(969,-1064){\makebox(0,0)[lb]{\smash{{\SetFigFont{10}{7.2}{\rmdefault}{\mddefault}{\updefault}{\color[rgb]{0,0,0}$C$}%
}}}}
\put(619,-1902){\makebox(0,0)[lb]{\smash{{\SetFigFont{10}{7.2}{\rmdefault}{\mddefault}{\updefault}{\color[rgb]{0,0,0}$e(\tau)$}%
}}}}
\end{picture}%
\caption{Twisting a bundle along a circle}
\label{twist}\end{figure} 
\noindent The automorphism given by $\diag( \exp( \pm  2 \pi i \tau))$ commutes with
the holonomy so the resulting bundle has a canonical flat structure,
whose holonomies around loops $\Sigma_+,\Sigma_-$ are equal, but
parallel transport from $\Sigma_+$ to $\Sigma_-$ is twisted by $ \diag
( \exp ( \pm 2 \pi i \tau)) $.  Let $\cR(\lambda_1,\lldots,\lambda_n)^C$
denote the locus where $\mu \notin \{ 0, 1/2 \}$, for which the
construction $[P] \mapsto [P_\tau]$ is well-defined and independent of
all choices.  The map
\[ \cR(\lambda_1,\lldots, \lambda_n)^C 
\to \cR(\lambda_1,\lldots,
       \lambda_n)^C, \quad [P] \mapsto [P_\tau] \]
       defines a circle action.  Furthermore, if $C_1,C_2$ are
       disjoint circles then the circle actions defined above commute
       on the common locus
\[  \cR(\lambda_1,\lldots, \lambda_n)^{C_1} 
\cap \cR(\lambda_1,\lldots, \lambda_n)^{C_2}  .\]
Recall that a {\em pants decomposition} of a surface is a
decomposition into three-holed spheres.  Any compact oriented Riemann
surface with boundary admits a finite pants decomposition, by choosing
sufficiently many separating surfaces so that each piece has Euler
characteristic one.  Choose a pants decomposition $\cP$ that refines
the decomposition into pieces $\Sigma_+,\Sigma_-$.  Given a pants
decomposition, one repeats the construction for each interior circle
in the pants decomposition to obtain a moment map
\begin{equation} \label{goldman}
 \Psi_{\cP}: \cR(\lambda_1,\lldots,\lambda_n) \to [0,1/2]^{n - 3} \end{equation}
for a densely-defined torus action, see \cite[Section 4]{go:in} or,
for a summary, Jeffrey-Weitsman \cite{jw}.  In the genus zero case,
the generic fibers are Lagrangian tori.  For each pairs of pants $P$
in the pants decomposition with labels $\mu_1,\mu_2,\mu_3$, define the
{\em looseness} of $P$ by
\[ l(P) :=  \min 
\left( \min_{i \neq j \neq k} 
( \mu_i + \mu_j - \mu_k) , 1 -
\mu_1 - \mu_2 - \mu_3 \right) .\]
A labelling $\mu \in [0,1/2]^{n-3}$ is {\em regular} if the looseness
$l(P)$ is the same for each pair of pants $P \in \cP$,
\[ \# \{ l(P) | P \in \cP \} = 1  \]
and if the first fibration in the running occurs at a time greater
than $l(P)$.  See Figure \ref{pantses} for two examples in the case $n
= 5$.

The regular Lagrangians are described as follows.  
Consider an mmp running as in \eqref{sequence}
 $\RR \left( \frac{\lambda_1 - t}{1 - 4t}, \lldots, \frac{\lambda_n -
    t}{1- 4t} \right) $.
 The transition times $\cT$
are the times $t$ for which there is an abelian representation.  Given
such a representation with holonomies $\diag(\exp( \pm \eps_j \mu_j))$
define a partition of the surface $\Sigma$ into pieces
$\Sigma_+, \Sigma_-$ containing the markings $\mu_j$ for which
$\eps_j$ is positive resp. negative.  We claim that if $\mu$ is
regular and $l(\mu)$ is sufficiently small then the Goldman fiber
$L_\mu := \Psi_{\cP}^{-1}(\mu) $ is regular.  The Goldman bending flow
\eqref{gflow} induces a toric structure on
$\cR(\lambda_1,\lldots,\lambda_n)$ in a neighborhood of the
exceptional locus.  The image of the Goldman map \eqref{goldman} is
given by
\[ \Psi_{\cP}(\cR(\lambda_1,\lldots,\lambda_n)) = \Set{ \mu \in
[0,1/2]^{n-3} \ | \  \forall P \in \cP, l(P) \ge 0 };\]
that is, for each pair of pants in the pants decomposition the
looseness is non-negative \cite{jw}.  It follows that if $l $ is a
regular labelled pants decomposition then $\Psi^{-1}(l)$ is toric and
there are $n-2$ homotopy classes of disks with Maslov index two and
boundary in $\Psi^{-1}(l)$, of equal area $l(\mu)$ while (for
sufficiently small looseness) the remaining disks of Maslov index two
have area $A(u) > l(\mu)$.  Figure \ref{pantses} gives an example of a
labelled pants decomposition giving rise to a regular Lagrangian,
corresponding to an mmp transition at time $t = .06$.

\chapter{Fukaya algebras}

The Fukaya algebra is a homotopy-associative algebra whose higher
composition maps are counts of configurations involving perturbed
pseudoholomorphic disks with boundary in the Lagrangian
\cite{fuk:garc}.  Because the moduli spaces of disks involved in the
construction are usually singular, there are technical issues involved
in its construction similar to those involved in the construction of
virtual fundamental classes for moduli spaces of pseudoholomorphic
curves.
\label{foooref}
Fukaya-Oh-Ohta-Ono \cite{fooo} introduced a method of solving these
issues using {\em Kuranishi structures} in which one first constructs
local thickenings of the moduli spaces and then introduces
perturbations constructed locally.  In this section we construct
Fukaya algebras of Lagrangians in a compact {\em rational} symplectic
manifold using a perturbation scheme that we find particularly
convenient for various computations: the {\em stabilizing divisors}
scheme introduced by Cieliebak-Mohnke \cite{cm:trans}.  We also
incorporate Morse gradient trees introduced by Fukaya \cite{fuk:garc}
and Cornea-Lalonde \cite{cl:clusters}, see also Seidel
\cite{seidel:genustwo} and Charest \cite{charest:clust}.  Stasheff's
homotopy-associativity equation follows from studying the boundary
strata in the moduli space of treed disks as in Figure \ref{MW}.
\begin{figure}[ht]
\includegraphics[width=4.5in]{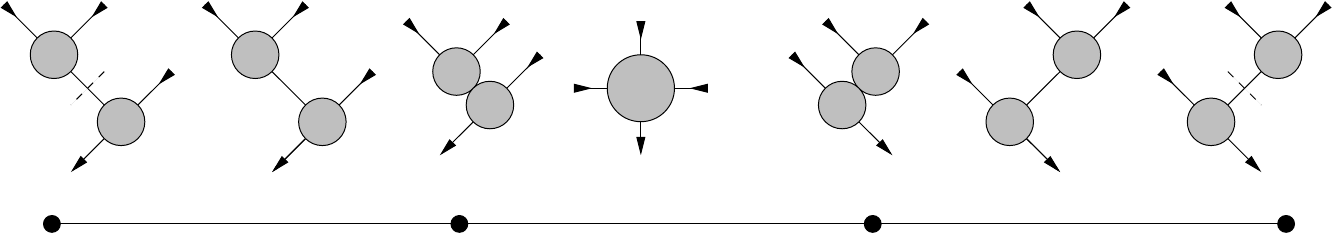}
\caption{Moduli space of stable treed disks}
\label{MW}
\end{figure} 
This construction allows us to take our Floer cochain spaces to be
finite-dimensional.  The structure constants for the Fukaya algebras
in the stabilizing divisors approach count pseudoholomorphic disks
with Lagrangian boundary conditions and Morse gradient trajectories on
the Lagrangians with domain-dependent almost complex structures and
Morse functions depending on the position of additional markings
mapping to a stabilizing divisor.  Because the additional marked
points must be ordered in order to obtain a domain without
automorphisms, this scheme gives a multi-valued perturbation.  The
resulting structure maps
\[ \mu^n : CF(L)^{\otimes n} \to CF(L), \quad n \ge 0 \]
for the Fukaya algebra are defined only using rational coefficients.
We also equip Fukaya algebras with strict units so that disk
potentials are defined.  To achieve this we incorporate a slight
enhancement, similar to that of homotopy units \label{hunits} in
Fukaya-Oh-Ohta-Ono \cite[(3.3.5.2)]{fooo}, in which perturbation
systems compatible with breakings are homotoped to perturbation
systems that admit forgetful maps.

Given the construction of strictly unital Fukaya algebras described
above, the Floer cohomology is defined over a space of projective
Maurer-Cartan solutions.  Let $e_L \in CF(L)$ denote the resulting
strict unit and $CF(L)^{\on{odd},+} \subset CF(L)$ the subset of odd
elements whose coefficients all have positive $q$-valuation.  The
Maurer-Cartan map
\begin{equation} \label{mcmap} \mu: CF(L)^{\on{odd},+} \to CF(L), \quad b \mapsto
  \sum_{n \ge 0} \mu^n \left(\underbrace{b, \ldots, b}_n
  \right) \end{equation}
has solution space
\[ {MC}(L) := \mu^{-1}(\Lambda e_L) \subset CF(L) \]
also denoted $MC(L,y)$ if we wish to emphasize the dependence on the
local system $y$.  The {\em Floer cohomology} is the fiber-wise
cohomology of operator $\mu^1_b$ defined below in Chapter
\ref{maurercartan}
\[HF(L) = \cup_{b \in {MC}(L)} HF(L,b), \quad HF(L,b) :=
\ker(\mu^1_b)/ \on{im}(\mu^1_b), \quad b \in {MC}(L) .\]
The Floer cohomology $HF(L)$ is said to be non-vanishing if the fiber
$HF(L,b) $ is non-vanishing for some $b \in {MC}(L)$.  We prove the
following:

\begin{theorem} \label{fukalg} Let $(X,\omega)$ be a compact
  symplectic manifold with rational symplectic class
  $[\omega] \in H^2(X,\Q)$ and $L \subset X$ a compact rational
  embedded Lagrangian submanifold equipped \label{equip} with a
  relative spin structure and grading.  For a comeager subset of
  perturbation data, counting weighted treed pseudoholomorphic disks
  defines a convergent \ainfty structure with strict unit independent
  of all choices up to convergent strictly-unital \ainfty homotopy.
  Furthermore, for any $b \in MC(L)$ the Floer cohomology $HF(L,b)$ is
  independent of all choices up to gauge equivalence (to be explained
  below).
\end{theorem} 

Theorem \ref{fukalg} is a combination of Theorem \ref{yields} and
Corollary \ref{diffdegree} below.

\section{\ainfty algebras}
\label{orient}

Homotopy-associative algebras were introduced by Stasheff \cite{st:ho}
in order to capture algebraic structures on the space of cochains on
loop spaces.  We follow the sign convention in Seidel
\cite{seidel:sub}.  Let $g > 0$ be an even integer.  A {\em
  $\Z_g$-graded \ainfty algebra} consists of a $\Z_g$-graded vector
space $A$ together with for each $d \ge 0$ a multilinear degree zero
{\em composition map}
\[ \mu^d: \ A^{\otimes d} \to A[2-d] \]
satisfying the {\em \ainfty-associativity equations} \cite[(2.1)]{seidel:sub}
\begin{multline} \label{ainftyassoc}  
0 = \sum_{\substack{n,m \ge 0 \\ n+m \leq d}} (-1)^{ n + \sum_{i=1}^n |a_i|}
\mu^{d-m+1}(a_1,\ldots,a_n, \\ \mu^m(a_{n+1},\ldots,a_{n+m}),
a_{n+m+1},\ldots,a_d) 
\end{multline}
for any tuple of homogeneous elements $a_1,\ldots,a_d$ with degrees
$|a_1|, \ldots, |a_d| \in \Z_g$.  The signs are the {\em shifted
  Koszul signs}, that is, the Koszul signs for the shifted grading in
which the structure maps have degree one as in Kontsevich-Soibelman
\cite{ks:ainfty}.  The notation $[2-d]$ denotes a degree shift by
$2-d$, so that without the shifting \label{shiftingl} \llabel{shifting} $\mu^1$ has
degree $1$, $\mu^2$ has degree $0$ etc.  The element $\mu^0(1) \in A$
(where $1 \in \Lambda$ is the unit) is called the {\em curvature} of
the algebra.  The \ainfty algebra $A$ is {\em flat} if the curvature
vanishes.  A {\em strict unit} for $A$ is an element $e_A \in A$ such
that
\begin{equation} \label{strictunit} 
 \mu^2(e_A,a) = a = (-1)^{|a|} \mu^2(a,e_A), \quad \mu^n(\ldots, e_A,
 \ldots) = 0, \forall n \neq 2 .\end{equation}
A {\em strictly unital} \ainfty algebra is an \ainfty algebra equipped
with a strict unit.  The {\em cohomology} of a flat \ainfty algebra
$A$ is defined by
\[ H(\mu^1) = \frac{ \on{ker}(\mu^1)}{\on{im}(\mu^1)} .\]
The algebra structure on $H(\mu^1)$ is given by
\begin{equation} \label{hcomp}
[a_1 a_2] = (-1)^{|a_1|} [\mu^2( a_1,a_2)] .\end{equation}
An element $e_A \in A$ is a {\em cohomological unit} if $[e_A]$ is a
unit for $H(\mu^1)$.  A result of Seidel \cite[Corollary 2.14]{se:bo}
implies that any flat \ainfty algebra with a cohomological unit is
equivalent to an \ainfty algebra with strict unit.  However, we will
construct strict units by a {\em homotopy unit} construction.

\section{Associahedra} 

The combinatorics of the \ainfty associativity axiom
\eqref{ainftyassoc} is closely related to a sequence of cell complexes
introduced by Stasheff \cite{st:ho} under the name {\em associahedra}.
One realization of these spaces is as {\em moduli spaces of metric
  ribbon trees}.  An {\em oriented tree} is a connected, cycle-free
graph given by a pair
\[{\Gamma} = (\Edge({\Gamma}),\Ver({\Gamma})) \]
where $\Ver({\Gamma})$ is the set of vertices and $\Edge({\Gamma})$ is the set of
edges equipped with {\em head} and {\em tail} maps 
\[ h, t : \Edge({\Gamma}) \to \Ver({\Gamma}) \cup \{ \infty \} .\]
The {\em valence} $|v|$ of any vertex $v \in \Ver({\Gamma})$ is the
number of edges $e \in h^{-1}(v) \cup t^{-1}(v)$ meeting the vertex
$v$.  An edge $e \in \Edge({\Gamma})$ is {\em combinatorially finite}
if $ \infty \notin \{ h^{-1}(e), t^{-1}(e) \}$ {\em semi-infinite} if
$\{ h^{-1}(e), t^{-1}(e) \} = \{v , \infty \}$ for some
$v \in \Ver({\Gamma})$ and {\em infinite} if \label{verv}
$\Ver(\Gamma) = \emptyset$ and $\Edge({\Gamma})$ has a single element
$e$.  If $\Ver({\Gamma})$ is non-empty, then we denote by
$\Edge_{-}({\Gamma})$ resp. $\Edge_{\rightarrow}({\Gamma})$ the set of
combinatorially finite resp. semi-infinite edges.  In the special case
$\Ver({\Gamma})$ is empty we denote by $\Edge_{\rightarrow}({\Gamma})$
two copies $e_+,e_-$ of the single edge $e \in \Edge(\Gamma)$ (so that
there is a single incoming $e_-$ and single outgoing $e_+$
semi-infinite edge).  A {\em ribbon tree} is a tree $\Gamma$ equipped
with a {\em ribbon structure}: a cyclic ordering
$o_v: \{ e \in \Edge({\Gamma}), e \ni v \} \to \{ 1, \ldots, |v| \}$
of the edges \llabel{ribbon}\label{ribbonp} incident to each vertex
$v \in \Ver({\Gamma})$; a cyclic ordering is an equivalence class
$[o_v]$ of orderings where two orderings $o_v, o_v'$ are equivalent if
they are related by a cyclic permutation
$o_v'(\tau(e)) = o_v(\tau(e)) + k \ \text{mod} \ |v| $.  A single edge
$e_0 \in \Edge_{\rightarrow}({\Gamma})$ is outgoing (with head at
$\infty$) and called the {\em root} of the tree.  All the edges of
$\Gamma$ are oriented towards the root $e_0$. \label{rootorient}
Incoming semi-infinite edges
$e \in \Edge_{\rightarrow}({\Gamma}), e \neq e_0$ are called {\em
  leaves}. \llabel{leaf}

A moduli space of metric ribbon trees is obtained by allowing the
finite edges to acquire lengths.  A {\em metric ribbon tree} is a pair
$T=({\Gamma},\ell)$ consisting of a ribbon tree ${\Gamma}$ equipped with a {\em
  metric $\ell$}.  By definition a metric is a labelling
\[ \ell: \ \Edge_{-}({\Gamma}) \to [0,\infty]\]
of its combinatorially finite edges $e \in \Edge_-({\Gamma})$ by
elements $\ell(e)$ of $[0,\infty]$ called {\em lengths}.  We think of
$T$ as the topological space obtained by joining together intervals
$T_e$ of length $\ell(e)$ at the vertices.  From this point of view
there is a natural equivalence relation on ribbon metric trees defined
by collapsing edges of length zero: Given a tree $T$ with an edge $e$
of length $\ell(e)$ zero, removing the interior $\on{int}(e)$ of the
edge $e$ from $T$ and identifying its head $h(e)$ and tail $t(e)$
gives an equivalent metric tree
\begin{equation} \label{equivT} T' = (T - \on{int}(e))/ (h(e) \sim
  t(e)) .\end{equation}

To obtain a compactification of the moduli space we also allow the
lengths of the edges to go to infinity in which case the edge becomes
a {\em broken edge}: The interior of the edge $e$ with length
$\ell(e) = \infty$ is equipped with a finite number of points
$b_1,\ldots, b_k \subset \on{int}(e)$ called {\em breakings}.  A {\em
  broken metric tree} is obtained from a finite collection of metric
trees by gluing roots to leaves as follows: given two metric trees
$T_1,T_2$ and semi-infinite root edge
$e_2 \in \Edge_{\rightarrow}(\Gamma_2)$ and leaf edge
$e_1 \in \Edge_{\rightarrow}(\Gamma_2)$, let $\ol{T}_1$
resp. $\ol{T}_2$ denote the space obtained by adding a point
$\infty_2$ resp. $\infty_1$ at the open end of $e_2$ resp. $e_1$.  The
space
\begin{equation} \label{glueT}
 T := \ol{T}_1 \cup_{\infty_1 \sim \infty_2} \ol{T}_2 \end{equation}
is a broken metric tree, the point $\infty_1 \sim \infty_2$ being
called a {\em breaking}.    See Figure \ref{cuttree}.
\begin{figure}[ht]
\includegraphics[height=.85in]{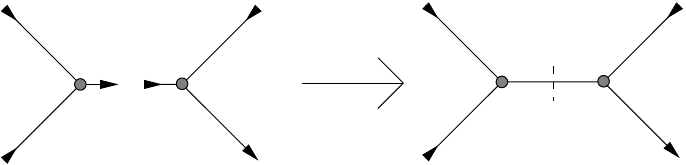} 
\caption{Creating a broken tree}
\label{cuttree}
\end{figure}
\noindent In general, broken metric trees $T$ are obtained by from
broken metric trees $T_1,T_2$ as in \eqref{glueT} in such a way that
the resulting space $T$ is connected and has no non-contractible
cycles, that is, $\pi_0(T)$ is a point and $\pi_1(T)$ is the trivial
group.  We think of the gluing points as breakings rather than
vertices, so that there are no new vertices in the glued treed $T$.
\label{clarify}
If a combinatorially finite edge $e \in \Edge_-(T)$ has infinite
length $\ell(e) = \infty$ then one attaches an additional positive
integer $b(e)$ to that edge indicating its number of breakings, see
\cite{charest:clust}.

In order to obtain a compact moduli space of broken trees a stability
condition is imposed.  A broken metric tree $T = (\Gamma,\ell,b)$ is
{\em stable} if and only if \label{stabletree} \llabel{stabletreel}
\label{stabletreelp}
each combinatorially semi-infinite edge $e \in \Edge_{\rightarrow}(T)$
is unbroken, that is, $b(e) = 0$; each combinatorially finite edge
$e' \in \Edge_{-}(T)$ is broken at most once, that is, $b(e') \leq 1$;
and the valence $|v|$ of each vertex $v \in \Ver(T)$ is at least $3$.
The moduli space of stable metric ribbon trees with a fixed number of
semiinfinite edges is a finite cell complex studied in, for example,
Boardman-Vogt \cite{boardman:hom}.  This moduli space is the first
realization of Stasheff's associahedron as a moduli space of geometric
objects.  However, the natural cell structure on this moduli space is
a {\em refinement} of the canonical cell structure on the
associahedra.

A second realization of the associahedron that reproduces the canonical cell
structure involves nodal disks with boundary markings.

\begin{definition} \label{ndisk} 
\begin{enumerate} 
\item A {\em holomorphic disk} is a complex surface with boundary
  diffeomorphic to the complex unit disk   $B^2 = \Set{ z \in \C  |  \Vert z \Vert \leq 1 }$.  A {\em nodal disk} with a
  single boundary node is a topological space $S$ obtained from a
  disjoint union of holomorphic disks $S_1,S_2$ by identifying pairs
  of boundary points $w_{12} \in S_1, w_{21} \in S_2$ on the boundary
  of each component so that
\begin{equation} \label{glueS} 
S = S_1 \cup_{w_{12} \sim w_{21}} S_2 .\end{equation} 
See Figure \ref{cutfig2}.
\begin{figure}[ht]
\includegraphics[height=.85in]{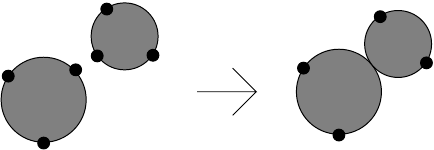} 
\caption{Creating a nodal disk}
\label{cutfig2}
\end{figure}
The image of $w_{12}, w_{21}$ in the space $S$ is the {\em nodal
  point}.  A nodal disk $S$ with multiple nodes
$w_{ij}, i,j \in \{ 1,\ldots, k\}, i \neq j$ is obtained by repeating
this construction \eqref{glueS} with $S_1,S_2$ nodal disks with fewer
nodes, and $w_{12}, w_{21}$ distinct from the previous nodes.  For an
integer $n \ge 0$ a {\em nodal disk with $n+1$ boundary markings} is a
nodal disk $S$ equipped with a finite ordered collection of points
$\ul{x} = (x_0,\ldots,x_n)$ on the boundary $\partial S$, disjoint
from the nodes, in counterclockwise cyclic order around the boundary
$\partial S$.
\item {\rm (Stable nodal disks)} An $(n+1)$-marked nodal disk
  $(S,\ul{z})$ is {\em stable} if each component $S_v$ has at least
  three special (nodal or marked) points.  The moduli space of
  $(n+1)$-marked stable disks forms a compact cell complex, isomorphic
  as a cell complex to the associahedron from Stasheff \cite{st:ho}.
\item {\rm (Sphere components and interior markings)} A {\em
    holomorphic sphere} is a complex surface biholomorphic to the
  projective line $S^2 \cong \P^1$.  We allow sphere components
  $\P^1 \cong S_v \subset S$ and interior markings
  $z_1,\ldots, z_n \in \on{int}(S)$ in the definition of marked nodal
  disks $S$.  A nodal disk $S$ with a single interior node $w \in S$
  is defined similarly to that of a boundary node by using the
  construction \eqref{glueS}, except in this case $S$ is obtained by
  gluing together a nodal disk $S_1$ with a holomorphic marked sphere
  $S_2$ with $w_{12}, w_{21}$ points in the interior 
  $\on{int}(S)$. 
 \llabel{glueI}   \label{glueIp} 
\end{enumerate}
\end{definition} 

A combination of the above constructions involves both trees and
disks, as in Oh \cite{oh:fl1}, Cornea-Lalonde \cite{cl:clusters},
Biran-Cornea \cite{bc:ql}, and Seidel \cite{seidel:genustwo}.  A {\em
  treed disk} $C$ is obtained from a nodal disk $S$ by replacing each
node $w$ with a (possibly broken) edge $e$ of some length $\ell(e)$
and $b(e) \ge 0$ breakings; that is, by replacing $w$ with two copies
$w_{12}, w_{21}$ and gluing to the endpoints of $T_e$.  We also allow
a (possibly broken) edge $e$ with no disks in which case $C \cong \R$
has two semi-infinite edges $e_+,e_-$ and an arbitrary number of
breakings $b(e)$.  Let $\Gamma(C)$ be the {\em combinatorial type} of
$C$, equal to the combinatorial type of the nodal disk $S$ but
equipped with the additional data of a number of breakings
$b: \Edge(C) \to \Z_{\ge 0}$.  Thus a treed disk $C$ consists of a
surface part
\[S = (S_v,\ul{x}_v,\ul{z}_v)_{v \in \Ver(\Gamma)}\]
(where $\ul{x}_v$ resp. $\ul{z}_v$ denotes the ordered set of boundary
resp. interior markings) a tree part
\[T = (T_e, \ell(e), b(e))_{e \in \Edge(\Gamma)} \]
and an ordering
\[o: \Edge_{\black,\rightarrow}(\Gamma) \to \{ 1, \ldots, n \}\]
of the set of interior leaves.  Denote by
\[ z_e = T_e \cap S, e \in \Edge_{\black,\rightarrow}(\Gamma) \]  
the attaching points of the interior leaves and call them {\em
  interior markings}.  A treed disk $C = S \cup T$ is {\em stable}
\llabel{stabletd} \label{stabletdp} if and only if the nodal disk $S$
is stable, each combinatorially-finite edge is broken at most once,
and each semi-infinite edge is unbroken.  The set of vertices
$\Ver(\Gamma)$ is equipped with a partition
\[ \Ver({\Gamma})= \Ver_{\white}({\Gamma}) \sqcup \Ver_{\black}({\Gamma})\]
into vertices corresponding to disks and vertices correspond to
spheres. Similarly, the set of edges is partitioned into sets of edges
\[\Edge({\Gamma}) = \Edge_{\white}({\Gamma}) \sqcup \Edge_{\black}({\Gamma})\] 
representing boundary nodes resp. interior nodes.  For each
$v \in \Ver_{\white}({\Gamma})$, the edges $e \in \Edge_{\white}(E)$
incident to $v$ are equipped with a cyclic ordering $o_v$ induced by
the orientation on the boundary of the disk $S_v$.  We call the
resulting tree ${\Gamma}$ equipped with a length function
$\ell: \Edge_-(\Gamma) \to [0,\infty]$ also a {\em metric ribbon
  tree}, although only some of the incident edges
$e \in (h \times t)^{-1}(v)$ are equipped with a cyclic ordering.
\begin{figure}[ht]
\includegraphics[height=1.5in]{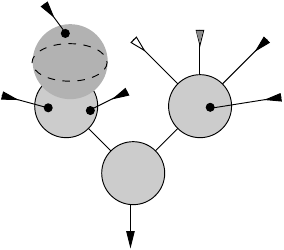}
\caption{A treed disk with three disk components and one sphere component}
\label{treeddisk}
\end{figure} 
\noindent See Figure \ref{treeddisk}.  An {\em isomorphism} of treed
disks $\phi: C \to C'$ with positive edge lengths is a collection of
isomorphisms $\phi_v: S_v \to S_v'$ of nodal disks preserving the
markings and a collection of isomorphisms $T_e \to T_e'$ of broken
edges (that is, preserving the number of breakings and lengths).
These data combine to a homeomorphism $\phi:C \to C'$ that is an
isometry $\phi |_T: T \to T'$ on the tree part and a biholomorphism
$\phi|_S : S \to S'$ on the surface part.  A treed disk $C$ with an
edge length $\ell(e)$ zero is declared equivalent to the treed disk
$C'$ where the edge $e$ is replaced by a node $w \in C'$.

In order to obtain Fukaya algebras with strict units, we attach
additional parameters to certain of the semi-infinite edges called
{\em weightings} as in Ganatra \cite{ganatra}.  When the weighting of
an edge is infinite, we will assume that the perturbation data is
pulled back under the forgetful map forgetting that edge and
stabilizing.  For this reason, the edges where the weightings are
forced to be infinite are called {\em forgettable}.
\llabel{forgettable}
\label{forgettablep}

\begin{definition} \label{wdef} {\rm (Weightings)} A {\em weighting} of
  a treed disk $C= S \cup T$ of type $\Gamma$ is 
\begin{enumerate}
\item {\rm (Weighted, forgettable, and unforgettable edges)} a
  partition of the boundary semi-infinite edges
\[\Edge^{\greyt}(\Gamma) \sqcup \Edge^{\whitet}(\Gamma) \sqcup \Edge^{\blackt}(\Gamma) =
  \Edge_{\white,\rightarrow}(\Gamma) \]
into {\em weighted} resp. {\em forgettable} resp.  {\em unforgettable}
edges, and
\item {\rm (Weighting) } a map 
\[\rho: \Edge_{\white,\rightarrow}(\Gamma) \to [0,\infty]\]
satisfying the property: each of the semi-infinite $e$ edges is
assigned a {\em weight} $\rho(e)$ such that
 \[ 
	  \rho(e) \in 
\begin{cases}  \{ 0 \}  &  e \in
  \Edge^{\blackt}(\Gamma) \\  [0,\infty] & e \in
  \Edge^{\greyt}(\Gamma) \\
  \{ \infty \} & e \in \Edge^{\whitet}(\Gamma) 
\end{cases}.
\] 
\end{enumerate} 
If the outgoing edge $e_0 \in \Edge_{\rightarrow}(\Gamma)$ is
unweighted (forgettable or unforgettable) then an isomorphism
$\psi: (C,\rho) \to (C',\rho')$ of weighted treed disks is an
isomorphism of treed disks $C \to C'$ that preserves the types of
semi-infinite edges
$e \in \Edge_{\rightarrow}(\Gamma) \cong \Edge_{\rightarrow}(\Gamma')$
and weightings: $\rho(e) = \rho'(e')$ for all corresponding edges
$e \in \Edge_{\white,\rightarrow}(\Gamma), \ e' \in
\Edge_{\white,\rightarrow}(\Gamma')$.  This ends the definition.
\end{definition} 

The case that the outgoing edge $e_0\in \Edge_{\rightarrow}(\Gamma)$
is weighted $\rho(e_0) > 0$ is rare in our examples and should be
considered an exceptional case.  There is an additional notion of
equivalence in this case: If the outgoing edge $e_0$ is weighted then
an isomorphism of weighted treed disks $C \to C'$ is an isomorphism of
treed disks preserving the types of semi-infinite edges
$e \in \Edge_{\white,\rightarrow}(\Gamma)$ and the weights
$\rho(e), e \in \Edge_{\white,\rightarrow}(\Gamma)$ up to scalar
multiples:
\llabel{scalarl} \label{scalarlp} 
\begin{equation} \label{scalar} 
\exists \lambda\in (0,\infty), \ \forall e \in \Edge_{\white,\rightarrow}(\Gamma), e' \in
 \Edge_{\white,\rightarrow}(\Gamma'), \ \rho(e) = \lambda \rho'(e').\end{equation}
In particular, any weighted tree $T$ such that 
$\Ver(\Gamma) = \emptyset$ and a single edge
$e \in \Edge_{\white,\rightarrow}(\Gamma)$ that is weighted
$\rho(e) \in (0,\infty)$ is isomorphic to any other such configuration
$T'$ with a different weight \llabel{ends} \label{endsp}
$\rho(e') \in (0,\infty), e \in \Edge(\Gamma')$.

The {\em combinatorial type} of any weighted treed disk is the tree
associated to the underlying nodal disk with additional data recording
which lengths resp. weights are zero or infinite.  Namely if
$C = S \cup T$ is a weighted treed disk then its combinatorial type is
the tree $\Gamma = \Gamma(C)$ \llabel{graphtotree}
\label{graphtotreep}
 obtained by gluing 
together the combinatorial types $\Gamma(S_v)$ of the disks $S_v$
along the edges corresponding to the edges of $T$; and equipped with 
the additional data of 
\begin{enumerate} 
\item the subsets 
\[\Edge^{\greyt}(\Gamma) \ \text{resp.} \ \Edge^{\whitet}(\Gamma) \ \text{resp.}
  \ \Edge^{\blackt}(\Gamma) \subset \Edge_{\white,\rightarrow}(\Gamma) \]
of weighted, resp. forgettable, resp. unforgettable semi-infinite 
edges;
\item the subsets 
\[\Edge_{-}^\infty(\Gamma) \ \text{resp.} \Edge_{-}^0(\Gamma) \ \text{resp.}
  \Edge_{-}^{(0,\infty)}(\Gamma) \subset \Edge_{-}(\Gamma) \]
of combinatorially finite edges of infinite resp. zero length resp.
non-zero finite length;
\item the subset 
\[\Edge^{\greyt,\infty}(\Gamma) \ \text{resp.}
  \ \Edge^{\greyt,0}(\Gamma) \subset \Edge^{\greyt}(\Gamma)
\]
of weighted edges with infinite resp. zero weighting.
\end{enumerate} 

A well-behaved moduli space of weighted treed disks is obtained after
imposing a stability condition.

\begin{definition} \label{wstable} A weighted treed disk
  $C = S \cup T$ of type $\Gamma$ is {\em stable} if either
\begin{enumerate}
\item {\rm (At least one disk)} there is at least one disk component
  $S_v, v \in \Ver_{\white}(\Gamma)$, and the following conditions hold:
\begin{enumerate} 
\item each disk component $S_v, v \in \Ver_{\white}(\Gamma)$ has at least
  three edges $e \in \Edge(\Gamma)$ attached to the boundary $\partial S_v$
  or at least one edge attached to the boundary $\partial S_v$ and one
  edge to the interior $\on{int}(S_v)$;
\item each sphere component $S_v, v \in \Ver_{\black}(\Gamma)$ has at least
  three edges $e \in \Edge(\Gamma)$ attached; 
\item each combinatorially-finite edge $e \in \Edge_-(\Gamma)$ is
  broken at most once, and each semi-infinite edge
  $e \in \Edge_{\rightarrow}(\Gamma)$ is unbroken;
\item if the outgoing edge is weighted
  $e_0 \in \Edge^{\greyt}(\Gamma)$ then at least one leaf
  $e_i \in \Edge_{\white,\rightarrow}(\Gamma), i > 0$ is also
  weighted, that is, $e_i \in \Edge^{\greyt}(\Gamma)$. 
\end{enumerate} 
\item {\rm (No disks)} if there are no disks, so that
  $\Ver(\Gamma) = \emptyset$, there is a single weighted leaf
  $e_1 \in \Edge^{\greyt}(\Gamma)$ and an unweighted (forgettable or
  unforgettable) root 
  $e_0 \in \Edge^{\whitet}(\Gamma) \cup \Edge^{\blackt}(\Gamma)$.
\end{enumerate} 
\end{definition} 

These conditions guarantee that the moduli space $\M_\Gamma$ of stable
weighted treed disks of each combinatorial type $\Gamma$ is expected
dimension, see Remark \ref{examples} below.  Because a configuration
$C \cong \R$ with no disks is allowed, the stability condition for
weighted treed disks is not equivalent to the absence of non-trivial
automorphisms, that is, the triviality $\Aut(C) \cong \{ 1 \}$ of the
group $\Aut(C)$ of automorphisms of $C$.

The moduli spaces of stable weighted treed disks are naturally cell
complexes with multiple cells of top dimension.  For integers
$n,m \ge 0$ denote by $\ol{{\M}}_{n,m}$ the moduli space of
isomorphism classes of stable weighted treed disks $C$ with $n$
boundary leaves and $m$ interior leaves. For each combinatorial type
$\Gamma$ denote by ${\M}_{\Gamma} \subset \ol{{\M}}_{n,m}$ the set of
isomorphism classes of weighted stable treed disks of type $\Gamma$.
The dimension of $\M_\Gamma$ is equal to $n+ 2m -2$, in the absence of
weighted edges.  In particular, if $\Gamma$ has no vertices
$v \in \Ver(\Gamma)$ then the dimension $\dim(\M_\Gamma)$ of
$\M_\Gamma$ is zero.  The moduli spaces decomposes into strata of
fixed type
\[ \ol{\M}_{n,m} = \bigcup_\Gamma \M_\Gamma .\]
Let $\ol{\M}_\Gamma$ denote the closure of $\M_\Gamma$ in
$\ol{\M}_{n,m}$.  In Figure \ref{interior} a subset of the moduli
space $\ol{\M}_{2,1}$ with one interior marking is shown, where the
interior marking is constrained to lie on the line half-way between
the special points on the boundary.  We denote by
$\M_{n,m} \subset \ol{\M}_{n,m}$ the interior, by which we mean the
(disconnected) union of top-dimensional strata.

\begin{remark} \label{examples} The moduli spaces of weighted treed
  disks are related to unweighted moduli spaces by taking products
  with intervals: If $\Gamma$ has at least one vertex
  $v \in \Ver(\Gamma)$ and $\Gamma'$ denotes the combinatorial type of
  $\Gamma$ obtained by setting the weights $\rho(e)$ to zero and the
  outgoing edge $e_0$ of $\Gamma$ is unweighted
  $e_0 \in \Edge_{\rightarrow}^{\blackt}(\Gamma)$ then
  \[ \M_\Gamma \cong \M_{\Gamma'} \times
  (0,\infty)^{|\Edge^{\greyt,(0,\infty)}(\Gamma)|} .\]
  If the outgoing edge $e_0$ is weighted
  $e_0 \in \Edge_{\rightarrow}^{\greyt}(\Gamma)$ and at least one
  leaf $e \in \Edge_{\rightarrow}^{\greyt}(\Gamma)$ is
  weighted then
\[ \M_\Gamma \cong \M_{\Gamma'} \times
(0,\infty)^{|\Edge^{\greyt,(0,\infty)}(\Gamma)| - 2} \]
since only the ratios of the weightings of leaves must be preserved by
the isomorphisms, see \eqref{scalar}.  In particular, if $\Gamma$ is a
type with a single weighted leaf
$e \in \Edge_{\rightarrow}^{\greyt}(\Gamma)$ and no vertices,
$\Ver(\Gamma) = \emptyset$, the outgoing edge may be unforgettable or
forgettable, the weightings on the leaf are irrelevant and $\M_\Gamma$
is a point.
\end{remark}

In general moduli spaces of stable curves only admit universal curves
in an orbifold sense. In the setting here orbifold singularities are
absent and the moduli spaces of stable treed disks admit honest
universal curves.  For any stable combinatorial type $\Gamma$ let
$\ol{\U}_\Gamma$ denote {\em universal treed disk} consisting of
isomorphism classes of pairs $(C,z)$ where $C$ is a treed disk of type
$\Gamma$ and $z$ is a point in $C$, possibly on a disk component
$S_v \cong \{ |z| \leq 1 \}$, a sphere component $S_v \cong \P^1$, or
one of the edges $e$ of the tree part $T \subset C$.  The map
\[ \ol{{\U}}_\Gamma \to \ol{{\M}}_\Gamma, \quad [C,z] \to [C] \]
is the universal projection.  Because of the stability condition,
there is a natural bijection
\[ \ol{\U}_{\Gamma}= \bigcup_{[C] \in \ol{\M}_\Gamma} C .\]
Denote by
\[ \ol{{\S}}_\Gamma = \{ [C = S \cup T, z] \in \ol{{\U}}_\Gamma \ |
\ z \in S \} \]
the locus where $z$ lies on a disk or sphere of $C$.  Denote by 
\[\ol{{\T}}_\Gamma = \{ [C = S \cup T, z] \in \ol{{\U}}_\Gamma \ |
\ z \in T \} \]
the locus where $z$ lies on an edge of $C$.  Hence
\[ \ol{{\U}}_\Gamma = \ol{{\S}}_\Gamma \cup \ol{{\T}}_\Gamma \] 
and $\ol{\S}_\Gamma \cap \ol{\T}_\Gamma$ is the set of points on the
boundary of the disks meeting the edges of the tree.  In case $\Gamma$
has no vertices we define $\ol{\U}_\Gamma$ to be the real line,
considered as a fiber bundle over the point $\ol{\M}_\Gamma$.  The
tree part splits into {\em interior} and {\em boundary} tree parts
depending on whether the edge is attached to an interior point or a
boundary point of a disk or sphere:
\begin{equation} \label{boundint}
 \ol{\T}_\Gamma = \ol{\T}_{\white,\Gamma} \cup \ol{\T}_{\black,\Gamma}
 .\end{equation}

\begin{figure}[ht]
\includegraphics[width=5in]{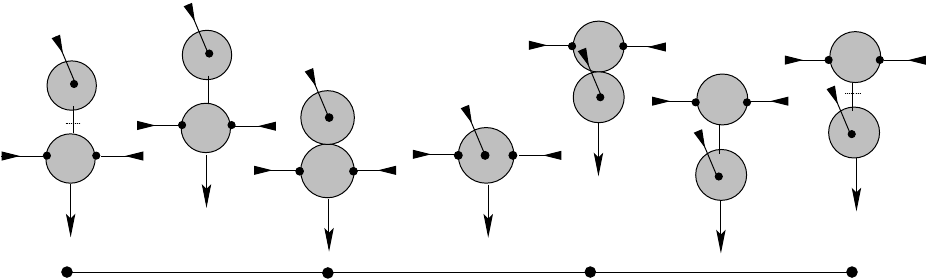}
\caption{Treed disks with interior leaves}
\label{interior}
\end{figure}

Later we will need local trivializations of the universal treed disk
and the associated families of complex structures and metrics on the
domains.  For a 
stable combinatorial type $\Gamma$ let
\[\tau^i : {{\U}}_{\Gamma}^i \to {{\M}}_{\Gamma}^i \times C,  \quad  i =
1,\ldots, l \]
be a collection of local trivializations of the universal treed disk.
The trivialization $\tau^i$ identifies each fiber $C'$ with the fixed
treed disk $C$.  The complex structures on the fibers of induce a family
\begin{equation} \label{localtriv} {\M}_{\Gamma}^i \to \J(S), \quad m \mapsto j(m) \end{equation}
of complex structures on the two-dimensional locus $S \subset C$.

The following operations on treed disks will be referred to in the
coherence conditions on perturbation data. 

\begin{definition} \label{bmgraphs} {\rm (Morphisms of graphs)} 
A {\em morphism} of graphs $\Upsilon: \Gamma \to \Gamma'$ is a
surjective morphism of the set of vertices $\Ve(\Gamma) \to
\Ve(\Gamma')$ obtained by combining the following {\em elementary
  morphisms}:
\begin{enumerate}
\item {\rm (Cutting edges) } \llabel{cutting} \label{cuttingp}
 $\Upsilon$ {\em cuts
    an edge with infinite length} with a single breaking if there
  exists
\[e \in \Edge_{-}(\Gamma'), \quad \ell(e) = \infty \]
so that the map $\Ver(\Gamma) \to \Ver(\Gamma')$ on vertices is a
bijection, and
\[\Edge_{\white}(\Gamma) \cong \Edge_{\white}(\Gamma') - \{ e \} + \{
e_+,e_- \} \]
where $e_\pm \in \Edge_{\white,\rightarrow}(\Gamma)$ are attached to
the vertices contained in $e$.  Since our graphs are trees, $\Gamma$
is disconnected with pieces $\Gamma_-,\Gamma_+$ that are types of
stable treed disks.  The ordering on
$\Edge_{\black,\rightarrow}(\Gamma')$ is required to agree with the
ordering on $\Edge_{\black,\rightarrow}(\Gamma_\pm)$ by viewing the
latter as a subset of the former.

The weighting and type of the cut edges are defined as follows.
Suppose that $\Gamma_-$ is the component of $\Gamma$ not containing
the root edge.  If $\Gamma_-$ has any interior leaves, set
$\rho(e_\pm) =0$ and $e_\pm \in \Edge^{\blackt}(\Gamma)$.  Otherwise
(and these are relatively rare exceptional cases in our examples and
used only for the construction of strict units) if there are no
interior leaves let $e_1,\ldots,e_k$ denote the leaves of
$\Gamma_-$.
\begin{enumerate} 
\item If any of $e_1,\ldots, e_k$ are unforgettable then $e_\pm \in
  \Edge^{\blackt}(\Gamma)$ are also unforgettable.
\item If none of $e_1,\ldots, e_k$ are unforgettable and at least one
  of $e_1,\ldots, e_k$ is weighted then $e_\pm \in
  \Edge^{\greyt}(\Gamma)$ are also weighted.
\item If $e_1,\ldots, e_k$ are forgettable then $e_\pm \in
  \Edge^{\whitet}(\Gamma)$ are also forgettable.
\end{enumerate} 
\begin{figure}[ht]
\includegraphics[height=1in]{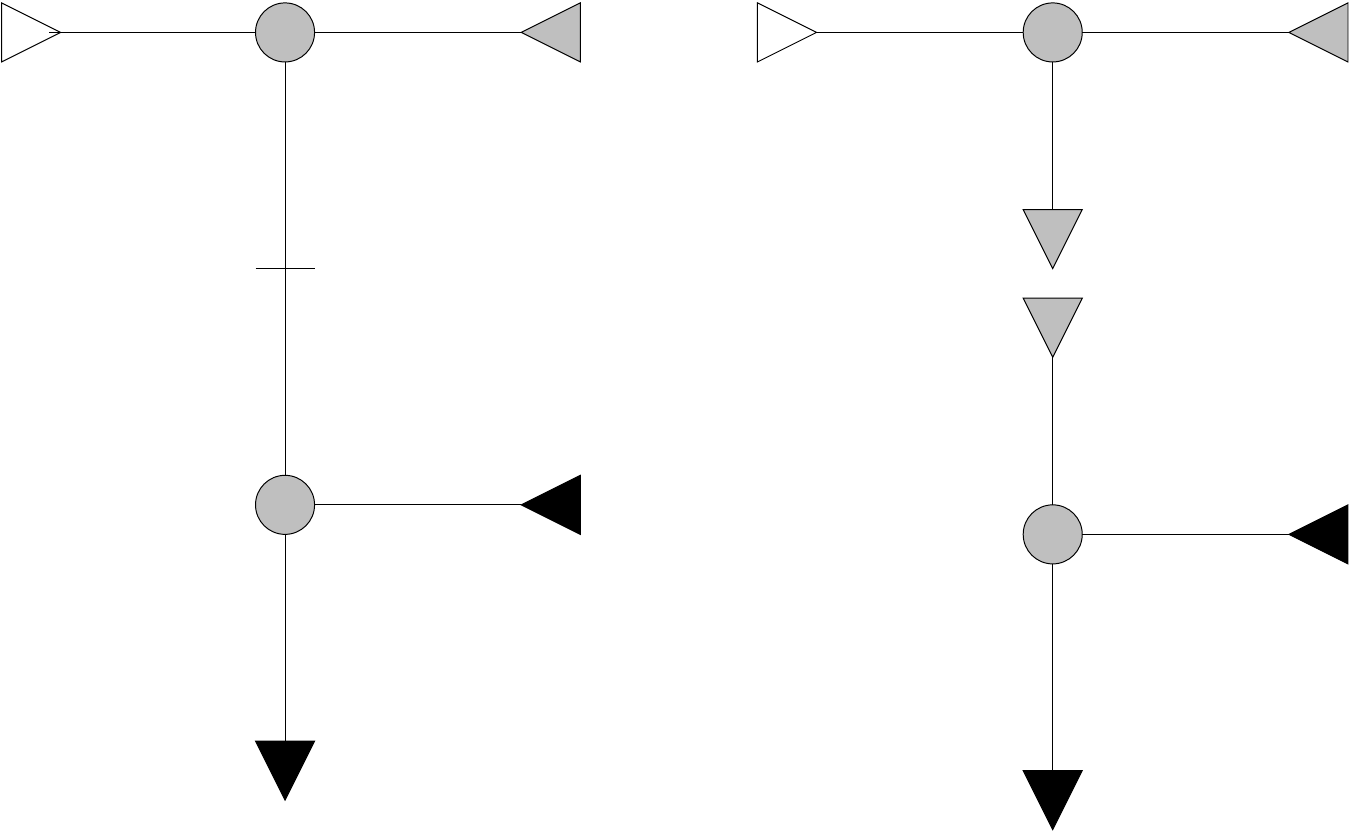}
\caption{Cutting an edge} 
\label{break}
\end{figure} 
 See Figure \ref{break} for an example.  Define the weighting on the
cut edges
$\rho(e_\pm) = \min(\rho(e_1),\ldots, \rho(e_k)) .$ 
In particular if $\Gamma_-$ has all zero weights $\rho(e_l) = 0, l =
1,\ldots, k $ then $\rho(e_\pm) = 0$.

\item {\rm (Collapsing edges)} $\Upsilon$ {\em collapses an edge} if
  the map on vertices is a bijection except for a single vertex $v'
  \in \Ve(\Gamma')$
\[\Ve(\Upsilon): \Ve(\Gamma) \to \Ve(\Gamma'), \quad \Ve(\Upsilon)^{-1}(v')
= \{ v_-, v_+ \} \]
that has two pre-images $v_\pm \in \Ve(\Gamma)$.  The vertices
$v_-,v_+$ are connected by an edge $e \in \Edge(\Gamma)$ so that
$\Edge(\Gamma') \cong \Edge(\Gamma) - \{ e \} .$  
See Figure \ref{collapse}.

\llabel{collapse} \label{collapsep}
 \begin{figure}[ht]
\includegraphics[height=0.85in]{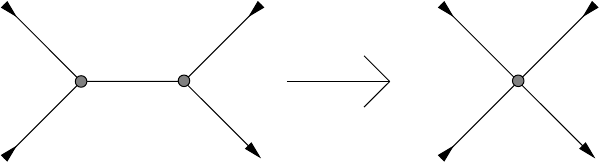} 
\caption{Collapsing an edge}
\label{collapse}
\end{figure}

\item {\rm (Making an edge length finite or non-zero)} $\Upsilon$ {\em
    makes an edge finite resp. non-zero} if $\Gamma'$ is the same
  graph as $\Gamma$ with the same orderings and the lengths of the
  edges of $\Gamma'$ and $\Gamma$ are the same except for a single
  edge $e$:
\[\ell |_{ \Edge_{-}(\Gamma) - \{ e \} } = \ell' |_{
      \Edge_{-}(\Gamma') - \{ e \} } .\]
For the edge $e$ we require 
\[\ell(e) = \infty \ \text{resp.} \ 0, \quad \ell(e') \in (0,\infty)
.\]
\item {\rm (Forgetting tails)} $\Upsilon: \Gamma \to \Gamma'$ {\em
    forgets a tail} (semi-infinite edge) and collapses edges to make
  the resulting combinatorial type stable.  The ordering on
  $\Edge_{\black,\rightarrow}(\Gamma)$ is required to agree with the
  one on $\Edge_{\black,\rightarrow}(\Gamma')$ viewing the latter as a
  subset.  See Figure \ref{forgettail}.
\item {\rm (Making an edge weight finite or non-zero)} $\Upsilon$ {\em
    makes a weight finite or non-zero} if $\Gamma'$ is the same graph
  as $\Gamma$ and the weights of the edges
  $\rho(e), e \in \Edge^{\greyt}(\Gamma)$ are the same with orderings
  except for a single edge $e$,
\[\rho |_{ \Edge_{-}(\Gamma) - \{ e \} } = \rho' |_{
  \Edge_{-}(\Gamma') - \{ e \} } .\]
For the edge $e$ we have 
$\rho(e) = \infty \ \text{resp.} \ 0$ and $ \rho'(e) \in (0,\infty).$

\begin{figure}[ht]
\includegraphics[height=.85in]{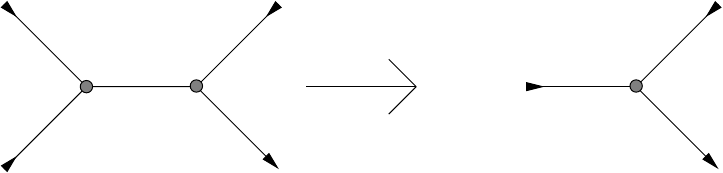} 
\caption{Forgetting a tail}
\label{forgettail} 
\end{figure}
\end{enumerate} 

\end{definition}

The operations of cutting edges commute.  For example, if $\Gamma'$ is
obtained from $\Gamma$ by cutting two edges, say
$e',e'' \in \Edge_{-}(\Gamma)$, then the induced weighting on
$\Gamma'$ is independent of the order of the cutting.  This follows
from the identity
\[\min(\rho(e_1),\ldots, \rho(e_j),
\min( \rho(e_{j+1}),\ldots, \rho(e_{j+k})), \ldots, \rho(e_i))=
\min(\rho(e_1),\ldots, \rho(e_i)) .\]
Each of the above operations on graphs corresponds to a map of moduli
spaces of stable marked treed disks.

\begin{definition} {\rm (Morphisms of moduli spaces)} 
\begin{enumerate} 
\item {\rm (Cutting edges)} Suppose that $\Gamma'$ is obtained from
  $\Gamma$ by cutting an edge $e$.  There are diffeomorphisms
\[\ol{\M}_{\Gamma} \to \ol{\M}_{\Gamma'}, \quad [C] \to [C'] \]
obtained as follows.  Given a treed disk $C$ of type $\Gamma'$, let
$z_+,z_-$ denote the endpoints at infinity of the edge corresponding
to $e$.  Form a treed disk $C'$ by identifying $z_+ \sim z_-$ and
choosing the labelling of the interior leaves to be that of $\Gamma'$.

\item {\rm (Collapsing edges)} Suppose that $\Gamma$ is obtained from
  $\Gamma'$ by collapsing an edge. There is an embedding
  $\iota_{\Gamma}^{\Gamma'}: \ol{\M}_{\Gamma} \to \ol{\M}_{\Gamma'}$.
  In the case of an edge of $\Edge_{-}^0(\Gamma)$, the image of
  $\iota_{\Gamma}^{\Gamma'}( \ol{\M}_{\Gamma})$ is a 1-codimensional
  corner of $\ol{\M}_{\Gamma}$.  In the case of an edge of
  $\Edge_{\black,\rightarrow}(\Gamma)$ the image
  $\iota_{\Gamma}^{\Gamma'}( \ol{\M}_{\Gamma})$ is a 2-codimensional
  submanifold of $\ol{\M}_{\Gamma'}$.
\item {\rm (Making an edge or weight finite resp. non-zero )} If
  $\Gamma$ is obtained from $\Gamma'$ by making an edge finite
  resp. non-zero then $\ol{\M}_{\Gamma}$ also embeds in
  $\ol{\M}_{\Gamma'}$ as the 1-codimensional corner.  The image is the
  set of configurations where the edge $e$ reaches infinite resp. zero
  length $\ell(e)$ or weight $\rho(e)$.
\item {\rm (Forgetting tails)} Suppose that $\Gamma'$ is obtained from
  $\Gamma$ by forgetting a tail (either in
  $\Edge_{\black,\rightarrow}(\Gamma)$ or
  $\Edge_{\white,\rightarrow}(\Gamma')$).  Collapsing the unstable
  vertices $v \in \Ver(\Gamma')$ and combining the lengths $\ell(e)$
  (if any) into the new metric
  $\ell(e') = \sum_{e \mapsto e'} \ell(e)$ defines a map
  $\ol{\M}_{\Gamma} \to \ol{\M}_{\Gamma'}$.  Each weighted
  semi-infinite edge $e$ for $\Gamma$ defines a weighted semi-infinite
  edge $e'$ for $\Gamma'$ with the same weight $\rho(e) = \rho(e')$.
\end{enumerate} 
\end{definition} 

 Each of the maps involved in the operations {\rm (Collapsing
   edges/Making edges or weights finite or non-zero), (Forgetting tails),
   (Cutting edges)} extends to a smooth map of universal treed disks.
 In the case that the type is disconnected we have 
\[\Gamma = \Gamma_1 \sqcup \Gamma_2 \ \ \implies \ \ \ol{\M}_\Gamma
 \cong \ol{\M}_{\Gamma_1} \times \ol{\M}_{\Gamma_2} .\]
In this case the universal disk $\ol{\U}_{\Gamma}$ is the disjoint
union of the pullbacks of the universal disks $\ol{\U}_{\Gamma_1}$ and
$\ol{\U}_{\Gamma_2}$: If $\pi_1, \pi_2$ are the projections on the
factors above then
\[ \ol{\U}_{\Gamma} = \pi_1^* \ol{\U}_{\Gamma_1} \sqcup \pi_2^*
\ol{\U}_{\Gamma_2} .\]
In the case of forgetting a tail, we denote by
$f^{\Gamma'}_{\Gamma}: \ol{\U}_{\Gamma} \to \ol{\U}_{\Gamma'}$ the map
of universal curves obtained by collapsing components that becomes
unstable after forgetting the tail, and by
$\ol{\U}_{\Gamma}^{\on{st}} \subset \ol{\U}_{\Gamma}$ the union of
components {\em not} collapsed by the forgetful morphism.  The
restriction of $f^{\Gamma'}_\Gamma$ to $\ol{\U}_{\Gamma'}^{\on{st}}$
has finite fibers, and injective except at the points $w \in T$ where
edges $T_{e'}, T_{e''}$ are glued together after a disk component
$S_v, v \in \Ver(\Gamma)$ collapses.

Orientations on the main strata (i.e. of maximal dimension) of the
moduli space of (non-weighted) treed disks may be constructed as
follows.

\begin{definition} \label{orientdef} {\rm (Orientations on moduli of treed disks)} 
\begin{enumerate} 
\item {\rm (A single disk)} Consider a stratum $\M_\Gamma$ of treed
  disks having a single disk corresponding to a single vertex
  $ \Ver(\Gamma) = \{ v \}$.  One can identify a smooth disk $S$ with
  $n+1$ boundary attaching points $x_0,\ldots,x_n$ and $m\geq 1$
  attaching points of interior leaves $z_1,\ldots,z_m$ with the
  positive half-space $\H \subset \C$ by a map
  $ \phi: S - \{ x_0 \} \to \H, \quad z_1 \mapsto i $ so that the
  boundary attaching points $x_i$, $i\geq 1$ map to an ordered tuple
  in $ \R \subset \C$.  For $m=0$ interior leaves, there are
  $n+1\geq 3$ boundary leaves.  We identify
  $ \phi: S - \{ x_0 \} \to \H$ so that
  $ x_1 \mapsto 0, \quad x_2 \mapsto 1 .$ The remaining boundary
  points $x_i$, $i\geq 3$ map to an ordered tuple of
  $]1,\infty[ \subset \R \subset \C$.  The moduli space $\M_\Gamma$ of
  disks of this type then inherits an orientation
  $O_\Gamma: \M_\Gamma \to \det( TM_\Gamma)$ from the canonical
  orientation on $\R^{n-2}\times \C^{m}$.
\item {\rm (Multiple disks)} Consider a stratum $\M_\Gamma$ treed
  disks having more than a single disk.  The closure $\ol{\M}_\Gamma$
  contains strata $\ol{\M}_{\Gamma'}$ with fewer edges with finite
  non-zero lengths
\[ \# \{ e \in \Edge (\Gamma') \ | \ \ell(e) \in (0,\infty) \} < \# \{
  e \in \Edge (\Gamma) \ | \ \ell(e) \in (0,\infty) \}  .\] 
  The inclusion identifies treed disks $C \to C'$ by identifying a
  boundary node $w \in C$ with an edge $e \subset C'$ of length
  $\ell(e)$ zero.  The addition of an edge $e$ of finite non-zero
  length $\ell(e) > 0$ corresponds to identifying the closures of two
  main strata on a 1-codimensional corner strata $\M_{\Gamma'}$.
  Choose an orientation $O_\Gamma$ so that the induced orientations on
  the boundary strata $\M_{\Gamma'}$ corresponding to a zero length
  are opposite.  The orientations $O_\Gamma$ then glue together to an
  orientation $O_{n,m}$ on $\ol{\M}_{n,m}$.
\end{enumerate} 
\end{definition} 

\section{Treed pseudoholomorphic disks}

The composition maps in the Fukaya algebra will be obtained
by counting treed pseudoholomorphic disks, which we now define.

\begin{definition} 
\begin{enumerate} 
\item {\rm (Gradient flow lines)} Let $L$ be a 
compact connected smooth manifold.  Denote by 
\[\cG(L) \subset \Map(TL^{\otimes 2},\R) \]
the space of smooth Riemannian metrics on $L$.  Fix a metric
$G \in \cG(L)$ and a Morse function $F: L \to \R$ having a unique
maximum $x_M \in L$.  Let $I \subset \R$ be a connected subset
containing at least two elements, that is, an open or closed interval.
The {\em gradient vector field} of $F$ is defined by
\[ \grad_F: L \to TL, \quad G( \grad_F, \cdot ) = \d F \in \Omega^1(L)
.\]
A {\em gradient flow line} for $-F$ is a map
\[u: I \to L, \quad \dds u = - \grad_F(u) \]
\label{gradsign}  \label{gradsignp} 
where $s$ is a unit velocity coordinate on $I$.
Given a time $s \in \R$ let
\[\phi_s: L \to L, \quad \dds \phi_s(x) = - \grad_F(\phi_s(x)), \quad \forall
x \in L \]
\llabel{gradsign}  denote the time $s$ gradient flow of $-F$.  
\item {\rm (Stable and unstable manifolds)} Denote by
\[\cI^{\on{geom}}(L) := \crit(F) \subset L \]
the space of critical points of $F$.  Taking the limit of the gradient
flow determines a discontinuous map
\[ L \to \crit(F), \quad y \mapsto \lim_{s \to \pm \infty} \phi_t(y)
.\]
By the stable manifold theorem each $x \in \cI^{\on{geom}}(L)$ determines stable
and unstable manifolds
\[W_x^\pm := \Set{ y \in L | \lim_{s \to \pm \infty} \phi_s(y) = x }
\subset L\]
consisting of points whose downward resp. upwards gradient flow
converges to $x$.  Denote by 
\begin{equation} \label{imap} i: \cI^{\on{geom}}(L) \to \Z_{\ge 0},
  \quad x \mapsto \dim(W_x^-) 
\end{equation} 
the {\em index map}.  The pair $(F,G)$ is {\em Morse-Smale} if the
intersections
\[W_{x_-}^+ \cap W_{x_+}^- \subset L\] 
are transverse for $x_+,x_- \in \cI^{\on{geom}}(L)$, and so a smooth manifold of
dimension $i(x_+) - i(x_-)$.   The additive group $\R$ acts on the
intersection $W_{x_-}^+ \cap W_{x_+}^-$ by the flow of $-\grad(F)$,
and the quotient 
\[ \M(x_+,x_-) = (W_{x_-}^+ \cap W_{x_+}^-) / \R \]
is canonically identified with the space of Morse trajectories from
$x_+$ to $x_-$.
\item {\rm (Almost complex structures)} Let $(X,\omega)$ be a
  symplectic manifold.  An almost complex structures on $(X,\omega)$
  given by
\[J : TX \to TX, \quad J^2 = -I\]
is {\em tamed} if and only if $\omega( \cdot, J \cdot)$ is positive
definite and {\em compatible} if in addition $\omega(\cdot, J \cdot)$
is symmetric, hence a Riemannian metric on $X$.  Denote by
$\J_\tau(X)$ the space of smooth tamed almost complex structures and
$\J_\tau(X)_l$ the space of tamed almost complex structures of class
$C^l$.  The space $\J_\tau(X)_l$ has a natural Banach manifold
structure modelled on the space of $C^l$ sections $\delta J$ of the
endomorphism bundle $\End(TX) \to TX$ satisfying the linearized
condition
\[ \delta J: X \to \End(TX), \quad J (\delta J) = - (\delta J) J .\]
\end{enumerate} 
\end{definition} 

%Later we will need the following observation on Morse-Smale pairs:

%\begin{proposition} \label{sameindex} Let $(F,G)$ be a Morse-Smale
%  pair and $x_+,x_- \in \crit(F)$ critical points of the same index
%  $i(x_+) = i(x_-)$.  Then the space of Morse trajectories
%  $\M(x_+,x_-)$ from $x$ to $y$ is empty, if $x_+ \neq x_-$, or equal
%  to the space of constant trajectories with value $x_+ = x_-$, if
%  $x_+ = x_-$.
%\end{proposition}

%\begin{proof} Under the assumption of equal index $i(x_+) = i(x_-)$,
%  the expected dimension of the space of trajectories $\M(x_+,x_-)$ is
%  equal to $-1$.  By assumption $W_{x_+}^- \cap W_{x_-}^+$ is smooth
%  of expected dimension, so if non-empty so $\R$ must act trivially.
%  This implies $x_+ = x_-$ and the trajectories in $\M(x_+,x_-)$ are
%  constant.
%\end{proof} 

In order to obtain the necessary transversality our Morse functions
and almost complex structures must be allowed to depend on a point in
the domain.  Fix a compact {\em thick part} of the universal tree
$ \ol{{\T}}^{\thick}_{\Gamma}
\subset 
\ol{{\T}}_{\Gamma} $
with the following property: Its interior
$\on{int}( \ol{{\T}}^{\thick}_{\Gamma} )$ contains at least one point
on each edge:
\[  
\on{int}(\ol{{\T}}^{\thick}_{\Gamma}) \cap \on{int}(e) \neq
\emptyset, \quad \forall \ \text{edges} \ e .\]
Also fix
a compact subset
\[ \ol{\S}_\Gamma^{\thick}
 \subset \ol{\S}_\Gamma - \{ w_e \in
\ol{\S}_\Gamma, \ e \in \Edge_{-}(\Gamma) \} \]
disjoint from the fiber-wise boundary $\partial \ol{\S}_\Gamma$ and
spherical nodes, with the property: $\ol{\S}_\Gamma^{\thick}$ contains
in its interior $\on{int}( \ol{\S}_\Gamma)$ at least one point on each
sphere and disk component $S_v \subset S, v \in \Ver(\Gamma)$ in each
fiber $S \subset \ol{\S}_\Gamma$.  Thus the complement
$
{\T}_\Gamma^{\thin} =  \ol{{\T}}_{\Gamma} 
- \ol{{\T}}^{\thick}_{\Gamma} 
\subset  \ol{{\T}}_{\Gamma}  $
is a neighborhood of infinity on each edge.  Furthermore, the
complement
$ {\S}_\Gamma^{\thin} = \ol{{\S}}_{\Gamma} 
- \ol{{\S}}^{\thick}_{\Gamma} 
\subset  \ol{{\S}}_{\Gamma}  $
is a neighborhood of the boundary and nodes.

\begin{definition} 
\begin{enumerate} 
\item {\rm (Domain-dependent Morse functions)} Let $\Gamma$
  be a type of stable treed disk.  Let $\ol{\T}_\Gamma \subset
  \ol{\U}_\Gamma$ be the tree part of the universal treed disk, and
  $\ol{\T}_{\white,\Gamma}$ its boundary part as in \eqref{boundint}.  Let
  $(F,G)$ be a Morse-Smale pair.  For an integer $l \ge 0$ a {\em
    domain-dependent perturbation} of $F$ of class $C^l$ is a $C^l$
  map
\begin{equation} \label{FGam}  F_{\Gamma}: \ol{{\T}}_{\white,\Gamma}
 \times L \to \R \end{equation}
equal to the given function $F$ away from the compact part:
\[ F_\Gamma | (\ol{{\T}}_{\white,\Gamma} - \ol{{\T}}^{\thick}_{\white,\Gamma}) =
\pi_2^* F \]
where $\pi_2$ is the projection on the second factor in \eqref{FGam}. 
\item {\rm (Domain-dependent almost complex structure)} Let
  $J \in \J_\tau(X)$ be a tamed almost complex structure.  Let
  $l \ge 0$ be an integer.  A {\em domain-dependent almost complex
    structure} of class $C^l$ for treed disks of type $\Gamma$ and
  base $J$ is a map
  \[ J_{\Gamma} : \ \ol{{\S}}_{\Gamma} \times X \to \End(TX) .\]
  We require that $J_\Gamma$ is equal to the given $J$ away from the
  compact part:
\[ J_\Gamma | 
(\ol{{\S}}_\Gamma - 
\ol{{\S}}^{\thick}_\Gamma )
= \pi_2^* J \]
where $\pi_2$ is the projection on the second factor in \eqref{FGam}.
\item {\rm (Perturbation data)} A perturbation datum for a type
  $\Gamma$ of holomorphic treed disk is a pair
  $P_\Gamma = (F_\Gamma,J_\Gamma)$ consisting of a domain-dependent
  Morse function $F_\Gamma$ and a domain-dependent almost complex
  structure $J_\Gamma$.
\end{enumerate}
\end{definition} 

The following are three operations on perturbation data related to the
morphisms of weighted graphs in Definition \ref{bmgraphs}.

\begin{definition} \label{pertops}
\begin{enumerate}
\item {\rm (Collapsing edges/making an edge or weight finite or
  non-zero)} Suppose that $\Gamma'$ is obtained from $\Gamma$ by
  collapsing an edge or making an edge length/weight finite/non-zero.  Any
  perturbation datum $P_{\Gamma'}$ for $\Gamma'$ induces a datum for
  $\Gamma$ by pullback of $P_{\Gamma'}$ under $\iota_\Gamma^{\Gamma'}:
  \ol{\U}_{\Gamma} \to \ol{\U}_{\Gamma'}$.
\item {\rm (Cutting edges)} Suppose that $\Gamma'$ is a combinatorial
  type obtained by cutting an edge of $\Gamma$.  A perturbation datum
  for $\Gamma'$ gives rise to a perturbation datum for $\Gamma$ by
  pushing forward $P_{\Gamma'}$ under the map
  $\pi_\Gamma^{\Gamma'}: \ol{\U}_{\Gamma} \to \ol{\U}_{\Gamma'}$.
  That is, define
\[J_{\Gamma}(z,x) = J_{\Gamma'}(z',x), \quad \forall z \in 
  (\pi_\Gamma^{\Gamma'})^{-1}(z') .\] 
  The definition is independent of the choice of lift $z$ by the
  (Constant near the nodes and markings) axiom.  The definition for
  $F_{\Gamma}$ is similar.

\item {\rm (Forgetting tails)} Suppose that $\Gamma'$ is a
  combinatorial type of stable treed disk is obtained from $\Gamma$ by
  forgetting a semi-infinite edge.  Consider the map of universal
  disks $f_{\Gamma'}^\Gamma: \ol{\U}_{\Gamma} \to \ol{\U}_{\Gamma'}$
  given by forgetting the edge and stabilizing.  Any perturbation
  datum $P_{\Gamma'}$ induces a datum $P_{\Gamma}$ by pullback of
  $P_{\Gamma'}$.
\end{enumerate}
\end{definition} 

We are now ready to define coherent collections of perturbation data.
These are data that behave well 
with each type of operation in 
Definition \ref{pertops}.  Given a type $\Gamma$, denote by 
$\Gamma_\circ$ the combinatorial type obtained by collapsing all 
vertices $v \in \Ver_{\black}(\Gamma)$ corresponding to spherical 
vertices to markings, and let $\Gamma(v) \subset \Gamma$ denote the 
subgraph with vertex $v$ and adjacent edges.

\begin{definition} \label{coherent} {\rm (Coherent families of
    perturbation data)} A collection of perturbation data
  $ \ul{P} = (P_\Gamma ) $ is {\em coherent} if it is compatible with
  the morphisms of moduli spaces $\M_\Gamma \to \M_{\Gamma'}$ 
  induced by morphisms of weighted treed disks $\Gamma \to \Gamma'$ in
  the sense that
\begin{enumerate} 
\item {\rm (Collapsing edges/making an edge or weight finite or non-zero)}
  if $\Gamma'$ is obtained from $\Gamma$ by collapsing an edge or
  making an edge/weight finite/non-zero, then $P_{\Gamma}$ is the
  pullback of $P_{\Gamma'}$;

\item {\rm (Cutting edges) } if $\Gamma'$ is obtained from $\Gamma$ by
  cutting an edge of infinite length, then $P_{\Gamma'}$ is the
  pullback of $P_{\Gamma}$.  If $\Gamma'$ is the union of types
  $\Gamma_1,\Gamma_2$ obtained by cutting an edge of $\Gamma$, then
  $P_{\Gamma'}$ is obtained from $P_{\Gamma_1}$ and $P_{\Gamma_2}$ as
  follows: Let
\[\pi_k: \ol{\M}_{\Gamma'} \cong \ol{\M}_{\Gamma_1} \times
  \ol{\M}_{\Gamma_2} \to \ol{\M}_{\Gamma_k}\] 
  denote the projection on the $k$th factor.  The universal curve
  $\ol{\U}_{\Gamma'}$ is the union of $\pi_1^* \ol{\U}_{\Gamma_1}$ and
  $\pi_2^* \ol{\U}_{\Gamma_2}$.  We require that $P_{\Gamma'}$ is
  equal to the pullback of $P_{\Gamma_k}$ on
  $\pi_k^* \ol{\U}_{\Gamma_k}$ for each $k \in \{ 1 ,2 \}$:
  \begin{equation} \label{require} P_{\Gamma'} | \ol{\U}_{\Gamma_k} =
    \pi_k^* P_{\Gamma_k} .\end{equation}
  In particular suppose that $\Gamma_1$ corresponds to a configuration
  $u: C \to X$ with a single unmarked disk $S_v \subset C$ and two
  leaves $T_{e'}, T_{e''} \subset C$, one of which, say $e'$ is
  weighted resp. forgettable as in the bottom row in Figure
  \ref{triv}.  Then by our conventions for (Cutting Edges) the
  corresponding leaf $e$ of $\Gamma_2$ is weighted resp. forgettable,
  with the same weight $\rho(e) = \rho(e')$ of the leaf $e'$ of
  $\Gamma_1$, and we require \eqref{require}.

\item {\rm (Locality axiom)} For any spherical vertex $v$ in a type
  $\Gamma$, the perturbation datum $P_\Gamma$ restricts to the
  pull-back of a perturbation $P_{\Gamma,v}$ on the image of
  $\pi^* \U_{\Gamma(v)}$ in $\U_\Gamma$.  (Note that the perturbation
  $P_{\Gamma,v}$ is allowed to depend on $\Gamma$, not just
  $\Gamma(v)$).  
Furthermore, the restriction of $P_\Gamma$ to the 
  disk components and boundary edges in $\U_\Gamma$ is the pull-back 
  of a perturbation datam $P_{\Gamma_\circ}$ from $\U_{\Gamma_\circ}$.

\item \label{infweight} {\rm (Forgettable edges)} If some weight
  parameter $\rho(e), e \in \Edge_{\rightarrow}(\Gamma')$ of a type
  $\Gamma'$ is equal to infinity and $\Gamma$ is the type obtained by
  (Forgetting tails) then
  $P_{\Gamma'} = (f^{\Gamma'}_\Gamma)^* P_{\Gamma}$ is pulled back
  under the forgetful map $f^{\Gamma'}_\Gamma$ forgetting the first
  forgettable leaf $e_i \in \Edge_{\rightarrow}^{\whitet}(\Gamma)$ and
  stabilizing from the perturbation datum $P_{\Gamma}$ given by
  (Forgetting tails).  The last sentence of the previous item
  guarantees that this condition is compatible with the product axiom,
  in the case that forgetting an edge $e$ with infinite weight
  $\rho(e) = \infty$ leads to a collapse of a disk component.
\end{enumerate} 
\end{definition}  

\begin{remark} The (Locality Axiom) implies that on any disk component
  $S_v, v \in \Ver_\circ(\Gamma)$ that the perturbation
  $J_\Gamma |S_v$ depends only on special points attached to disk
  components and the lengths $\ell(e)$ of the edges
  $e \in \Edge(\Gamma)$.
\end{remark}

Domain-dependent perturbations are obtained by pull-back from
inclusions into the universal curve.  Let $C$ be a stable treed disk
of type $\Gamma$.  Since $C$ is stable, $C$ admits a unique
identification with a fiber of a universal treed disk $\U_{\Gamma}$.
Given a perturbation datum $P_\Gamma$ for type $\Gamma$, we obtain a
domain-dependent almost complex structure and Morse function for $C$,
still denoted $J_\Gamma,F_\Gamma$, by pull-back under the map
$C \to \U_\Gamma$.  If $C$ is an arbitrary treed disk of type $\Gamma$
with at least one vertex, let $C \to C^{\on{st}}$ denote the
stabilization obtained by collapsing unstable sphere and disk
component and collapsing trajectories of infinite length (e.g. in the
case of combinatorially finite edges with more than one breaking or
sphere or disk components with too few special points) to points.  Let
$\Gamma^{\on{st}}$ denote the combinatorial type of $C^{\on{st}}$.  By
pull-back, one obtains domain-dependent perturbations for $C$ from
those for $C^{\on{st}}$.  A perturbation datum for type $\Gamma$ is a
perturbation for type $\Gamma^{\on{st}}$.  In the case that $C$ has
only one leaf, so that $C$ is a string of disks
$S_v \cong B^2 , v \in \Ver(\Gamma)$ and segments
$T_e, e \in \Edge(\Gamma(C))$, we take $J_\Gamma = J$ and
$F_\Gamma = F$ to be domain-independent.

\begin{definition} {\rm (Perturbed pseudoholomorphic treed disks)}  
  Given a perturbation datum $P_\Gamma$ for weighted treed disks of
  type $\Gamma$, a pseudoholomorphic treed disk in $X$ with boundary in $L$
  of type $\Gamma$ consists of a treed disk $C = S \cup T$ of type
  $\Gamma$ and a continuous map $u = (u_S,u_T): C \to X$ such that the
  following holds: Let $T = T_{\white} \cup T_{\black}$ be the
  splitting into boundary and interior parts as in \eqref{boundint}.
\begin{enumerate} 
\item {\rm (Lagrangian boundary condition)} On the boundary $u ( \partial S \cup T_{\white}) \subset L$.
\item {\rm (Surface equation)} On the surface part $S$ of $C$ the map
  $u$ is $J_\Gamma$-holomorphic for the given domain-dependent almost
  complex structure: if $j$ denotes the complex structure on $S$ then
  \[ J_{\Gamma,u(z),z} \ \d u_S = \d u_S \ j. \]
\item {\rm (Boundary tree equation)} On the boundary tree part $T_{\white}
  \subset C$ the map $u$ is a collection of gradient trajectories:
  \[ \dds u_T = -\grad_{ F_{\Gamma, (s,u(s))} } u_T \]
  where $s$ is a coordinate on the segment so that the segment has the
  given length.  Thus for each edge $e \in \Edge_{-}(\Gamma)$ the
  length of the trajectory is given by the length $u |_{e \subset T}$
  is equal to $\ell(e)$.
\item {\rm (Interior tree condition)} On the interior tree  $T_{\black}
  \subset T$ the map $u$ is constant.
\end{enumerate} 
\end{definition} 

The (Interior tree condition)  means that the interior parts $T_{\black}$ of the
tree $T$ are essentially irrelevant from our point of view.  However,
from a conceptual viewpoint if one is going to replace boundary
markings with edges then one should also replace interior markings
with leaves; this conceptual point will become important in the proof
of homotopy invariance in Chapter \ref{hinv}. 

A compact Hausdorff moduli space of treed disks is obtained by
requiring an energy bound and the following stability condition.  The
moduli space that we define will, however, not be smooth; we will
achieve smoothness for a perturbation of these moduli spaces later
using Cieliebak-Mohnke perturbations.  By a {\em node} of a treed disk
$C = S \cup T$ we mean an intersection point $S_{v'} \cap S_{v''}$ of
two components $S_{v'}, S_{v''}$ of the surface part $S$ or the
intersection $e \cap S_v$ of an edge $e \subset T$ with a surface
component $S_v \subset S$.

\begin{definition} \label{stable1} {\rm (Stable pseudoholomorphic
    treed disks)} A pseudoholomorphic weighted treed disk $u: C \to X$
  with interior nodes $z_1,\ldots,z_k$ and boundary nodes
  $w_1,\ldots, w_m$ is {\em stable} if
\begin{enumerate} 
\item each disk component on which $u$ is constant has at least three
  boundary nodes or one boundary node and one interior node:
\[ \d u(S_v) = 0 \ \ S_v \ \text{disk} \ \implies \ 2 \# \{ z_k,w_k
\in \on{int}(S_v) \} + \# \{ w_k \in \partial S_v \} \ge 3 ;
  \]
\item each sphere component $S_v \subset C$ component on which $u$ is constant
  has at least three special points:
\[ \d u(S_v) = 0, \ \ S_v \ \text{sphere} \ \ \ \implies \ \ \ \# \{
  z_k, w_k \in S_v \} \ge 3
  \]
and
\item \label{infline} each infinite line on which $u$ is constant has a
weighted leaf and an unforgettable or forgettable root:
\[ \d u (C) = 0 , \ \ C \ \text{line} \ \ \ \implies e_0 \in
  \Edge^{\greyt}(\Gamma) \ \text{and} \ \ e_1 \in
  \Edge^{\blackt}(\Gamma) \cup \Edge^{\whitet}(\Gamma) .\]
\end{enumerate} 
Note that we allow a configuration $u: C \to X$ with no disks or
spheres, so that $S = \emptyset$, and a single edge $T_e \cong \R$
equipped with a non-constant Morse trajectory $u: T_e \to L$.
\end{definition} 

The stability condition in Definition \ref{stable1} is not quite the
same as a map $u: C \to X$ having no non-trivial automorphisms
$\phi: C \to C, \phi^* u = u$.  Indeed given a configuration
$u: C \cong \R \to X$ on which $u$ is constant as in \eqref{infline},
we have $\Aut(u) \cong \R $, corresponding to translations of the line
$C \cong T \cong \R$.  However, the moduli space of such maps
$u: C \to X$ is still of expected dimension $0$ because the weighting
$\rho(e) \in (0,\infty)$ raises the expected dimension of the moduli
space $\M_\Gamma$.

There is a further notion of equivalence of pseudoholomorphic weighted
treed disks related to attaching constant trajectories.  Given a
non-constant pseudoholomorphic treed disk $u: C \to X$ with leaf $e_i$
for which the weighting $\rho(e_i) = \infty$ resp. $0 $, we declare
$u$ to be {\em equivalent} to the pseudoholomorphic treed disk
$u' : C \to X$ obtained by attaching to $e_i$ a constant trajectory
$u'': \R \to L$ with weighted incoming $e_i^-$ and forgettable
resp. unforgettable outgoing edge $e_i^+$.  See Figure \ref{triv0}.
\begin{figure}[ht]
\begin{picture}(0,0)%
\includegraphics{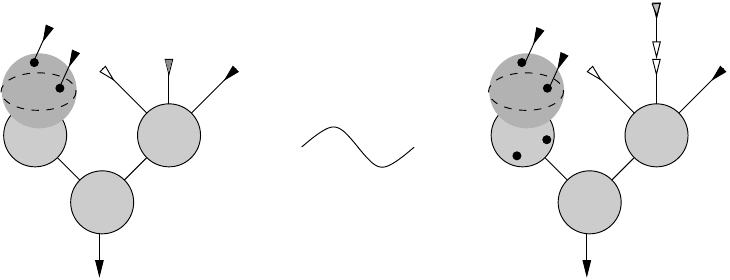}%
\end{picture}%
\setlength{\unitlength}{3947sp}%
\begingroup\makeatletter\ifx\SetFigFont\undefined%
\gdef\SetFigFont#1#2#3#4#5{%
  \reset@font\fontsize{#1}{#2pt}%
  \fontfamily{#3}\fontseries{#4}\fontshape{#5}%
  \selectfont}%
\fi\endgroup%
\begin{picture}(5818,2224)(-912,983)
\put(4393,2951){\makebox(0,0)[lb]{\smash{{{$\rho=\infty$}%
}}}}
\put(301,2789){\makebox(0,0)[lb]{\smash{{{$\rho = \infty$}%
}}}}
\end{picture}%
\caption{Equivalent weighted treed disks}
\label{triv0}
\end{figure}
Conversely, removing a constant segment $u'': \R \to X$ and
relabelling gives equivalent holomorphic treed disks.  Also, any two
configurations $u: C \to X, u':C' \to X$ with an outgoing weighted
edge $e_0$ with the same underlying tree $\Gamma$ are considered
equivalent.  See Figure \ref{equiv}.

\begin{figure}[ht]
\begin{picture}(0,0)%
\includegraphics{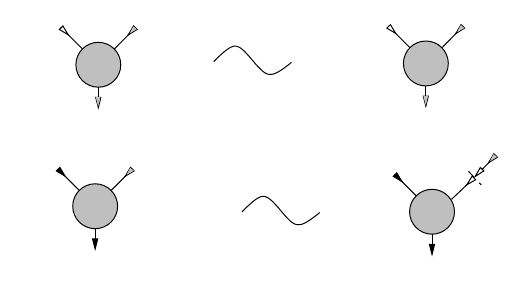}%
\end{picture}%
\setlength{\unitlength}{4144sp}%
\begingroup\makeatletter\ifx\SetFigFont\undefined%
\gdef\SetFigFont#1#2#3#4#5{%
  \reset@font\fontsize{#1}{#2pt}%
  \fontfamily{#3}\fontseries{#4}\fontshape{#5}%
  \selectfont}%
\fi\endgroup%
\begin{picture}(3846,2129)(3777,-4185)
\put(4434,-3014){\makebox(0,0)[lb]{\smash{{\SetFigFont{8}{9.6}{\rmdefault}{\mddefault}{\updefault}{\color[rgb]{0,0,0}$x^{\greyt}$}%
}}}}
\put(4709,-2193){\makebox(0,0)[lb]{\smash{{\SetFigFont{8}{9.6}{\rmdefault}{\mddefault}{\updefault}{\color[rgb]{0,0,0}$x^{\greyt}$}%
}}}}
\put(4115,-2173){\makebox(0,0)[lb]{\smash{{\SetFigFont{8}{9.6}{\rmdefault}{\mddefault}{\updefault}{\color[rgb]{0,0,0}$x^{\whitet}$}%
}}}}
\put(4790,-2415){\makebox(0,0)[lb]{\smash{{\SetFigFont{8}{9.6}{\rmdefault}{\mddefault}{\updefault}{\color[rgb]{0,0,0}$\rho_1$}%
}}}}
\put(4646,-2804){\makebox(0,0)[lb]{\smash{{\SetFigFont{8}{9.6}{\rmdefault}{\mddefault}{\updefault}{\color[rgb]{0,0,0}$\rho_1$}%
}}}}
\put(6929,-3005){\makebox(0,0)[lb]{\smash{{\SetFigFont{8}{9.6}{\rmdefault}{\mddefault}{\updefault}{\color[rgb]{0,0,0}$x^{\greyt}$}%
}}}}
\put(7205,-2183){\makebox(0,0)[lb]{\smash{{\SetFigFont{8}{9.6}{\rmdefault}{\mddefault}{\updefault}{\color[rgb]{0,0,0}$x^{\greyt}$}%
}}}}
\put(6611,-2163){\makebox(0,0)[lb]{\smash{{\SetFigFont{8}{9.6}{\rmdefault}{\mddefault}{\updefault}{\color[rgb]{0,0,0}$x^{\whitet}$}%
}}}}
\put(7608,-3140){\makebox(0,0)[lb]{\smash{{\SetFigFont{8}{9.6}{\rmdefault}{\mddefault}{\updefault}{\color[rgb]{0,0,0}$x^{\greyt}$}%
}}}}
\put(7286,-2406){\makebox(0,0)[lb]{\smash{{\SetFigFont{8}{9.6}{\rmdefault}{\mddefault}{\updefault}{\color[rgb]{0,0,0}$\rho_2$}%
}}}}
\put(7142,-2794){\makebox(0,0)[lb]{\smash{{\SetFigFont{8}{9.6}{\rmdefault}{\mddefault}{\updefault}{\color[rgb]{0,0,0}$\rho_2$}%
}}}}
\put(7509,-3459){\makebox(0,0)[lb]{\smash{{\SetFigFont{8}{9.6}{\rmdefault}{\mddefault}{\updefault}{\color[rgb]{0,0,0}$\rho=\infty$}%
}}}}
\put(6977,-4134){\makebox(0,0)[lb]{\smash{{\SetFigFont{8}{9.6}{\rmdefault}{\mddefault}{\updefault}{\color[rgb]{0,0,0}$x^{\blackt}$}%
}}}}
\put(6659,-3292){\makebox(0,0)[lb]{\smash{{\SetFigFont{8}{9.6}{\rmdefault}{\mddefault}{\updefault}{\color[rgb]{0,0,0}$x^{\blackt}$}%
}}}}
\put(4686,-3271){\makebox(0,0)[lb]{\smash{{\SetFigFont{8}{9.6}{\rmdefault}{\mddefault}{\updefault}{\color[rgb]{0,0,0}$x^{\greyt}$}%
}}}}
\put(4726,-3553){\makebox(0,0)[lb]{\smash{{\SetFigFont{8}{9.6}{\rmdefault}{\mddefault}{\updefault}{\color[rgb]{0,0,0}$\rho=\infty$}%
}}}}
\put(4410,-4092){\makebox(0,0)[lb]{\smash{{\SetFigFont{8}{9.6}{\rmdefault}{\mddefault}{\updefault}{\color[rgb]{0,0,0}$x^{\blackt}$}%
}}}}
\put(4092,-3251){\makebox(0,0)[lb]{\smash{{\SetFigFont{8}{9.6}{\rmdefault}{\mddefault}{\updefault}{\color[rgb]{0,0,0}$x^{\blackt}$}%
}}}}
\end{picture}%
\caption{Equivalent weighted treed disks, ctd.}
\label{equiv}
\end{figure}

We introduce notation for various moduli spaces of equivalence classes
of stable weighted treed disks.  For non-negative integers $n,m$
denote by $\ol{\M}_{n,m}(L)$ the (possibly empty) moduli space of
equivalence classes of stable treed pseudoholomorphic disks with $n$
boundary leaves and $m$ interior leaves.  A natural extension of the
Gromov topology on $\ol{\M}_{n,m}(L)$ is Hausdorff as in \cite[Section
5.6]{ms:jh}.  For any connected combinatorial type $\Gamma$ of treed
pseudoholomorphic disk, denote by $\M_\Gamma(L)$ the subset of type
$\Gamma$ so that
\[ \ol{\M}_{n,m}(L) = \bigcup_{\Gamma} \M_\Gamma(L) .\]
Denote by $\ol{\M}_\Gamma(L)$ the union of strata $\M_{\Gamma'}(L)$
such that there exists a morphism $\Gamma' \to \Gamma$.  After the
regularization below, $\ol{\M}_\Gamma(L)$ is the closure of
$\M_\Gamma(L)$, at least for the strata we consider.  The moduli space
of treed disks decomposes further into components depending on the
limits along the semi-infinite edges.  Define
\[\cI(L) = \left( \cI^{\on{geom}}(L) - \{ x_M \} \right) \cup \{x^{\whitet}, x^{\greyt},
x^{\blackt} \} . \]
Thus $\cI(L)$ is the set of  critical points of $F$, with the
maximum $x_M$ replaced by three copies $x^{\whitet}, x^{\greyt},
x^{\blackt}$.  We extend the index map on $\cI^{\on{geom}}(L)$ to $\cI(L)$
by
\[i(x^{\whitet}) = i(x^{\blackt}) = 0, \quad i(x^{\greyt})=-1 .\]
Define the set of generators of degree $d$ 
\begin{equation} \label{degreepart} 
 \cI_d(L) = \{ x \in \cI(L) \ | \ i(x) = d \}
 .\end{equation}
An {\em admissible labelling} of a (non-broken) weighted treed disk
$C$ with leaves $e_1,\ldots, e_n$ and outgoing edge $e_0$ is a
sequence $\ul{x} = (x_0,\ldots, x_n) \in \cI(L)$ satisfying:
\begin{enumerate}
\item {\rm (Label axiom)} If $x_i = x^{\greyt}$ resp. $x^{\whitet}$
  resp. $x^{\blackt}$ then the corresponding semi-infinite edge $e_i$
  is required to be weighted resp. forgettable resp. unforgettable,
  that is,
\begin{multline}  x_i = x^{\greyt} \ \text{resp.} \ x^{\whitet} \
  \text{resp.} \ x^{\blackt} \\ \implies
  \ e_i \in \Edge^{\greyt}(\Gamma)\ \text{resp.} \ e_i \in
  \Edge^{\whitet}(\Gamma) \ \text{resp.} \ e_i \in
  \Edge^{\blackt}(\Gamma) .\end{multline} 
Furthermore, in this case the limit along the $i$-th semi-infinite
edge is required to be $x_M$:
\[\lim_{s \to \infty} u(\varphi_e(s)) = x_M .\]
In every other case the semi-infinite edge is required to be
unforgettable:
\[ x_i \notin \{ x^{\greyt}, x^{\whitet}, x^{\blackt} \} \ \implies \ e_i \in
\Edge^{\blackt}(\Gamma) .\]
\vskip .1in
\item {\rm (Outgoing edge axiom)}  
\begin{enumerate} 
\item The outgoing edge $e_0$ is weighted,
  $e_0 \in \Edge^{\greyt}(\Gamma)$ only if there are two leaves
  $e_1,e_2 \in \Edge_{\white,\rightarrow}(\Gamma)$, exactly one of
  which, say $e_1$ is weighted with the same weight
  $\rho(e_1) = \rho(e_0)$, and the other, say $e_2$ is forgettable
  with weight $\rho(e_2) = \infty$, and there is a single disk $S$
  with no interior leaves, that is, $T \cap \on{int}(S) = \emptyset$.
\item The outgoing edge $e_0$ can only be forgettable, that is, $e_0
  \in \Edge^{\whitet}(\Gamma)$ if either
\begin{itemize}
\item there are two forgettable  leaves $e_1,e_2 \in
  \Edge^{\whitet}(\Gamma)$, or
\item there is a single leaf $e_1 \in \Edge^{\greyt}(\Gamma)$ that is
  weighted and the configuration $u: C \to X$ has no interior leaves,
  that is, $\Edge_{\black,\rightarrow}(\Gamma) = \emptyset$.
\end{itemize}
See Figure \ref{triv}.
\end{enumerate} 
\end{enumerate} 

The (Outgoing edge axiom) treats the case of constant 
treed disks.  As is typical in Floer theory, constant configurations
must be treated with great care.  Denote by
\[ \M_\Gamma(L,\ul{x}) \subset \M_\Gamma(L) \] 
the moduli space of isomorphism classes of pseudoholomorphic treed disks of
type $\Gamma$ with boundary in $L$ and admissible labelling
$\ul{x} = (x_0, \ldots, x_n)$.

\begin{remark} \label{const} {\rm (Constant trajectories)} If
  $x_1 = x^{\greyt}$ and $x_0 = x^{\blackt}$ resp.
  $x_0= x^{\whitet}$ then the moduli space $\M(L,x_0,x_1)$ contains
  a configuration with no disks and single edge on which $u$ is
  constant, corresponding to a weighted leaf
  $e \in \Edge^{\greyt}(\Gamma)$ and a root edge
  $e_0 \in \Edge(\Gamma)$ that is unforgettable resp.  forgettable.
  These trajectories are pictured in Figure \ref{triv}.
\end{remark}  

\begin{figure}[ht]
\begin{picture}(0,0)%
\includegraphics{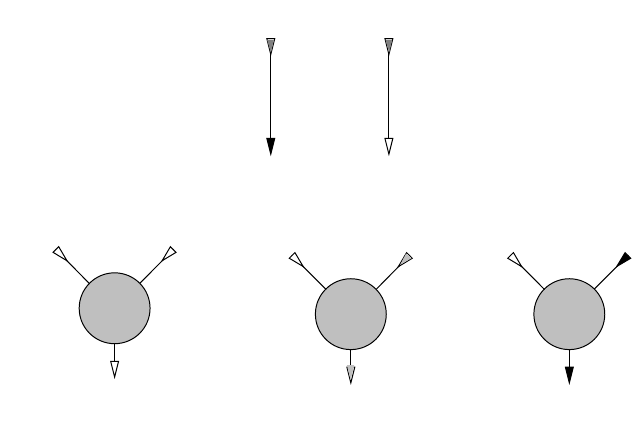}%
\end{picture}%
\setlength{\unitlength}{4144sp}%
\begingroup\makeatletter\ifx\SetFigFont\undefined%
\gdef\SetFigFont#1#2#3#4#5{%
  \reset@font\fontsize{#1}{#2pt}%
  \fontfamily{#3}\fontseries{#4}\fontshape{#5}%
  \selectfont}%
\fi\endgroup%
\begin{picture}(4819,3220)(2277,-3634)
\put(4804,-3561){\makebox(0,0)[lb]{\smash{{{$x^{\greyt}$}%
}}}}
\put(3701,-1173){\makebox(0,0)[lb]{\smash{{{$\rho = 0$}%
}}}}
\put(4111,-573){\makebox(0,0)[lb]{\smash{{{$x^{\greyt}$}%
}}}}
\put(5111,-593){\makebox(0,0)[lb]{\smash{{{$x^{\greyt}$}%
}}}}
\put(4141,-1783){\makebox(0,0)[lb]{\smash{{{$x^{\blackt}$}%
}}}}
\put(5581,-1783){\makebox(0,0)[lb]{\smash{{{$x^{\whitet}$}%
}}}}
\put(5491,-1163){\makebox(0,0)[lb]{\smash{{{$\rho = \infty$}%
}}}}
\put(2592,-2194){\makebox(0,0)[lb]{\smash{{{$x^{\whitet}$}%
}}}}
\put(3424,-2207){\makebox(0,0)[lb]{\smash{{{$x^{\whitet}$}%
}}}}
\put(2944,-3512){\makebox(0,0)[lb]{\smash{{{$x^{\whitet}$}%
}}}}
\put(4276,-2260){\makebox(0,0)[lb]{\smash{{{$x^{\whitet}$}%
}}}}
\put(5239,-2265){\makebox(0,0)[lb]{\smash{{{$x^{\greyt}$}%
}}}}
\put(6040,-2251){\makebox(0,0)[lb]{\smash{{{$x^{\whitet}$}%
}}}}
\put(6945,-2256){\makebox(0,0)[lb]{\smash{{{$x^{\blackt}$}%
}}}}
\put(6501,-3565){\makebox(0,0)[lb]{\smash{{{$x^{\blackt}$}%
}}}}
\end{picture}%
\caption{Unmarked treed disks}
\label{triv}
\end{figure}

The moduli spaces of stable pseudoholomorphic treed disks of any fixed
combinatorial type are cut out locally by Fredholm maps in suitable
Sobolev completions.  Let $p \ge 2$ and $k \ge 1$ be integers and let
$C = S \cup T$ be a treed disk.  Denote by $\Map^{k,p}(C,X,L)$ the
space of continuous maps $u$ from $C$ to $X$ of Sobolev class
$W^{k,p}$ on each disk, sphere and edge component such that the
boundaries $\partial C := \partial S \cup T$ of the disks and edges
map to $L$.  In each local chart for each component of $C$ and $X$ the
map $u$ is given by a collection of continuous functions with $k$
partial derivatives of class $L^p$.  Let $\exp: TX \to X$ denote
geodesic exponentiation with respect to a metric on $X$ is chosen for
which $L$ is totally geodesic, that is, $TL$ maps to $L$.  The space
$\Map^{k,p}(C,X,L)$ has the structure of a smooth Banach manifold,
with a local chart at $u \in \Map^{k,p}(C,X,L)$ 
\[ W^{k,p}(C, u^* TX, (u |_{\partial C})^* TL) \to \Map^{k,p}(C,X,L),
\quad \xi \mapsto \exp_u(\xi) .\]
Denote by
\begin{equation} \label{mapkp} \Map^{k,p}_{\Gamma}(C,X,L) \subset
  \Map^{k,p}(C,X,L) \end{equation}
the subset of maps $u: C \to X$ such that $u$ has the prescribed
homology class $[u | S_v]$ on each component
$S_v \subset S, v \in \Ver(\Gamma)$.  For each local trivialization of
the universal tree disk $\U_\Gamma$ as in \eqref{localtriv} we
consider the ambient moduli space defined as follows.  Let
$\Map^{k,p}_\Gamma(C,X,L)$ denote the space of maps of Sobolev class
$k \ge 1,p \ge 2, kp >2$ mapping the boundary of $C$ into $L$, defined
using a Sobolev norm induced by a choice of connections on $C$ and
$X$, independent of such choice.  The base of the universal bundle in
the local trivialization is
\[ \B^i_{k,p,\Gamma} := {\M}^i_{\Gamma} \times
\Map^{k,p}_\Gamma(C,X,L) .\]
Consider the map given by the local trivialization
\[ {\M}^{i}_{\Gamma} \to \J(S), \ m \mapsto j(m).\]
Consider the smooth Banach vector bundle $\E^i = \E^i_{k,p,\Gamma}$
over $\B^i_{k,p,\Gamma}$ whose fibers are given by
\[ (\E^i_{k,p,\Gamma})_{m, u, J } \subset
\Omega^{0,1}(C, u^* TX)_{k-1,p} := 
\Omega^{0,1}_{j,J,\Gamma}(S, u_S^* TX)_{k-1,p} \oplus \Omega^1(T,
u_T^* TL)_{k-1,p} \]
the space of $0,1$-forms with respect to $j(m),J$ that vanish to order
$m_\pm(e)-1$ at the nodes corresponding to each edge $e$.  Local
trivializations of $\E_{k,p,\Gamma}^i$ are defined by geodesic
exponentiation from $u$ and parallel transport using the Hermitian
connection defined by the almost complex structure
\[ \Phi_\xi: \Omega^{0,1}(S, u_S^* TX)_{k-1,p} \to 
\Omega^{0,1}(S, \exp_{u_S}(\xi)^*TX)_{k-1,p} \]
see for example \cite[p. 48]{ms:jh}.  The Cauchy-Riemann and shifted
gradient operators may be applied to the restrictions $u_S$
resp. $u_T$ of $u$ to the two resp. one dimensional parts of
$C = S \cup T$.  These define a section \llabel{gradf}\llabel{gradfp}
\begin{equation} \label{olp0} \olp_\Gamma: \B_{k,p,\Gamma}^i \to
  \cE_{k,p,\Gamma}^i, \quad (m,u) \mapsto
%C
\left( \olp_{j(m),J} u_S , \left( \dds + \grad_F \right)u_T \right) \end{equation}
where 
\begin{equation} \label{olp2} \olp_{j(m),J} u := \hh (\d u_S + J \d
  u_S j(m)),
\end{equation} 
and $s$ is a local coordinate on $T$ with unit speed with respect to
the given metric.  Let $\Aut(C)$ be the group of automorphisms of $C$.
The {\em local moduli space} is
\[{\M}^{\univ,i}_{\Gamma}(L) = \olp^{-1} \B^i_{k,p,\Gamma}/\Aut(C) \]
where $\B^i_{k,p,\Gamma}$ is embedded as the zero section.  Define a
non-linear map on Banach spaces using the local trivializations
\begin{multline} \label{linop} 
\cF_{u}: \M_\Gamma^i \times \Omega^0(C,
  u^* (TX,T L))_{k,p} 
\to \Omega^{0,1}(S, u_S^*
  TX)_{k-1,p} \oplus \Omega^{1}(, u_T^* TL)_{k-1,p}, \\ \quad (\xi,C) 
  \mapsto \Phi_\xi^{-1} \olp_\Gamma \exp_{u}(\xi) .
\end{multline}
denote the linearization of $\cF_u$ (c.f. Floer-Hofer-Salamon  
\cite[Section 5]{fhs:tr}) 
\begin{equation} \label{Du} \ti{D}_u = D_0 \cF_u = \ddt |_{t = 0}
  \cF_u ( t \xi, t m ) .\end{equation}
Standard arguments show that the operator $\ti{D}_u$ is Fredholm.

\section{Transversality} 

In this section we regularize the moduli space of stable
pseudoholomorphic treed disks with boundary in a Lagrangian
submanifold using domain-dependent almost complex structures and
metrics.  The domains are stabilized using Donaldson hypersurfaces as
in Charest-Woodward \cite{cw:traj}; this construction extends that of
Cieliebak-Mohnke \cite{cm:trans}.  For a submanifold $L \subset X$,
let
\[h_2: \pi_2(X,L) \rightarrow H_2(X,L), \quad \text{resp.} \quad
[\omega]^\dual: H_2(X,L) \to \R\]
be the degree two relative Hurewicz morphism, resp.  the map induced
by pairing with $[\omega] \in H^2(X,L)$.  A symplectic manifold $X$
with two-form $\omega \in \Omega^2(X)$ will be called {\em rational}
if the class $[\omega] \in H^2(X,\R)$ is rational, that is, in the
image of $H^2(X,\Q) \to H^2(X,\R).$ Equivalently, $(X,\omega)$ is
rational if there exists a {\em linearization} of $X$: a line bundle
$\widetilde{X} \to X$ with a connection whose curvature is
$(2\pi k/i) \omega$ for some integer $k > 0$.
A Lagrangian $L \subset X$ of a rational symplectic manifold $X$ with
linearization $\widetilde{X}$ will be called {\em strongly rational}
if $\widetilde{X} | L$ has a covariant constant section
$L \to \widetilde{X} | L$.  A Lagrangian submanifold $L \subset X$ is
{\em rational} if and only if the set of areas of disks is discrete:
\[ \exists e > 0, \quad [\omega]^\dual \circ h_2(\pi_2(X,L)) = \Z \cdot e \subset \R.\] 

These rationality assumptions guarantee that we can find {\em
  stabilizing divisors} in the following sense. A {\em divisor} in $X$
is a closed codimension two symplectic submanifold $D \subset X$.  An
almost complex structure $J: TX \to TX$ is {\em adapted} to a divisor
$D$ if $D$ is an almost complex submanifold of $(X,J)$.  A divisor
$D \subset X$ is {\em strongly stabilizing} for a Lagrangian
submanifold $L \subset X$ if and only if $D$ is disjoint from $L$ and
any disk $u: (C,\partial C) \rightarrow (X,L)$ with non-zero area
$\omega([u])>0$ intersects $D$ in at least one point.  A divisor
$D \subset X$ is {\em weakly stabilizing} for a Lagrangian submanifold
$L \subset X$ if and only if $D$ is disjoint from $L$ and there exists
an almost-complex structure $J_D \in \mathcal{J}(X,\omega)$ adapted to
$D$ such that any non-constant $J_D$-holomorphic disk
$u: (C,\partial C) \rightarrow (X,L)$ intersects $D$ in at least one
point.

The existence of stabilizing divisors is an application of the theory
of Donaldson-Auroux-Gayet-Mohsen stabilizing divisors \cite{don:symp},
\cite{auroux:complement}.  Roughly speaking one chooses an
approximately holomorphic section $\sigma$ of $\ti{X}$ concentrated on
the Lagrangian $L$; then a generic perturbation defines the desired
divisor by $D = \sigma^{-1}(0)$.

\begin{theorem} \cite[Section 4]{cw:traj} There exists a divisor
  $D \subset X - L$ that is \label{weaklyp} \llabel{weakly}
 weakly stabilizing for
  $L$ representing $k[\omega]$ for some sufficiently large integer
  $k$. Moreover, if $L$ is rational resp. strongly rational then there
  exists a divisor $D \subset X - L$ that is weakly stabilizing for
  $L$ resp. strongly stabilizing for $L$ representing $k [\omega]$ for
  some large $k$ and such that $L$ is exact in $(X-D,\omega |_{X-D})$.
\end{theorem}

In the case that $X$ is a smooth projective algebraic variety,
stabilizing divisors may be obtained using a result of
Borthwick-Paul-Uribe \cite{bo:le}.  There is also a time-dependent
version of this result which will be used later to prove independence
of the homotopy type of the Fukaya algebra from the choice of base
almost complex structure: If $J^t, t \in [0,1]$ is a smooth path of
compatible almost complex structures on $X$ then (see \cite[Lemma
4.20]{cw:traj} ) there exists a path of $J^t$-stabilizing divisors
$D^t, t \in [0,1]$ connecting $D^0,D^1$.

\begin{definition}
  {\rm (Adapted treed disks)} Let $L$ be a compact Lagrangian and $D$
  a codimension two submanifold disjoint from $L$.  A stable
  pseudoholomorphic treed disk $u: C \to X$ with boundary in $L$ is {\em
    adapted} to $D$ if and only if
\begin{enumerate} 
\item {\rm (Stable surface axiom)} The domain $C= S \cup T$ has stable
  or empty surface part $S$; and
\item {\rm (Leaf axiom)} each interior semi-infinite leaf
  $e \subset C$ maps to $D$ and each connected component of
  $u^{-1}(D)$ contains an interior semi-infinite leaf $e$.
\end{enumerate} 
\end{definition}
\noindent The (Stable surface axiom) implies that the domain is stable
except that arbitrary breakings of each edge are allowed; in
particular, the case that $S$ is empty corresponds to a broken edge
with an arbitrary number of breakings.

The moduli space of adapted treed pseudoholomorphic disks is
stratified by combinatorial type as follows.  Denote by $\Pi(X)$
resp. $\Pi(X,L)$ the set of homotopy classes of maps from the two
sphere resp. disk with boundary in $L$.  The multiplicities of the
intersection with the divisor $D$ at $z_e \in \on{int}(S) \cap T$ are
the winding numbers $m_\pm(e)$ of small loop around $z_e$ in the
complement of the zero section in a tubular neighborhood of $D$ in
$X$. The combinatorial type of a pseudoholomorphic treed disk
$u: C \to X$ adapted to $D$ consists of
\begin{enumerate} 
\item the combinatorial type $\Gamma = (\Ver(\Gamma), \Edge(\Gamma))$
  (orderings etc. omitted from the notation) of its domain $C$
  together with
\item the labelling
\[d: \Ver(\Gamma) \to \Pi(X)
\cup \Pi(X,L)\] 
of each vertex $v$ of $\Gamma$ corresponding to a disk or sphere
component with the corresponding homotopy class and
\item the labelling
  \begin{equation} \label{me} m= (m_+,m_-): \Edge_{\black}(\Gamma) \to
    \Z_{\ge 0}^2 \end{equation}
  recording the intersection multiplicity of the map $u$ to the
  divisor $D$ at each of the nodes $z_e^\pm$ at the end of each edge
  $ e \in \Edge_{\black,\rightarrow}(\Gamma)$; we assign
  $m_\pm(e) = 0$ if the point $z_e^\pm$ does not map to $D$ or the
  entire component maps to $D$.
\end{enumerate}

We introduce the following notation.  Let $\ol{\M}(L,D)$ the moduli
space of adapted treed marked disks in $X$ with boundary in $L$ and
$\M_\Gamma(L,D)$ the locus of combinatorial type $\Gamma$.  For
$\ul{x} \in \cI(L)^n$ let
\[ {\M}_\Gamma(L,D,\ul{x}) \subset \M_\Gamma(L,D)\]
denote the adapted subset made of pseudoholomorphic treed disks of type
$\Gamma$ adapted to $D$ with limits $\ul{x} = (x_0,\ldots, x_n) \in
\cI(L)$ along the root and leaves.

The expected dimension of the moduli space is given by a formula
involving the Maslov indices of the disks and the indices and number
of semi-infinite edges.  Denote the components $u_v= u | S_v$ of $u$,
denote by the Maslov index $I(u_v)$ resp. twice the Chern number, if
$S_v$ is a sphere.  The expected dimension of
${\M}_\Gamma(L,D,\ul{x})$ at $[u: C \to X]$ is given by
\begin{multline} \label{expdim} i(\Gamma,\ul{x}) := i(x_0)- \sum_{i
    =1}^n i(x_i) + \sum_{v \in \Ver(\Gamma)} I(u_v) + n - 2 - |
  \Edge_{-}^0(\Gamma) | - |\Edge_-^\infty(\Gamma)| \\- 2 |
  \Edge_{\black,-}(\Gamma) | - \sum_{e \in
    \Edge_{\black,\rightarrow}(\Gamma)} 2m(e) - \sum_{e \in
    \Edge_{\black,-}(\Gamma)} 2m(e)\end{multline}
where $m(e) = m_+(e) + m_-(e)$ is the intersection multiplicity of the
map  with the stabilizing divisor at each end of the edge, see
\eqref{me}.   Denote by 
\[ {\M}_\Gamma(L,D,\ul{x})_d \subset \M_\Gamma(L,D,\ul{x}) \]
the locus of expected dimension $d$.    We denote by 
\[ \M(L,D,\ul{x}) = \bigcup_\Gamma \M_\Gamma(L,D,\ul{x}) \]
the union over strata  of top dimension, that is, for which the edge
lengths in $\Gamma$ connecting disk and sphere components are finite
and non-zero.  Finally, denote by
\[ \M(L,D,\ul{x})_d \subset \M(L,D,\ul{x}) \]
the locus of expected dimension $d$.  We call elements of
$\M(L,\ul{x})_0$ {\em rigid}.  The elements of
$\M_\Gamma(L,D,\ul{x})_0$ for types $\Gamma$ corresponding to strata
that are not of maximal dimension in $\ol{\M}(L,D,\ul{x})$ are
stratum-wise rigid, but at (formally) may be deformed in the direction
normal to the stratum.

The energy and the number of interior leaves are related as follows.
Let $\Gamma$ be a type of stable treed disk.  Disconnecting the
components that are connected by boundary nodes with positive length
one obtains types $\Gamma_1,\ldots, \Gamma_l$, and a decomposition of
the universal curve $\U_\Gamma$ into components
$\ol{\U}_{\Gamma_1},\ldots, \ol{\U}_{\Gamma_l}$.  Let $n(\Gamma_i)$
denote the number of interior leaves on $\ol{\U}_{\Gamma_i}$.  We
assume that $D$ has Poincar\'e dual given by $k[\omega]$.  Exactness
of $L$ in the complement of $D$ implies the following:

\begin{proposition}  \label{control}
  Any stable treed disk $u: C \to X$ with domain of type $\Gamma$ and
  only transverse intersections with the divisor has energy equal to
\[ E(u | C_i) \leq n(\Gamma_i,k) := \frac{n(\Gamma_i)}{k}\]
on the component $C_i \subset C$ contained in $\ol{\U}_{\Gamma_i}$.
\end{proposition} 

In order to obtain transversality we begin by fixing an open subset of
the universal curve on which the perturbations will vanish.  Let
$\ol{\U}_{\Gamma}^{\thick} \subset \ol{\U}_{\Gamma} $ be a compact
subset disjoint from the nodes and attaching points of the edges such
that the interior of $\ol{\U}_{\Gamma}^{\thick} $ in each two and
one-dimensional component is open.  Suppose that perturbation data
$P_{\Gamma'}$ for all boundary types
$\U_{\Gamma'} \subset \ol{\U}_\Gamma$ have been chosen.  Let
\[\cP_\Gamma^l(X,D) = \{ P_\Gamma = (F_\Gamma, J_\Gamma) \}\]
denote the space of perturbation data $P_\Gamma = (F_\Gamma,
J_\Gamma)$ of class $C^l$ that are
\begin{itemize} 
\item equal to the given pair $(F,J)$ on $ \ol{\U}_\Gamma -
  \ol{\U}_\Gamma^{\thick}$, and such that
\item the restriction of $P_{\Gamma}$ to $\ol{\U}_{\Gamma'}$ is equal
  to $P_{\Gamma'}$, for each boundary type $\Gamma$', that is, type of
  lower-dimensional stratum $\ol{\M}_{\Gamma'} \subset
  \ol{\M}_\Gamma$.
\end{itemize}
The second condition will guarantee that the resulting collection
satisfies the (Collapsing edges/Making edges or weights finite or
non-zero) axiom of the coherence condition Definition \ref{coherent}.
Let $\cP_\Gamma(X,D)$ denote the intersection of the spaces
$\cP_\Gamma^l(X,D)$ for $l \ge 0$.

One cannot expect, using stabilizing divisors, to obtain
transversality for all combinatorial types.  The reason is an analog
of the multiple cover problem: once one has a ghost bubble
$S_v \subset S$ mapping to the divisor $D$ then one has configurations
with arbitrary number of interior leaves $T_e \subset C$ meeting the
component $S_v$.  The expected dimension $\dim \M_\Gamma(L,D)$ of the
component $\M_\Gamma(L,D)$ of the moduli space containing these
configurations has limit minus infinity but each $\M_\Gamma(L,D)$ is
non-empty.  A type $\Gamma$ will be called {\em uncrowded} if each
maximal ghost component contains at most one endpoint of an interior
leaf.  Write $\Gamma' \prec \Gamma$ if and only if $\Gamma$ is
obtained from $\Gamma'$ by (Collapsing edges/making edge lengths or
weights finite/non-zero) or $\Gamma$ is obtained from $\Gamma'$ by
(Forgetting a forgettable tail).

\begin{theorem} \label{main} {\rm (Transversality)} Suppose that
  $\Gamma$ is an uncrowded type of adapted pseudoholomorphic treed
  disk of expected dimension $i(\Gamma,\ul{x}) \leq 1$, see
  \eqref{expdim}.  Suppose admissible perturbation data for types of
  adapted treed marked disk 
  $\Gamma' \prec \Gamma$ are given.  Then there exists a comeager
  subset
\[ {\cP}^{\reg}_\Gamma(X,D) \subset {\cP}_\Gamma(X,D) \]
of {\em regular perturbation data} for type $\Gamma$ coherent with the
previously chosen perturbation data such that if $P_\Gamma \in
\cP_\Gamma^{\reg}(X,D)$ then
\begin{enumerate} 
\item \label{transs} {\rm (Smoothness of each stratum)} the stratum
  ${\M}_{\Gamma}(L,D)$ is a smooth manifold of expected
  dimension;
\item \label{tubs} {\rm (Tubular neighborhoods)} if $\Gamma$ is obtained from
  $\Gamma'$ by collapsing an edge of $\Edge_{\white,-}(\Gamma')$ or
  making an edge or weight finite/non-zero  or by gluing
  $\Gamma'$ at a breaking then the stratum ${\M}_{\Gamma'}(L,D)$ has a
  tubular neighborhood in $\ol{\M}_{\Gamma}(L,D)$; and
\item \label{oriens} {\rm (Orientations)} there exist orientations on
  $\M_\Gamma(L,D)$ compatible with the morphisms {\rm (Cutting an
    edge)} and {\rm (Collapsing an edge/Making an edge/weight
    finite/non-zero)} in the following sense:
\begin{enumerate} 
\item If $\Gamma$ is obtained from $\Gamma'$ by {\rm (Cutting an
  edge)} then the isomorphism $\M_{\Gamma'}(L,D) \to \M_{\Gamma}(L,D)$
  is orientation preserving.
\item If $\Gamma$ is obtained from $\Gamma'$ by {\rm (Collapsing an
  edge)} or {\rm (Making an edge/weight finite/non-zero)} then the
  inclusion $\M_{\Gamma'}(L,D) \to \M_{\Gamma}(L,D)$ has orientation
  (using the decomposition
\[ T\M_{\Gamma}(L,D) | \M_{\Gamma'}(L,D) \cong \R \oplus
  T\M_{\Gamma'}(L,D)\]
and the outward normal orientation on the first factor) given by a
universal sign depending only on $\Gamma,\Gamma'$. 
\end{enumerate} 

\end{enumerate} 
\end{theorem} 

\begin{proof} 
  We discuss only \eqref{transs}; the item \eqref{tubs} is a
  combination of standard gluing theorems, c.f.  Schm\"aschke
  \cite[Section 7]{schmaschke}, while orientations \eqref{oriens} are
  discussed further in Remark \ref{oriensrem} below.  \eqref{transs}
  is an application of the Sard-Smale theorem to a universal moduli
  space.  In the first part of the proof we assume that $\Gamma$ has
  no forgettable leaves and construct a perturbation datum $P_\Gamma$
  by extending the given perturbation data on the boundary of the
  universal moduli space.  Let $p \ge 2$ and $k \ge 1$ be integers
  with $kp > 2$.  As in the discussion before \eqref{mapkp} denote by
  $\Map^{k,p}(C,X,L)$ the space of continuous maps $u$ from $C$ to $X$
  of Sobolev class $W^{k,p}$ on each disk, sphere and edge component
  such that the boundaries $\partial C := \partial S \cup T$ of the
  disks and edges mapping to $L$.  That is, the restriction of $u$ to
  each component is in class $W^{k,p}_{\loc}$ (which is independent of
  the choice of Sobolev norm) and has finite $W^{k,p}$ norm with
  respect to some choice of connections on the tangent bundles of the
  domain and target (which depends strongly on the choice of norm;
  however in the end the moduli spaces will be independent of such
  choices.)  In each local chart for each component of $C$ and $X$ the
  map $u$ is given by a collection of continuous functions with $k$
  partial derivatives of class $L^p$.  The space $\Map^{k,p}(C,X,L)$
  has the structure of a Banach manifold, with a local chart at
  $u \in \Map^{k,p}(C,X,L)$ given by the geodesic exponential map
\[ W^{k,p}(C, u^* TX, (u |_{\partial C})^* TL) \to \Map^{k,p}(C,X,L),
\quad \xi \mapsto \exp_u(\xi) \]
where we assume that the metric on $X$ is chosen so that $L$ is
totally geodesic, that is, preserved by geodesic flow.  For any
combinatorial type $\Gamma$ with vertices labelled by homology classes
denote by
$\Map^{k,p}_{\Gamma}(C,X,L) \subset \Map^{k,p}(C,X,L) $
the subset of maps such that $u$ has the prescribed homology class on
each component.

For each local trivialization of the universal tree disk as in
\eqref{localtriv} we define a Banach manifold that combines variations
in the domain with variations in the map, depending on a choice of
Sobolev norm.  Let $\Map^{k,p}_\Gamma(C,X,L,D)$ denote the subspace of
maps mapping the boundary of $C$ into $L$, the interior markings into
$D$, and constant on each disk with no interior marking, and with the
prescribed order of vanishing at the intersection points with $D$.
Let $l \gg k$ be an integer and
\[ \B^i_{k,p,l,\Gamma} := {\M}^i_{\Gamma} \times 
\Map^{k,p}_\Gamma(C,X,L,D) \times {\cP}^l_\Gamma(X,D) .\]
Consider the map given by the local trivialization
\[ {\M}^{\univ,i}_{\Gamma} \to \J(S), \ m \mapsto j(m).\]
Consider the Banach bundle $\E^i_{k,p,l,\Gamma}$ over
$\B^i_{k,p,l,\Gamma}$ given by
\[ (\E^i_{k,p,l,\Gamma})_{m, u, J } \subset
\Omega^{0,1}_{j,J,\Gamma}(S, (u_S)^* TX)_{k-1,p} \oplus \Omega^1(T,
(u_T)^* TL)_{k-1,p} \]
the space of $0,1$-forms with respect to $j(m),J$ that vanish to order
$m_\pm(e)-1$ at the nodes corresponding to the edge $e$, if $m_\pm(e)$
is positive.  The space $\E^i_{k,p,l,\Gamma}$ is then a $C^q$-Banach
vector bundle for $q < l - k$.  The Cauchy-Riemann and shifted
gradient operators may be applied to the restrictions $u_S$
resp. $u_T$ of $u$ to the two resp. one dimensional parts of
$C = S \cup T$.  These define a $C^q$ section
\begin{equation} \label{olp} 
\olp_\Gamma: \B_{k,p,l,\Gamma}^i \to \E_{k,p,l,\Gamma}^i, \quad (m,u,
J,F) \mapsto 
%C
\left( \olp_{j(m),J} u_S , (\dds + \grad_F)u_T \right) \end{equation}
where 
\begin{equation} 
  \olp_{j(m),J} u :=  \hh ( \d u_S + J \d u_S j(m)), 
\end{equation} 
and $s$ 
is a local coordinate with unit speed.  The {\em local universal
  moduli space} is
\begin{equation} \label{luniversal}
{\M}^{\univ,i}_{\Gamma}(L,D) = \olp^{-1}
\B^i_{k,p,l,\Gamma} \end{equation} 
where $\B^i_{k,p,l,\Gamma}$ is embedded as the zero section. 

To show the local universal moduli space is cut out transversally, we
show that an element of the cokernel of the linearized operator
vanishes everywhere.  Suppose that
\[\eta = (\eta_2,\eta_1) \in \Omega^{0,1}_{j,J,\Gamma}(S, (u_S)^*
TX)_{k-1,p} \oplus \Omega^1(T, (u_T)^* TL)_{k-1,p} \]
is in the cokernel of derivative $\ti{D}_u$ of \eqref{olp}, introduced
in \eqref{Du} but now with an additional term arising from the
variation of almost complex structure and Morse function, with
$2$-dimensional part $\eta_2$ and one-dimensional part $\eta_1$.
Variation of \eqref{olp} with respect to the section $\xi_1$ on the
one-dimensional part $T$ gives
\[ 0 = \int_T (D_{u_1} \xi_1, \eta_1) \d s = \int_T (\xi_1, D_{u_1}^*
\eta_1) \d s, \quad \forall \xi \in \Omega^0_c(u_1^* TL) \]
where $D_{u_1}$ is the operator of \eqref{Du}.  Hence
\begin{equation} \label{covconst} \nabla_\xi * \eta_1 = 0 .\end{equation} 
On the other hand, the linearization of \eqref{olp} with respect to
the Morse function is pointwise surjective:
\[ \{ - \grad_{F_\Gamma} u_T(s) \ | \ F_{\Gamma} \in C^\ell_c(
\ol{{\T}}_{\white,\Gamma} \times L) \} = T_{u_T(s)} L .\]
This surjectivity implies that
\begin{equation} \label{pointvan} 
\eta_1 (u_T(s)) = 0, \quad \forall s \in \ol{\U}_\Gamma -
\ol{\U}_\Gamma^{\thin} .\end{equation}
Combining \eqref{covconst} and \eqref{pointvan} implies that
$\eta_1 = 0$.  Similarly the two-dimensional part $\eta_2$ satisfies
\begin{equation}  (D_u \xi_2, \eta_2) = 0, \quad \int_S ( Y \circ \d u \circ j ) \wedge \eta_2
= 0 \end{equation}
for every $\xi_2 \in \Omega^0(S, u^* TX)$ with given orders of
vanishing at the intersection points with $D$ and variation of almost
complex structure $Y \in \Omega^0(S, u^* \End(TX))$ as in
\cite[Chapter 3]{ms:jh}.  Hence in particular $D_u^* \eta_2 =0$ away
from the intersection points with $D$.  So if $\d u(z) \neq 0$ for
some $z \in S$ not equal to an intersection point then $\eta_2$
vanishes in an open neighborhood of $z$.  By unique continuation
$\eta_2$ vanishes everywhere on the component of $S$ containing $z$
except possibly at the intersection points with $D$.  At these points
$\eta_2$ could in theory be a sum of derivatives of delta functions.
That $\eta_2$ also vanishes at these intersection points follows from
\cite[Lemma 6.5, Proposition 6.10]{cm:trans}.

It remains to consider components of the two-dimensional part on which
the map is constant.  On such components variation of the almost
complex structure $J_\Gamma$ produces no variation of the one-form
$\olp_\Gamma u$ obtained by applying the Cauchy-Riemann operator.  Let
$S' \subset S$ be a union of disk and sphere components of
$S \subset C$.  If $u: C \to X$ is a map that is constant on
$S' \subset C$ the linearized operator is also constant is surjective
by standard arguments, c.f. Oh \cite{oh:rh}.  However, we also must
check that the matching conditions at the nodes are cut out
transversally.  Let $S''$ denote the normalization of $S'$, obtained
by replacing each nodal point in $S'$ with a pair of points in $S''$.
Since the combinatorial type of the component $S'$ is a connected
subgraph $\Gamma'$ of a tree, the combinatorial type $\Gamma'$ must
itself be a tree.  Denote by $T_u L$ the tangent space at the constant
value of $u$ on $S'$.  Taking the differences of the maps at the nodes
defines a map
\begin{equation} \label{diffs}  \delta: \ker(D_u | S'') \cong T_u L^k \to T_u^m, \quad \xi \mapsto (
\xi(w_i^+) - \xi(w_i^-) )_{i=1}^m .\end{equation}
An explicit inverse to $\delta$ is given by defining recursively as
follows.  Consider the orientation on the combinatorial type $\Gamma'
\subset \Gamma$ induced by the choice of outgoing semi-infinite edge
of $\Gamma$.  For $\eta \in T_u^m$ define an element $\xi \in T_u L^k$
by 
\[\xi(t(e)) = \xi(h(e)) + \eta(e) \] 
whenever $t(e),h(e)$ are the head and tail of an edge $e$
corresponding to a node.  This element may be defined recursively
starting with the edge $e_0''$ of $\Gamma'$ closest to the outgoing
edge of $\Gamma$ and taking $\xi(v) = 0$ for $v$ the vertex
corresponding to the disk component closest to outgoing edge.  The
matching conditions at the nodes connecting $S'$ with the complement
$S - S'$ are also cut out transversally.  Indeed on the adjacent
components the linearized operator restricted to sections vanishing at
the node is already surjective as in \cite[Lemma 6.5]{cm:trans}.  This
implies that the conditions at the boundary nodes cut out the moduli
space transversally.  A similar discussion for collections of sphere
components on which the map is constant implies that the matching
conditions at the spherical nodes are also cut out transversally.
This completes the proof that the parametrized linearized operator is
surjective.  The surjectivity of the parametrized linearized and the
implicit function theorem implies that the local universal moduli
space is a Banach manifold.  More precisely,
${\M}^{\univ,i}_{\Gamma}(L,D)$ is a Banach manifold of class $C^q$,
and the forgetful morphism
$\varphi_i: {\M}^{\univ,i}_{\Gamma}(L,D) \to \cP_{\Gamma}(L,D)_l $
is a $C^q$ Fredholm map.  Let
${\M}^{\univ,i}_{\Gamma}(L,D)_d \subset {\M}^{\univ,i}_{\Gamma}(L,D) $
denote the component on which $\varphi_i$ has Fredholm index $d$. 
\end{proof} 

In order to obtain compactness later it will also be necessary to
obtain a certain kind of transversality for {\em crowded}
combinatorial types.  

\begin{definition} {\rm (Regularity for crowded types)} For each such
  crowded combinatorial type $\Gamma$ let $\Gamma'$ denote the
  combinatorial type obtained by forgetting all but one marking on the
  ghost components.  The same proof shows that there exists a comeager
  subset $\PP^{\reg}_{\Gamma}(X,D)$ such that the moduli space
  $\M_{\Gamma'}(L,D)$ is a smooth manifold whose dimension is the sum
  of the expected dimension, necessarily empty if the expected
  dimension of the crowded type was at most one. We call a
  perturbation $P_\Gamma$ {\em regular} if the moduli space
  $\M_{\Gamma'}(L,D)$ is cut out transversally.
\end{definition}

\begin{definition} {\rm (Regular perturbations for types of disks)}
  For any combinatorial type $\Gamma$ of adapted treed disk there
  exist ( up the multiple breakings of edges) finitely many
  combinatorial types $\Gamma_X$ of {\em pseudoholomorphic} treed
  disks whose underlying type of treed disk is $\Gamma$.  Indeed, if a
  component $u|S_v: S_v \to X$ is non-constant, then its energy
  $E(u|S_v)$ is controlled by the number of interior leaves as in
  Proposition \ref{control}.  On the other hand, if $u|S_v$ is
  constant then its energy is zero.  Thus the energy of the map $u(C)$
  is controlled by the number $\# \Edge_{\black,\rightarrow}$ of
  interior leaves.  This implies that there are a finite number of
  combinatorial types $\Gamma_X$ that correspond to any particular
  $\Gamma$.  Given a type $\Gamma$ of domain, define
\[ \PP^{\reg}_\Gamma(X,D) = \cap_{\Gamma_X} \PP^{\reg}_{\Gamma_X}(X,D) \]
where $\Gamma_X$ ranges over types of pseudoholomorphic treed disks
with type of domain $\Gamma$.  We call perturbation data
$P_\Gamma \in \PP^{\reg}_\Gamma(X,D)$ {\em regular}.  Since countable
intersections of comeager sets are comeager, Theorem \ref{main}
implies the regular perturbation data form a comeager subset.  This
ends the definition. \end{definition}

\begin{remark} The strata $\M_{\Gamma'}(L,D,\ul{x})$ of expected
  dimension one, with $\M_{\Gamma'}$ of maximal dimension, have
  boundary strata $\M_{\Gamma}(L,D,\ul{x})$ that are of two possible
  types.
\begin{enumerate} 
\item The first possibility is that $\Gamma$ has a broken edge, as in Figure \ref{brokenfig}. 
\begin{figure}[ht]
\includegraphics[height=2in]{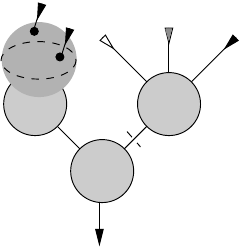}
\caption{Treed disk with a broken edge}
\label{brokenfig}
\end{figure} 
Note that the broken edge $e$ may be a leaf and that leaf may have
label $x^{\greyt}$ and weight $0$ or $\infty$ as in Remark
\ref{const}.  In this case any configuration $u$ of type $\Gamma$ is
constant on the first segment in the leaf and the normal bundle is
again isomorphic to $\R_{\ge 0}$. The normal direction corresponds to
deformations that replace that segment with an unbroken segment
labelled $x^{\greyt}$ and deform the weight $\rho(e)$ away from $0$
resp. $\infty$.
\item The second possibility is that $\Gamma$ corresponds to a stratum
  with a boundary node: either $T$ has an edge of length zero or
equivalently $S$ has a disk with a boundary node. See Figure \ref{zero}. 
\end{enumerate} 

\begin{figure}[ht]
\includegraphics[height=2in]{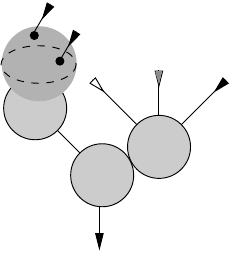}
\caption{Treed disk with a zero-length edge}
\label{zero}
\end{figure}

We call $\M_\Gamma(L,D)$ in the first resp. second case a {\em true}
resp.  {\em fake boundary stratum.}  A fake boundary stratum
$\M_\Gamma(L,D)$ is not a part of the boundary $\partial \ol{\M}(L,D)$
of the moduli space of adapted treed disks in the sense that
$\ol{\M}(L,D)$ is homeomorphic to an open ball, rather than a
half-ball near a $\M_\Gamma(L,D)$ if regular.  Indeed any such treed
disk $u:C \to X$ may be deformed either by deforming the disk node
$z \in C$, or deforming the length $\ell(e) $ of the edge corresponding
to the node to a positive real number $\ell(e) > 0$.
\end{remark}

\begin{remark} \label{oriensrem} 
{\rm (Orientations)} Orientations on moduli spaces of
  treed pseudoholomorphic disks are defined in the presence of
  relative spin structures on the Lagrangian as in Fukaya-Oh-Ohta-Ono
  \cite[Chapter 10]{fooo}, \cite{orient}.  A {\em relative spin
    structure} for an $n$-dimensional oriented Lagrangian
  $L \subset X$ is a lift of the \v{C}ech class of its tangent bundle
  $TL$ to relative non-abelian cohomology with values in $\Spin(n)$
  relative to the map $i: L \to X$; equivalently, a collection of
  lifts 
\[ \ti{\psi}_{\alpha \beta}: U_{\alpha} \cap U_\beta \to
\Spin(n) \]  
for $TL$ of the transition maps
$\psi_{\alpha \beta}: U_{\alpha} \cap U_\beta \to SO(n)$ for $TL$ with
respect to some open cover $\{ U_\alpha, \alpha \in I \}$ of $L$ to
$\Spin(n)$ satisfying the cocycle condition
\[ \ti{\psi}_{\alpha \beta} \ti{\psi}_{\alpha \gamma}^{-1} \ti{\psi}_{\beta \gamma} =
i^* \phi_{\alpha \beta \gamma} \]
where
$(\phi_{\alpha \beta \gamma} \in \{ \pm 1 \})_{\alpha \beta \gamma}$
is a $2$-cycle on $X$.  Given a relative spin structure, the
determinant line on the treed pseudoholomorphic disk is oriented as
follows: Fix coherent orientations on the stable and unstable
manifolds of the Morse function on $L$.  Let
$(u : C \to X) \in \M_\Gamma(L,D)$ be an adapted pseudoholomorphic
treed disk of combinatorial type $\Gamma$ of expected dimension $0$.
Assuming that regularity has been achieved, the linearization
$D \olp_\Gamma|_{P_\Gamma}$ of the section $\olp_\Gamma$ of
\eqref{olp} restricted to the perturbation $P_\Gamma$ is an
isomorphism.  Choose orientations
\[ O_{x_i}: W^\pm(x_i) \to \det (T W^{\pm(x_i)}) \] 
on the stable and unstable manifolds of the Morse function
$W^\pm(x_i)$ for $x_i \in \cI(L)$ so that the map
\[ T_{x_i} W^-(x_i) \oplus T_{x_i} W^+(x_i) \to T_{x_i} L \]
induces an orientation-preserving isomorphism of determinant lines
\begin{equation} \label{coho} \det (T_{x_i} W^-(x_i) \oplus T_{x_i} W^+(x_i)) \to \det(T_{x_i} L)
.\end{equation}
In case $x_i = x^{\greyt}$ we define 
$W^\pm(x^{\greyt}) = W^\pm(x_M) \times \R$
and choose orientations similarly.  One naturally obtains an
orientation of the determinant line $\det(\ti{D}_u)$ of the linearized
operator $\ti{D}_u$ of \eqref{Du} from the isomorphism
\begin{equation} \label{orienteq} \det(\ti{D}_u) \to
  \det(T\M_{\Gamma}) \otimes \det(TL) \otimes \det( TW^+(x_0)) \otimes
  \bigotimes_{i=1}^n \det(TW^-(x_i)), \end{equation}
the orientations on the stable and unstable manifolds $W^\pm(x_i)$ and
the orientation on the underlying moduli space $\M_\Gamma$ of treed
disks.  See \cite{charest:clust}, \cite{orient} for similar
discussions.   In particular, denote  by 
\begin{equation} \label{signs} \eps: \M(L,D)_0 \to \{ \pm 1
  \} \end{equation}
the map assigning to any rigid treed disk $u$ the associated sign.
The case of the type $\Gamma$ of a trivial trajectory
$u: C \cong \R \to X$ with no disks is treated separately: In the case
of a trajectory connecting $x^{\greyt}$ with $x^{\whitet}$
resp. $x^{\blackt}$ the moduli space is a point and we define the
orientation to agree resp. disagree with the standard orientation.
This ends the Remark.
\end{remark} 

\section{Compactness}  

In this section we show that the subset of the moduli space satisfying
an energy bound is compact for suitable perturbation data. 

\begin{definition} \label{Estab} For $E > 0$, an almost complex structure
  $J_D \in \J_\tau(X)$ is {\em $E$-stabilized} by a divisor $D$ if and only if
\begin{enumerate}
\item {\rm (Non-constant spheres)} $D$ contains no
  non-constant $J_D$-holomorphic spheres of energy less than $E$;
  and
\item {\rm (Sufficient intersections)} each non-constant
  $J_D$-holomorphic sphere $u: C \to X$ resp. $J_D$-holomorphic disk
  $u: (C,\partial C) \to (X,L)$ with energy less than $E$ has at least
  three resp. one intersection points resp. point with the divisor
  $D$:
\[ E(u) < E \ \ \implies \ \ \# u^{-1}(D) \ge 1 + 2 (\chi(C) - 1) \]
where $\chi(C)$ is the Euler characteristic.
\end{enumerate} 
Denote by 
\[H_2(X,\Z)_{\black} \subset H_2(X,\Z), \quad \text{resp.} \quad
H_2(X,L,\Z)_{\white} \subset H_2(X,L,\Z) \] 
the set of classes representing non-constant $J_D$-holomorphic
spheres, resp.  non-constant $J_D$-holomorphic disks with boundary in
$L$.  A divisor $D$ with Poincar\'e dual $[D]^\dual = k [\omega]$ for
some $k \in \N$ has {\em sufficiently large degree} for an almost
complex structure $J_D$ if and only if
\begin{equation} 
\label{large} 
\begin{array}{lll} 
([D]^\dual,\alpha) & \geq 2(c_1(X),\alpha) + \dim(X) + 1 &  \forall \alpha \in H_2(X,\Z)_{\black}\\ 
([D]^\dual,\beta) & \geq 1 & \forall \beta \in H_2(X,L,\Z)_{\white} .
\end{array}
\end{equation} 
This ends the definition.
\end{definition} 

We introduce the following notation for spaces of almost complex
structures sufficiently close to the given one.  Given $J \in
\J_\tau(X,\omega)$ denote by 
\[\J_\tau(X,D,J,\theta) = \{ J_D \in \J_\tau(X,\omega) \ | \Vert J_D -
J \Vert < \theta, \quad J_D(TD) = TD \} \]
the space of tamed almost complex structures close to $J$ in the sense
of \cite[p. 335]{cm:trans} and preserving $TD$.  The following lemma
on existence of stabilizing almost complex structures is a special
case of the results of \cite{cm:trans}.

\begin{lemma} \label{largelem} {\rm (Cieliebak-Mohnke \cite[Proposition
  8.14, Corollary 8.20]{cm:trans}. )} For $\theta$ sufficiently small,
  suppose that $D$ has sufficiently large degree for an almost complex
  structure $\theta$-close to $J$.  For each energy $E > 0$, there
  exists an open and dense subset
\[\J^*(X,D,J,\theta,E) \subset \J_\tau(X,D,J,\theta)\] 
such that if $J_D \in \J^*(X,D,J,\theta,E)$, then $J_D$ is
$E$-stabilized by $D$.  Similarly, if $D = (D^t)$ is a family of
divisors for $J^t$, then for each energy $E > 0$, there exists a dense
and open subset of time-dependent almost complex structures
\[\J^*(X,D^t,J^t,\theta,E) \subset \J_\tau(X,D^t,J^t,\theta)\] 
such that if $J_{D^t} \in \J^*(X,D^t,J^t,\theta,E)$, then $J_{D^t}$ is
$E$-stabilized for all $t$.
\end{lemma} 

We restrict to perturbation data taking values in
$\J^*(X,D,J,\theta,E)$ for a (weakly or strictly) stabilizing divisor
$D$ having sufficiently large degree for an almost-complex structure
$\theta$-close to $J$.  Let $J_D \in \J_\tau(X,D,J,\theta)$ be an
almost complex structure that is stabilized for all energies, for
example, in the intersection of $\J^*(X,D,J,\theta,E)$ for all $E$.
For each energy $E$, there is a contractible open neighborhood
$\J^{**}(X,D,J_D,\theta,E)$ of $J_D$ in $\J^*(X,D,J,\theta,E)$ that is
$E$-stabilized.

\begin{definition} \label{stabilized} A perturbation datum
  $P_\Gamma = (F_\Gamma,J_\Gamma)$ for a type of stable treed disk
  $\Gamma$ is {\em stabilizing} with respect to $D$ if $J_\Gamma$
  takes values in $\J^*(X,D,J,\theta, n(\Gamma_i,k))$ on
  $\ol{\U}_{\Gamma_i}$ (in particular, if $J_\Gamma$ takes values in
  $\J^{**}(X,D,J,\theta,n(\Gamma_i,k))$.)
\end{definition} 

To save space we call a perturbation system $\ul{P} = (P_\gamma)$ {\em
  admissible} if it is coherent, regular, and stabilizing.

\begin{theorem} \label{compthm} {\rm (Compactness for fixed type)} For
  any collection $\ul{P} = (P_\Gamma)$ of admissible perturbation data
  and any uncrowded type $\Gamma$ of expected dimension at most one,
  the moduli space $ \ol{\M}_\Gamma(L,D)$ of adapted treed marked
  disks of type $\Gamma$ is compact and the closure of
  $\M_\Gamma(L,D)$ contains only configurations with disk bubbling.
\end{theorem} 

\begin{proof} (c.f.  \cite[Proof of Proposition 4.10]{cw:traj}.)
  Because of the existence of local distance functions, similar to
  \cite[Section 5.6]{ms:jh}, it suffices to check sequential
  compactness.  Let $u_\nu: C_\nu \to X$ be a sequence of stable treed
  disks of type $\Gamma$, necessarily of fixed energy $E(\Gamma)$.
  After passing to a subsequence we may assume that the sequence of
  stable treed disks $[C_\nu]$ converges to a limiting stable disk
  $[C]$ in $\ol{\M}_\Gamma$.  Considering each disk or sphere
  component or Morse trajectory of $u_\nu$ separately one sees the
  sequence of maps $u_\nu: C_\nu \to X$ has subsequence that admits a
  stable Gromov limit $u: \hat{C} \to X$, where
  $\hat{C} = \hat{S} \cup \hat{T}$ is a possibly unstable sphere or
  disk with stabilization $C$ of type $\Gamma_\infty$.

  We show that the limit constructed in the previous paragraph is
  adapted, that is, interior leaves correspond to intersections with
  the Donaldson hypersurface.  By definition we have
  $J_\Gamma(x,z) = J_D (x) , \ \forall x \in D.$ Now
  $J_D \in \J^*(X,D,J,\theta,n(\Gamma_i,k))$ was chosen so that $D$
  contains no non-constant $J_D$-holomorphic spheres.  To show the
  (Leaf axiom) property, note that each connected component $C_i$ of
  $u^{-1}(D)$ has surface part $S_i = S \cap C_i$ either a point $z_i$ or a
  union of sphere and disk components.  In the first case, the
  intersection multiplicity $m(z_i)$ with the divisor at the point
  $z_i$ is positive while in the second, the intersection multiplicity
  $m(w)$ at each node $w \in S_i$ connecting the component $S_i$ with
  the rest of the domain $S - S_i$ is positive.  In the first case, by
  topological invariance of the intersection multiplicity there exists
  a sequence $z_\nu$ of isolated points in $u_\nu^{-1}(D)$ with
  positive intersection multiplicity converging to $z_i$.  By the
  (Leaf axiom) property of $u_\nu$, the points $z_\nu$ must be
  markings.  Hence $z_i$ is a marking as well.  In the second case,
  the positivity of the intersection multiplicity implies that there
  exists a sequence of points
  $z_{\nu,i} \in u_\nu^{-1}(D) \subset C_\nu$ converging to $C_i$.
  Either $z_{\nu,i}$ are isolated in $u_\nu^{-1}(D)$, or lie on some
  sequence of connected components $C_{\nu,i}$ of
  $u_\nu^{-1}(D) \subset C_\nu$ on which $u_\nu$ is constant in which
  case any limit point of $C_{\nu,i}$ lies in $C_i$.  Since each
  $C_{\nu,i}$ contains a marking by the (Leaf axiom) property, so does
  $C_i$.  Note that if $u_\nu(z_{i,\nu}) \in D$ then $u(z_i) \in D$,
  by convergence on compact subsets of complements of the nodes.  This
  shows the (Leaf axiom) property.

  We next show the (Stable surface axiom).  Note that since $D$ is
  stabilizing for $L$, any non-constant disk component $\hat{S}_i$ of
  $\hat{S}$ must have at least one interior intersection point, call
  it $z$, with $u^{-1}(D)$.  Since $z$ lies in the interior of $S_i$,
  and the boundary $\partial S_i$ has at least one special point the
  component $\hat{S}_i$ is stable.  Next consider a spherical
  component $\hat{S}_i$ of $\hat{S}$ attached to a component of $S$.
  First suppose that $u$ is non-constant on $\hat{S}_i$.  Suppose that
  $\hat{S}_i$ is attached at a point of $S$ contained in
  $\ol{\U}_{\Gamma_i}$ for some type $\Gamma_i$, so that the energy of
  $u |_{\hat{S}_i}$ is at most $n(\Gamma_i,k)$.  Since $J_\Gamma$ is
  constant and equal to an element of
  $\J^*(X,D,J,\theta, n(\Gamma_i,k))$ on $\hat{S}_i$, the restriction
  of $u$ to $\hat{S}_i$ has at least three intersection points
  resp. one intersection point with $D$.  By definition these points
  must be markings, which contradicts the instability of $\hat{S}_i$.
  Hence the stable map $u$ must be constant on $\hat{S}_i$, and thus
  $\hat{S}_i$ must be stable.  This shows that $\hat{S}$ is equal to
  $S$ and the (Stable surface axiom) follows.

  It remains to rule out sphere components in the limit.  Suppose the
  limiting domain $C$ has a spherical component $S_v \subset S$.  By
  the (Locality Axiom), forgetting all but one marking on maximal
  ghost components gives a configuration $u_{\on{red}}: C_{\on{red}} \to X$ in
  an uncrowded moduli space $\M_{\Gamma_{\on{red}}}(L,D)$ for some type
  $\Gamma_{\on{red}}$ with respect to some perturbation datum
  $P_{\Gamma_{\on{red}}}^{\on{red}}$ given by the perturbation datum
  $P_{\Gamma_\circ}$ on disk components and p perturbations
  $P_{\Gamma_v}^{\on{red}}$ for sphere components $S_v^{\on{red}}$ of $C_{\on{red}}$.
  For generic choices of $P_\Gamma$, the moduli space
  $\M_{\Gamma_{\on{red}}}(L,D)$ with respect to this induced perturbation
  datum is of expected dimension.  Since the configuration $u_{\on{red}}$
  has a spherical node, this expected dimension is at least two less
  than the expected dimension of $\M_\Gamma(L,D)$, which is at most
  one.  Hence the dimension $\dim \M_{\Gamma_{\on{red}}}(L,D)$ is negative,
  a contradiction.
\end{proof}

\section{Composition maps} 
\label{novsec}

In this section we use pseudoholomorphic treed disks to define the structure
coefficients of the Fukaya algebra.  Let $q$ be a formal variable and
$\Lambda$ the {\em universal Novikov field} of formal sums with
complex coefficients
\begin{equation} \label{novikov} \Lambda = \Set{ \sum_i c_i
    q^{\alpha_i} \ | \  c_i \in \C, \ \alpha_i \in \R, \quad \alpha_i
    \to \infty . } \end{equation}
Denote by $\Lambda_{\ge 0}$ resp. $\Lambda_{ >0}$ the subalgebra with
only non-negative resp.  positive exponents.  Denote by
\[\Lambda^\times = \Set{ c_0 + \sum_{i > 0} c_i q^{\alpha_i} \in 
  \Lambda_{\ge 0} \ | \ c_0 \neq 0 } \]
the subgroup of formal power series with invertible leading
coefficient.  Although our Fukaya algebras will be defined over the
rationals, allowing complex number provides additional solutions to
the Maurer-Cartan equation.

Lagrangians will be equipped with additional data called {\em brane
  structures}.  Let $\Lag(X)$ denote the fiber bundle over $X$ whose
fiber $\Lag(X)_x$ at $x$ consists of Lagrangian subspaces of $T_x X$.
Let $g$ be an even integer.  A {\em Maslov cover} is an $g$-fold cover
$\Lag^g(X) \to \Lag(X)$
such that the induced two-fold cover 
\[\Lag^2(X) := \Lag^g(X)/\Z_{g/2} \to \Lag(X)\]
is the oriented double cover.  A Lagrangian submanifold $L$ is {\em
  admissible} if $L$ is compact and oriented; we also assume for
simplicity that $L$ is connected.  A {\em grading} on $L$ is a lift of
the canonical map
\[L \to \Lag(X), \quad l \mapsto T_l L \]
to $\Lag^g(X)$, see Seidel \cite{se:gr}.  A {\em brane structure} on a
compact oriented Lagrangian is a relative spin structure together with
a grading and a $\Lambda_{\times}$ local system.  (The grading will be
irrelevant until we discuss Fukaya bimodules.)  An {\em admissible
  Lagrangian brane} is an admissible Lagrangian submanifold equipped
with a brane structure.  For $L$ an admissible Lagrangian brane define
the space of {\rm Floer cochains}
\[ CF(L) = \bigoplus_{d \in \Z_g} CF_d(L), \quad
CF_d(L) = \bigoplus_{x
  \in \cI_d(L)
 } \Lambda x \]
 where the degree $d$ generators $ \cI_d(L)$
\label{CFdef} 
 are as in \eqref{degreepart}.  Let $CF^{\on{geom}}(L)
 \subset CF(L)$ the subspace generated by $x \in \cI^{\on{geom}}(L)$.

 The composition maps are defined in the following.  Given a
 Lagrangian brane $L$ and a tree disk $u: C \to X$ with boundary in
 $L$, we denote by $y(u) \in \Lambda^\times$ the evaluation of the
 local system $y \in \RR(L)$ on the homotopy class of loops
 $[\partial u] \in \pi_1(L)$ defined by going around the boundary of
 each disk component $S_v \subset C$ in the treed disk $C$ once.
 Denote by $\sigma(u) \in \Z_{\ge 0}$ the number of interior leaves of
 $u \in \ol{\M}(L,D,\ul{x})$.  Let $E(u)$ denote the energy, or
 equivalently area, of surface part of $u$, c.f. \eqref{energyarea}
 below.

 \begin{definition} {\rm (Composition maps)} For admissible
   perturbation data $(P_\Gamma)$ define
\[ \mu^{n}: CF(L)^{\otimes n} \to CF(L) \]
on generators by 
\begin{equation} \label{ongenerators} \mu^{n}(x_1,\ldots,x_n) =
  \sum_{x_0,u \in \M(L,D,\ul{x})_0} (-1)^{\heartsuit}
  (\sigma(u)!)^{-1} y(u) q^{E(u)} \eps(u) x_0 \end{equation} 
where $ \heartsuit = {\sum_{i=1}^n i|x_i|}$ and $\eps(u)$ is the sign of
\eqref{signs}.
\end{definition} 

\begin{remark} {\rm (Zero-th composition map is a quantum correction)}   
  Any configuration $u: C \to X$ with no boundary leaves
  $T_e \subset T$ must have at least one non-constant disk component
  $S_v \subset S$.  Indeed, since each vertex has valence at least
  three, any configuration must have leaves but in this case all
  leaves must be interior.  Hence $\mu^0(1)$ has
  $\on{val}_q(\mu^0(1)) > 0$.
\end{remark} 

\begin{remark} {\rm (Leading order term in the first composition map)}  
  The constant trajectories at the maximum $x_M$ with weighted leaf
  and outgoing unweighted root are elements of
  ${\M}_\Gamma(L,D,x_0,x^{\greyt})_0$ for some type $\Gamma$, as in
  Remark \ref{examples}.  The orientations on these trajectories are
  determined by the orientation on $\M_\Gamma$. By the discussion
  after \eqref{orienteq} the orientation is negative resp. positive
  for $x_0 = x^{\blackt}$ resp.  $x_0 = x^{\whitet}$.  Hence
  \begin{multline} \label{diff} \mu^{1}(x^{\greyt}) = x^{\whitet} -
    x^{\blackt}  + \sum_{x_0,[u] \in \M(L,D,x^{\greyt},x_0)_0, E(u)
      > 0} (-1)^{\heartsuit} (\sigma(u)!)^{-1} q^{E(u)} \eps(u)y(u)
    x_0 .\end{multline}
This formula is similar to that in Fukaya-Oh-Ohta-Ono 
\cite[(3.3.5.2)]{fooo}.  Presumably the discussion here is a version
of their treatment of homotopy units.
\end{remark}

\begin{theorem} \label{yields} {\rm (\ainfty algebra for a Lagrangian)}   
For any admissible perturbation system $\ul{P} =
(P_\Gamma)$ the maps $(\mu^n)_{n \ge 0}$ satisfy the axioms of a 
(possibly curved) \ainfty algebra $CF(L)$ with strict 
unit.  The subspace $CF^{\on{geom}}(L)$ is a subalgebra without unit. 
\end{theorem} 

\begin{proof}  
  By Theorems \ref{main} and \ref{compthm}, the one-dimensional
  component of the moduli space with energy bound $\ol{\M}^{< E}(L)_1$
  is a finite union of compact oriented one manifolds
  $\M^{< E}_\Gamma(L)$ for some combinatorial types $\Gamma$ with a
  single broken edge.  Hence the boundary points of the moduli space
  come in pairs.  This produces the identity mod $2$
  \begin{equation} \label{ident} 0 = \sum_{\Gamma \in \TT_{n,m}}
    \sum_{ [u] \in
    \partial \ol{\M}_{\Gamma}(L,D,\ul{x})_1} \eps(u) (\sigma(u)!)^{-1}
  q^{E(u)}  y(u) \end{equation}
where $\TT_{n,m}$ denotes the set of types of treed disks with $n$
boundary and $m$ interior edges.  In case the moduli spaces do not
involve weightings then each combinatorial type $\Gamma \in \TT_{n,m}$
with a single interior edge $e \in \Edge_{-}(\Gamma)$ of infinite
length $\ell(e) = \infty$ is obtained by gluing together graphs
$\Gamma_1,\Gamma_2$ with $n-n_2+1$ and $n_2$ leaves along
semi-infinite edges $e_-,e_+$, say with $m_1$ resp.  $m_2$ interior
markings.  By the (Cutting edges axiom) we have an isomorphism
\begin{multline} \ol{\M}_{\Gamma}(L,D,\ul{x}) = \bigcup_{w \in \cI(L)} 
	\ol{\M}_{\Gamma_1}(L,D,x_0,x_1,\ldots,x_{i-1},w,x_{i+n_2}, \dots, x_n)
	\\	\times \ol{\M}_{\Gamma_2}(L,D,w,x_{i},\ldots,x_{i+n_2-1}) 
  .\end{multline}
Since there are $m$ choose $m_1,m_2$ ways of distributing the interior
leaves to the two component graphs, mod $2$ we have
\begin{multline}
 0 = \sum_{ \substack{i,m_1 + m_2 = m \\ [u_1] \in
    {\M}_{\Gamma_1}(L,D,x_0,x_1,\ldots,x_{i-1},y,x_{i+n_2}, \dots,
    x_n)_0 \\ [u_2] \in
    {\M}_{\Gamma_2}(L,D,y,x_{i},\ldots,x_{i+n_2-1})_0 } } (m!)^{-1}
\left( \begin{array}{l} m \\ m_1 \end{array} \right) q^{E([u_1]) +
  E([u_2])}\\ \eps([u_1]) (\eps[u_2]) {y}(u_1) {y}(u_2).
 \end{multline}
 This gives the \ainfty axiom \eqref{ainftyassoc} up to signs.  We
 refer to \cite{ainfty} for the sign computation.  In the case of a
 weighted leaf $e$ one has additional boundary terms corresponding to
 types where the weighting $\rho(e)$ becomes zero or infinity.  Those
 configurations correspond to splitting off a constant Morse
 trajectory with weighted leaf
 $e \in \Edge_{\rightarrow}^{\greyt}(\Gamma)$ and outgoing forgettable
 or unforgettable root
 $e_0 \in \Edge_{\rightarrow}^{\blackt}(\Gamma) \cup
 \Edge_{\rightarrow}^{\whitet}(\Gamma)$.
 These correspond to the terms $x^{\whitet}$ and $x^{\blackt}$ in
 $\mu^{1}(x^{\greyt})$.

 The existence of strict units follows from the (Forgettable edges
 axiom).  We claim that a strict unit is given by the element
 $e_L = x^{\whitet} \in CF(L)$.  By the (Forgettable edges) axiom, the
 perturbation data $P_\Gamma$ used to define
 $\ol{\M}_\Gamma(L,D,\ldots, x_{i-1}, x_i = x^{\whitet}, x_{i+1},
 \ldots, )$
 is pulled back, on the stable part of the universal curve, from data
 $P_{\Gamma'}$ for
 $\ol{\M}_{\Gamma'}(L,D,\ldots,x_{i-1}, x_{i+1}, \ldots, )$ for the
 type $\Gamma'$ obtained from $\Gamma$ by forgetting the corresponding
 leaf $e_i$.  As long as $\Gamma'$ has at least one vertex, the moduli
 space $\ol{\M}_{\Gamma'}(L,D,\ldots,x_{i-1}, x_{i+1}, \ldots, )$ is
 one dimension lower, and so empty.  Thus the compositions
 $\mu^n(\ldots, x^{\whitet}, \ldots)$ vanish except for the case that
 the resulting type has no stabilization: In the case $n = 2$, the
 underlying configuration $u: C' \to X$ has no non-constant disks
 $S_i \subset C$, as in Figure \ref{triv}.  One obtains from a
 configuration with type $\Gamma'$ and constant values the identity
\[\mu^2(x,x^{\whitet}) = (-1)^{\deg(x)} \mu^2(x^{\whitet},x) =
x , \quad \forall x \in \cI(L) . \qedhere \]
\end{proof} 

\section{Divisor equation} 

The divisor equation \cite[2.2.4]{km} expresses the fact that the
number of pseudoholomorphic curves mapping a point on the domain to a
codimension two submanifold (divisor) is the pairing of the homology
class of the map with the class of the divisor.  In the case of
pseudoholomorphic curves with boundary, the resulting equation
\cite[Proposition 6.3]{cho} gives an equality once one sums over all
possible places of the insertion (since the theory is not invariant
under permutations of the insertions.)  In any particular perturbation
scheme, the existence of such an equation relies on the existence of
forgetful maps.  The Morse moduli spaces above do not admit forgetful
morphisms in general, hence the divisor axiom is not satisfied.
However, there is a weak version for the case of a single leaf.  Let
$\Gamma$ be a combinatorial type with a single boundary leaf and at
least one interior leaf and $f(\Gamma)$ the type obtained by
forgetting the leaf.  We consider perturbations of the form
\begin{equation} \label{fp}  P_\Gamma = f^*
  P_{f(\Gamma)} \end{equation} 
for type $\Gamma$ pulled back under the map of universal curves
$f: \U_\Gamma \to \U_{f(\Gamma)}$.

\begin{lemma} \label{fexist} Let $\Gamma$ be a combinatorial type of adapted
  pseudoholomorphic treed disk with a single boundary leaf of expected
  dimension at most one and suppose that $J_D \in \J_\tau(X)$ is such
  that all non-constant $J_D$-holomorphic disks $u$ in $X$ with
  boundary in $L$ have positive Maslov index $I(u) > 0$.  For a
  comeager set of perturbations $\PP_{f(\Gamma)}^{\reg,\on{div}}$, if
  $P_{f(\Gamma)} \in \PP_{f(\Gamma)}^{\reg,\on{div}}$ then the moduli
  space $\M_\Gamma(X,L)$ is smooth of expected dimension.
\end{lemma}

\begin{proof} Let $C$ be a tree disk of type $\Gamma$ with single
  boundary leaf $e$ and $f(C)$ the treed disk obtained by forgetting
  $e$ and stabilizing.  In the construction of the local universal
  moduli space \eqref{luniversal} perturbations of the form
  $f^* P_{f(\Gamma)}$ suffice to achieve transversality.  Indeed, at
  most one disk component of $C$ is collapsed under the forgetful map
  (depending on whether the disk containing $e \cap S$ is constant or
  not).  If the disk $S_v$ containing $e \cap S$ is not collapsed,
  then variations $\xi \in \Omega^0(S_v , (u| S_v)^* TX)$ that vanish
  at the point $S \cap e$ are enough to force an element $\eta$ in the
  cokernel of the linearized operator to satisfy $D_u^* \eta = 0$ on
  $S_v$.  The same argument as in that construction produces a
  comeager subset of perturbations.  If $S_v$ is collapsed, then it
  maps to a point on an edge of $f(C)$ which is necessarily
  combinatorially semi-infinite in at most one direction, since $C$
  must contain at least one non-constant disk.  Let $z = f(S \cap e)$
  be the collapsed disk in $f(C)$.  If $z$ lies on an edge $T_e$ not
  diffeomorphic to $\R$ (that is, a semi-infinite rather than infinite
  interval) then domain-dependent perturbations $F_\Gamma$ of the
  Morse function suffice to make the moduli space of maps
  $v: f(C) \to X$ with $v(z) \in W_{l_1}^-$ transverse, and this
  moduli space is isomorphic to $\M_\Gamma(X,L)$.  The final case is
  that $z$ maps to a broken segment $e \cong \R$ of a semi-infinite
  edge.  The latter corresponds to a broken configuration $u: C \to X$
  that splits into pieces $u_1: C_1 \to X$ with no leaves, a
  configuration $u_2: C_2 \to X$ with two leaves but only non-constant
  disks, and a Morse trajectory $u_3: \R \to L$.  Since the
  configuration $u_1$ must be non-negative expected dimension, the
  breaking must be at a critical point $l_2 \in \cI(L)$ of index zero.
  There are no such configurations, since $u_3$ must flow to a
  critical point of positive index. \end{proof}

Note that in general one may not choose perturbations pulled back
under forgetful maps: Configurations $u \in \M_\Gamma$ with constant
disks correspond to intersections of the unstable manifolds
$W_x^-, x \in \crit(m)$ of the unperturbed Morse function
$m: L \to \R$.  The stable manifolds $W_x^-,W_y^-$ are not
transversally intersecting for obvious reasons (e.g. they may be equal
if $x = y$.) \label{complicatedp} \llabel{complicated}

Perturbations satisfying the forgetful property \eqref{fp} give the
following weak form of the divisor equation.  Let
$c \in \ker(\mu^{1,0})$ be a cocycle for the Morse operator
$\mu^{1,0}$ and $[c] \in H(L)$ the Morse homology class, represented
by the sum of unstable manifolds $W^-(x)$ of the critical points $x$
appearing in $c$.  That is, if $c = \sum c_x x$ then
\[ [c ] = \left[ \sum c_x W^-(x) \right] .\]
  As in Fukaya-Oh-Ohta-Ono \cite[Lemma 
  13.1]{fooo:toric1} we write 
 \begin{equation} \label{decompose}  \mu^d = \sum_\beta q^\beta
   \mu^{d,\beta} \end{equation} 
  for some complex numbers $\mu^{d,\beta}$.  

  \begin{proposition} {\rm (Weak divisor equation)} Suppose that an
    admissible perturbation system $\ul{P} = (P_\Gamma)$ is such that
    $P_\Gamma = f^* P_{f(\Gamma)}$ for any type $\Gamma$ with one
    leaf.  For any Morse cycle $c \in \ker(\mu^{1,0}) \cap CF^1(L)$,
    we have
\begin{eqnarray}\label{diveq} 
 \mu^1(c) &=& \sum_{u \in \M_1(l_0)} q^{A(u)} \eps(u) \sigma(u) ( [
              \partial u], [c]) y(u) \\ &=&
                                               \sum (\partial \beta,[c])  q^{\beta} \mu^{0,\beta} (c) 
                                               \end{eqnarray}
where $\partial \beta$ is the image of $\beta \in H_2(X,L)$ in
$H_1(L)$.  
\end{proposition}

\begin{proof} By assumption there is a forgetful morphism
\begin{equation} \label{forgetful} 
f: \M_\Gamma(X,L,D,x_1,x_0) \to 
\M_{f(\Gamma)}(X,L,D,x_0)\end{equation}
obtained by forgetting the map on the leaf.  The fiber over 
an element 
\[ [u: C \to X] \in \M_{\Gamma'}(x_0) \] 
is the set of points in the boundary $\partial C$ mapping to
$W^-(x_1)$,
\[ f^{-1}(u) \cong (u | \partial C)^{-1}(W^-(x_1)) \]
and is cut out transversally.  The areas, numbers of interior leaves,
and holonomies of $u$ and $f(u)$ are equal.  Furthermore, by
construction the sign $\eps(u)$ is equal to $\eps(f(u))$ times the
sign of the corresponding intersection of
$(u | \partial C)^{-1}(W^-(x_1))$.  Indeed in the case that
$\{ z \} = S \cap e$ is a point on a constant disk with one other
leaf $e' \subset T$, attaching the edge $e$ at the disk with
the leaves reversed gives a contribution with opposite sign;
thus we may ignore such contributions.  In the case that
$\{ z \} = S \cap e$ attaches to a non-constant disk $S_v \subset S$,
the construction of orientations in \eqref{orienteq} defines the
orientation on these moduli spaces as that on
$ \det(T f^{-1}(C)) \otimes \det(TW^-(x_1))$ which is identified with
the trivial determinant line $\R$ via the determinant line of
$\det(D_z u) \ \on{mod} TW^-(x_1)$. This identification is positive
 if the intersection of $\partial u$ with $W^-(x_1)$ is
positive.  The claimed equality follows.
\end{proof}

The following argument similar to that Fukaya-Oh-Ohta-Ono
\cite[Section 13]{fooo:toric1} in the toric case implies that critical
points of the disk potential determine Floer-non-trivial brane
structures: Let
\[ \cR(L) := \Hom(\pi_1(L),\Lambda^\times) \]
denote the space of isomorphism classes of local systems on $L$.  Any
representation descends to the abelianization $H_1(L)$ of $\pi_1(L)$.
Coordinates on $\cR(L)$ are given by exponentiation:
\[ \exp:  H^1(L,\Lambda_{\ge 0}) 
\to \cR(L), \quad f \mapsto \exp(f) .\]
In particular we have an isomorphism of tangent spaces 
\[ T_{y} \cR(L) \cong H^1(L,\Lambda_{\ge 0}) \]
at any ${y} \in \cR(L)$.  Any element of $\cR(L)$ is given by an
element $[f] \in H^1(L,\Lambda_{\ge 0})$ in the sense that the map is
given by
\[ \pi_1(L) \to \Lambda^\times, \quad [\gamma] \mapsto 
{y}_{[f]}([\gamma]) = \exp( [\gamma], [f]) .\]
The derivative of evaluation at a class $[\gamma] \in \pi_1(L)$ is the
evaluation of the cohomology class on the homology class of the loop:
\[ \partial_{[c]} {y}_{[f]}( [\gamma]) = ([c], [\gamma])  .\]

The argument for the Floer non-triviality involves variation of the
local system. Suppose that $0 \in {MC}(L,y)$ for every choice of local
system $y \in \cR(L)$.  Set
\begin{equation} \label{WW} \WW: \cR(L) \to \Lambda_{\ge 0}, \quad
  \WW(y) e_L = \mu^0(1) .\end{equation}
Continuing the computation from \eqref{diveq} we identify a
first cohomology class with a tangent vector to the space of local
systems. Then \llabel{wehave} \label{wehavep}
\begin{eqnarray*}  \label{derivw} \mu^1(c) &=& 
\sum_{\beta > 0} q^\beta 
(\partial \beta, [c]) 
\mu^{0,\beta} \\ 
&=& 
\sum_{x_0,u \in {\M}_1(L,D,x_0)_0} (-1)^{\heartsuit}
([\partial u],[c]) (\sigma(u)!)^{-1}  y(u)  q^{E(u)} \eps(u) x_0 \\
&=&  \partial_{[c]} \mu^0(1) =   \partial_{[c]} \WW(y) e_L .
\end{eqnarray*} 

In the following proposition, similar to Fukaya-Oh-Ohta-Ono
\cite[Lemma 13.1]{fooo:toric1}, 
we have in mind the case that the
Lagrangian is a torus and the Morse function is the standard one
obtained from taking the product of the standard height function on
the component circles, so that the Morse coboundary vanishes.  A
related result in the monotone case is in Biran-Cornea \cite[Theorem
1.2.2]{bc:rigid}. \llabel{induct} \label{inductp}

\begin{proposition} \label{critW} Suppose that the Morse operator
  $\mu^{1,0} = \mu^1 |_{q = 0}$ vanishes, $0 \in {MC}(L)$ and for some
  ${y} \in \cR(L)$, for all $b \in H^1(L,\Lambda_{\ge 0})$, we have
  $\partial_b \WW(y) = 0 $.  Then the Floer cohomology is isomorphic
  to the ordinary cohomology:
\[ H(\mu^1_{y},\Lambda_{\ge 0}) = H(L,\Lambda_{\ge 0}) .\]
\end{proposition} 

\begin{proof}  
  For completeness we include the proof is a double induction on
  energy and classical degree.  Let $\hbar > 0$ be the energy
  quantization constant so that $ E(\beta) > \hbar$ for all classes
  $\beta \in H_2(\XX,L)$ represented by non-constant treed holomorphic
  disks.  Suppose that $m_1^{\beta}= 0$ for all $\beta$ with
  $E(\beta) \leq E $.  We claim that $m_1^{\beta} = 0$ for all
  $\beta$ with $E(\beta) \leq E + \hbar$.  The base step is furnished
  by the assumption that the Morse function is standard, so the Morse
  differential vanishes.  By induction on degree we may assume that
  $m_1^{\beta}(c) = 0$ for degree $\deg(c) \leq d$.  The inductive
  step is furnished by the assumption that $\mu^1(c) = 0$ for
  $\deg(c) = 1$ via the computation \eqref{derivw}.  We claim that
  $m_1^{\beta}(c_1 \cup c_2) = 0$ for all classes $c_1,c_2$ with
  $1 \leq \deg(c_1), \deg(c_2) \leq d$.  Using the decomposition
  \eqref{decompose} and comparing terms with coefficient $q^\beta$ in
  the \ainfty associativity relation we obtain
  \begin{multline} m_1^{\beta}( m_2^{0}(c_1,c_2)) = \sum_{\beta_1 +
      \beta_2 = \beta} \pm m_2^{\beta_1}(m_1^{\beta_2}(c_1),c_2) \\ +
    \sum_{\beta_1 + \beta_2 = \beta} \pm
    m_2^{\beta_1}(c_1,m_1^{\beta_1}(c_2)) + \sum_{\beta_1 + \beta_2 =
      \beta, \beta_2 \neq 0} \pm m_1^{\beta_1}(m_2^{\beta_2}(c_1,c_2))
    .\end{multline}
  The first two terms on the right vanish by the inductive hypothesis
  for degree.  The last term vanishes by the inductive hypothesis for
  energy since $E(\beta_1) < E(\beta) - \hbar \leq E$. Since any class
  $c$ of degree $d + 1$ is a linear combination of classes
  $c_1 \cup c_2$ with $1 \leq \deg(c_1),\deg(c_2) \leq d$ this shows
  that $\mu^1(c) = 0$ for classes of degree $d + 1$. 
\end{proof}

\begin{example} {\rm (Potential for the projective line)}  
  Suppose that $X = S^2$ with area $A$ and $L \cong S^1$ separates $X$
  into pieces of areas $A_1,A_2$.  Thus the Maslov-index-two disks
  have areas $A_1,A_2$ with opposite boundary homotopy classes.  The
  disk potential is
\[ \WW(y) = q^{A_1} y + q^{A_2}/y .\]
The Floer differential vanishes if and only if 
\[ 0 = y \partial \WW/\partial y = q^{A_1} y - q^{A_2}/y \]
for some $y$.  The equation $y^2 = q^{A_2 - A_1}$ has a solution in
$\cR(L) \cong \Lambda^\times$ if and only if $A_1 = A_2$.  Thus only in this case
(the case that the Lagrangian is Hamiltonian isotopic to the equator)
one has non-trivial Floer cohomology.
\end{example} 

Brane structures corresponding to non-degenerate critical points are
particularly well-behaved.  A Lagrangian torus $L$ equipped with brane
structure is {\em non-degenerate} if the leading order part $\WW_0$ of
the potential $\WW$ (that is the sum of terms with lowest $q$-degree)
has a non-degenerate critical point.  Existence of a non-degenerate
critical point is invariant under perturbation by Fukaya-Oh-Ohta-Ono
\cite[Theorem 10.4]{fooo:toric1}.  We reproduce the proof here for the
sake of completeness.  Since $\RR(L) = \Hom(H_1(L), \Lambda^\times)$,
$\RR(L)$ is a smooth manifold at any $y_0 \in \RR(L)$ in the sense
that there exists a coordinate chart
$\exp_{y_0}: \Lambda_{\ge 0}^{\otimes N} \supset U \to \RR(L)$, where
$U$ is an open neighborhood of $0$.  The notion of critical points of
$\WW$ and the Hessian at those critical points is therefore
well-defined.

\begin{theorem} \label{ift} Suppose that
  $\WW: \RR(L) \to \Lambda_{\ge 0} $ is a function of the form
  $\WW = \WW_0 + \WW_1$ where $\WW_0$ consists of the terms of lowest
  $q$-valuation.  Suppose that $y_0 \in \Crit(\WW_0)$ and $\WW_0$ has
  non-degenerate Hessian $D^2_{y_0} \WW_0$ at $y_0$.  Then there
  exists
\[y_1 \in \Hom(\pi_1(L), \Lambda_{> 0}), \quad y = y_0\exp( y_1) \in
\Crit(\WW)\] 
with the terms in $y_1$ having higher $q$-valuation than those in
$y_0$.
\end{theorem} 

\begin{proof} The proof is an application of a formal implicit
  function theorem adapted to the setting of Novikov rings.  Given a
  critical point $y_0$ of $\WW_0$ we solve for a critical point $y$ of
  $\WW$ order by order, using non-degeneracy of the Hessian of $\WW$
  at $y_0$.  Suppose that
\[\WW = q^{\eps_0} \WW_0 + q^{\eps_1} \WW_1\] 
where $\WW_0$ has vanishing $q$-valuation and $\eps_1 > \eps_0$.  Suppose
that $y_0$ is a critical point of $\WW$ mod terms divisible by
$q^{\delta}$ for some $\delta$ with $\delta > \eps_0$.  Taking a
Taylor expansion of $D_y \WW$ at $y = y_0$ we solve
\begin{eqnarray} \label{wesolve} 0 &=& D_{y_0 \exp( y_1)} \WW =
  D_{y_0} \WW + D^2_{y_0} \WW (y_1) +
                                       F_{y_0}( y_1) \\
                                   &=& q^{\delta} G(y_0) + D^2_{y_0}
                                       \WW(y_1) + F_{y_0}( y_1)
\end{eqnarray}
where $F_{y_0}(y_1)$ is a linear map satisfying a quadratic
bound for small $y_1$.  Here the Hessian $D^2_{y_0} \WW$ is viewed as
a map
\[ D^2_{y_0} \WW:  T_{y_0} \RR(L) \to \Hom(T_{y_0} \RR(L), \Lambda_{
  \ge 0 } ) .\]
We solve
\[ D^2_{y_0} q^{-\eps_0} \WW(y_1) = -q^{\delta - \eps_0} G(y_0) .\]
Then \eqref{wesolve} fails to hold only because of the term
$F_{y_0}( y_1)$.  Since $\WW$ is divisible by $q^{\eps_0}$ and $y_1$
is divisible by $q^{\delta - \eps_0}$, $F_{y_0}(y_1)$ is divisible by
$q^{2(\delta - \eps_0)+\eps_0} $.  Replacing $y_0$ with
$y_0 \exp( y_1)$ and continuing by induction one obtains a solution to
all orders.
\end{proof}

Since any critical point of $\WW$ is, to leading order, a critical
point of $\WW_0$, we have the following corollary:

\begin{corollary} 
  If every critical point of $\WW_0$ is non-degenerate then there
  exists a bijection $\Crit(\WW_0)\cong \Crit(\WW) $ between the set
  $\Crit(\WW_0)$ of critical points of $\WW_0$ and critical points
  $\Crit(\WW)$ of $\WW$.
\end{corollary}

\section{Maurer-Cartan moduli space} 
\label{maurercartan} 
 
This section contains a version of the results of Fukaya-Oh-Ohta-Ono
\llabel{refhere}  \label{refherep} 
\cite[Section 3.6]{fooo} on solutions to the projective Maurer-Cartan
equation, adapted to the Morse setting.  In particular we define a
{\em cohomology complex} associated to an \ainfty algebra satisfying
suitable convergence conditions.  

The Floer complex is a complex of vector bundles over a space of
solutions to a {\em projective Maurer-Cartan equation}.  Let $A$ be an
\ainfty algebra free and finitely generated over the Novikov ring
$\Lambda_{\ge 0}$.  Such $A$ is {\em convergent} if
$\on{val}_q(\mu^0(1)) > 0$.  Let
\[A^+ = \{ a \in A \ | \ \on{val}_q(a) > 0 \}\] 
denote the subalgebra of terms with positive q-valuation.

\begin{lemma}  \label{curvlem}
For $b \in A^+$ the sum
\begin{equation} \label{curv} 
 \mu^0_b(1) := \mu^0(1) + \mu^1(b) + \mu^2(b,b) + \ldots
 \end{equation}
is well-defined.
\end{lemma}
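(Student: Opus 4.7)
\medskip
\textbf{Proof proposal.} The plan is to establish convergence of the series \eqref{curv} in the $q$-adic (Novikov) topology on the finitely generated $\Lambda_{\ge 0}$-module $A$, by showing that for every threshold $E>0$ only finitely many indices $n$ produce a summand $\mu^n(b,\dots,b)$ that is nonzero modulo $q^E A$.

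First I would decompose $b=\sum_{d\in[d_-,d_+]} b_d$ into its finitely many homogeneous components. By the definition \eqref{aplus} of $A^+$, each $b_d$ with $d>0$ lies in $\Lambda_{>0}A^d$ and therefore has strictly positive $q$-valuation; since the grading range is finite I may set $\epsilon := \min\{\on{val}_q(b_d) : d>0,\ b_d\neq 0\}>0$ (with the convention $\epsilon=+\infty$ if no such $d$ exists). Expanding multilinearly gives
\begin{equation*}
\mu^n(b,\dots,b) \;=\; \sum_{(d_1,\dots,d_n)}\sum_m \mu^{n,m}(b_{d_1},\dots,b_{d_n}),
\end{equation*}
and by the convergence hypothesis \eqref{convergent} each summand lies in $q^{E_m}A$. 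Since $E_m\to\infty$, only the finitely many $m$ in $M(E):=\{m : E_m<E\}$ can contribute modulo $q^E$.

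The key step is then a degree count. Fix $m\in M(E)$ and a tuple $(d_1,\dots,d_n)$ with every $b_{d_i}\neq 0$, and let $k:=\#\{i : d_i>0\}$. The output $\mu^{n,m}(b_{d_1},\dots,b_{d_n})$ has degree $d_1+\cdots+d_n+2-n+m$, which must lie in $[d_-,d_+]$ for the summand to be nonzero. When $d_+>0$, the bound $\sum_i d_i \le k d_+$ (the $n-k$ non-positive indices contribute at most zero) combined with the lower output constraint $\sum_i d_i\ge d_-+n-m-2$ forces $k\ge (n-2-m+d_-)/d_+$, so $k$ grows at least linearly in $n$. The $q$-valuation of the summand is therefore at least
\begin{equation*}
E_m + \sum_i \on{val}_q(b_{d_i}) \;\ge\; E_m + k\epsilon \;\ge\; E_m + \epsilon\,(n-2-m+d_-)/d_+ ,
\end{equation*}
which exceeds $E$ once $n$ is sufficiently large (depending on $E,m,\epsilon,d_\pm$). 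In the edge case $d_+\le 0$ we have $A^{>0}=0$, so $\sum_i d_i\le 0$, and the lower output constraint directly forces $n\le 2+m-d_-$.

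Combining the finiteness of $M(E)$ with the resulting $m$-dependent upper bound on $n$ shows that only finitely many terms contribute to $\mu^0_b(1)$ modulo $q^E A$, proving convergence. The main obstacle I foresee is keeping the degree bookkeeping clean: the inequalities relating $\sum_i d_i$, the output degree range $[d_-,d_+]$, and the count $k$ must hold uniformly in both the $d_+>0$ and $d_+\le 0$ regimes, and the two competing constraints (input $q$-valuation $\ge k\epsilon$ forced by $A^+$, and output degree $\in[d_-,d_+]$ forced by the grading) must be paired in exactly the right way; once this pairing is made, what remains is a routine linear estimate.
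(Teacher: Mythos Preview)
Your proof is correct and follows essentially the same approach as the paper: both arguments exploit the interplay between the degree constraint (the output of $\mu^{n,m}$ must land in $[d_-,d_+]$, forcing $m$ to grow with $n$ once the number $k$ of positive-degree inputs is bounded) and the $q$-valuation contributed by those positive-degree inputs via $A^+$. The paper organizes this by first fixing a bound $n_0$ on $k$ and splitting $b=b_{\le 0}+q^E b_{>0}$, whereas you fix $m\in M(E)$ first and bound $k$ from below in terms of $n$; these are the same estimate read in two directions.
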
 

\begin{proof} Since $b$ has positive $q$-valuation, we may write
  $b = q^E b_0$ for some $b_0 \in A$ and $E> 0$.  Then the coefficient
  of each generator of $A$ in the composition $\mu^n(b,\ldots, b)$ has
  $q$-valuation $\on{val}_q(\mu^n(b,\ldots,b))> nE$, if non-zero.
  This inequality implies that the sum \eqref{curv} converges.
\end{proof}

More generally the same argument implies convergence of the {\em
  deformed composition map}
\[ \mu^n_b(a_1,\ldots,a_n) = \sum_{i_1,\ldots,i_{n+1}} \mu^{n + i_1 +
  \ldots + i_{n+1}}(\underbrace{b,\ldots, b}_{i_1}, a_1,
\underbrace{b,\ldots, b}_{i_2}, a_2,b, \ldots, b, a_n,
\underbrace{b,\ldots, b}_{i_{n+1}}) \]
over all possible combinations of insertions of the element
$b \in A^+$ between (and before and after) the elements
$a_1,\ldots, a_n$.  For $b$ odd the maps $\mu^n_b$ define an \ainfty
structure on $A$.  In particular
\[ (\mu^1_b)^2(a_1) = \mu^2_b(\mu^0_b(1), a_1) -
\mu^2_b(a_1,\mu^0_b(1)) .\]
The {\em projective Maurer-Cartan equation} for $b \in A^+$ is
\begin{equation} \label{wmc} 
 \mu^0_b(1) = \mu^0(1) + \mu^1(b) + \mu^2(b,b) + \ldots \in \Lambda
 e_A .\end{equation}
Denote by 
\begin{equation} \label{mcspace} {MC}(A) = \{ b \in A^{+,\on{odd}} \ |
  \ \mu^0_b(1) \in \Lambda e_A \} \end{equation}
the space of odd solutions to the projective Maurer-Cartan equation \eqref{wmc}.
Any solution to the projective Maurer-Cartan equation defines an \ainfty
algebra such that $(\mu^1_b)^2 = 0$ and so has a well-defined
cohomology
\[H(\mu^1_b) = \frac{\on{ker}(\mu^1_b)}{\on{im}(\mu^1_b)} .\]
An \ainfty algebra is {\em weakly unobstructed} if there exists a
solution to the projective Maurer-Cartan equation.

We introduce a notion of gauge equivalence for solutions to the projective
Maurer-Cartan equation, so that cohomology is invariant under gauge
equivalence.  For $b_0,\ldots,b_n \in A^+$ of odd degree define
\begin{multline} \label{lotsbs}
 \mu^n_{b_0,b_1,\ldots,b_n}(a_1, \ldots, a_n) =
 \\ \sum_{i_1,\ldots,i_{n+1}} \mu^{n + i_1 + \ldots +
   i_{n+1}}(\underbrace{b_0,\ldots, b_0}_{i_1}, a_1,
 \underbrace{b_1,\ldots, b_1}_{i_2}, a_2,b_2, \ldots, b_2, \ldots,
 a_n, \underbrace{b_n,\ldots, b_n}_{i_{n+1}}). \end{multline}
Two projective Maurer-Cartan solutions $b_0,b_1$ are {\em gauge equivalent}
if and only if
\[ \exists h \in A^+, \ b_1 - b_0 = \mu^1_{b_0,b_1}(h), \quad \deg(h)
\ \text{even} .\]
We then write $b_0 \sim_h b_1$.

Gauge equivalence is an equivalence relation, by a discussion parallel
to that in Seidel \cite[Section 1h]{se:bo}.  To show transitivity,
\label{gtrans} suppose that $b_0 \sim_{h_{01}} b_1$ and
$b_1 \sim_{h_{12}} b_2$.  Then $b_0 \sim_{h_{02}} b_2$ where
\[h_{02} = h_{01} + h_{12} - \mu^2_{b_0,b_1,b_2}(h_{01},h_{12}) .\]
Indeed, the first composition maps involving $b_0,b_1,b_2$ are related
by 
\begin{equation} \label{switch} \mu^1_{b_0,b_1} = \mu^1_{b_0,b_2} - \mu^2_{b_0,b_1,b_2}(\cdot, b_2 - b_1), 
\quad 
\mu^1_{b_1,b_2} = \mu^1_{b_0,b_2} + \mu^2_{b_0,b_1,b_1}(  b_1- b_0, \cdot) .\end{equation}
It follows that 
\begin{eqnarray*}
 \mu^1_{b_0,b_2}(h_{02}) 
&=& 
 \mu^1_{b_0,b_2}(h_{01} + h_{12} + \mu^2_{b_0,b_1,b_2}(h_{01},h_{12}))  \\
&=& 
\mu^1_{b_0,b_1}(h_{01}) 
+ \mu^2_{b_0,b_1,b_2}(h_{01},b_2 - b_1) 
+ \mu^1_{b_1,b_2}(h_{12}) \\
&& - \mu^2_{b_0,b_1,b_2}(b_1 - b_0,h_{12}) 
+ \mu^1_{b_0,b_2}( \mu^2_{b_0,b_1,b_2}(h_{01},h_{12})) ) \\
&=&  (b_1 - b_0)
+ \mu^2_{b_0,b_1,b_2}(h_{01},\mu^1_{b_1,b_2}(h_{12}))
+ (b_2 - b_1) \\ 
&&- \mu^2_{b_0,b_1,b_2}(\mu^1_{b_0,b_1}(h_{01}),h_{12})
+ \mu^1_{b_0,b_2}(\mu^2_{b_0,b_1,b_2}(h_{01},h_{12})) \\
&=& b_2 - b_0 
\end{eqnarray*} 
\label{transproof} where the cancellation of the terms involving $\mu^2_{b_0,b_1,b_2}$
\[ \mu^2_{b_0,b_1,b_2}(h_{01},\mu^1_{b_1,b_2}(h_{12})) -
\mu^2_{b_0,b_1,b_2}(\mu^1_{b_0,b_1}(h_{01}),h_{12}) -
\mu^1_{b_0,b_2}(\mu^2_{b_0,b_1,b_2}(h_{01},h_{12})) \]
follows from the \ainfty axiom for the deformed maps \eqref{lotsbs}.
To prove symmetry \llabel{define} \label{definep}
suppose that $b_0 \sim_{h_{01}} b_1$.  Define
\[ 
 \phi(h_{10}) = h_{10}  - 
%(-1)^{|h_{10}|} 
\mu^2_{b_1,b_0,b_1}(h_{10}, h_{01}) \quad
 \psi(h_{11}) = h_{11}  +
% (-1)^{|h_{11}|}
 \mu^2_{b_1,b_1,b_0}( h_{01}, h_{11}).
\] 
The identities \eqref{switch} imply that $\phi,\psi$ are chain maps:
Indeed, 
\begin{eqnarray*} 
 \phi (\mu^1_{b_1,b_0}(h_{10})) 
&=& \mu^1_{b_1,b_0}(h_{10}) -
%(-1)^{|h_{10}|}  
\mu^2_{b_1,b_0,b_1}(\mu^1_{b_1,b_0}(h_{10}), h_{01})) \\ &=&
 \mu^1_{b_1,b_1}(h_{10}) + \mu^2_{b_1,b_0,b_1}(h_{10}, \mu^1_{b_0,b_1}(h_{01})) -
%(-1)^{|h_{10}|} 
 \mu^2_{b_1,b_0,b_1}(\mu^1_{b_1,b_0}(h_{10}), h_{01})) \\ &=&
 \mu^1_{b_1,b_1}(h_{10})  - 
%(-1)^{|h_{10}|}
  \mu^1_{b_1,b_1}(
 \mu^2_{b_1,b_0,b_1}(h_{10}, h_{01})) \\ &=& 
\mu^1_{b_1,b_1} (\phi(h_{10}))
\end{eqnarray*} 
and similarly for $\psi$.  Since the $q =0$ part of
$\mu^2_{b_1,b_0,b_1}(\cdot, h_{01})$ resp. $\mu^2_{b_1,b_1,b_0}(
h_{01}, \cdot)$ has negative degree, the maps $\phi,\psi$ are
invertible.  Furthermore,
\begin{eqnarray} \label{phipsi} 
\phi( b_0 - b_1) &=& \mu^1_{b_1,b_1}(h_{01}) \\
\psi( \mu^1_{b_1,b_1}(h_{01})) &=& 
%(-1)^{|h_{01}|} 
\mu^1_{b_0,b_1} 
\mu^2_{b_0,b_1,b_1}(h_{01},h_{01}) + b_0 - b_1 .
\end{eqnarray} 
Hence if we define
\[ h_{10} :=
 ( \psi \circ \phi)^{-1} ( - h_{01} - 
%(-1)^{|h_{01}|} 
\mu^2_{b_0,b_1,b_1}(
h_{01}, h_{01}) ) \]
then $b_1 \sim_{h_{10}} b_0$:
\begin{eqnarray*} 
 \mu^1_{b_1,b_0} (h_{10} )  &=& 
\mu^1_{b_1,b_0}  \phi^{-1}   \psi^{-1}  
 (- h_{01} - 
%(-1)^{|h_{01}|} 
 \mu^2_{b_0,b_1,b_1}(
h_{01}, h_{01}) ) \\
&=&
 \phi^{-1}   \psi^{-1}  
\mu^1_{b_0,b_1} 
 (- h_{01} - 
%(-1)^{|h_{01}|}  
\mu^2_{b_0,b_1,b_1}(
h_{01}, h_{01}) ) \\
&=& 
 \phi^{-1}   \psi^{-1}  
( 
%(-1)^{|h_{01}|} 
 \mu^1_{b_1,b_1} 
\mu^2_{b_0,b_1,b_1}(h_{01},h_{01}) + b_0 - b_1)  \\
&=& b_0 - b_1 .\end{eqnarray*} 
Also $b \sim_0 b$ for any $b$, hence $\sim$ is reflexive.

We define the potential of the algebra as a function on the moduli
space of solutions to the projective Maurer-Cartan equation.  Denote by
$\ol{MC}(A)$ the quotient of ${MC}(A)$ by the gauge equivalence
relation,
\[ \ol{MC}(A) = {MC}(A)/ \sim .\]
The quotient $\ol{MC}(A)$ is the {\em moduli space} of solutions to the
projective Maurer-Cartan equation.  Define a {\em potential}
\[ {W}: {MC}(A)
\to \Lambda\] 
on the space of solutions to the projective Maurer-Cartan equation by
\[ {W}(b) 1_A = \mu^0_b(1) .\]
We remark that ${W}$ would be related to the potential $\WW$ defined
in \eqref{WW} via the divisor equation, but the divisor equation in
the perturbation system given here only holds in weak form
\eqref{diveq}. 

\begin{lemma}  
The potential $W$ is gauge-invariant and so descends to a
potential $\ol{W}: \ol{MC}(A) \to \Lambda$.
\end{lemma} 

\begin{proof} 
Suppose $b_0 \sim_h b_1$ so $b_1 - b_0 = \mu^1_{b_0,b_1}(h)$.  We have
\begin{eqnarray*} 
\label{muzero} 
\mu^0_{b_1}(1) - \mu^0_{b_0}(1) &=& 
\sum_{i,j} \mu^{i + j + 1}( \underbrace{b_0,\ldots,b_0}_i, b_1 - b_0 , \underbrace{b_1,\ldots,b_1}_j) \\
&=& 
\sum_{i,j} \mu^{i + j + 1}( \underbrace{b_0,\ldots,b_0}_i,\mu^1_{b_0,b_1}(h), \underbrace{b_1,\ldots,b_1}_j) \\ \\
&=& 
\sum_{i,j,k} \mu^{i + j + k + 2}( \underbrace{b_0,\ldots,b_0}_i,
\mu^0_{b_0}(1), 
 \underbrace{b_0,\ldots,b_0}_j,
h, 
\underbrace{b_1,\ldots,b_1}_k) \\ 
&-& 
\sum_{i,j,k} \mu^{i + j + k + 1}( \underbrace{b_0,\ldots,b_0}_i,
h, 
 \underbrace{b_1,\ldots,b_1}_j,
\mu^0_{b_1}(1), 
\underbrace{b_1,\ldots,b_1}_k) \\ 
&=& \mu^{2}(W(b_0) e_A,h) 
- \mu^{2}(h, W(b_1) e_A) \\ 
&=&   ( W(b_0) - W(b_1)) h .
\end{eqnarray*} 
It follows that $ (e_A + h ) W(b_1) = (e_A + h) W(b_0) .$ Since
$h \in A^+$ the sum $e_A + h$ is non-zero.  Hence $W(b_1) = W(b_0)$ as
claimed.
\end{proof} 

\begin{corollary}   If $b_0 \sim_h b_1$, then $\mu^1_{b_0,b_1}$ is a differential. 
\end{corollary}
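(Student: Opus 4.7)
The plan is to establish $(\mu^1_{b_0,b_1})^2 = 0$ by combining two ingredients: the generalized \ainfty relations satisfied by the deformed operations $\mu^n_{b_0,\ldots,b_n}$, and the gauge-invariance of $\widetilde{W}$ established in the preceding lemma. By the previous lemma, if $b_0 \sim_h b_1$ then $\widetilde{W}(b_0) = \widetilde{W}(b_1)$, so $\mu^0_{b_0}(1) = w \cdot e_A = \mu^0_{b_1}(1)$ with the same scalar $w \in \Lambda$.

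First I would record the deformed \ainfty relation in arity $d = 1$ for the family $\mu^n_{b_0,\ldots,b_n}$. Expanding $(\mu^1_{b_0,b_1})^2(a)$ as a sum over all placements of $b_0$'s and $b_1$'s around a single input $a$ and applying the ordinary \ainfty associativity for $\mu$, every internal composition vanishes except for those that absorb either a terminal $\mu^0_{b_0}(1)$ on the left of $a$ or a terminal $\mu^0_{b_1}(1)$ on the right. With the shifted Koszul signs as in \eqref{ainftyassoc}, the resulting identity reads
\begin{equation*}
(\mu^1_{b_0,b_1})^2(a) = -\mu^2_{b_0,b_0,b_1}(\mu^0_{b_0}(1),a) + (-1)^{|a|}\mu^2_{b_0,b_1,b_1}(a,\mu^0_{b_1}(1)).
\end{equation*}
This is the two-sided analog of the formula $(\mu^1_b)^2(a) = \mu^2_b(\mu^0_b(1),a) - \mu^2_b(a,\mu^0_b(1))$ displayed earlier.

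Next I would use strict unitality \eqref{strictunit} to simplify the two remaining terms. Since $\mu^0_{b_i}(1) = w\, e_A$ and every term $\mu^n(\ldots,e_A,\ldots)$ with $n \neq 2$ vanishes, the deformation sums collapse: inserting $e_A$ as an input kills all contributions in which any $b_j$ is adjacent, leaving only
\begin{equation*}
\mu^2_{b_0,b_0,b_1}(e_A,a) = \mu^2(e_A,a) = a, \qquad \mu^2_{b_0,b_1,b_1}(a,e_A) = \mu^2(a,e_A) = (-1)^{|a|} a.
\end{equation*}
Substituting gives
\begin{equation*}
(\mu^1_{b_0,b_1})^2(a) = -w\, a + (-1)^{|a|}(-1)^{|a|} w\, a = \bigl(\widetilde{W}(b_1) - \widetilde{W}(b_0)\bigr) a,
\end{equation*}
which vanishes by the gauge-invariance of $\widetilde{W}$. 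Convergence of these sums is ensured by Lemma \ref{curvlem} and the convergence condition \eqref{convergent}, together with the fact that $b_0,b_1 \in A^+$.

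The main thing to be careful about is sign bookkeeping in the deformed \ainfty relations and the correct unpacking of strict unitality within the deformed operations $\mu^n_{b_0,\ldots,b_n}$; once the correct signs are in place, the cancellation is forced by the equality $\widetilde{W}(b_0) = \widetilde{W}(b_1)$ and no further analysis is required.
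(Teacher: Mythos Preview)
Your proposal is correct and follows essentially the same approach as the paper: use the deformed \ainfty relation in arity one to reduce $(\mu^1_{b_0,b_1})^2(a)$ to two terms involving $\mu^0_{b_0}(1)$ and $\mu^0_{b_1}(1)$, then invoke strict unitality and the gauge-invariance of $\widetilde{W}$ to conclude. You are simply slightly more explicit than the paper in writing the intermediate step with the deformed operations $\mu^2_{b_0,b_0,b_1}$ and $\mu^2_{b_0,b_1,b_1}$ before collapsing them via strict unitality; the paper skips directly to the undeformed $\mu^2$.
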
 

\begin{proof}   Using the \ainfty relations and strict unitality we have  
\begin{eqnarray*} (\mu^1_{b_0,b_1})^2(a)  &=& 
\mu^2( \mu^0_{b_0}(1), a )
-
(-1)^{|a|} \mu^2(a, \mu^0_{b_1}(1)) \\ 
&=& (W(b_1) - W(b_0)) a = 0 . \qedhere \end{eqnarray*}
\end{proof} 

The cohomology of a curve \ainfty algebra is a collection of vector
spaces over the space of solutions to the projective Maurer-Cartan
equation For any $b \in {MC}(A)$ define
\[ H(b) := H(\mu^1_b) = \frac{ \ker(\mu^1_b)}{\on{im}(\mu^1_b)} .\]
The {\em cohomology complex} is the resulting complex of sheaves over
${MC}(A)$:
\begin{equation} \label{cohcom}  
A \times {MC}(A) \to A \times {MC}(A), \quad (a,b)
\mapsto (\mu_1^b(a), b) \end{equation}
The ``stalks'' of this complex fit together to the cohomology
\begin{equation} \label{cohomology}
H(A) := \bigcup_{b \in {MC}(A)} H(b) .\end{equation}

\begin{lemma} \label{gaugeinv} The cohomology $H(A)$ is
  gauge-invariant in the sense that if $b_0 \sim_{h_{10}} b_1$, then
  $H(b_0) \cong H(b_1)$.
\end{lemma}

\begin{proof}
One can verify that
    \[
        \mu^2_{b_1,b_0,b_0}(h_{10},a) = \sum_{n_1,n_2,n_3} 
        \mu^{2  + n_1 + n_2 +n_3}(\underbrace{b_1,\ldots, b_1}_{n_1}, h_{10}, 
        \underbrace{b_0,\ldots, b_0}_{n_2}, a, \underbrace{b_0,\ldots, b_0}_{n_3}) 
    \]
 satisfies 
 \[
     \mu^2_{b_1,b_0,b_0}(h_{10},\mu^1_{b_0}(a)) - 
     \mu^1_{b_1,b_0}(\mu^2_{b_1,b_0,b_0}(h_{10},a) ) = 
     \mu^1_{b_0}(a) -
     \mu^1_{b_1,b_0}(a).
 \]
Hence the operator
\[\mu_{b_1,b_0,b_0}^2(h_{10},\_)-\on{Id}(\_): (A,\mu^1_{b_0})\to (A,\mu^1_{b_1,b_0}) ,\]
is a chain morphism, see \eqref{phipsi}.  For the same reasons,
\[\mu^2_{b_0,b_1,b_0}(h_{10},\_)-\on{Id}(\_): (A,\mu^1_{b_1,b_0}) \to (A,\mu^1_{b_0}) .\]
is a chain morphism.  Consider the map 
\[A \to A, \quad a \mapsto
H^{b_0,b_1,b_0,b_0}_1(h_{01},h_{10},a)\] 
where
\begin{multline} 
H^{b_0,b_1,b_0,b_0}_1(h_{01},h_{10},a) := \sum_{n_1,n_2,n_3,n_4}
\mu^{3 + n_1 + n_2 +n_3 +n_4}(\underbrace{b_0,\ldots, b_0}_{n_1},
h_{01}, \\ 
\underbrace{b_1,\ldots, b_1}_{n_2}, h_{10},
\underbrace{b_0,\ldots, b_0}_{n_3}, a, \underbrace{b_0,\ldots,
  b_0}_{n_4}) .
    \end{multline}
This map is a chain homotopy between
\[
(\mu^2_{b_0,b_1,b_0}(h_{10},\_)-\on{Id}(\_)) 
\circ(\mu^2_{b_1,b_0,b_0}(h_{10},\_)-\on{Id}(\_)) 
\] 
and the chain map 
\[\Phi(a)=a + \mu^2_{b_0}(h_{01} + h_{10} +
\mu^2_{b_0,b_1,b_0}(h_{01},h_{10}),a) .\]  
The latter can be seen to induce an automorphism of $H(b_0)$. In the
same way, the reverse composition of these maps is homotopic to a map
inducing an automorphism of
\[H(b_1,b_0) :=H(A,\mu^1_{b_1,b_0}) .\]
    Similarly, $\mu^{b_1,b_1,b_0}_2(\_,h_{10})-\on{Id}(\_)$ and
    $\mu^{b_1,b_0,b_1}_2(\_,h_{01})-\on{Id}(\_)$ define a chain morphism from
    $(A,\mu^1_{b_1})$ to $(A,\mu^1_{b_1,b_0})$ and
    $(A,\mu^1_{b_1,b_0})$ to $(A,\mu^1_{b_1})$, respectively.  This
    shows that $H(b_1,b_0)\cong H(b_1)$.  Hence
$ H(b_0) \cong H(b_1,b_0) \cong H(b_0) $
as claimed.
\end{proof}

We consider the special case of the projective Maurer-Cartan solutions for a
Fukaya algebra.  The following Lemma will be used in the proof of
Theorem \ref{unobs} below to show that Lagrangians are weakly
unobstructed.

\begin{lemma}\label{Wx} Suppose that $\mu^0(1) \in \Lambda x^{\blackt}$ and 
  every non-constant disk has positive Maslov index.  Then ${MC}(L)$
  is non-empty. 
\end{lemma} 

\begin{proof} Suppose $\mu^0(1) = W x^{\blackt}$ and the condition 
  in the Lemma holds.  Equation \eqref{diff} becomes 
  $ \mu^{1}(x^{\greyt}) = x^{\whitet} - x^{\blackt} .$ Hence 
  with notation as in \eqref{mcmap} we have 
\[ \mu(W x^{\greyt})= \mu^0(1) + W \mu^1(x^{\greyt}) = W x^{\blackt}
+ W (x^{\whitet} - x^{\blackt}) = W x^{\whitet} \in \Lambda 
x^{\whitet} .\]
Thus $W x^{\greyt} \in {MC}(L)$. 
\end{proof} 

We also mention that the addition of homotopy units does not affect
the Floer cohomology:

\begin{lemma} \label{nochange} Suppose that $b \in MC(L)$.  Then
  $HF(L,b)$ is isomorphic to the cohomology $HF^{\on{geom}}(L,b)$ of
  $\mu^1_b$ acting on $CF^{\on{geom}}(L)$.
\end{lemma}

\begin{proof} Choose $\hbar$ so that the first term in $\mu^1_b$ with
  non-vanishing $q$-valuation has valuation at least $\hbar$.
  Consider the first page in the spectral sequence for $CF(L)$ induced
  by the filtration $q^{n \hbar} CF(L),\ n \in \Z_{\ge 0}$.  The
  differential $\mu^1_{b,1}$ on the first page $E^1$ of this spectral
  sequence comes from trajectories with no disks, and in particular
  \eqref{diff} becomes
  $\mu^1_{b,1}(x^{\greyt}) = x^{\whitet} - x^{\blackt}$.  It follows
  that $H(\mu^1_{b,1})$ is the cohomology of the Morse differential
  $\mu^1_b |_{q =0 }$ on $CF^{\on{geom}}(L) $.  The claim follows.
\end{proof}

\chapter{Homotopy invariance}
\label{hinv} 

In this and following sections we show that the Fukaya algebra
constructed above is independent, up to \ainfty homotopy invariance,
of the choice of perturbation system.  The argument uses moduli spaces
of {\em quilted treed disks}, introduced without trees in
Ma'u-Woodward \cite{mau:mult}.

\section{\ainfty morphisms} 

Recall the definition of \ainfty morphisms and homotopies. 

\begin{definition} \label{morphisms}
\begin{enumerate} 
\item {\rm (\ainfty morphisms)}
 \label{ainftyfunctor}
Let $A_0,A_1$ be \ainfty algebras.  An {\em \ainfty morphism}
$\F$ from $A_0$ to $A_1$ consists of a sequence of linear maps
\[\F^d: \ A_0^{\otimes d} \to A_1[1-d], \quad d \ge 0\]
such that the following holds:
\begin{multline} \label{faxiom}
 \sum_{i + j \leq d} (-1)^{i + \sum_{j=1}^i |a_j|} \F^{d - j +
     1}(a_1,\ldots,a_i, \mu_{A_0}^j(a_{i+1
   },\ldots,a_{i+j}),a_{i+j+1},\ldots,a_d) = \\ \sum_{i_1 + \ldots + i_m = d}
   \mu_{A_1}^m(\F^{i_1}(a_1,\ldots, a_{i_1}), \ldots,
   \F^{i_m}(a_{i_1 + \ldots + i_{m-1} + 1},\ldots,a_d))
\end{multline}
where the first sum is over integers $i,j$ with $i+j \leq d$, the
second is over partitions $d = i_1 + \ldots + i_m$.  An \ainfty
morphism $\F$ is {\em unital} if and only if
\[\F^1(e_0) = e_1, \quad \F^k(a_1, \ldots, a_i, e_0,
a_{i+2}, \ldots, a_k) = 0\] 
for every $k\geq 2$ and every $0\leq i \leq k-1$, where $e_0$
resp. $e_1$ is the strict unit in $A_0$ resp. $A_1$. 
\item {\rm (Composition of \ainfty morphisms)} The {\em composition} of
  \ainfty morphisms $\F_0,\F_1$ is defined by 
\begin{multline} \label{composefunc}
 (\F_0 \circ \F_1)^d(a_1,\ldots,a_d)
 = \sum_{i_1 + \ldots + i_m =d}
  \F_0^{m}( \F_1^{i_1}(a_1,\ldots,a_{i_1}),  \\ \ldots, \F_1^{i_m}(a_{d -
    i_m + 1},\ldots,a_d)). \end{multline}
\item {\rm (\ainfty natural transformations)}  
Let $\F_0,\F_1: A_0 \to A_1$ be \ainfty morphisms.  A {\em pre-natural
  transformation} $\TT$ from $\F_0$ to $\F_1$ consists of for each $d
\geq 0$ a multilinear map
\[ \TT^d: \ A_0^d \to A_1 .\]
Let $\Hom(\F_0,\F_1)$ denote the space of pre-natural transformations
from $\F_0$ to $\F_1$.  Define a differential on $\Hom(\F_0,\F_1)$ by
\begin{multline} \label{mu1}
 (\mu^1_{\Hom(\F_0,\F_1)} \TT)^d (a_1,\ldots,a_d) = \sum_{k,m}
  \sum_{i_1,\ldots,i_m} (-1)^\dagger \mu^m_{A_2}(
  \F_0^{i_1}(a_1,\ldots,a_{i_1}), \F_0^{i_2}(a_{i_1 + 1},\ldots, a_{i_1+i_2}), 
  \ldots, \\ \TT^{i_k}(a_{i_1 + \ldots + i_{k-1} + 1},\ldots, a_{i_1 +
    \ldots + i_k}), \F_1^{i_{k+1}}( a_{i_1 + \ldots + i_k + 1},\ldots,
  ) ,\ldots, \F_1^{i_m}(a_{d - i_m},\ldots, a_d)) \\ - \sum_{i,e}
  (-1)^{i + \sum_{j=1}^i |a_j| + |\TT| - 1} \TT^{d - e +
    1}(a_1,\ldots,a_i, \mu^e_{A_1}(a_{i+1},\ldots,
  a_{i+e}),a_{i+e+1},\ldots,a_d) \end{multline}
where 
\[ \dagger = (|\TT|-1)( |a_1| + \ldots + |a_{i_1 + \ldots +
  i_{k-1}}| - i_1 - \ldots - i_{k-1}) .\] 
A {\em natural transformation} is a closed pre-natural transformation.
An \ainfty (pre)natural transformation $\TT$ from a
unital morphism $\F_0$ to a unital morphism $\F_1$ is {\em unital} if
and only if 
\[\TT^k(a_1, \ldots, a_i, e_0, a_{i+2}, \ldots, a_k) = 0 \quad
\forall k\geq 1, \ 0\leq i \leq k-1 \]
where $e_0$ is the unit in $A_0$.
\item {\rm (Composition of natural transformations)} Given two
  pre-natural transformations 
\[\TT_1:\F_0 \to \F_1, \quad \TT_2:\F_1 \to
  \F_2 \] 
  define $ \mu^2(\TT_1,\TT_2)$ by
\begin{multline}  \label{T2T1}
 (\mu^2(\TT_1,\TT_2))^d(a_1,\ldots,a_d) =
\sum_{m,k,l} \sum_{i_1,\ldots,i_m} 
(-1)^\ddag \mu^m_{A_2}( \F_0^{i_1}(a_1,\ldots,a_{i_1}),
\ldots, \F_0^{i_{k-1}}(\ldots),
 \\
\TT_1^{i_k}(a_{i_1 + \ldots + i_{k-1} + 1},\ldots, a_{i_1 + \ldots + i_k}),
\F_1^{i_{k+1}}(\ldots),\ldots, \F_1^{i_{l-1}}(\ldots), 
\\
\TT_2^{i_l} (a_{i_1 + \ldots + i_{l-1} + 1},\ldots, a_{i_1 + \ldots + i_{l}}),
\F_2^{i_{l+1}}(\ldots),\ldots, \F_2^{i_m}(a_{d - i_m},\ldots,a_d)) 
\end{multline}
where 
\[ \ddag = \sum_{i = 1}^{i_1 + \ldots + i_{k-1}} ( |\TT_1| - 1) ( |a_i| - 1) + 
\sum_{i = 1}^{i_1 + \ldots + i_{l-1}} ( |\TT_2| - 1) ( |a_i| - 1)  .\]
Let $\Hom(A_0,A_1)$ denote the space of \ainfty morphisms from $A_0$
to $A_1$, with morphisms given by pre-natural transformations.
Compositions give $\Hom(A_0,A_1)$ the structure of an \ainfty category
\cite[10.17]{fu:fl1}, \cite[8.1]{le:ai}, \cite[Section 1d]{se:bo}.
\item {\rm (\ainfty homotopies)} Suppose that $\F_1,\F_2: A_0 \to A_1$
  are morphisms.  A {\em homotopy} from $\F_1$ to $\F_2$ is a
  pre-natural transformation $\TT \in \Hom(\F_1,\F_2)$ such that
 \begin{equation} \label{homotopy} 
	 \F_1 - \F_2 = \mu_{\Hom(\F_1,\F_2)}^1 (\TT).
 \end{equation}
where $\mu_{\Hom(\F_1,\F_2)}^1 (\TT)$ is defined in \eqref{mu1}.  If a
homotopy exists we say that $\F_1$ is homotopic to $\F_2$ and write
$\F_1 \equiv \F_2$.  As shown in \cite[Section 1h]{se:bo}, homotopy of
\ainfty morphisms is an equivalence relation.
\item {\rm (Composition of homotopies)} Given homotopies $\TT_1$ from
  $\F_0$ to $\F_1$, and $\TT_2$ from $\F_1$ to $\F_2$, the sum
\begin{equation} \label{composehom}
\TT_2 \circ \TT_1: = \TT_1 + \TT_2 + \mu^2(\TT_1,\TT_2) \in
  \Hom(\F_0,\F_2) \end{equation} 
is a homotopy from $\F_0$ to $\F_2$.
\item \label{compfun} {\rm (Composition of morphisms)} Let
  $A_0,A_1,A_2$ be \ainfty algebras.  Given a morphism $\F_{12}: A_1
  \to A_2$ resp.  $\F_{01}: A_0 \to A_1$, right composition with
  $\F_{12}$ resp. left composition with $\F_{01}$ define \ainfty
  morphisms
\[ \begin{array}{lll}
 \cR_{\F_{12}} &:& \Hom(A_0,A_1) \to \Hom(A_0,A_2)  \\
  \ \cL_{\F_{01}} &:& \Hom(A_0,A_2) \to \Hom(A_1,A_2) .\end{array}\]
The action on pre-natural transformations is given as follows
\cite[Section 1e]{se:bo}: Let $\F_{01}', \F_{01}'':A_0 \to A_1$ be
\ainfty morphisms and $T_{01}$ a pre-natural transformation from
$\F_{01}'$ to $\F_{01}''$.  Define
\begin{multline} 
 (\cR_{\F_{12}}(T_{01}))_d(a_1,\ldots,a_d) \\ = \sum_{r,j} \sum_{i_1 +
    \ldots + i_r = d} (-1)^{\ddag} \F_{12}( \F_{01}'(
  a_1,\ldots,a_{i_1}), \ldots, \F_{01}'( \ldots ), \\ T_{01}(a_{i_1 +
    \ldots, + i_{j-1} + 1}, \ldots, a_{i_1 + \ldots + i_j}), 
  \F_{01}'' (\ldots ), \\ \ldots, \F_{01}''(a_{i_1 + \ldots + i_{r-1}
    + 1},\ldots, a_d )) .\end{multline}
\item {\rm (Homotopy equivalence of \ainfty algebras)} Two  \ainfty algebras $A_0,A_1$ are {\em homotopy equivalent} if there
  exist morphisms $\F_{01} : A_0 \to A_1$ and $\F_{10} : A_1 \to A_0$
  such that $\F_{01} \circ \F_{10}$ and $\F_{10} \circ \F_{01}$ are
  homotopic to the respective identities.  Homotopy equivalence of
  \ainfty algebras is an equivalence relation: Symmetry and
  reflexivity are immediate.  For transitivity note that by the
  previous item, if $\F_{01}'$ and $\F_{01}''$ are homotopy equivalent
  then so are $\F_{12} \circ \F_{01}'$ and $\F_{12} \circ \F_{01}''$;
  repeating the argument for left composition, if $\F_{01}'$ and
  $\F_{01}''$ are homotopic and also $\F_{12}'$ and $\F_{12}''$ then
  $\F_{12}' \circ \F_{01}'$ is homotopic to $\F_{12}'' \circ
  \F_{01}''$.  Hence if 
  \[\F_{01}: A_0 \to A_1, \F_{10}: A_1 \to A_0, \quad \F_{12} : A_1
  \to A_2, \F_{21} : A_2 \to A_1\]
are homotopy
  equivalences then
\begin{eqnarray*} 
 (F_{10} \circ \F_{21}) \circ (\F_{12} \circ \F_{01}) 
&=& \F_{10} \circ (\F_{21} \circ \F_{12}) \circ \F_{01} \\
&\cong& \F_{10} \circ \F_{01} 
\cong \on{Id} \end{eqnarray*}
and similarly for $ (\F_{12} \circ \F_{01}) \circ (F_{10} \circ
\F_{21})$.
\end{enumerate} 
\end{definition} 

The following is a version of the material on Maurer-Cartan moduli
spaces in \cite[Chapter 4]{fooo}.  Let $A_0,A_1$ be convergent \ainfty
algebras.  Let $\F: A_0 \to A_1$ be an \ainfty morphism or pre-natural
transformation.  Such $\F$ is {\em convergent} if and only if
the zero-th term $\F^0(1) $ lies in $A^+$.

\begin{lemma} \label{mcmapthm} \llabel{mcmapl} {\rm (Map between
    Maurer-Cartan moduli spaces)} Suppose that $A_0,A_1$ are
  convergent strictly-unital \ainfty algebras and $\F: A_0 \to A_1$ is
  a convergent unital \ainfty morphism.  Then
\begin{equation} \label{phiF}
\phi_{\F} ({b})  = \F^0(1) + \F^1({b}) + \F^2({b},{b}) + \F^3({b},{b},{b}) 
+ \ldots \end{equation}
defines a map from $A_0^+$ to $A_1^+$ and restricts to a map
${MC}(A_0) \to {MC}(A_1)$ of the moduli spaces of
solutions to the projective Maurer-Cartan equation and descends to a map
$\ol{MC}(A_0)$ to $\ol{MC}(A_1)$.  That is: For every ${b} \in
{MC}(A_0)$, $\phi_{\F}({b}) \in {MC}(A_1)$; and
\[ ({b} \sim {b}') \  \implies \ ( \phi_{\F}({b}) \sim \phi_{\F}({b}')),
\quad \forall {b}, {b}' \in {MC}(A_0) .\]

Moreover, if $\F_0,\F_1: A_0 \to A_1$ are unital \ainfty morphisms
that are homotopic by an \ainfty homotopy $\TT: A_0 \to A_1$
then $\F_0,\F_1$ induce the same map on Maurer-Cartan moduli spaces,
that is, $\phi_{\F_0} = \phi_{\F_1}$. In particular, if $\F: A \to A$
is convergent and convergent-homotopic to the identity then $\F$
induces the identity on $\ol{MC}(A)$.  Hence the set $\ol{MC}(A)$ is an
invariant of the homotopy type of $A$.
\end{lemma}

\begin{proof} The proof that the sum \eqref{phiF} converges is
  essentially the same as that for Lemma \ref{curvlem} and left to the
  reader.  Regarding gauge invariance, first notice that for every
  $b\sim b' \in {MC}(A_0)$ with $h \in A_0$ such that
  $b- b'= \mu^1_{b,b'}(h)$,
	\begin{eqnarray*}
		\phi_{\F}(b) - \phi_{\F}(b')
        &=& \sum_{n_0,n_1} \F^{n_0 + n_1 + 1}(\underbrace{b, \ldots, b}_{n_0}, b-b', \underbrace{b', \ldots, b'}_{n_1})\\
	&=& \sum_{n_0,n_1,n_2,n_3}  \F^{n_0 + n_1 + 1}(\underbrace{b, \ldots, b}_{n_0}, \mu^{n_2+n_3+1}(\underbrace{b, \ldots, b}_{n_2}, h,
\\ && \ \quad \quad \underbrace{b', \ldots,b'}_{n_3}), 
\underbrace{b', \ldots, b'}_{n_1}) \\
%
%	&=& \sum_{n,n_0,n_1,n_2,n_3} \mu^n \circ \F (\underbrace{b, \ldots, b}_{%n_0+n_2},h, \underbrace{b', \ldots, b'}_{n_1+n_3}) \\
    &=& \mu^1_{\phi_{\F}(b),\phi_{\F}(b')} \left( \sum_{n_4,n_5} \F^{n_4+n_5+1}(\underbrace{b, \ldots,b}_{n_4}, h,\underbrace{b', \ldots,b'}_{n_5} ) \right)
	\end{eqnarray*}
so that $\phi_{\F}(b) \sim \phi_{\F}(b')$. Note that the third
equality above uses the fact that $b, b' \in {MC}(A_0)$ and
the unitality of $\F$.

We now prove the second part of the claim.  Suppose that
$\F_0 - \F_1 = \mu_{\Hom(\F_0,\F_1)}^1 (\TT)$ for some unital
pre-natural transformation $\TT$.  Then
	\begin{eqnarray*}
		\phi_{\F_0}(b) - \phi_{\F_1}(b) &=& \sum_{k\geq 0}
                (\mu_{\Hom(\F_0,\F_1)}^1 \TT)^k (b, \ldots,b) \\ &=&
                \sum_{k\geq 0} \sum_{m} \sum_{i_1+\ldots+i_m=k}
                (-1)^\dagger \mu^m_{A_2}( \F_0^{i_1}(b,\ldots,b),
                \F_0^{i_2}(b,\ldots,b), \ldots, \\ &&
                \TT^{i_k}(b,\ldots, b ), \F_1^{i_{k+1}}( b,\ldots, b)
                ,\ldots, \F_1^{i_m}(b,\ldots, b)) \\ && - \sum_{i,e}
                (-1)^{i + \sum_{j=1}^i |b| + |\TT| - 1} \TT^{k - e +
                  1}(\underbrace{b,\ldots,b}_{i},
                \mu^e_{A_1}(b,\ldots, b),b,\ldots,b) \\ & =:& \clubsuit.
 \end{eqnarray*} 
Using that $b$ is a projective Maurer-Cartan solution we continue
\begin{eqnarray*}
\clubsuit 	&=& \sum_{k\geq 0} \sum_{m}
  \sum_{i_1+\ldots+i_m=k} (-1)^\dagger \mu^m_{A_2}(
  \F_0^{i_1}(b,\ldots,b), \F_0^{i_2}(b,\ldots,b),
  \ldots, \\ 
  && \TT^{i_k}(b,\ldots, b
    ), \F_1^{i_{k+1}}( b,\ldots,
  b) ,\ldots, \F_1^{i_m}(b,\ldots, b)) \\
 && - \sum_{i,e}
  (-1)^{i + \sum_{j=1}^i |b| + |\TT| - 1} \TT^{k - e +
    1}(b,\ldots,b, \lambda e_{A_1}
  ,b,\ldots,b) \\
	&=& \sum_{k\geq 0} \sum_{m}
  \sum_{i_1+\ldots+i_m=k} (-1)^\dagger \mu^m_{A_2}(
  \F_0^{i_1}(b,\ldots,b), \F_0^{i_2}(b,\ldots,b),
  \ldots, \\ 
  && \TT^{i_k}(b,\ldots, b
    ), \F_1^{i_{k+1}}( b,\ldots,
  b) ,\ldots, \F_1^{i_m}(b,\ldots, b)) \\
  &=& \mu^1_{\phi_{\F_0} (b),\phi_{\F_1} (b)} \left(
  \sum_{k} \TT^{k}(b,\ldots, b) \right).
\end{eqnarray*}
Since $b \in A^+$ the sum
\[h_{01} := \sum_{k} \TT^{k}(b,\ldots, b) \]
exists in $A$.  Furthermore since $\TT^0(1) \in A^+ $ we have
$h_{01} \in A^+$.  Hence $\phi_{\F_0}(b) \sim_{h_{01}} \phi_{\F_1}(b)$
as claimed.
\end{proof}

Similarly one has a homotopy invariance property of the cohomology vector
bundle introduced in \eqref{cohomology}:   

\begin{lemma} \label{uptogauge} {\rm (Maps of cohomology bundles)} Any
  convergent \ainfty morphism $\cF:A_0 \to A_1$ induces a morphism
  $H(\cF): H(A_0) \to H(A_1)$, that is, a morphism $H(b) \to
  H(\cF(b))$ for each $b \in {MC}(A_0)$.  If $\cF_0, \cF_1:
  A_0 \to A_1$ are convergent morphisms related by a convergent
  homotopy then $H(\cF_0)$ is equal to $H(\cF_1)$ up to gauge
  transformation.  In particular, if there exist convergent \ainfty
  maps $\F_{01}: A_0 \to A_1$ and $\F_{10}:A_1 \to A_0$ such that
  $\F_{01} \circ \F_{10}$ and $\F_{10} \circ \F_{01}$ are homotopic to
  the identities via convergent homotopies then $H(A_0)$ is isomorphic
  to $H(A_1)$ up to gauge equivalence in the sense that
\[H(b_0) \cong H(\phi_{\F_{01}}(b_0)), \quad H(b_1) \cong H(\phi_{\F_{10}}(b_1)) \]
for any $b_0 \in {MC}(A_0), b_1 \in {MC}(A_1)$.
\end{lemma} 

The proof is similar to that of Lemma \ref{gaugeinv} and omitted.
Thus having a non-trivial cohomology is an invariant of the homotopy
type of a convergent, strictly-unital \ainfty algebra.

\section{Multiplihedra} 
\label{mult} 

The terms in the \ainfty morphism axiom correspond to codimension one
cells in a cell complex called the multiplihedron introduced by
Stasheff \cite{st:ho}.  Stasheff's definition identifies the
$n$-multiplihedron as the cell complex whose vertices correspond to
expressions involving bracketings of formal variables
$x_1,\ldots,x_n$, together with expressions $f( \cdot )$ so that every
$x_j$ is contained in an argument of some $f$.  For example, the
second multiplihedron is an interval with vertices $f(x_1)f(x_2)$ and
$f(x_1x_2)$.  A geometric realization of this polytope was given by
Boardman-Vogt \cite{boardman:hom} in terms of what we will call {\em
  quilted (metric ribbon) trees}.  \llabel{ribbontrees} A quilted
metric ribbon tree is a ribbon metric tree
\[T = (\Gamma, e_0 \in \Edge_{\white,\rightarrow}(\Gamma), \ell: \Edge_{-}(\Gamma) \to
[0,\infty])\]
(ribbon structure omitted from the notation) together with a subset
\[\Ver^1(\Gamma) \subset \Ver(\Gamma)\] of {\em
  colored vertices}.  This set is required to satisfy the condition
that every simple path of edges $e_0, e_1, \ldots, e_k$ from the root
$e_0$ to a leaf $e_k$ meets precisely one colored vertex
$v \in \Ver^1(\Gamma)$, and the metric $\ell$ is required to satisfy the
following:
\begin{itemize}
    \item[] {\rm (Balanced lengths condition)} For any two {\em
            colored vertices} $v_1,v_2 \in \Ver^1(\Gamma)$, 
\begin{equation} \label{balanced}
\sum_{e \in P_+(v_1,v_2)} \ell(e) = \sum_{e \in P_-(v_1,v_2)} \ell(e)
\end{equation}
where $P(v_1,v_2)$ is the (finite length) oriented non-self-crossing
path from $v_1$ to $v_2$ and $P_+(v_1,v_2)$ resp. $P_-(v_1,v_2)$ is
the part of the path towards resp. away from the root edge, see
Ma'u-Woodward \cite{mau:mult}.
\end{itemize} 
The set of combinatorially finite resp. semi-infinite edges is denoted
$\Edge_{-}(\Gamma)$ resp. $\Edge_{\rightarrow}(\Gamma)$; the latter
are equipped with a labelling by integers $0,\ldots, n$.  A quilted
tree is {\em stable} if each colored vertex $v \in \Ver^1(\Gamma)$ has
valence $|v|$ at least two and any non-colored vertex
$v \in \Ver(\Gamma) - \Ver^1(\Gamma)$ has valence $|v|$ at least
three.  Broken quilted trees are defined as in the non-quilted case,
but requiring that any simple path from the root of the broken tree to
a leaf still meets only one colored vertex.  There is a natural notion
of convergence of quilted trees $T_i \to T_\infty$, in which edges
$e_i$ of $\Gamma_i$ whose length $\ell(e_i)$ approaches zero are
contracted and edges $e_i$ whose lengths $\ell(e_i)$ go to infinity
are replaced by broken edges.

A different realization of the multiplihedron is the moduli space of
stable {\em quilted} disks in Ma'u-Woodward \cite{mau:mult}.  In this
realization, one obtains Stasheff's cell structure on the
multiplihedron naturally.  

\begin{definition} \label{quiltings} 
\begin{enumerate}
\item
A {\em quilted disk} is a datum
  $(S,Q,x_0,\ldots, x_n \in \partial S)$ consisting of a marked
  complex disk $(S,x_0,\ldots, x_n \in \partial S)$ (the points are
  required to be in counterclockwise cyclic order) together with a
  circle {\em seam} $Q \subset S$ (here we take $S$ to be a ball in
  the complex plane, so the notion of circle makes sense) tangent to
  the $0$-th marking $x_0$.  An {\em isomorphism} of quilted disks
  from $(S,Q,x_0,\ldots,x_n)$ to $(S',Q',x_0',\ldots,x_n')$ is an
  isomorphism of pseudoholomorphic disks $S \to S'$ mapping $Q$ to
  $Q'$ and $x_0,\ldots, x_n$ to $x_0',\ldots, x_n'$.
\item 
An {\em affine structure} on a disk
  $S$ with boundary point $z_0 \in \partial S$ is an isomorphism with
  a half-space $\phi: S - \{ z_0 \} \to \H$.  Two affine structures
  $\phi,\phi': S - \{ z_0 \} \to \H$ are considered equivalent if
  $\phi(z) = \phi'(z) + \zeta$ for some $\zeta \in \R$.  A quilting is
  equivalent to an affine structure, by taking the quilting to be
  $Q = \{ \on{Im}(z) = 1 \}$.
\item
In this context the notion of quilted
  disk admits a natural generalization to the notion of a {\em quilted
    sphere}: a marked sphere $(C, (z_0,\ldots,z_n))$ equipped with an
  isomorphism $\phi$ from $C - \{z_0 \} \to \C$ to the affine line
  $\C$.  Again, two such isomorphisms $\phi,\phi'$ are considered
  equivalent if they differ by a translation:
  $\phi(z) = \phi'(z) + \zeta$ for some $\zeta \in \C$.
\item
The {\em combinatorial type} $\Gamma$ of a
  quilted nodal marked disk $(S,Q,\ul{x})$ is defined similar to the
  combinatorial type of a nodal marked disk disk.  The set of vertices
  $\Ver(\Gamma)$ has a distinguished subset $\Ver^1(\Gamma)$ of {\em
    colored vertices} corresponding to the quilted components.  Thus
  the unique non-self-crossing path $\gamma_e$ from the root edge
  $e_0$ of the tree to any leaf $e$ is required to pass through
  exactly one colored vertex $v \in \Ver^1(\Gamma)$.
\item
 A {\em nodal quilted disk} of type
  $\Gamma$ is a union of disks
  $S_v , v \in \Ver_{\white}(\Gamma) - \Ver_{\white}^1(\Gamma)$,
  spheres
  $S_v , v \in \Ver_{\black}(\Gamma) - \Ver_{\black}^1(\Gamma)$,
  quilted disks $S_v, v \in \Ver_{\white}^1(\Gamma)$, and quilted
  spheres $S_v, v \in \Ver_{\black}^1(\Gamma)$, with the property that
  along a non-self-crossing path of components
  $S_v, v \in \Ver(\Gamma)$ from the root edge $e_0$ to any other leaf
  $e_i$, a single colored vertex $v \in \Ver^1(\Gamma)$ occurs.  A
  nodal quilted disk $S$ is {\em stable} if there are no non-trivial
  automorphisms $\Aut(S) - \{ 1 \}$, or equivalently, each component
  $S_v, v \in \Ver(\Gamma)$ has no automorphisms.  In the case of
  sphere components $S_v, v \in \Ver_{\black}(\Gamma)$, this means
  that $S_v$ has at least three special points if it is unquilted, or
  at least two special points if it is quilted.
\end{enumerate}
\end{definition} 

The moduli space of stable quilted disks with interior and boundary
markings is a compact cell complex.  As the interior and boundary
markings go to infinity, they bubble off onto either quilted disks or
quilted spheres.  The case of combined boundary and interior markings
is a straight-forward generalization of the boundary and interior
cases treated separately in \cite{mau:mult}.

There is a combined moduli space which includes both quilted disk,
spheres, and possibly broken segments.  A quilted treed disk $C$ is
obtained from a quilted nodal disk $S$ by replacing each node
$w(e) \in S, e \in \Edge(\Gamma(S))$ with a (possibly broken) segment
$T_e$ by attaching the endpoints of $T_e$ to two copies of $w(e)$.
The {\em combinatorial type} $\Gamma(C)$ of a quilted treed disk $C$
is the combinatorial type $\Gamma(S)$ of the surface with the
additional labellings of the number of breakings $b(e)$ of each edge
$T_e$.  A quilted treed disk is {\em stable} if the underlying quilted
disk is stable.  Let $\ol{\M}_{n,m,1}$ denote the moduli space of
stable marked quilted treed disks $u: C \to X$ with $n$ boundary
leaves and $m$ interior leaves.  See Figure \ref{MWc} for a picture of
$\ol{\M}_{2,0,1}$.  A picture of $\ol{\M}_{1,1,1}$ is shown in Figure
\ref{qspheres}.  In the picture the quilted disks
$S_v \subset S, v \in \Ver^1(\Gamma)$ are those with two shadings;
while the ordinary disks $S_v, v \notin \Ver^1(\Gamma)$ have either
light or dark shading depending on whether they can be connected to
the zero-th edge without passing a colored vertex.  The hashes on the
line segments $T_e$ indicate breakings.
\begin{figure}[ht]
\includegraphics[width=5in]{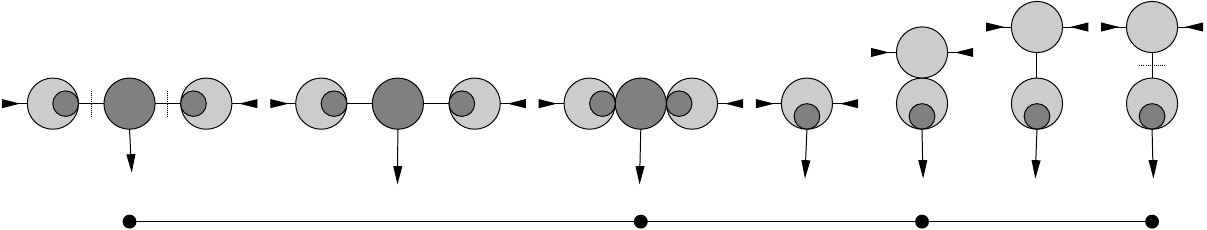}
\caption{Moduli space of stable quilted treed disks}
\label{MWc} 
\end{figure} 
Any non-self-crossing path from the root edge $e_0$ to a leaf $e$ must
pass through exactly one colored vertex $v \in \Ver^1(\Gamma)$
correspond to either a quilted disk or quilted sphere $S_v \subset C$.
See Figure \ref{qspheres} for the combinatorics of the top-dimensional
cells in the case of one boundary leaf
$e \in \Edge_{\white,\rightarrow}(\Gamma)$ and one interior leaf
$e \in \Edge_{\black,\rightarrow}(\Gamma)$; the $s$ indicates a
quilted sphere component
$S_v \cong \P^1, v \in \Ver_{\black}(\Gamma)$.  Orientations of the
stratum $\M_\Gamma$ of the moduli space of quilted treed disks can be
defined from the moduli space $\M_{\Gamma'}$ of unquilted disks with
the same number of markings via the morphism
$\M_\Gamma \to \R \times \M_{\Gamma'}$ where the extra factor $\R$
prescribes the position of the seam in Definition \ref{quiltings};
this orientation extends over the other top-dimensional cells as in
the unquilted case.

\begin{figure}[ht]
\includegraphics[height=2.5in]{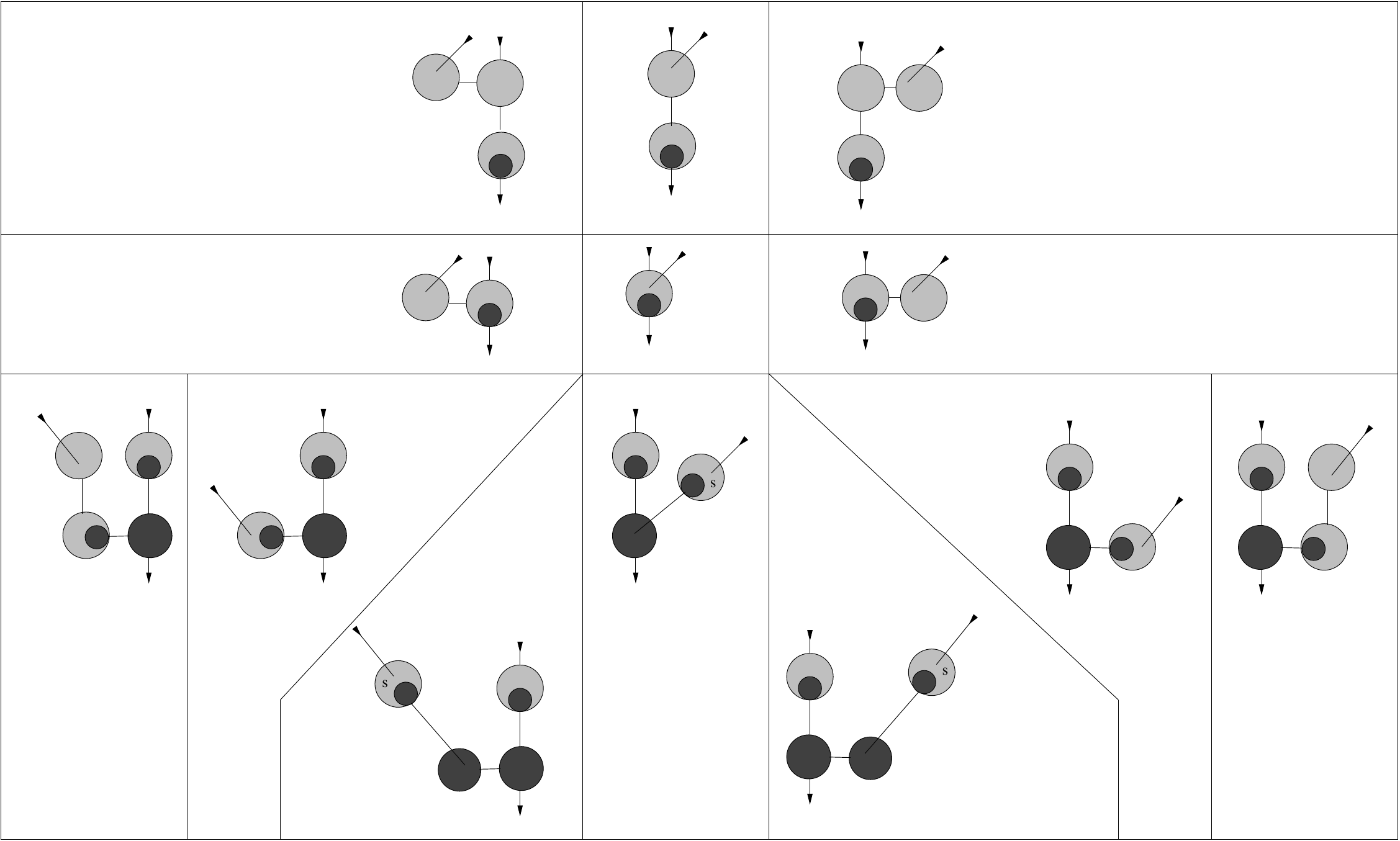}
\caption{Moduli space of stable quilted treed disks with a boundary
  leaf and an interior leaf}
\label{qspheres} 
\end{figure} 

The boundary of the moduli space of quilted treed disks is a union of
moduli spaces involving quilted and unquilted disks with fewer
markings.  For any integers $n,m \ge 1$ we have
\begin{equation} \label{unionseq} \partial \ol{\M}_{n,m,1} \cong 
\bigcup_{\substack{i,j \\ m_1+m_2 =m}} \left( \ol{\M}_{n-i + 1,m_1,1}
\times \ol{\M}_{i,m_2}\right) \cup \bigcup_{\substack{i_1,\ldots,
    i_r\\ m_0 + \sum m_j=m}} \left( \ol{\M}_{r,m_0} \times
\prod_{j=1}^r \ol{\M}_{i_j,m_j,1} \right) .\end{equation}
In the first sum $j$ is the index of the attaching leaf in the quilted
tree and so ranges from $1$ to $n-i + 1$, while $i_1,\ldots, i_r$ in
the second union range over partitions $i_1 + \ldots + i_r = n$.  By
construction, for the facet of the first type, the sign of the
inclusions of boundary strata are the same as that for the
corresponding inclusion of boundary facets of $\ol{\M}_{n,m,1}$, that
is, $(-1)^{i(n-i-j) + j} $.  For facets of the second type, the gluing
map
$ (0,\infty) \times \M_{r,m_0} \times \Pi_{j=1}^r \M_{|I_j|,m_j,1} \to
\M_{n,m,1} $ is for boundary markings
\begin{multline} \label{gluemap2} 
    (\delta,x_1,\ldots, x_r,
(x_{1,j} = 0, x_{2,j},\ldots, x_{|I_j|,j} )_{j=1}^r) \mapsto \\ 
(x_1, x_1 + \delta^{-1}x_{2,1}, \ldots,  x_1 + \delta^{-1}x_{|I_1|,1},
\ldots,
x_r, x_r + \delta^{-1}x_{2,r} ,\ldots, x_r + \delta^{-1}x_{|I_r|,r}).
\end{multline}
This map changes orientations by $\sum_{j=1}^r (r-j) (|I_j| - 1);$ in
case of non-trivial weightings, $|I_j|$ should be replaced by the
number of incoming markings or non-trivial weightings on the $j$-th
component.

The combinatorial type of a quilted disk is the tree obtained as in
the unquilted case.  The resulting tree $\Gamma$ has vertices
$\Ver(\Gamma)$ equipped with a distinguished subset $\Ver^1(\Gamma)$
of colored vertices $v$ corresponding to quilted components
$S_v \subset C$.  Morphisms of quilted trees (Cutting infinite length
edges, collapsing edges, making lengths/weights finite/non-zero, and
forgetting tails) induce morphisms of moduli spaces of stable quilted
treed disks as in the unquilted case. In the special case of cutting
an edge $e$ of infinite length $\ell(e) = \infty$, one of the pieces
$\Gamma_1$ will be a (possibly disconnected) quilted type and the
other $\Gamma_2$ an unquilted type.  The uncolored vertices
$v \notin \Ver^1(\Gamma)$ admit a $\{ 0, 1 \}$-labelling
$\Ver(\Gamma) - \Ver^1(\Gamma) \to \{ 0 , 1 \}$ with value $0$
resp. $1$ if component is further away from the root resp. closer to
the root edge $e \subset \Edge_{\rightarrow}(\Gamma)$ than the quilted
components with respect to any non-self-crossing path of components.
For any combinatorial type $\Gamma$ of quilted disk there is a {\em
  universal quilted treed disk} $\ol{\U}_\Gamma \to \ol{\M}_\Gamma$
which is a cell complex whose fiber over $C$ is isomorphic to $C$, and
splits into surface and tree parts
$ \ol{\U}_\Gamma = \ol{\S}_\Gamma \cup \ol{\T}_{\white,\Gamma} \cup
\ol{\T}_{\black,\Gamma}$,
where the last two sets are the boundary and interior parts of the
tree respectively.  Labels and weights on the semi-infinite ends
$e \in \Edge_{\rightarrow}(\Gamma)$ of types $\Gamma$ of quilted tree
disks are defined as in the case of treed disks.  We suppose there is
a partition of the boundary semi-infinite edges
\[\Edge^{\greyt}(\Gamma) \sqcup \Edge^{\whitet}(\Gamma) \sqcup \Edge^{\blackt}(\Gamma) =
\Edge_{\white,\rightarrow}(\Gamma) \]
into {\em weighted} resp. {\em forgettable} resp.  {\em unforgettable}
edges as in the unquilted case, except that now the root of the
quilted tree with one weighted leaf and no marking is weighted with
the same weight as the leaf, see Figure \ref{trivq}.  The moduli space
with a single weighted leaf
$e \in \Edge_{\rightarrow}(\Gamma), \rho(e) \in (0, \infty)$ and no
markings is then a point.

\begin{figure} 
\begin{picture}(0,0)%
\includegraphics{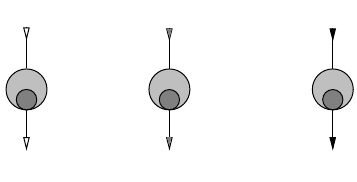}%
\end{picture}%
\setlength{\unitlength}{4144sp}%
\begingroup\makeatletter\ifx\SetFigFont\undefined%
\gdef\SetFigFont#1#2#3#4#5{%
  \reset@font\fontsize{#1}{#2pt}%
  \fontfamily{#3}\fontseries{#4}\fontshape{#5}%
  \selectfont}%
\fi\endgroup%
\begin{picture}(2700,1371)(4216,-2041)
\put(4231,-1986){\makebox(0,0)[lb]{\smash{{\SetFigFont{8}{9.6}{\rmdefault}{\mddefault}{\updefault}{\color[rgb]{0,0,0}$x^{\whitet}$}%
}}}}
\put(5352,-1986){\makebox(0,0)[lb]{\smash{{\SetFigFont{8}{9.6}{\rmdefault}{\mddefault}{\updefault}{\color[rgb]{0,0,0}$x^{\greyt}$}%
}}}}
\put(6597,-1986){\makebox(0,0)[lb]{\smash{{\SetFigFont{8}{9.6}{\rmdefault}{\mddefault}{\updefault}{\color[rgb]{0,0,0}$x^{\blackt}$}%
}}}}
\put(4259,-785){\makebox(0,0)[lb]{\smash{{\SetFigFont{8}{9.6}{\rmdefault}{\mddefault}{\updefault}{\color[rgb]{0,0,0}$x^{\whitet}$}%
}}}}
\put(5417,-799){\makebox(0,0)[lb]{\smash{{\SetFigFont{8}{9.6}{\rmdefault}{\mddefault}{\updefault}{\color[rgb]{0,0,0}$x^{\greyt}$}%
}}}}
\put(6662,-799){\makebox(0,0)[lb]{\smash{{\SetFigFont{8}{9.6}{\rmdefault}{\mddefault}{\updefault}{\color[rgb]{0,0,0}$x^{\blackt}$}%
}}}}
\put(5515,-1082){\makebox(0,0)[lb]{\smash{{\SetFigFont{8}{9.6}{\rmdefault}{\mddefault}{\updefault}{\color[rgb]{0,0,0}$\rho$}%
}}}}
\put(5523,-1653){\makebox(0,0)[lb]{\smash{{\SetFigFont{8}{9.6}{\rmdefault}{\mddefault}{\updefault}{\color[rgb]{0,0,0}$\rho$}%
}}}}
\end{picture}%
\caption{Unmarked stable treed quilted disks}
\label{trivq}
\end{figure} 

\section{Quilted pseudoholomorphic disks}  
\label{qdisks}

In the next few sections we prove invariance of the homotopy type of
the Fukaya algebra under a change of divisors of the same degree,
built from homotopic sections of the same line bundle; the general
case is treated in Chapter \ref{stabilize} below.  We introduce the
following notation:

\begin{enumerate} 
\item {\rm (Paths of divisors)} Suppose that $D^0,D^1 \subset X$ are
  stabilizing divisors for $L$ built from homotopic unitary sections
  of $\ti{X}$ of the same degree that are asymptotically holomorphic
  with respect to compatible almost complex structures
  $J^0, J^1 \in \J(X)$.  Choose a path $J^t \in \J(X)$ from $J^0$ to
  $J^1$.  By Lemma \ref{largelem} above, there exists a path of
  $J^t$-stabilizing divisors $D^t \subset X, t \in [0,1]$ connecting
  $D^0,D^1$, and a path $J_{D^t} \in \J(X,D^t)$ of compatible almost
  complex structures such that $D^t$ contains no $J_{D^t}$-holomorphic
  spheres.
\item {\rm (Distance-to-seam function)} In order to specify which
  divisor of the above family to use at a given point of a quilted
  domain, define a function as follows.  For every point $z \in C$ of
  a quilted treed disk $C$, let
  \[d(z) := \pm \sum_{e \in P(z)} \ell(e) \in [-\infty,\infty]\]
  be the distance of $z$ to the quilted components of $C$, that is,
  the sum of the lengths of the edges $e$ in the path $P(z)$ between
  $z$ and the quilted components times $1$ resp. $-1$ if $z$ is above
  resp. below the quilted components (that is, further from
  resp. closer to the root than the quilted components).
\item {\rm (Perturbation morphisms)} Given perturbation data
  $\ul{P}^0$ and $\ul{P}^1$ with respect to metrics
  $G^0,G^1 \in \G(L)$ over unquilted treed disks for $D^0$
  resp. $D^1$, a {\em perturbation morphism} $\ul{P}^{01}$ from
  $\ul{P}^0$ to $\ul{P}^1$ for the quilted combinatorial type $\Gamma$
  consists of
\begin{enumerate} 
\item a smooth function $\delta^{01}_\Gamma: [-\infty,\infty] \to
  [0,1]$ (to be composed with $d$)
\item a smooth domain-dependent choice of metric
\[ G_\Gamma^{01}: \ol{\T}_{\white,\Gamma} \to \R \]
constant to $G^0$ resp. $G^1$ on the neighborhood of the points at
infinity of the semi-infinite edges 
\[ \ol{{\T}}_{\white,\Gamma} - \ol{{\T}}^{\thick}_{\white,\Gamma} \]
for which $d=-\infty$ resp. $d=\infty$,
\item a domain-dependent Morse function
\[ F_\Gamma^{01}: \ol{\T}_{\white,\Gamma} \to \R \]
constant to $F^0$ resp. $F^1$ on the neighborhood
$\ol{{\T}}_{\white,\Gamma} - \ol{{\T}}^{\thick}_{\white,\Gamma}$ of
the endpoints for which $d=-\infty$ resp. $d=\infty$ and equal to
$F_{\Gamma_0}^0$ resp. $F_{\Gamma_1}^1$ on the (unquilted) treed disks
components of type $\Gamma_0$, $\Gamma_1$ for which $d=-\infty$
resp. $d=\infty$, and

\item a domain-dependent almost complex structure
\[ J_\Gamma^{01}: \ol{\S}_\Gamma \to \J_\tau(X) \]
given as the product of maps pulled back from 
\[ J_{\Gamma(v)}^{01} : \ \ol{{\S}}_{\Gamma(v)} \times X \to \End(TX) \]
with the property that for any surface component $C_i$ of $C$,
$J_\Gamma^{01}$ is
\begin{enumerate}
\item  equal to the given $J$ away from the compact part:
\[ J_\Gamma | \ol{{\S}}_\Gamma - \ol{{\S}}^{\thick}_\Gamma = \pi_2^*
J \]
where $\pi_2$ is the projection on the second factor in \eqref{FGam}
and 
\item equal to the complex structures $J_{\Gamma_0}^0$
resp. $J_{\Gamma_1}^1$ on the (unquilted) treed disks components of
type $\Gamma_0$, $\Gamma_1$ for which $d=-\infty$ resp. $d=\infty$:
Let $\iota_k: \ol{\S}_{\Gamma_k} \to \ol{\S}_\Gamma$ denote the
inclusion of the unquilted components. Then we require
\[ J_\Gamma | \iota_k: \ol{{\S}}_{\Gamma_k} = J_{\Gamma_k}^k , \quad k
\in \{ 0 ,1 \} .\]
\end{enumerate}
\end{enumerate} 
%q
\item \label{qindep} {\rm (Quilt-independence)} If the perturbations
  $\ul{P}^ = (P^k_\Gamma)$ are equal one can also require the
  following independence property: A perturbation system for quilts
  $\ul{P}^{01} = (P^{01}_\Gamma = (F_\Gamma^{01}, J^{01}_\Gamma,
  G^{01}_\Gamma))$
  is {\em quilt-independent} if \label{inva_axio} $G_\Gamma^{01}$,
  $F_\Gamma^{01}$, and $J_\Gamma^{01}$ are pull-backs under the
  forgetful morphism forgetting the quilting $Q_v \subset S_v$ on the
  quilted disk components $S_v, v \in \Ver^1(\Gamma)$.
\end{enumerate}

Pseudoholomorphic quilted disks with respect to domain-dependent
structures are defined as follows.  Let $C$ be a stable quilted treed
disk of type $\Gamma$.  By the stability condition $C$ may be
identified with a fiber $\pi^{-1}[C]$ of the universal curve
$\pi: \U_\Gamma \to \M_\Gamma$.  By pullback we obtain a perturbation
on $C$, still denoted
$(\delta^{01}_\Gamma, J^{01}_\Gamma, F^{01}_\Gamma,G^{01}_\Gamma)$.
If $C = S \cup T$ is obtained from a stable quilted treed disk $C'$ by
attaching an infinite segment $e \subset T$ at any of the
semi-infinite ends of $C'$; choose perturbation data on $C$ such that
the Morse function $F^{01}_\Gamma$ on $C$ is constant on the infinite
segment $e$.  A {\em pseudoholomorphic quilted treed disk}
$u: C \to X$ of combinatorial type $\Gamma$ is a continuous map $u$
from a quilted treed disk $C$ that is smooth on each component
$S_v ,T_e \subset C$, $J_\Gamma^{01}$-holomorphic on the surface
components $S_v, v \in \Ver(\Gamma)$, a $-\grad(F_\Gamma^{01})$-Morse
trajectory with respect to the metric $G_\Gamma^{01}$ on each boundary
tree segment $T_e \subset C, e \in \Edge_{\white}(\Gamma)$, and
constant on the tree segments of sphere type
$T_e \subset C, e \in \Edge_{\black}(\Gamma)$.  The interior leaves of
the tree are irrelevant for our purposes; but they do affect the
combinatorics of the boundary of the moduli spaces because of the
balanced condition, see Figure \ref{qspheres}.  A stable holomorphic
quilted tree disk is {\em adapted} if and only if (Stable surface
axiom) the surface part is stable and (Leaf axiom) each interior leaf
$T_e$ maps to $D^{\delta^{01}_\Gamma \circ d(z_i)}$ and for each
$t \in [0,1]$, each component of
$u^{-1}(D^t) \cap (\delta_\Gamma^{01})^{-1}(t)$ contains an interior
leaf $e$.  Two adapted weighted disks
$u_0 : C_0 \to X, u_1: C_1 \to X$ are {\em isomorphic} if there exists
an isomorphism of weighted disks $\phi: C_0 \to C_1$ so that
$u_0 \circ \phi = u_1 .$ Note that if $C_0,C_1$ have a single unmarked
quilted disk component
$S_{v,k}, v \in \Ver^1(\Gamma), k \in \{ 0, 1 \}$ and a single leaf
$e_{1,k} \in \Edge_{\rightarrow}(\Gamma)$, the weightings
$\rho(e_{1,0}), \rho(e_{1,1})$ are not required to be equal.  Given a
non-constant pseudoholomorphic quilted treed disk $u: C \to X$ with
leaf $e_i \in \Edge^{\greyt}(\Gamma)$ on which there is a weighting
$\rho(e_i) = 0$ resp. $\infty$, we declare $u:C \to X$ to be {\em
  equivalent} to the pseudoholomorphic quilted treed disk
$u' : C' \to X$ obtained by replacing the label $x^{\greyt}$ at $e_i$
by $x^{\blackt}$ resp. $x^{\whitet}$ and adding a segment
$C' = C \cup [-\infty,\infty)$ so that $u$ is constant on
$[-\infty,\infty)$, and so represents a trajectory from $x^{\greyt}$
to $x^{\blackt}$ resp. $x^{\whitet}$ above that leaf as in the
unquilted case in Figure \ref{triv0}.  In other words, forgetting
constant infinite segments and replacing them by the appropriate
weights gives equivalent pseudoholomorphic treed disks. For any
combinatorial type $\Gamma$ of quilted disks we denote by
$\ol{\M}_\Gamma(L,D)$ the compactified moduli space of equivalence
classes of adapted quilted pseudoholomorphic treed disks.

The moduli space of adapted pseudoholomorphic quilted disks breaks
into components depending on the limits along the root and leaf edges.
Denote by $\M_\Gamma(L,D,\ul{x}) \subset \ol{\M}_{\Gamma}(L,D)$ the
moduli space of isomorphism classes of adapted pseudoholomorphic
quilted treed disks with boundary in $L$ and limits $\ul{x}$ along the
root and leaf edges, where $\ul{x} = (x_0,\ldots, x_n) \in \cI(L)$
satisfies the requirement:
\begin{enumerate} 
\item {\rm (Label axiom)} 
\begin{enumerate} 
\item {\rm (Incoming unit)}  If $x_0 = x^{\greyt}$ resp.  $x_0 = x^{\whitet}$,
  then there is a single leaf reaching $x^{\greyt}$ resp. $x^{\whitet}$
  and no interior marking (in which case the moduli space will be a
  point). 
\item {\rm (Outgoing unit)} If $x_i = x^{\greyt}$ resp. $x^{\whitet}$
  resp. $x^{\blackt}$ for some $i \geq 1$ then the $i$-th leaf is
  required to be weighted resp.  forgettable resp. unforgettable and
  the limit along this leaf is required to be $x_M$.
\item {\rm (Non-units)} If
  $x_i \notin \{ x^{\greyt}, x^{\whitet}, x^{\blackt} \} $, then
  the $i$-th leaf is required to be unforgettable.
\end{enumerate}
%CW
\item {\rm (Outgoing edge axiom)} The outgoing edge $e_0$ is weighted
  (resp. forgettable) only if there is a single leaf $e_1$,
  which is weighted (resp. forgettable) with the same weight
  $\rho(e_0) = \rho(e_1)$ and the domain $C$ has no interior leaves
  (so there is a single quilted disk $S \subset C$ with no interior
  leaves.)
\end{enumerate}

In order to obtain moduli spaces with the expected boundary, we
introduce a coherence condition. A collection
$\ul{P}^{01} = (P_\Gamma^{01})$ of perturbation morphisms is {\em
  coherent} if $P_\Gamma^{01}$ is compatible with the morphisms of
moduli spaces as before:
\begin{enumerate} 
\item {\rm (Collapsing edges/making an edge or weight finite or non-zero)}
  If $\Gamma'$ is obtained from $\Gamma$ by collapsing an edge or
  edges, then $P_{\Gamma}^{01}$ is the pullback of $P_{\Gamma'}^{01}$.

\item {\rm (Cutting edges) } If $\Gamma'$ is obtained from $\Gamma$ by
  cutting an edge or a collection of edges of infinite length, then
  $P_\Gamma^{01}$ is the pushforward of $P_{\Gamma'}^{01}$.
Suppose that $\Gamma'$ is the union of a quilted type
  $\Gamma_1$ and a non-quilted type $\Gamma_0$, then $P_{\Gamma'}^{01}$
  is obtained from $P_{\Gamma_1}^{01}$ and $P_{\Gamma_0}^0$ as
  follows: Let 
$\pi_k: \ol{\M}_{\Gamma'} \cong \ol{\M}_{\Gamma_1} \times
  \ol{\M}_{\Gamma_0} \to \ol{\M}_{\Gamma_k}$
  denote the projection on the $k$-factor, so that 
$\ol{\U}_{\Gamma'} \cong \pi_1^* \ol{\U}_{\Gamma_1} \cup \pi_0^*
\ol{\U}_{\Gamma_0} .$
Then we require that $P_{\Gamma'}$ is equal to the pullback of
$P_{\Gamma_1}^{01}$ on $\pi_1^* \ol{\U}_{\Gamma_1}$ and to the
pullback of $P_{\Gamma_0}^{0}$ on $\pi_0^* \ol{\U}_{\Gamma_0}$.

  Similarly, if $\Gamma'$ is the union of a non-quilted type
  $\Gamma_1$ and quilted types $\cup_{i} \Gamma_{0,i}$ and quilted
  types with interior markings, then $P_{\Gamma'}^{01}$ is equal to
  the pullback of $P_{\Gamma_1}^{1}$ on $\pi_1^* \ol{\U}_{\Gamma_1}$
  and to the pullback of $P_{\Gamma_{0,i}}^{01}$ on
  $\pi_{0,i}^* \ol{\U}_{\Gamma_{0,i}}$.

  The case of constant quilted types requires special treatment.  If
  any of the types $\Gamma_{0,i}$ have no interior markings and a
  single weighted leaf
  $e \in \Edge(\Gamma), \rho(e) \in (0,\infty)$ then we label the
  corresponding leaf of $\Gamma_1$ with the same weight, by
  our (Cutting edges) construction.  This guarantees that the moduli
  spaces are of expected dimension.

\item {\rm (Locality axiom)} For any spherical vertex $v$, $P_\Gamma$
  restricts to the pull-back of a perturbation $P_{\Gamma,v}$ on the
  image of $\pi^* \U_{\Gamma(v)}$ in $\U_\Gamma$.  (Note that the
  perturbation $P_{\Gamma,v}$ is allowed to depend on $\Gamma$, not
  just $\Gamma(v)$.)  Furthermore, the restriction of $P_\Gamma$ to
  the disk components and boundary edges in $\U_\Gamma$ is the
  pull-back of a perturbation datam $P_{\Gamma_\circ}$ from
  $\U_{\Gamma_\circ}$.

\item {\rm (Forgettable edges)} Whenever some weight parameter
  $\rho(e)$ attached to a leaf $e \in \Edge_{\rightarrow}(\Gamma)$ is
  equal to infinity, then the $P_\Gamma^{01}$ is pulled back under the
  forgetful map $f: \U_\Gamma \to \U_{f(\Gamma)}$ forgetting the first
  leaf $e_i \in \Edge_{\rightarrow}^{\whitet}(\Gamma)$ and stabilizing
  from the perturbation morphism $P_{f(\Gamma)}^{01}$ given by
  (Forgetting tails).

\end{enumerate} 

The moduli space of pseudoholomorphic adapted quilted treed disks has
compactness and transversality properties similar to those for
unquilted disks.  A perturbation morphism
$\ul{P}^{01} = (P_\Gamma^{01})$ is {\em stabilizing} if it satisfies a
condition analogous to that in Definition \eqref{stabilized}: the
divisors contain no non-constant pseudoholomorphic spheres and the
intersection of any pseudoholomorphic sphere with the divisor contains
at least three points.  For a comeager subset of perturbation
morphisms $\ul{P}^{01}$ extending those chosen for unquilted disks,
the uncrowded moduli spaces $\M_\Gamma(L,D)$ of expected dimension at
most one are smooth and of expected dimension.  For sequential
compactness, it suffices to consider a sequence $u_\nu: C_\nu \to X$
of quilted treed disks of fixed combinatorial type $\Gamma_\nu$
constant in $\nu$.  Coherence of the perturbation morphism implies the
existence of a stable limit $u: C \to X$ which we claim is adapted.
In particular, the (Leaf Property) is justified as follows.  For each
component $C_i \subset C$, the almost complex structure
$J_\Gamma | C_i$ is constant near the almost complex submanifold
$D_{\delta_\Gamma^{01}\circ d(C_i)}$.  Suppose that $C_i$ is a
component of the limit of some sequence of components $C_{i,\nu}$ of
$C_\nu$.  Coherence for the parameter $\delta_{01}^\nu$ implies that
$D_{\delta_\Gamma^{01}\circ d(C_i)}$ is the limit of the divisors
$D_{\delta_{\Gamma_\nu}^{01} \circ d(C_{i,\nu})}$.  Then local
conservation of intersection degree\footnote{Since
  $\delta_{01}^\Gamma$ is constant on each disk or sphere the union
\[ D^{\delta_{01}^\Gamma} = \cup_{z \in \S_\Gamma} \left( \{z \} \times 
D^{\delta_{01}^\Gamma \circ d(z)} \right) \]
is an almost complex submanifold of $\S_\Gamma \times X$.  In
particular, the intersection multiplicity of $u :C \to X$ with
$D^{\delta_{01}^\Gamma}$ at $z_i$ is positive. } implies that any
component of $u^{-1}(D_{\delta_\Gamma^{01}\circ d(C_i)})$ contains a
limit point of some markings
$z_{i,\nu} \in u^{-1}_\nu(D_{\delta_{\Gamma_\nu}^{01} \circ
  d(C_{i,\nu})})$.
For types of index at most one, each component of
$u^{-1}(D_{\delta_\Gamma^{01}\circ d(C_i)})$ is a limit of a unique
component of
$u^{-1}_\nu( D_{\delta_{\Gamma_\nu}^{01} \circ d(C_{i,\nu})})$,
otherwise the intersection degree would be more than one which is a
codimension two condition.  Since non-trivial sphere bubbling is a
codimension two condition and ghost bubbling is impossible unless two
markings come together, this implies that
$u^{-1}(D_{\delta_\Gamma^{01}\circ d(C_i)}) = \{ z_i \}$ is also a
marking. As before, we call a perturbation morphism $\ul{P}^{01}$
admissible if it is coherent, regular, and stabilizing.

The condition \eqref{inva_axio} implies the following properties of
the moduli spaces of constant quilted disks.  By the argument in the
proof of Proposition \ref{equalpert}, in the $0$-dimensional strata
all of the quilted disks are mapped to points.  The $1$-dimensional
strata will be of two types: First, there are $1$-dimensional families
of quilted trajectories where every quilted disk is constant. Second,
there may be $1$-dimensional families for which only the quilting
parameter on a single non-constant quilted disk varies from $-\infty$
to $\infty$.

\section{Morphisms of Fukaya algebras}

Morphisms of Fukaya algebras are defined using the moduli spaces of
quilted disks in the previous section. Given an 
admissible perturbation morphism $\ul{P}^{01}$ from $\ul{P}^0$ to
$\ul{P}^1$, define for each $n \ge 0$ 
\begin{multline} \label{phin}
 \phi^n: CF(L;\ul{P}^0)^{\otimes n} \to CF(L;
 \ul{P}^1) \\ (x_1,\ldots,x_n) \mapsto \sum_{x_0,u \in
   {\M}_\Gamma(L,D,x_0,\ldots,x_n)_0} (-1)^{\heartsuit} \eps(u) (
 \sigma(u)!)^{-1} q^{ E(u)} y(u) x_0
\end{multline}
where the sum is over quilted disks in strata of dimension zero with
$x_1,\ldots,x_n$ incoming labels.

\begin{remark} {\rm (Lowest energy terms)} For $x \in \cI(L)$, the
  element $\phi^1(x^{\greyt})$ resp. $\phi^1(x^{\blackt})$
  resp. $\phi^1(x^{\whitet})$ has a term containing $x^{\greyt}$
  resp. $x^{\blackt}$ resp. $x^{\whitet}$ coming from the count of
  quilted treed disks $u: C\to X$ with no interior leaves.  Such a
  configuration consists of a quilted treed disk $C = S \cup T$ with
  only one disk component $S \cong B$ that is quilted and mapped to a
  point $u(S) = \{ x_M \}$.
\end{remark}

\begin{remark} \label{qtypes} The codimension one strata
  $\M_\Gamma(L,D)$ in the moduli spaces of pseudoholomorphic quilted
  treed disks $\ol{M}(L,D)$ are of several possible types: either
  there is one (or a collection of) edge $e \subset T$ of length
  $\ell(e)$ infinity, there is one (or a collection of) edge $e$ of
  length $\ell(e)$ zero, or equivalently, boundary nodes, or there is
  an edge $e$ with zero or infinite weight $\rho(e)$.  The case of an
  edge of zero or infinite weighting $\rho(e)$ is equivalent to
  breaking off a constant trajectory, and so may be ignored.  In the
  case of edges of infinite length(s) $l(e)$, then either $\Gamma$ is
\begin{enumerate} 
\item {\rm (Breaking off an uncolored tree)} a pair $\Gamma_1 \sqcup
  \Gamma_2$ consisting of a colored tree $\Gamma_1$ and an uncolored
  tree $\Gamma_2$ as in Figure \ref{bunquilt}; necessarily the breaking must be a leaf of
  $\Gamma_1$; or
\begin{figure}[ht]
\includegraphics[height=1.5in]{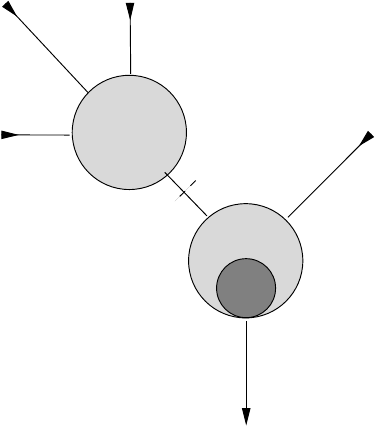}
\caption{Breaking off an unquilted treed disk} 
\label{bunquilt}
\end{figure}
\item {\rm (Breaking off colored trees)} a collection consisting of an
  uncolored tree $\Gamma_0$ containing the root $e_0$ and a collection
  $\Gamma_1,\ldots,\Gamma_r$ of colored trees attached to each of its
  $r$ leaves as in Figure \ref{bcoll}.  Such a stratum $\M_\Gamma$ is
  codimension one because of the (Balanced Condition) which implies
  that if the length $\ell(e)$ of any edge $e$ between $e_0$ to $e_i$
  is infinite for some $i$ then the path from $e_0$ to $e_i$ for any
  $i$ has the same property.

\begin{figure}[ht]
\includegraphics[height=1.5in]{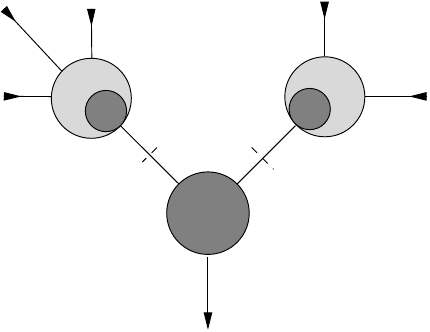}
\caption{Breaking off a collection of quilted disks}
\label{bcoll} 
\end{figure}

\end{enumerate} 
\noindent In the case of a zero length(s), one obtains a fake boundary component
with normal bundle $\R$, corresponding to either deforming the edge(s)
$e$ to have non-zero length $\ell(e)$ or deforming the node(s).  This
ends the Remark. 
\end{remark} 

\begin{theorem} \label{samedegree} {\rm (\ainfty morphisms via
    quilted disks)} For any admissible collection $\ul{P}^{01}$ of
  perturbations morphisms from $\ul{P}^0$ to $\ul{P}^1$, the
  collection of maps $\phi = (\phi^n)_{n \ge 0}$ constructed in
  \eqref{phin} is a convergent unital \ainfty morphism from
  $CF(L,\ul{P}^0)$ to $CF(L,\ul{P}^1)$.
\end{theorem} 

\begin{proof} The statement is an algebraic consequence of the
  description of the boundary of $\ol{\M}_{n,1}(L,D)_1$ in Remark
  \ref{qtypes}.  By counting the ends of the one-dimensional moduli
  spaces we obtain the relation \eqref{ident} but with the union over
  type replaced by the set of types of quilted treed disks with $n$
  leaves and $m$ interior markings.  The true boundary strata
  $\M_\Gamma(L,D)$ are those described in Remark \ref{qtypes} and
  correspond to the terms in the axiom for \ainfty morphisms
  \eqref{faxiom}.  We refer to \cite{ainfty} for the sign computation
  in a slightly different, but equivalent, context.

  The assertion on the strict units is a consequence of the existence
  of forgetful maps for infinite values of the weights.  By assumption
  the $\phi^n$ products of type $\Gamma$ that involve $x^{\whitet}$ as inputs involve
  counts of quilted treed disks $u: C \to X$ using perturbations
  $P_\Gamma$ that are pulled back under the forgetful morphisms
  $\ol{\U}_\Gamma \to \ol{\U}_{f(\Gamma)}$ forgetting the first leaf
  $e_i$ labelled with the strict unit $x^{\whitet}$.  Since forgetting
  that semi-infinite edge gives a configuration $u':C' \to X$ of
  negative expected dimension, if non-constant, the only
  configurations contributing to these terms must be the constant maps
  $u: C \to X$ with a single quilted disk $S$ on which $u$ is constant
  and a single leaf $T_{e_1}$ mapping to $x_M$ and root edge $T_{e_0}$
  mapping to $x_M$ as well.  Hence
  $ \phi^1(x^{\whitet}) = x^{\whitet}$ and
  $\phi^n( \ldots, x^{\whitet}, \dots) = 0, n \ge 2 .$
 \end{proof}

 In the following we show that in the case that the incoming and
 outgoing perturbations are the same, the corresponding morphism may
 be taken to be the identity.  This fact is used for the proof of
 invariance of the homotopy type of the Fukaya algebra in Corollary
 \ref{hequiv} below.

\begin{proposition} \label{equalpert} Suppose that $\ul{P}^0 =
  \ul{P}^1$ is an admissible perturbation datum for treed disks.  For
  each type $\Gamma$ of quilted treed disk, let $\Gamma'$ denote the
  corresponding type of unquilted treed disk obtained by forgetting
  the quilting and collapsing unstable components.  Pulling back
  $P_{\Gamma',0} = P_{\Gamma',1}$ to a perturbation morphism
  $P^{01}_\Gamma$ for quilted treed disks gives an admissible
  perturbation morphism for quilted disks such that the corresponding
  \ainfty morphism is the identity.
\end{proposition}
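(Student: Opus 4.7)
The plan is to show that with the pulled-back quilt-invariant perturbation morphism $P^{01}_\Gamma$, the zero-dimensional moduli spaces of quilted treed holomorphic disks contributing to the $\phi^n$ count reproduce the identity $A_\infty$ morphism $\on{Id}: \widehat{CF}(L, \ul{P}^0) \to \widehat{CF}(L, \ul{P}^1) = \widehat{CF}(L, \ul{P}^0)$, that is, $\phi^1(x) = x$ for every $x \in \widehat{\cI}(L)$ and $\phi^n = 0$ for $n \ge 2$.

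First, since $P^{01}_\Gamma$ is by hypothesis pulled back under the forgetful map $f^{\Gamma}_{\Gamma'}: \ol{\U}_\Gamma \to \ol{\U}_{\Gamma'}$, the Cauchy--Riemann and gradient-flow equations on a quilted treed disk $u: C \to X$ of type $\Gamma$ descend to those on the stabilization $u': C' \to X$ of type $\Gamma' = f(\Gamma)$. This yields a forgetful map of moduli spaces $f_*: \ol{\M}_\Gamma(L,D,\ul{x}) \to \ol{\M}_{\Gamma'}(L,D,\ul{x})$ whose fibers parametrize admissible quilt structures on the underlying unquilted domain subject to the balanced-length condition \eqref{balanced}.

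Second, I would argue that the zero-dimensional stratum $\ol{\M}_\Gamma(L,D,\ul{x})_0$ consists only of configurations in which every quilted disk is constant and carries no interior markings, as asserted in the paragraph following Definition \ref{defi_pert_morp}. A non-constant quilted disk admits a one-parameter family of quilt positions compatible with the balanced-lengths condition, forcing the fiber of $f_*$ to be positive-dimensional; and an interior marking on a constant quilted disk would require its value to lie on a divisor $D^\delta$ disjoint from $L$, a contradiction. In the surviving configurations, all quilted disks are constant and unmarked, reducing the analysis to a combinatorial count of transverse intersections of stable and unstable manifolds of the Morse--Smale pair $(F,G)$.

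Third, I would enumerate the surviving zero-dimensional configurations and match them to the identity morphism. For $n = 1$, the unique contributing type is a single constant quilted disk with one input leaf labelled $x_1$ and one output edge labelled $x_0$, giving the transverse intersection $W^-(x_1) \cap W^+(x_0)$ which is non-empty (as a single point) exactly when $x_0 = x_1$ by Morse--Smale genericity; the orientation convention \eqref{orienteq} then yields $\phi^1(x) = x$. For $n \ge 2$, the zero-dimensional configurations arise only as boundary endpoints of the one-dimensional families described in the second type in the excerpt following Definition \ref{defi_pert_morp}, in which the quilting parameter on a single non-constant quilted disk varies from $-\infty$ to $\infty$; since $P^0 = P^1$, the perturbations at the two endpoints agree and the signed contributions cancel pairwise, giving $\phi^n = 0$. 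Strict unitality $\phi^1(x_M^{\whitet}) = x_M^{\whitet}$ and $\phi^n(\ldots, x_M^{\whitet}, \ldots) = 0$ for $n \ge 2$ follows from the (Infinite weights) axiom of Definition \ref{coherent} exactly as in the proof of Theorem \ref{samedegree}. The main obstacle is the pairwise cancellation argument: one must precisely track the orientations on the one-dimensional strata using the orientation construction generalizing \eqref{orienteq} to quilted moduli, and verify that the two endpoints of each one-dimensional family carry opposite signs when $P^0 = P^1$, utilizing the gluing-sign computation in the proof of Theorem \ref{samedegree} specialized to the boundary strata of quilted moduli spaces.
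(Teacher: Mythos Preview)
Your setup via the forgetful map $f_*$ is correct and is exactly what the paper uses. However, your argument for $n \ge 2$ goes astray. The ``pairwise cancellation'' mechanism you invoke is both unnecessary and not well-founded: you have just argued in your second paragraph that in dimension zero every quilted disk is constant, yet in the third paragraph you speak of one-dimensional families in which the quilting parameter on a \emph{non-constant} quilted disk varies, and of their endpoints cancelling. These two statements are in tension, and in any case there is no reason the putative zero-dimensional configurations should organize into cancelling pairs.

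The paper's argument is simpler and uniform in $n$. Because the perturbation is pulled back, every quilted holomorphic configuration $u$ of type $\Gamma$ forgets to an unquilted holomorphic configuration of type $\Gamma'$. The expected dimension satisfies
\[
\dim \M_\Gamma(L,D,\ul{x}) \;=\; \dim \M_{\Gamma'}(L,D,\ul{x}) + 1,
\]
the extra $1$ coming from the quilting parameter (equivalently, $\dim \ol{\M}_{n,m,1} = \dim \ol{\M}_{n,m} + 1$). Hence a quilted configuration in expected dimension $0$ forgets to an unquilted configuration in expected dimension $-1$, which is empty by regularity of $\ul{P}^0$. The only exception is when $\Gamma'$ is not a valid stable type at all, i.e.\ when forgetting the quilting and stabilizing collapses everything. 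This forces $\Gamma$ to be a single constant quilted disk with exactly two boundary edges and no interior markings, which occurs only for $n=1$ and yields $\phi^1(x)=x$. For $n \ge 2$ the stabilization $\Gamma'$ always has at least three semi-infinite boundary edges and is therefore a genuine stable unquilted type, so the $(-1)$-dimensional argument applies and $\phi^n = 0$. No cancellation or orientation-tracking is needed; the moduli spaces are simply empty.

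Your second paragraph already contains the essential observation (positive-dimensional fibers of $f_*$), but you apply it only to rule out non-constant quilted components rather than to the entire configuration. The dimension drop holds regardless of whether the quilted components are constant, so long as the unquilted stabilization $\Gamma'$ is non-empty---and that is automatic for $n \ge 2$.
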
 

\begin{proof}  
  The claim follows by choosing quilt-independent perturbations
  $\ul{P} = (P_\Gamma)$ as in \eqref{qindep} on page \pageref{qindep}.
  Given a non-constant quilted treed disk $u: C \to X$ contributing to
  $\phi^n$ in the moduli space $\M_\Gamma(L,D)_0$ of expected dimension
  $0$, one obtains an unquilted treed disk $u' : C' \to X$ in a
  stratum $\M_{\Gamma'}(L,D)_{-1}$ where $\Gamma'$ is the type obtained
  from $\Gamma$ by forgetting to quilting, which is a
  contradiction. Therefore, the only configurations contribution to
  $\phi^n$ are the constant configurations.  Hence
  $\phi^1 = \on{Id}: CF(L) \to CF(L)$ is the identity and all other
  maps $\phi^n: CF(L)^{\otimes n} \to CF(L), n > 0$ vanish.
\end{proof} 

\section{Homotopies} 
\label{homsec}

The morphism of Fukaya algebras constructed in Theorem
\ref{samedegree} is a homotopy equivalence by an argument using {\em
  twice-quilted disks}.  A twice-quilted disk $(C,Q_1,Q_2)$ is defined
in the same way as once-quilted disks, but with two interior circles
$Q_1,Q_2 \subset C$ that are either equal $Q_1 = Q_2$ or with the
second $Q_2 \subset \on{int}(Q_1)$ contained inside the first, say
with radii $\rho_1 < \rho_2$.  The moduli space of twice-quilted treed
disks is a cell complex constructed in a similar way to the space of
once-quilted treed disks.  Denote the moduli space with $n$ leaves and
two quiltings by $\ol{\M}_{n,2}$.  As before the moduli space admits a
stratification by combinatorial type.  The combinatorial type of a
twice-quilted disk $(C,Q_1,Q_2)$ with boundary markings is a tree
\[\Gamma =
(\Ver(\Gamma), \Edge(\Gamma), (h \times t): \Edge(\Gamma) \to
\Ver(\Gamma) \cup \{ \infty \} )\] 
equipped with subsets
\[\Ver^1(\Gamma), \Ver^2(\Gamma) \subset \Ver(\Gamma)\] 
corresponding to the quilted components; the set
\[\Ver^{12}(\Gamma) := \Ver^1(\Gamma) \cap
\Ver^2(\Gamma)\] 
corresponds to the twice-quilted components. 
The ratios 
\[\lambda_S (v) = \rho_2(v)/\rho_1(v), \quad v \in
\Ver^{12}(\Gamma)\]
of the radii of the interior circles with radii $\rho_2(v),\rho_1(v),
v \in \Ver^{12}(\Gamma)$ are required to be equal for each
twice-quilted disk in the configuration, if the configuration has
twice-quilted components:
\begin{equation} \label{ratioeq} \lambda_S(v_1) = \lambda_S({v_2}), \quad \forall v_1,v_2 \in
  \Ver^{12}(\Gamma) .\end{equation}
The stratification of the moduli space of twice-quilted disks
$\ol{\M}_{n,2}$ by type is a cell decomposition with cells in
bijection with certain expressions involving formal functions $f,g$
and inputs $x_1,\ldots, x_n$.  For example, in Figure \ref{twicecol}
the moduli space of twice-quilted stable disks $\ol{\M}_{2,2}$ is
shown without trees; it is a pentagon whose vertices correspond to the
expressions
$f(g(x_1x_2))$, $f(g(x_1)g(x_2))$, $f(g(x_1))f(g(x_2))$, $((fg)(x_1))
((fg)(x_2))$, $(fg)(x_1x_2) .$
\begin{figure}[ht]
\includegraphics[height=2.75in]{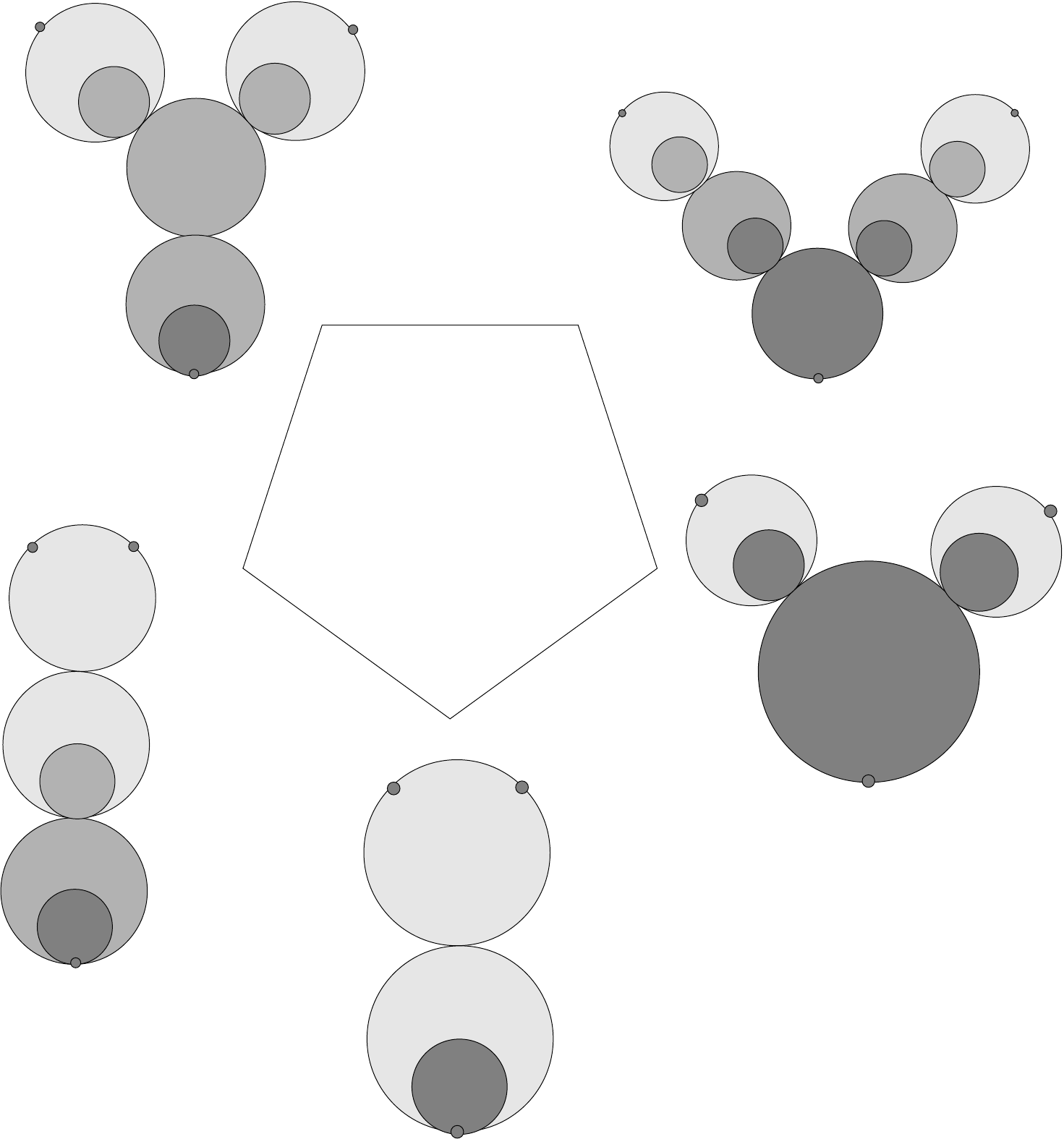}
\caption{Twice-quilted disks}
\label{twicecol} 
\end{figure} 
\noindent One can also consider a moduli space of twice-quilted disks with
interior markings; these are required to lie on components $v \in
\Ver(\Gamma)$ which are at least as far away from the root edge as the
vertices $v \in \Ver^2(\Gamma)$.  That is, if $v \in \Ver(\Gamma)$
lies on a non-self-crossing path from the root edge to an element in
$\Ver^2(\Gamma)$ then $v \in \Ver^2(\Gamma)$.  Once one allows
interior markings, these can bubble off onto {\em twice-quilted
  spheres}, which are marked spheres $(C,(z_0,\ldots,z_n))$ equipped
with {\em two} isomorphisms $C - \{ z_0 \} \to \C$.

As in the quilted case, there is a moduli space of {\em treed}
twice-quilted spheres which assigns lengths to the interior and
boundary nodes.  The lengths of paths between colored vertices satisfy
the balanced condition \eqref{balanced} for each color:
 For any two vertices of the same color $v_1,v_2 \in \Ver^{k}(\Gamma)$, 
\begin{equation} \label{balanced2}
\sum_{e \in P_+(v_1,v_2)} \ell(e) = 
\sum_{e \in P_-(v_1,v_2)} \ell(e) 
 \end{equation}
where $P(v_1,v_2)$ is the (finite length) oriented non-self-crossing
path from $v_1$ to $v_2$ and $P_+(v_1,v_2)$ resp. $P_-(v_1,v_2)$ is
the part of the path pointing towards resp. away from the root edge.
In particular this implies that for two vertices of different colors $v_1\in \Ver^{1}(\Gamma)$
and $v_2 \in \Ver^{2}(\Gamma)$ for which there is a (finite length) oriented 
non-self-crossing path $P(v_1,v_2)$ from $v_1$ to $v_2$, let
\begin{equation} \label{lambdaT}
\lambda_T(v_1,v_2) = \sum_{e \in P(v_1,v_2)} \ell(e).
\end{equation}
Then $\lambda_T(v_1,v_2)$ is independent of the choice
$v_1 \in \Ver^{1}(\Gamma)$ and $v_2 \in \Ver^{2}(\Gamma)$.  

Suppose that divisors and perturbations for unquilted and once-quilted
disks have already been chosen.  That is, there are given compatible
almost complex structures $J_0,J_1,J_2$, metrics $G_0,G_1,G_2$,
divisors $D_0,D_1,D_2$ and perturbation systems
$\ul{P}^0,\ul{P}^1,\ul{P}^2$ for unquilted disks.  Furthermore, there
are given paths $J_{01}^t,J_{12}^t, J_{02}^t$ of compatible almost
complex structures from $J_0$ to $J_1$, $J_1$ to $J_2$ and $J_0$ to
$J_2$, paths $D_{01}^t$ from $D_0$ to $D_1$, and $D_{12}^t$ from $D_1$
to $D_2$, and $D_{02}^t$ from $D_0$ to $D_2$.  (In our application we
are particularly interested in the case $D_0 = D_2$ and the constant
path $D_{02}^t = D_0 = D_2$.) Suppose there are given perturbation
data $\ul{P}^{ij}$ for once-quilted disks giving rise to morphisms
\[\phi_{ij}: CF(L,\ul{P}^i) \to CF(L,\ul{P}^j),
\quad 0 \leq i < j \leq 2 .\]
We have in mind especially the case that $D_0 = D_2$, $D_{01}^t =
D_{12}^{1-t}$, and $D_{02}^t$ is the constant path.  In this case one
may take $\ul{P}^{12,t} = \ul{P}^{01,1-t}$ and $\ul{P}^{02}$ the
perturbation system pulled back by the forgetful map forgetting the
quilting as in Proposition \ref{equalpert}.

To construct the moduli spaces of twice-quilted treed disks we extend
the families of stabilizing divisors over the universal twice-quilted
treed disks.  Choose a {\em domain-dependent parameter} for twice
quilted disks: a smooth map
\[\delta^{012}: \triangle \equiv \{(t_1,t_2) \in
[-\infty,\infty]^2 \ | \ t_2 \leq t_1\} \to [0,2] .\]
For every point $z \in C$ of a twice quilted treed disk $C$, let
$d(z)=(t_1,t_2) \in \triangle$ with 
%here
$t_1$ being the signed distance of $z$ to the lowest quilted
components $S_v, v \in \Ver^1(\Gamma)$ of $C$ and $t_2$ being the
signed distance of $z$ to the highest quilted components $S_v, v \in
\Ver^2(\Gamma)$  of $C$.

\begin{definition}\label{doub_pert_morp}
A perturbation $P^{012}_\Gamma$ for twice-quilted treed disks from
quilted perturbation systems $\ul{P}^{01} \times \ul{P}^{12}$ to
$\ul{P}^{02}$ consists of
 \begin{enumerate} 
     \item a domain-dependent parameter $\delta^{012}_\Gamma$ that
       agrees 
\begin{itemize}
\item 
with $\delta^{01}_{\Gamma_{01}}$ on $[-\infty,\infty] \times
  \{-\infty\}$,
\item with $\delta^{12}_{\Gamma_{12}}$ on $\{-\infty\} \times
  [-\infty,\infty]$ and
\item with $\delta^{02}_{\Gamma_{02}}$ on $\{ (t_1,t_2) \in
  [-\infty,\infty]^2 | t_1=t_2 \}$, 
\end{itemize} 
where $\Gamma_{ij}, 0 \leq i \leq j \leq 2$ is the corresponding type
of once-quilted disk;

\item a smooth family of metrics $G^{012}_\Gamma$ constant equal to
  $G^0$ resp. $G^1$ resp. $G^2$ on a neighborhood of the endpoints for
  which $d=(-\infty,-\infty)$ resp. $d=(\infty,-\infty)$
  resp. $d=(\infty,\infty)$ and that agrees 
\begin{itemize} 
\item with $G^{01}_{\Gamma_{01}}$ on $[-\infty,\infty] \times
  \{-\infty\}$, 
\item with $G^{12}_{\Gamma_{12}}$ on $\{+\infty\} \times
             [-\infty,\infty]$ and
\item with $G^{02}_{\Gamma_{02}}$ on $\{ (t_1,t_2) \in
  [-\infty,\infty]^2 | t_1=t_2 \}$, 
\end{itemize} 
where $\Gamma_{ij}, 0 \leq i \leq j \leq 2$ is the corresponding type
of once-quilted disk;

\item a domain-dependent Morse function $F^{012}_\Gamma$ equal to
  $F^0$ resp. $F^1$ resp. $F^2$ in a neighborhood of the endpoints at
  $d=(-\infty,-\infty)$ resp. $d=(\infty,-\infty)$
  resp. $d=(\infty,\infty)$ and that agrees with
  $F^{01}_{\Gamma_{01}}$ resp. $F^{12}_{\Gamma_{12}}$ resp.
  $F^{02}_{\Gamma_{02}}$ on the once quilted \label{qtreep}
  \llabel{qtree} disk components of type $\Gamma_{01}$
  resp. $\Gamma_{12}$ resp. $\Gamma_{02}$ containing the root
  resp. the leaves resp. where the quilting radii coincide,

\item a domain-dependent almost-complex structure $J^{012}_\Gamma$ 
    such that for every surface component $C_i$,  
    equal to $J_{\delta^{012}_\Gamma \circ d(z)}$ on 
    $D^{\delta^{012}_\Gamma \circ d(z)}$, in a neighborhood of the spherical
    nodes, the interior markings and on the boundary of $C_i$, 
that is 
\begin{itemize} 
\item equal to $J^0_{\Gamma_0}$, resp. $J^1_{\Gamma_1}$
  resp. $J^2_{\Gamma_2}$ on the unquilted components of type
  $\Gamma_0$ resp. $\Gamma_1$ resp. $\Gamma_2$ at
  $d=(-\infty,-\infty)$ resp. $d=(\infty,-\infty)$
  resp. $d=(\infty,\infty)$ and 
\item agrees with $J^{01}_{\Gamma_{01}}$ resp. $J^{12}_{\Gamma_{12}}$
  resp.  $J^{02}_{\Gamma_{02}}$ on the once quilted treed disk
  components of type $\Gamma_{01}$ resp. $\Gamma_{12}$
  resp. $\Gamma_{02}$ containing the root resp. the leaves resp. where
  the quilting radii coincide.
\end{itemize} 
\item In cases that the perturbations being connected are identical,
  one can also require the following invariance property: A
  perturbation datum is {\em quilting-independent}
  if \label{doub_inva_axio} $G^{012}_\Gamma$, $F_\Gamma^{012}$, and
  $J_\Gamma^{012}$ are pull-backs under the forgetful morphism
  forgetting the quiltings on each once-quilted or twice-quilted disk.
\end{enumerate}
\end{definition}

The perturbations above allow the definition of pseudoholomorphic
twice-quilted treed disks.  Given a treed twice-quilted treed disk $C$
of type $\Gamma$, one obtains perturbation data for $C$ by pull-back
from the stabilization $C^{\on{st}}$ of $C$, which may be identified
with a fiber of the universal twice-quilted disk
$\U_\Gamma \to \M_\Gamma$.  A {\em pseudoholomorphic twice-quilted
  treed disk} is a twice-quilted disk $C$ together with a map
$u:C \to X$ that is $J_\Gamma^{012}$-holomorphic on surface
components, a $F_\Gamma^{012}$-Morse trajectory on boundary tree
segments with respect to the metrics
$G^{01}_\Gamma, G^{12}_\Gamma, G^{02}_\Gamma$, and constant on the
interior part of the tree.  Stable and adapted twice-quilted treed
disks are defined as in the once-quilted case.  In particular, each
interior marking $z_i$ maps to the divisor
$D^{\delta^{012}_\Gamma(z_i)}$.  Suppose that the perturbations
$\ul{P} = (P_\Gamma^{012})$ satisfy coherence and stabilized
conditions similar to those for quilted disks.  The moduli spaces
$\ol{\M}_\Gamma(L,D)$ of adapted stable twice-quilted disks are then
compact for each uncrowded combinatorial type $\Gamma$ of expected
dimension $\dim(\M_\Gamma(L,D))$ at most one.  The property
\eqref{doub_inva_axio} of Definition \ref{doub_pert_morp} ensures that
the latter zero dimensional spaces
$\M_\Gamma(L,D), \dim(\M_\Gamma(L,D)) = 0$ will not contain
non-constant (either once or twice) quilted disks.  The one
dimensional strata $\M_\Gamma(L,D)$ may involve at most one
non-constant once-quilted disk $S_v \subset S$ and if so,
$\M_\Gamma(L,D)$ is a constant family over which the latter quilting
radius varies freely.

In order to obtain transversality the fiber products involved in the
definition of the universal twice-quilted disks in \eqref{ratioeq}
must be perturbed, using {\em delay functions}, as in Seidel
\cite{se:bo} and Ma'u-Wehrheim-Woodward \cite{ainfty}.
\llabel{ainftyref} \label{ainftyrefp}  We define a map incorporating
both the condition on ratios and distances between quilted components
as follows.  Identify $\M_{1,0,2}$ with $[0,\infty]$ as in Figure
\ref{oneleaf}.  For $n \ge 1, m \ge 0$ denote by
\[ \lambda: \M_{n,m,2} \to \M_{1,0,2} \cong [0, \infty ] \]
the forgetful morphism forgetting all but the first marking; note that
on the interval consisting of only once-quilted disks, $\lambda$ is
essentially equivalent to the map $\lambda_T$ of \eqref{lambdaT} while
on the interval with twice-quilted disks $\lambda$ is given by the map
$\lambda_S$ of \eqref{ratioeq}.  Note that $\lambda$ is also defined
in the case $n = 0$, by combining the maps \eqref{lambdaT} and
\eqref{ratioeq}.
\begin{figure}[ht]
\begin{picture}(0,0)%
\includegraphics{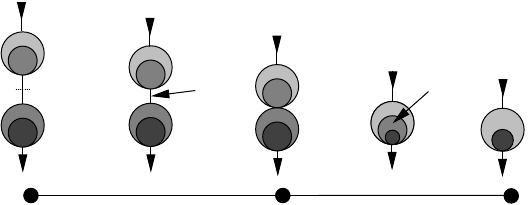}%
\end{picture}%
\setlength{\unitlength}{4144sp}%
\begingroup\makeatletter\ifx\SetFigFont\undefined%
\gdef\SetFigFont#1#2#3#4#5{%
  \reset@font\fontsize{#1}{#2pt}%
  \fontfamily{#3}\fontseries{#4}\fontshape{#5}%
  \selectfont}%
\fi\endgroup%
\begin{picture}(4001,1552)(-16950,-2025)
\put(-15423,-1220){\rotatebox{360.0}{\makebox(0,0)[lb]{\smash{{{$\lambda_T$}%
}}}}}
\put(-13644,-1173){\rotatebox{360.0}{\makebox(0,0)[lb]{\smash{{{$\lambda_S$}%
}}}}}
\end{picture}%
\caption{Moduli of treed twice-quilted disks with one leaf}
\label{oneleaf}
\end{figure} 
\noindent  Let
$\Gamma$ be a combinatorial type of twice-quilted disks.  Define
$\ol{\M}^{\pre}_\Gamma$ as the product of moduli spaces for the
vertices,
\[ \ol{\M}_\Gamma^{\pre} = \prod_{v \in \Ver(\Gamma)} \ol{\M}_v .\]
Let $k$ denote the number of twice-quilted vertices and 
\[ \lambda_\Gamma: \ol{\M}_\Gamma^{\pre} \to \R^{k}, \quad (r_v) \mapsto
\prod_{v \in \Ver^{(12)}(\Gamma)} \lambda(r_v) \]
the map combining the forgetful maps for the twice-quilted components.
Then $ \ol{\M}_\Gamma = \lambda_\Gamma^{-1}(\Delta) $ where
$\Delta \subset \R^{k}$ is the diagonal.  A {\em delay function} for
$\Gamma$ is a collection of smooth functions depending on
$r \in \ol{\M}^{\pre}_\Gamma$
\[ \tau_{\Gamma} = \left( \tau_e \in C^\infty(\ol{\M}^{\pre}_\Gamma)
\right)_{e\in \Edge(\Gamma)}.\]
Letting $\lambda_i := \lambda(r_{v_i})$ where $\lambda(r_{v_i})$ is the 
ratio of the radii circles for $r_{v_i}$, the {\em delayed evaluation
  map} is
\begin{eqnarray}
\lambda_{\tau_\Gamma} : \prod\limits_{v\in \Ver(\Gamma)} \ol{\M}_{v} &
\to & \R^k \label{delayeval} \\ (r_v, u_v)_{v\in \Ver{\Gamma} } &
\mapsto & \left( \lambda_i \exp\left( \sum_{e\in p_i} \tau_e(r)\right)
\right)_{i=1,\ldots,k} \nonumber
\end{eqnarray}
where the sum is the sum of delays along each path $p_i$ to a
twice-quilted disk component.  Call $\tau_\Gamma$ {\em regular} if the
delayed evaluation map $\lambda_{\tau_\Gamma}$ is transverse to the
diagonal $\Delta \subset \R^k$.  Given a regular delay function
$\tau_\Gamma$, we define
\begin{equation} \label{delayedfiber} 
\ol{\M}_\Gamma := \lambda_{\tau_\Gamma}^{-1}(\Delta) . \end{equation}
For a regular delay function $\tau_\Gamma$, the delayed fiber product
has the structure of a smooth manifold, of local dimension
\begin{equation} \label{localdim}
 \dim \M_\Gamma = 1-k + \sum_{v\in \Ver \Gamma} \dim \M_v   \end{equation}
where $k$ is the number of twice-quilted disk components.  A
collection $\{\tau^d\}_{d\geq 1}$ of delay functions is {\em
  compatible} if the following properties hold.  Let $\Gamma$ be a
combinatorial type of twice-quilted disk and $v_0,\ldots,v_k$ the
vertices corresponding to disk components.
%C:   
\begin{enumerate}

\item {\rm (Subtree property)} Suppose that the root component $v_0$
  is not a twice-quilted disk.  Let $\Gamma_1, \ldots, \Gamma_{|v_0|-1}$
  denote the subtrees of $\Gamma$ attached to $v_0$ at its leaves; then $\Gamma_1, \ldots, \Gamma_{|v_0|-1}$ are combinatorial
  types for nodal twice-quilted disks.  Let $r_i$ be the component of $r
  \in \M^{\pre}_\Gamma$ corresponding to $\Gamma_i$.  We require that
  $ \tau_{\Gamma}(r) \big\lvert_{\Gamma_i} = \tau_{\Gamma_i}(r_i)$,
  i.e., for each edge $e$ of $\Gamma_i$, the delay function
  $\tau_{\Gamma,e}(r)$ is equal to $\tau_{\Gamma_i,e}(r_i)$.  See
  Figure \ref{subtree}.  

\begin{figure}[ht]
\begin{picture}(0,0)%
\includegraphics{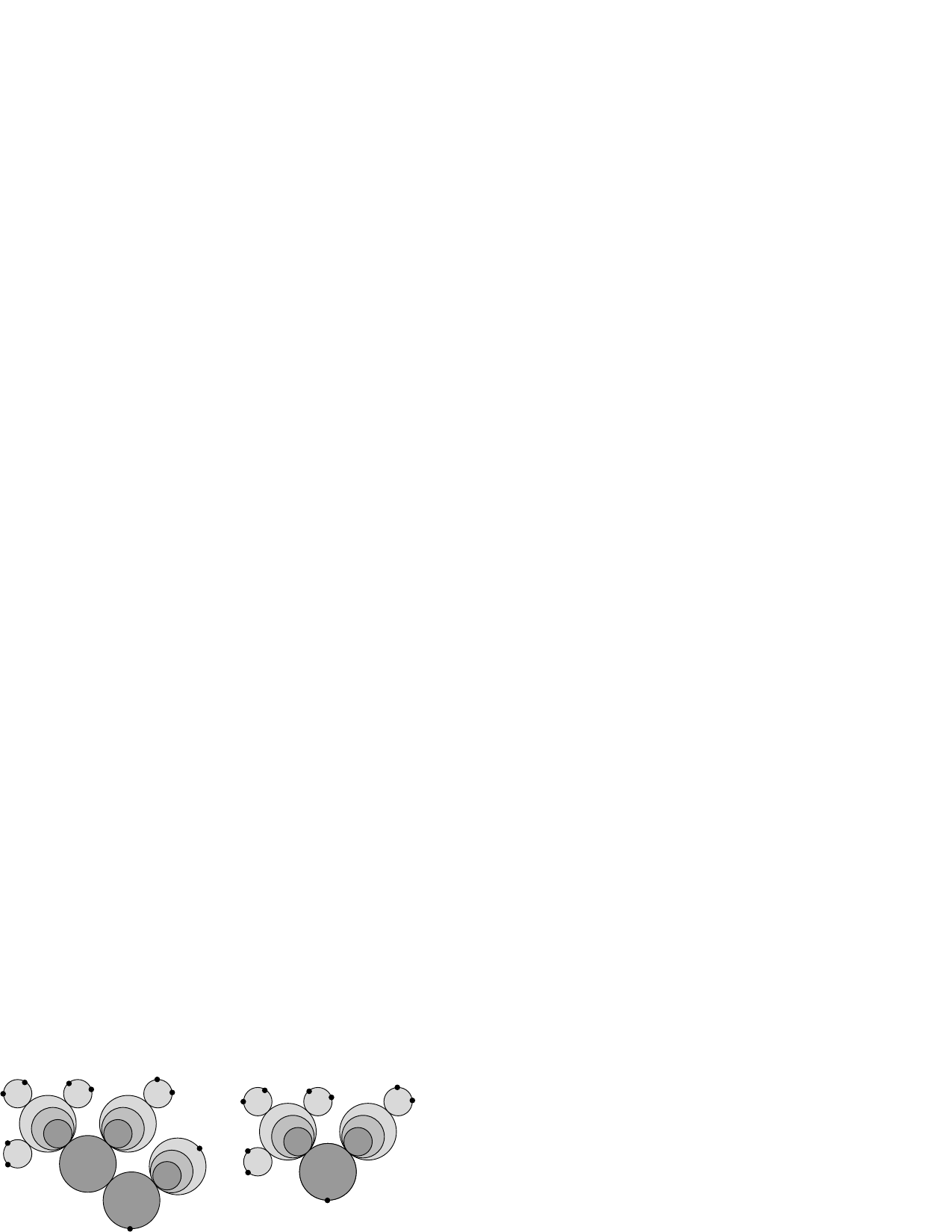}%
\end{picture}%
\setlength{\unitlength}{3947sp}%
\begingroup\makeatletter\ifx\SetFigFont\undefined%
\gdef\SetFigFont#1#2#3#4#5{%
  \reset@font\fontsize{#1}{#2pt}%
  \fontfamily{#3}\fontseries{#4}\fontshape{#5}%
  \selectfont}%
\fi\endgroup%
\begin{picture}(4453,1667)(472,-3792)
\put(3595,-2967){\makebox(0,0)[lb]{\smash{{{$\tau_{\Gamma',e}$}%
}}}}
\put(1096,-2856){\makebox(0,0)[lb]{\smash{{{$\tau_{\Gamma,e}$}%
}}}}
\end{picture}%

\caption{The (Subtree Property)} 
\label{subtree}
\end{figure}

\item {\rm (Refinement property)} Suppose that the combinatorial type
  $\Gamma^\prime$ is a refinement of $\Gamma$, so that there is
  a surjective morphism $f: \Gamma^\prime \to \Gamma$ of trees; let
  $r$ be the image of $r'$ under gluing.
We require that $\tau_\Gamma\big\lvert_U$ is determined by
$\tau_{\Gamma^\prime}$ as follows: for each $e \in \Edge(\Gamma)$, and
$r \in U$, the delay function is given by the formula
\[ \tau_{\Gamma,e}(r) = \tau_{\Gamma',e} + \sum_{e'}
\tau_{\Gamma',e'}(r')\]
where the sum is over edges $e'$ in $\Gamma'$ that are collapsed under
gluing and $e$ is the next-furthest-away edge from the root
vertex.  See Figure \ref{refinement1}.
\begin{figure}[ht]
\begin{picture}(0,0)%
\includegraphics{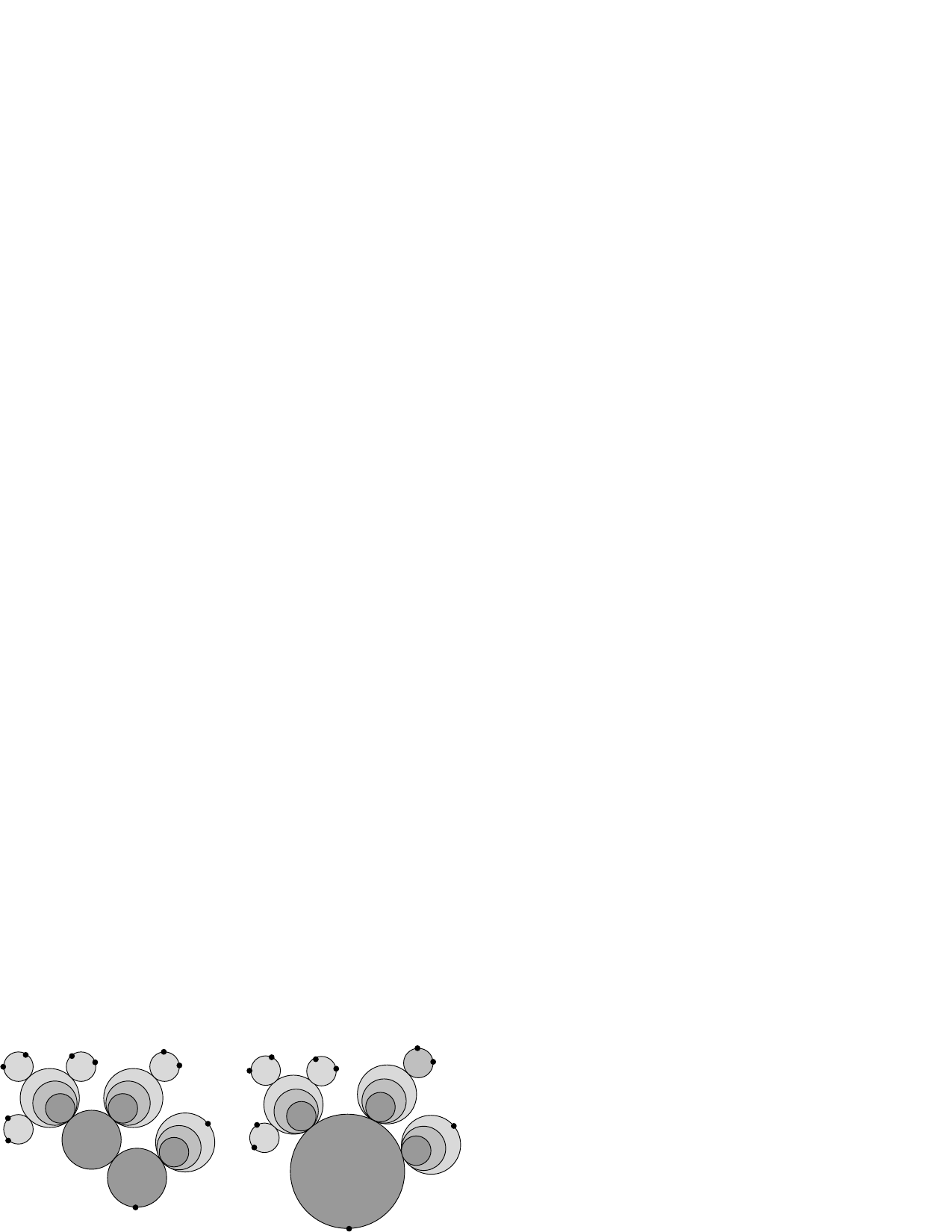}%
\end{picture}%
\setlength{\unitlength}{3947sp}%
\begingroup\makeatletter\ifx\SetFigFont\undefined%
\gdef\SetFigFont#1#2#3#4#5{%
  \reset@font\fontsize{#1}{#2pt}%
  \fontfamily{#3}\fontseries{#4}\fontshape{#5}%
  \selectfont}%
\fi\endgroup%
\begin{picture}(4944,2004)(472,-4074)
\put(1361,-3350){\makebox(0,0)[lb]{\smash{{{$\tau_{\Gamma,e_1}$}%
}}}}
\put(999,-2875){\makebox(0,0)[lb]{\smash{{{$\tau_{\Gamma,e_2}$}%
}}}}
\put(1597,-2967){\makebox(0,0)[lb]{\smash{{{$\tau_{\Gamma,e_3}$}%
}}}}
\put(3580,-2988){\makebox(0,0)[lb]{\smash{{{$\tau_{\Gamma,e_2}+\tau_{\Gamma,e_1}$}%
}}}}
\put(4297,-2842){\makebox(0,0)[lb]{\smash{{{$\tau_{\Gamma,e_3}+\tau_{\Gamma,e_1}$}%
}}}}
\end{picture}%
\caption{The (Refinement property), first case} 
\label{refinement1}
\end{figure}
\noindent In the case that the collapsed edges connect twice quilted components
with unquilted components, this means that the delay functions are
equal for both types, as in the Figure \ref{casetwo}.

\item {\rm (Core property)} Given a combinatorial type $\Gamma$, we
  say that the {\em core} $\Gamma_0$ of $\Gamma$ is the combinatorial
  type obtained by removing all vertices above the colored vertices.
  If two combinatorial types, say $\Gamma$ and $\Gamma^\prime$, have
  the same core $\Gamma_0$, let $r,r'$ be disks of type $\Gamma$
  resp. $\Gamma^\prime$ and $r_0,r'_0$ the disks of type $\Gamma_0$
  obtained by removing the components except for those corresponding
  to vertices of $\Gamma_0$.  If $r_0 = r'_0$ then
  $\tau_{\Gamma,e}(r) = \tau_{\Gamma^\prime,e}(r')$.  (That is, the
  delay functions depend only on the region between the root vertex
  and the bicolored vertices.)
\end{enumerate}
A collection of compatible delay functions is {\em positive} if, for
every vertex $v \in \Gamma_0$ with $k$ leaves labeled in
counterclockwise order by $e_1, \ldots, e_{k}$, the associated delay
functions satisfy 
\begin{equation} \label{poscond} \tau_{e_1} < \tau_{e_2} < \ldots <
  \tau_{e_{k}} .\end{equation} 

\begin{figure}[ht]
\begin{picture}(0,0)%
\includegraphics{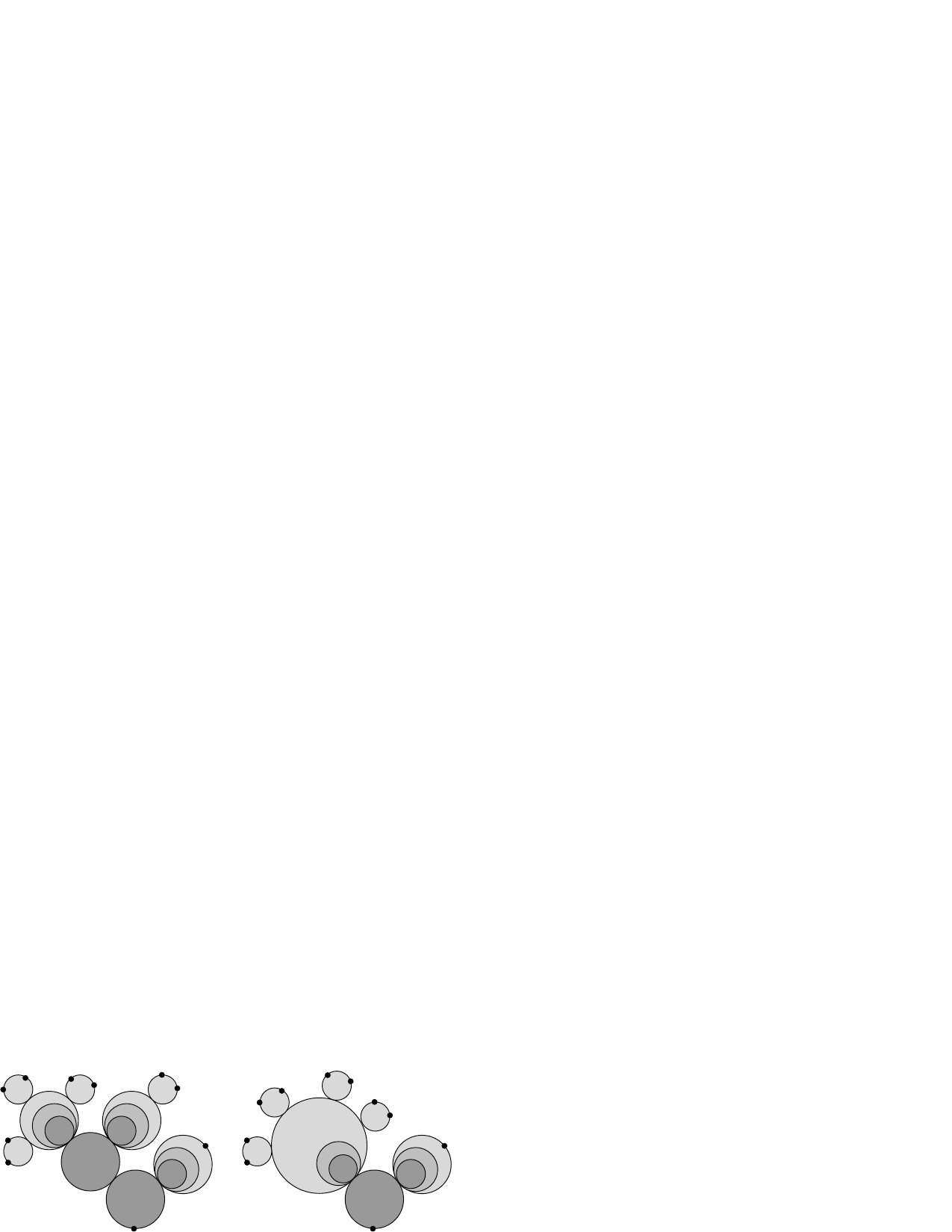}%
\end{picture}%
\setlength{\unitlength}{3947sp}%
\begingroup\makeatletter\ifx\SetFigFont\undefined%
\gdef\SetFigFont#1#2#3#4#5{%
  \reset@font\fontsize{#1}{#2pt}%
  \fontfamily{#3}\fontseries{#4}\fontshape{#5}%
  \selectfont}%
\fi\endgroup%
\begin{picture}(4843,1730)(472,-3837)
\put(1110,-2865){\makebox(0,0)[lb]{\smash{{{$\tau_{\Gamma',e_2}$}%
}}}}
\put(1668,-2950){\makebox(0,0)[lb]{\smash{{{$\tau_{\Gamma',e_3}$}%
}}}}
\put(1350,-3346){\makebox(0,0)[lb]{\smash{{{$\tau_{\Gamma',e_1}$}%
}}}}
\put(3769,-3351){\makebox(0,0)[lb]{\smash{{{$\tau_{\Gamma',e_1}$}%
}}}}
\end{picture}%

\caption{The (Refinement property), second case} 
\label{casetwo} 
\end{figure}

For each type there exist regular, positive delay functions compatible
with a collection of given choices on the boundary.  For any type
$\Gamma$ the (Subtree property) implies that all the delay functions
in $\tau_\Gamma := \tau_\Gamma(L)$ {\em except} those for the finite
edges adjacent to $v_0$, the root component, are already fixed.  We
wish to construct regular delay functions for the finite edges
$e \in \Edge(\Gamma)$ adjacent to the root vertex
$v_0 \in \Ver(\Gamma)$ in a way that is also compatible with the
(Refinement property).  We ensure compatibility by proceeding
inductively on the number of edges connecting to the root vertex $v_0$
as follows.  Suppose that regular delay functions are given for types
$\Gamma$ corresponding to strata of $\M_{n',m,2}$ for $n' < n$, as
well as for types $\Gamma'$ appearing in the (Refinement property) for
$\Gamma$.  We construct inductively on $d \ge 2$ a collection of
regular delay functions $\tau_\Gamma := \tau_\Gamma(L)$ for all
combinatorial types $\Gamma$ whose root vertex has $2 \leq d \leq n$
incoming edges.  We may assume that $\Gamma$ has no vertices ``beyond
the twice-colored vertices'' since by the (Core property) the delay
functions $\tau_\Gamma$ are independent of the additional vertices.
Consider a combinatorial type $\Gamma$ whose root vertex has $d$
leaves.  There is an open neighborhood $U$ of
$\partial \M^{\pre}_\Gamma $ in $\ol{\M}^{\pre}_\Gamma$ in which the
delay functions $\tau_\Gamma$ for the leaves $e$ adjacent to the root
vertex $v \in \Ver(\Gamma)$ are already determined by the
compatibility condition (Refinement property) and the inductive
hypothesis.  We need to show that we may extend the $\tau_i$ over the
interior of $\ol{\M}^{\pre}_\Gamma$.  To set up the relevant function
spaces let $l\ge 0$ be an integer and let $f$ be a given $C^l$
function on $\ol{U}$.  Let $C_{f}^l(\M^{\pre}_\Gamma)$ denote the
Banach manifold of functions with $l$ bounded derivatives on
$\M^{\pre}_\Gamma$, equal to $f$ on $\ol{U}$.  Let
$\Gamma_i, i =1,\ldots, d$ be the trees attached to the root vertex
$v_0$.  Consider the evaluation map
\bea & & \on{ev}: \M_{{\Gamma_1}}\times \ldots \times
\M_{{\Gamma_{d}}} \times \M_{v_0} \times \prod_{i=1}^{d} C^l_{
  \tau_i}(\M^{\pre}_\Gamma) \to \R^{d-1}\\ & & \left((r_1, u_1),
\ldots, (r_n, u_d), (r_0,u_0), \tau_1, \ldots, \tau_d\right) \mapsto
\\ & & \left( \lambda_{\Gamma_j}(r_j) \exp( \tau_j(r)) -
\lambda_{\Gamma_{j+1}}(r_{j+1}) \exp( \tau_{j+1}(r)) \right)_{j=1}^{d-1}
\eea
where $ r = (r_0,\ldots, r_d)$.  Note that $0$ is a regular value.
The Sard-Smale theorem implies that for $l$ sufficiently large the
regular values of the projection
\bea \Pi : \on{ev}^{-1}(0) \to \prod_{i=1}^{d} C^l_{
  \tau_i}(\M^{\pre}_\Gamma) \eea
form an open dense set.  Taking the intersection over $l$ sufficiently
large gives that the set of smooth regular delay functions is
comeager.  Both the positivity condition \eqref{poscond} and the
regularity condition are open conditions given an energy bound
$E(u) <E$.  Therefore the set of smooth, positive, compatible, delay
functions that are regular for a given energy bound is non-empty and
open.  Taking the intersection of these sets over all possible energy
bounds $E \in \Z$ we obtain a comeager set of delay functions for
which all moduli spaces $\M_\Gamma(L,D)$ are regular.  By induction,
there exists a smooth, positive, compatible, regular delay function
$\tau_\Gamma$.

Given a collection $\ul{\tau} = (\tau_\Gamma)$ of regular compatible
delay functions and a collection of perturbation data
$\ul{P}^0,\ul{P}^1,\ul{P}^2, \ul{P}^{01}, \ul{P}^{12}$ and
$\ul{P}^{02}$ for unquilted and once-quilted disks, perturbations
$\ul{P}^{012} = (P^{012}_\Gamma)$ for twice-quilted disks are
constructed inductively using the gluing construction and the
Sard-Smale theorem.  For each stratum $\M_\Gamma(L,D)$, we first use
the construction of the previous paragraph to find regular delay
functions for the boundary strata $\M_{\Gamma'}(L,D)$, then (after
replacing the fiber products in the boundary strata by the {\em
  delayed} fiber products of boundary strata using the delay functions
chosen) extend the perturbations by the gluing construction.  A
Sard-Smale argument shows that for perturbations in a comeager subset,
the uncrowded moduli spaces of expected dimension at most one are
regular of expected dimension.

\begin{theorem} \label{twicequilted} {\rm (\ainfty homotopies via
    twice-quilted disks)} Given admissible perturbation systems
  $\ul{P}^{01},\ul{P}^{12},\ul{P}^{02}$ defining morphisms
\[\phi_{ij}:
CF(L,\ul{P}^i) \to CF(L,\ul{P}^j), \quad 0 \leq i
< j \leq 2 \] 
and an admissible perturbation system $\ul{P}^{012}$ for
twice-quilted disks, counting treed pseudoholomorphic twice-quilted disks
defines a convergent \ainfty homotopy between $\phi_{02}$ and
$\phi_{01} \circ \phi_{12}$.
\end{theorem} 

\begin{proof} The homotopy is defined by combining homotopies
  constructed from small variations of ratio.  Consider the map
  $ \psi: \ol{\M}_{n,m,2} \to [1,\infty] $ giving the ratio
  $ \rho_1/\rho_2$ of radii of the inner circles of the twice-quilted
  disks.  For generic values of $\lambda$ the moduli space
  $\ol{\M}_{n,m,2}^\lambda(L,D)= \psi^{-1}(\lambda)$ is smooth of
  expected dimension.  We let $\phi_{02}^\lambda$ denote the \ainfty
  morphism obtained by counting twice-quilted disks with ratio of
  radii $\lambda \in [1,\infty)$.  After fixing an energy bound $E$
  and a number of leaves $n$ we may divide the interval $[0,1]$ into
  subintervals $[\lambda_i,\lambda_{i+1}], i = 0,\ldots, k-1$ so that
  each is sufficiently small so that there is a single singular value
  $\lambda \in [\lambda_i,\lambda_{i+1}]$, contained in the interior
  of the interval, for which there exist twice-quilted disks of
  expected dimension zero of energy at most $E$ and $n$ leaves and no
  such disks with fewer number of leaves.  Define
  $\cT_{02}^{\lambda_1,\lambda_{i+1},\leq E}$ by counting such
  twice-quilted disks,
\begin{multline} \label{twiceq}
 (\cT_{02}^{\lambda,\leq E})^n: CF(L;\ul{P}^0)^{\otimes n}
  \to CF(L; \ul{P}^2) \\ (x_1,\ldots,x_n) \mapsto
  \sum_{x_0,u \in\ol{\M}_\Gamma^\lambda(L,D;x_0,\ldots,x_n)_0}
  (-1)^{\heartsuit} \eps(u) (\sigma(u)!)^{-1} q^{E(u)} y(u)
  x_0 \end{multline} 
where the sum is over combinatorial types $\Gamma$ of twice-quilted
disks.  The difference
\[ (\phi_{02}^{\lambda_i} -
\phi_{02}^{\lambda_{i+1}})^n(x_1,\ldots,x_n) \]
is a count of configurations $u: C \to X$ either involving an
unquilted disk breaking off $C_1 \subset C$, or a collection of
twice-quilted treed disks $C_1,\ldots, C_r \subset C$ with
$i_1,\ldots,i_r$ leaves breaking off from an unquilted treed disk, see
\cite[Section 7]{ainfty}. For degree reasons, because of the fiber
product with the diagonal exactly one of these twice-quilted disks
lies in the moduli space of expected dimension zero, while the rest
have index one.  We suppose that the twice-quilted configuration in
the expected-dimension-zero moduli space is the $i+1$-st twice-quilted
treed disk attached to the unquilted treed disk.  Using positivity of
the delay functions one obtains that the moduli space of twice quilted
disks $\lambda_i$ and $\lambda_{i+1}$ and expected dimension zero are
cobordant:
\[ \begin{array}{llll}
 \ol{\M}_{i_j,m_j,2}^{\lambda + \tau_j}(L,D)_0 &\sim&
\ol{\M}_{i_j,m_j,2}^{\lambda_{i+1} + \tau_j}(L,D)_0 & j > i+2 \\ 
\ol{\M}_{i_j,m_j,2}^{\lambda + \tau_j}(L,D)_0 &\sim&
\ol{\M}_{i_j,m_j,2}^{\lambda_i + \tau_j}(L,D)_0 & j \leq i
                                                  .\end{array} \]
It follows that
\[ \phi_{02}^{\lambda_i} - \phi_{02}^{\lambda_{i+1}} =
  \mu^1_{\phi_{02}^{[\lambda_i,\lambda_{i+1}]}}(\cT_{02}^{\lambda_i,\lambda_{i+1},\leq E}) .\]
  The facets of $\ol{\M}_{n,m,2}$ with ratio $\lambda = 1$ or
  $\lambda = \infty$ correspond to either to terms in the definition
  of composition of \ainfty maps
  $\phi_{12} \circ \phi_{01}: CF(L,\ul{P}^0) \to CF(L,\ul{P}^2)$, to
  the components contributing to
  $\phi_{02} : CF(L,\ul{P}^0) \to CF(L,\ul{P}^2)$ or to terms
  corresponding to the bubbling off of some markings on the boundary.
  Composition using \eqref{composehom} produces a homotopy
\[ \TT_{02}^{\leq E} := \mu^2 ( \TT_{02}^{\lambda_{k-1},\lambda_k,\leq E},
\mu^2 ( \ldots \mu^2(\TT_{02}^{\lambda_{1},\lambda_{2}},
\TT_{02}^{\lambda_{0},\lambda_1, \leq E} ) ))\]
between $\phi_{12} \circ \phi_{01}$ and the identity modulo terms
involving powers $q^E$.  Taking the limit $E \to \infty$ defines a
homotopy
\[\TT_{02} := \lim_{E \to \infty} \TT_{02}^{\leq E} \] 
between $\phi_{02}$ and $\phi_{12} \circ \phi_{01}$.  Convergence,
that is, that $\T^0_{02}(1)$ has coefficients with positive
$q$-valuation, holds since any contributing configuration must contain
a non-trivial disk.
\end{proof} 

\begin{corollary}\label{hequiv}  For any two admissible convergent collections of perturbation data
  $\ul{P}^0, \ul{P}^1$ the Fukaya algebras $CF(L,\ul{P}^0)$
  and $CF(L,\ul{P}^1)$ are homotopy equivalent via strictly
  unital convergent maps and homotopies in the sense of Lemma
  \ref{uptogauge}.
\end{corollary} 

\begin{proof} Using Theorem \ref{twicequilted} and taking $D_{02}^t$
  to be constant and $\ul{P}^{02}$ to be pulled back by the map
  forgetting the quilting, one obtains a homotopy between the
  composition of morphisms
  $ \phi_{01}: CF(L,\ul{P}^0) \to CF(L,\ul{P}^1)$,
  $ \phi_{10}: CF(L,\ul{P}^1) \to CF(L,\ul{P}^0) $
  and the identity morphism as in Proposition \ref{equalpert}.  Since
  the perturbation data $P_\Gamma$ for a type $\Gamma$ with an
  infinite weight $\rho(e)$ is a pull-back under the morphism
  forgetting the first infinite weight, the maps $\phi_{01}^k$
  involving an identity insertion all vanish except $\phi_{01}^1$, for
  which $\phi_{01}^1(e_0) = e_1$.  Hence $\phi_{01}$ is strictly
  unital.  \end{proof}

\section{Stabilization}
\label{stabilize}

In this section we complete the proof of homotopy invariance of the
Fukaya algebras constructed above in the case that the algebras are
defined using divisors are not of the same degree or built from
homotopic sections.  For this we need to recall some results about
existence of a Donaldson hypersurface transverse to a given one.
Recall from \cite[Lemma 8.3]{cm:trans} that for a constant $\eps > 0$,
two divisors $D,D'$ intersect {\em $\eps$-transversally} if at each
intersection point $x \in D \cap D'$ their tangent spaces
$T_x D, T_x D'$ intersect with angle at least $\eps$.  A result of
Cieliebak-Mohnke \cite[Theorem 8.1]{cm:trans} states that there exists
an $\epsilon>0$ such that given a divisor $D$, there exists a divisor
$D'$ of sufficiently high degree $\eps$-transverse to $D$.  Moreover,
for any $\theta>0$, $\omega$-tamed almost complex structures
$\theta$-close to $J$ making $D,D'$ almost complex exist (provided
that the degrees are sufficiently large).

We apply the result of the previous paragraph as follows.  Suppose
that $D^0,D^1$ are stabilizing divisors for $L$, possibly of different
degrees.  By the previous paragraph, there exists a pair $D^{0,'}$
$D^{1,'}$ of higher degree stabilizing divisors built from homotopic
unitary sections over $L$ that are $\epsilon$-transverse to $D^0$ and
$D^1$, respectively.  Let $\ul{P}_0', \ul{P}_1'$ be perturbation
systems for $D^{0,'}, D^{1,'}$.  We have already shown that
\begin{equation} \label{shown} 
 CF(L,\ul{P}_0') \cong
 CF(L,\ul{P}_1') \end{equation}
are convergent, strictly-unital homotopy equivalent.  It remains to show:

\begin{theorem} \label{stab} For any admissible perturbation systems
  $\ul{P}_k,\ul{P}_k', k = 0,1$ as above, the Fukaya algebras
  $CF(L,\ul{P}_k)$ and $CF(L,\ul{P}_k')$ are convergent,
  strictly-unital homotopy equivalent.
\end{theorem} 

\begin{proof}[Sketch of proof]  
Denote the subset  of almost complex structures close to $J$ 
preserving $TD^{k,'}$ as well by 
\[ \J^*(X,D^k \cup D^{k,'}; J_{D^0},\theta,E) \subset
\J^*(X,D^k,J,\theta,E) .\] 
By \cite[Corollary 8.20]{cm:trans},
there exists a path-connected, open, dense set in $\J^*(X,D^k \cup
D^{k,'}; J_{D^k},\theta,E)$ with the property that for any $J \in
\J^*(X,D^k \cup D^{k,'}; J_{D^k},\theta,E)$, neither $D^k$ nor
$D^{k,'}$ contain any non-constant pseudoholomorphic spheres of energy at
most $E$, and each pseudoholomorphic sphere meets both $D^k$ and $D^{k,'}$
in at least three points.  Fix such an almost complex structure
$J_{D^k,D^{k'}}$, and an associated perturbation system $\ul{P}_k''$
using the divisor $D^k$ such that the almost complex structures
$J_{k,\Gamma}'' $ are equal to $J_{D^k,D^{k,'}}$ on $D^k \cup
D^{k,'}$.  The argument in the previous section (keeping the divisor
constant but changing the almost complex structures) shows that the
associated Fukaya algebras
$ CF(L, \ul{P}_k) \cong CF(L, \ul{P}_k'') $
are homotopy equivalent.  Similarly, choose a perturbation system
$\ul{P}_k'$ using the divisor $D^{k,'}$.  We claim that the Fukaya
algebras
$ CF(L, \ul{P}_k') \stackrel{?}{\cong}CF(L,
\ul{P}_k'') $
are homotopy equivalent.  To see this, we define adapted stable maps
adapted to the pair $(D^k, D^{k,'})$: a map is adapted if each
interior marking maps to either $D^k$ or $D^{k,'}$, and the first
$n_k$ markings map to $D^k$ and the last $n_k'$ markings map to
$D^{k,'}$.  A perturbation datum morphism $\ul{P} = (P_{\Gamma})$ is
coherent if it is compatible if with the morphisms of moduli spaces as
before: (Cutting edges) axiom, (Collapsing edges/Making an edge or
weight finite or non-zero) axiom, and satisfies the (Infinite weights)
axiom and (Product) axiom, where now on the unquilted components above
resp. below the quilted components the perturbation system is required
to depend only on the first $n_k$ resp. last $n_k'$ points mapping to
$D^k$ resp. $D^{k,'}$ (that is, pulled back under the forgetful map
forgetting the first $n_k$ resp. last $n_k'$ markings).  Then the same
arguments as before, together with the existence of the homotopy from
\cite{wx:partly}, produce the required homotopy equivalence.  Putting
everything together we have homotopy equivalences
$ 
 CF(L, \ul{P}_k) \cong CF(L, \ul{P}_k'') 
\cong  CF(L, \ul{P}_k')  .$
Applying \eqref{shown} completes the proof.
\end{proof}

\begin{corollary} \label{diffdegree} For any stabilizing divisors
  $D^0,D^1$ and any convergent admissible perturbation systems
  $\ul{P}_0,\ul{P}_1$, the Fukaya algebras $CF(L,\ul{P}_0)$
  and $CF(L,\ul{P}_1)$ are convergent-homotopy-equivalent.
\end{corollary} 

\begin{proof} Since homotopy equivalence of \ainfty algebras is an
  equivalence relation, see Definition \ref{morphisms}
  \eqref{compfun}, combining Theorems \ref{samedegree} and \ref{stab}
  gives the result.
\end{proof}

\chapter{Fukaya bimodules} 

Floer cohomology in the Morse model is invariant under Hamiltonian
perturbation, as in the various models described in Fukaya-Oh-Ohta-Ono
\cite{fooo}.  In this section we construct, by counting treed
Hamiltonian-perturbed pseudoholomorphic strips, a {\em Fukaya
  bimodule} for pairs of Lagrangians equipped with a Hamiltonian
perturbation such that the perturbed Lagrangians intersect cleanly.
The Fukaya bimodule is isomorphic to the bimodule for the Fukaya
algebra if the Lagrangians are identical.  The homotopy type of the
Fukaya bimodule is independent of the choice of Hamiltonian.  In
particular, if a Lagrangian brane is displaceable by a Hamiltonian
diffeomorphism then the Floer cohomology for any element of the
Maurer-Cartan moduli space vanishes.  This shows that the version of the 
Floer cohomology used in this paper is strong
enough to deduce standard conclusions about Hamiltonian displaceability.

\section{\ainfty bimodules} 

We introduce the following definitions on \ainfty bimodules,
with conventions similar to those of Seidel in \cite{seidel:sub}.  Let
$A_0,A_1$ be strictly unital \ainfty algebras graded \llabel{gradedby}
by $\Z_g$ where $g$ is an even integer.  An $\Z_g$-graded {\em \ainfty
  bimodule} is a $\Z_g$-graded vector space $M$ equipped with
operations
\[ \mu^{d|e} : A_0^{\otimes d} \otimes M \otimes A_1^{\otimes e} \to
M[1-d-e] \]
that satisfy the following relations among homogeneous elements $a_{i,k}, m$:
\begin{multline} \label{bimod}
 0 =  \sum_{i,k} (-1)^{\aleph} \mu^{d-i+1|e}( a^0_{1}, \lldots,
\mu_0^{i}(a^0_{k},\lldots, a^0_{k+i-1}), a^0_{k+i}, \lldots, a^0_{d},
m,a^1_{1},\lldots, a^1_{e})) \\ + \sum_{j,k} (-1)^{\aleph} \mu^{d|e-j+1}(a^0_{1},\lldots,
a^0_{d},m,a^1_{1},\lldots, \mu_1^{j}(a^1_{k},\lldots, a^1_{k+j-1}),\lldots,
a^1_{e})  \\ + \sum_{i,j} (-1)^{\aleph} \mu^{d-i|e-j}( a^0_{1},\lldots, a^0_{d-i},
\mu^{i|j}(a^0_{d-i+1},\lldots, a^0_{d},m,a^1_{1},\lldots,a^1_{j}),
\lldots, a^1_{e}) .\end{multline}
Here we follow Seidel's convention \cite{seidel:sub} of denoting by
$(-1)^{\aleph}$ the sum of the reduced degrees to the left of the
inner expression, except that $m$ has ordinary (unreduced) degree.  A
{\em morphism} $\phi$ of \ainfty-bimodules $M_0$ to $M_1$ of degree
$|\phi|$ is a collection of maps
\[ \phi^{d|e} : A_0^{\otimes d} \otimes M_0 \otimes A_1^{\otimes e}
\to M_1[|\phi|-d-e] \]
satisfying a splitting axiom
\begin{multline} \label{morphism} 
 \sum_{i,j} (-1)^{|\phi| \aleph}
\mu_1^{d|e}(a^0_{1},\lldots,a^0_{d-i}, \phi^{i|j}(a^0_{d-i+1},\lldots,
a^0_{d},m,a^1_{1},\lldots,a^1_{j}), a^1_{j+1},\lldots,a^1_{e}) \\ 
+ \sum_{i,j} (-1)^{|\phi| + 1 + \aleph} \phi^{d|e}(a^0_{1},\lldots,a^0_{d-i} ,
\mu_0^{i|j}(a^0_{d-i+1},\lldots,a^0_{d},m,a^1_{1},\lldots,a^1_{j} ),
a^1_{j+1},\lldots,a^1_{e}) \\
 + \sum_{i,j} (-1)^{|\phi | + 1 + \aleph}
\phi^{d|e}(a^0_{1}, \lldots, a^0_{j-1}, \mu_0^{i}(a^0_{j},\lldots, a^0_{j+i-1} ), \lldots,
a^0_{d},m,a^1_{1},\lldots,a^1_{e}) \\
+ \sum_{i,j} (-1)^{|\phi| + 1 + \aleph} \phi^{d|e}(a^0_{1},\lldots,
a^0_{e},m,a^1_{1},\lldots, \mu_1^{j}(a^1_{j},\lldots,a_{j+i-1}),
\lldots,a^1_{e}) = 0 .\end{multline}
Composition of bimodule morphisms
  $\phi: M_0 \to M_1, \psi: M_1 \to M_2$ is defined by
  \begin{multline} (\phi \circ \psi)^{d|e}(a^0_{1},\lldots, a^0_{d},m, a^1_{1},\lldots,a^1_{e})  \\
    = \sum_{i,j} (-1)^{|\psi| \aleph}
    \phi^{i|j}(a^0_{1},\lldots,a^0_{i},
    \psi_{d-i|e-j}(a^0_{i+1},\lldots, a^0_{d},m,
    a^1_{1},\lldots,a^1_{e-j}),a^1_{e-j+1}, \lldots,a^1_{e})
    .\end{multline}
  A {\em homotopy} of morphisms $\psi_0,\psi_1: M_0 \to M_1$ of degree
  zero is a collection of maps $(\phi^{d|e})_{d,e \ge 0}$ such that
  the difference $\psi_1 - \psi_0$ is given by the expression on the
  left hand side of \eqref{morphism}.  Each of these notions has an
  extension to the strictly unital case.  Suppose that $A_0,A_1$ are
  strictly unital with strict units $e_0,e_1$.  A bimodule $M$ is {\em
    strictly unital} $\mu^{1,0}(e_0,\cdot)$ and $\mu^{0,1}(\cdot,e_1)$
  are the identity, and all other operations involving $e_0$ and $e_1$
  vanish.  A morphism resp. homotopy is {\em strictly unital} if all
  operations involving the identities $e_0$ and $e_1$ vanish.
  As usual any \ainfty algebra $A$ is an \ainfty bimodule over itself.
  The bimodule operations are related to the structure maps by
\begin{multline} \label{change}
 \mu^{d|e}(a^0_{1},\lldots, a^0_{d},m,a^1_{1},\lldots,a^1_{e}) \\ =
(-1)^{1 + \diamondsuit } \mu^{d+ e + 1}(a^0_{1},\lldots,
a^0_{d},m,a^1_{1},\lldots,a^1_{e}), \quad \diamondsuit = \sum_{j=1}^d
(|a^0_{j}| + 1),
 \end{multline}
see Seidel \cite[2.9]{seidel:susp}.

For bimodules satisfying a convergence property the deformed
composition maps with infinitely many insertions are well-defined.
Given \ainfty algebras $A_0, A_1$, denote by ${MC}(A_k), k \in \{ 0, 1
\}$ the corresponding Maurer-Cartan solution spaces from
\eqref{mcspace}.  For each $b \in {MC}(A_k)$ the cohomology group is
denoted $H(\mu^{k,1}_b)$.  Define for odd elements $b_0 \in A_0, b_1
\in A_1$ the maps
\begin{multline} 
  \mu^{d|e,b_0|b_1}: A_0^{\otimes d} \otimes M \otimes A_1^{\otimes e}
  \to M, \\ (a^0_{1},\lldots, a^0_{d}, m,a^1_{1} \lldots, a^1_{e})
  \mapsto \sum_{i_0,\lldots, i_d, j_0,\lldots, j_e} \mu^{d + d'|e + e'}
  (\underbrace{b_0, \lldots, b_0}_{i_0} , a^0_{1},
  \underbrace{b_0,\lldots, b_0}_{i_1}, \\ \lldots, 
\underbrace{b_0,\lldots,b_0}_{i_d},m,  \underbrace{b_1,\lldots,b_1}_{j_0}, a^1_{1} ,\lldots, \underbrace{b_1,\lldots,b_1}_{j_e} ) .\end{multline} 

\begin{proposition} Suppose that $M$ is a finitely generated
  $\Lambda$-module.  For any odd $b_0 \in A_0^+, b_1 \in A_1^+$, the
  operations $\mu^{d|e, b_0|b_1}$ define the structure of an \ainfty
  bimodule on $M$.  For $b_0 \in {MC}(A_0)$ and $b_1 \in {MC}(A_0)$ we
  have
\[ (\mu^{0|0, b_0| b_1} ) ^2 = 0 .\]
The cohomology $H(\mu^{0|0, b_0|b_1})$ is invariant up to homotopy in
the sense that if $M_0 \to M_1$ is a homotopy equivalence of \ainfty
bimodules then the induced map
$H(\mu_0^{0|0, b_0|b_1}) \to H(\mu_1^{0|0, b_0 | b_1})$ is an
isomorphism.
\end{proposition}

\begin{proof} The \ainfty bimodule axiom implies in particular that
  $ (\mu^{0|0, b_0 | b_1} )^2 (a)$ is equal to
  $ (-1)^{|a| } ( \mu^{1|0}(\mu_0^{b_0}(1), a) - \mu^{0|1}(a,
  \mu_0^{b_1}(1) ) $
  for any $a \in A_0$.  Since the elements $\mu_0^{b_k}(1)$ are
  multiples of strict identities, the claim on the square zero
  follows.  It follows from the \ainfty bimodule morphism axiom in
  \eqref{morphism} that for any (convergent) morphism $\phi^{n,m}$
  from $M_0$ to $M_1$ the sum
\[ \phi^{b_0|b_1} = \sum_{n,m \ge0} \phi^{n|m}(b_0,\lldots, b_0; b_1,\lldots b_1) \]
is a chain map; a similar argument using \ainfty bimodule homotopies
implies that homotopic bimodules have isomorphic cohomology groups.
\end{proof}

\section{Treed strips}
\label{treedstrips}

The construction of Fukaya bimodules will be similar to the treatment
in Fukaya-Oh-Ohta-Ono \cite{fooo}, but using the Morse model.  Our
perturbation systems uses Donaldson hypersurfaces to stabilize the
domains as in Cieliebak-Mohnke \cite{cm:trans}.  For this purpose
we develop universal curves over moduli spaces of marked strips as
follows.

\begin{definition} 
\begin{enumerate}
\item {\rm (Marked strips)} A {\em marked nodal strip} is a marked
  nodal disk with two boundary markings.  Let $S$ be a connected
  $(2,n)$-marked nodal disk with markings $\ul{z}$.  We write
  $\ul{z} = (z_-, z_+, z_1,\lldots, z_n)$ where $z_\pm$ are the
  boundary markings and $z_1,\lldots,z_n$ are the interior markings.
  We call $z_-$ (resp. $z_+$) the {\em incoming} (resp. {\em
    outgoing}) marking. Let $S_1,\lldots, S_m$ denote the ordered {\em
    strip components} of $S$ connecting $z_-$ to $z_+$; the remaining
  components are either {\em disk components}, if they have boundary,
  or {\em sphere components}, otherwise.  The nodal strip is stable if
  it is stable as a marked disk, as in Definition \ref{ndisk}. 
Let $\ti{w}_i := S_i \cap S_{i+1}$ denote the {\em intermediate node}
connecting $S_i$ to $S_{i+1}$ for $i = 1,\lldots, m-1$.  Let
$\ti{w}_0 = z_-$ and $\ti{w}_m = z_+$ denote the incoming and outgoing
markings. 
\item {\rm (Strip coordinates)} Given a nodal strip $S$ denote the
  curve obtained by removing the nodes connecting strip components and
  the incoming and outgoing markings
  \begin{equation} \label{strips} S^\times := S - \{ \ti{w}_0,\lldots,
    \ti{w}_m \}.\end{equation}
  Each strip component in $S^\times$ may be equipped with coordinates
\[ \phi_i : S_i^\times := S_i - \{ \ti{w}_i,\ti{w}_{i-1} \} \to \R \times [0,1]
, \quad i =1,\lldots, m \]
satisfying the conditions that if $j$ is the standard complex
structure on $\R \times [0,1]$ and $j_i$ is the complex structure on
$S_i$ then 
\[\phi_i^* j = j_i, \quad  \lim_{z \to \ti{w}_i} \pi_1
\circ \phi_i(z) = -\infty, \quad \displaystyle \lim_{z \to \ti{w}_{i+1}}
\pi_1 \circ \phi_i(z) = \infty\] 
where $\pi_i$ denotes the projection on the $i^{th}$ factor.  We
denote by 
\begin{equation} \label{timecoord}
f: S^\times \to [0,1], \quad z \mapsto \pi_2 \circ
\phi_i(z) \end{equation} 
the continuous map induced by the {\em time} coordinate on the strip
components. The time coordinate is extended to nodal marked strips $S$
by requiring constancy on every connected component of
$S^\times - \bigcup_i S^\times_i$.
\item {\rm (Partition of the boundary)} The boundary of any marked
  strip $S$ is partitioned as follows.  For $b \in \{ 0, 1 \}$ denote
  \begin{equation} \label{Sb} (\partial S^\times)_b := f^{-1}(b)
    \cap \partial S \quad \text{so that}  \quad 
\partial S^\times = (\partial S)_0 \cup (\partial S)_1. \end{equation}
That is, $(\partial S)_b$ is the part of the boundary from $z_-$ to
$z_+$, for $b = 0$, and from $z_+$ to $z_-$ for $b = 1$.  An example
of a stable strip is shown in Figure \ref{stabstrip}. 

\begin{figure}
\begin{center} 
\includegraphics[width=3in]{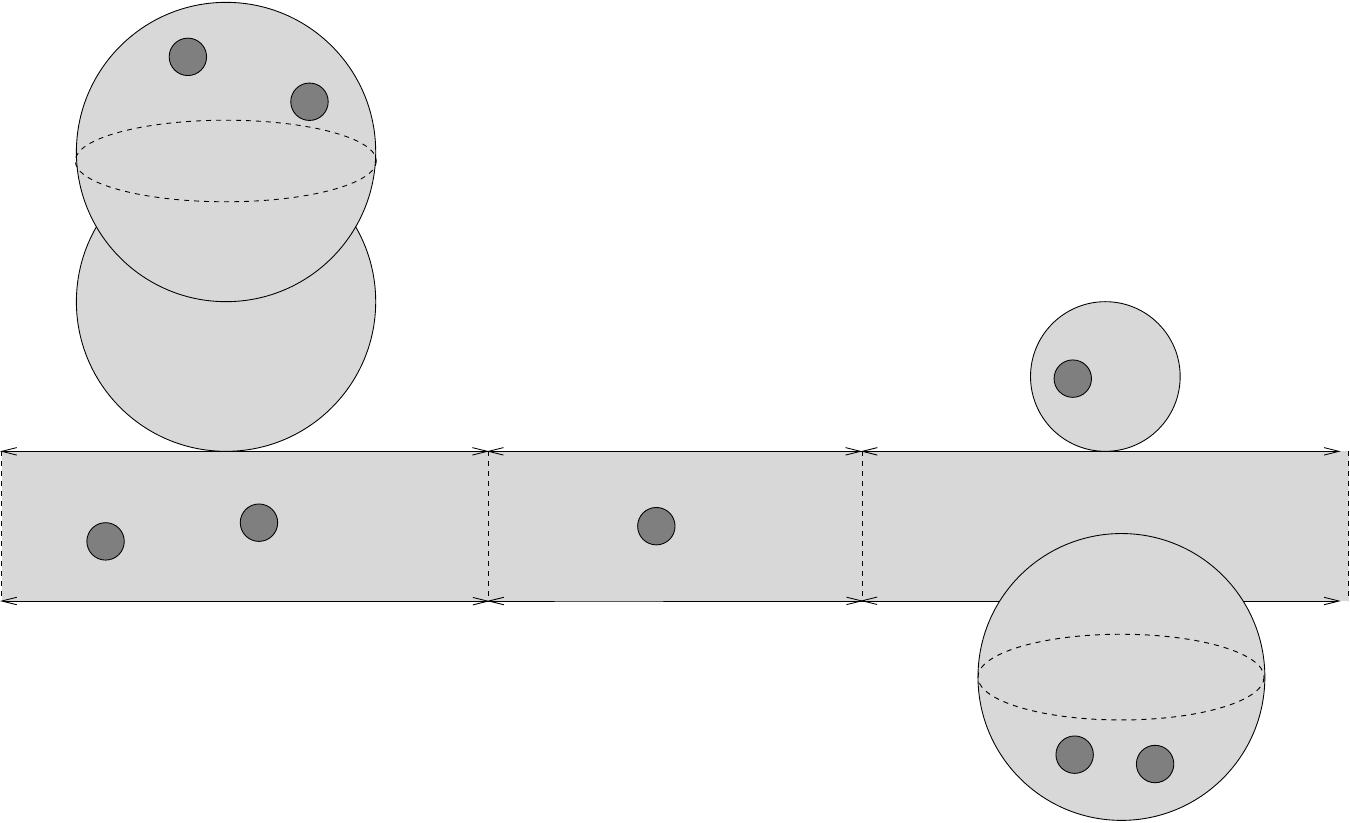}
\end{center}
\caption{A stable marked strip} 
\label{stabstrip} 
\end{figure} 

\item {\rm (Treed strips)} A {\em treed strip} is a space
  $C = S \cup T$ obtained from a marked strip $(S_0,\ul{z})$ (that is,
  a marked disk with two distinguished boundary markings) by replacing
  each boundary node and marking of $S_0$ with a possibly broken edge
  $e$ of some length $\ell(e)$ and number of breakings $b(e)$; denote
  by $T$ the union of such edges.
\item {\rm (Combinatorial types)} 
The {\em combinatorial type} $\Gamma$
  of a treed strip $C$ is defined as in the case of tree disks, but
  there is a distinguished incoming semi-infinite edge
  $e_- \in \Edge(\Gamma)$ and a outgoing edge $e_+ \in \Edge(\Gamma)$.
\item {\rm (Isomorphisms)} An {\em isomorphism} of treed marked strips
  $\phi: C \to C'$ is an isomorphism of the underly treed marked disks
  preserving the incoming and outgoing edges.  A treed strip
  $C = S \cup T$ is {\em stable} if the treed pseudoholomorphic marked
  disk $S$ marked by the intersections $S \cap T$ is stable.
\end{enumerate}
\end{definition} 

We introduce the following notation for moduli spaces.  Let
$\ol{\M}_{n_0|n_1,m}$ denote the moduli space of isomorphism classes
of stable treed strips with $n_0, n_1$ boundary markings (using the
partition of the boundary) and $m$ interior markings. For $\Gamma$ a
connected type we denote by
${\M}_{\Gamma} \subset \ol{\M}_{n_0|n_1,m}$ the moduli space of stable
strips of combinatorial type $\Gamma$ and $\ol{\M}_\Gamma$ its
closure.  Each moduli space $\ol{\M}_\Gamma$ is naturally a manifold with corners,
with local charts obtained by a standard gluing construction.
Generally $\ol{\M}_{n_0|n_1,m}$ is the union of several
top-dimensional strata.  For $\Gamma$ disconnected, $\ol{\M}_\Gamma$
is the product $\prod_i \ol{ \M}_{\Gamma_i}$ of moduli spaces for the
connected components $\Gamma_i$ of $\Gamma$.

\begin{definition} 
\begin{enumerate} 
\item {\rm (Universal strip)} By the universal strip we mean the
  complement of the intermediate nodes in the universal treed disk.
  That is, $\ol{\U}_\Gamma$ is the union of the curves $C^\times$ from
  \eqref{strips}. On the universal strip, the time coordinates
  \eqref{timecoord} on the strip components extend to a map
\begin{equation} \label{allbutone} f: \ol{\U}_\Gamma \to [0,1
  ] \end{equation}
On the subsets $f^{-1}(0), f^{-1}(1)$ with time coordinate equal to
zero or one, we have additional maps measuring the distance to the
strip components given by summing the lengths of the connecting edges:
\[ \ell_b: f^{-1}(b) \to [0,\infty], 
\quad z \mapsto \sum_{e \in \Edge(z)} \ell(e), \quad b \in \{ 0, 1
\} \]
where $\Edge(z)$ is the set of edges corresponding to nodes between
$z$ and the strip components.  Thus any point on the universal strip
$z \in \ol{\U}_\Gamma$ which lies on a disk component has $f(z) \in \{
0, 1 \}$.
\item {\rm (Partition by dimension) } The universal treed strip can be
  written as the union of one-dimensional and two-dimensional parts
\[ \ol{{\U}}_\Gamma = \ol{{\S}}_\Gamma \cup \ol{{\T}}_\Gamma \] 
so that $\ol{\S}_\Gamma \cap \ol{\T}_\Gamma$ is the set of points on
the disks or spheres meeting the edges of the tree.  Denote by
\[ \ol{\T}_\Gamma^b = f^{-1}(b) \cap \ol{\T}_\Gamma \]
the part of the tree corresponding to the minimum resp. maximum values and 
\[ \ol{\T}_\Gamma^{01} = \ol{\T}_\Gamma - \ol{\T}_\Gamma^0 -
\ol{\T}_\Gamma^1 .\]
\end{enumerate} 
\end{definition} 
\section{Hamiltonian perturbations}

The Fukaya bimodules in this paper will be defined using the Morse
model, by a combination of Hamiltonian-perturbed pseudoholomorphic
maps on the strip components and Morse trajectories on the edges.  We
introduce notations for Hamiltonian perturbations and associated
perturbed Cauchy-Riemann operators.  Let $S$ be a nodal strip.  Let
\[K \in \Omega^1(S,\partial S; C^\infty(X))\] 
be a one-form with values in smooth functions vanishing on the tangent
space to the boundary, that is, a smooth map $TS \times X \to \R$
linear on each fiber of $TS$, equal to zero on $TS | \partial S$.
Denote by 
\[\widehat{K} \in \Omega^1(S,\partial S; \Vect(X))\] 
the corresponding one form with values in Hamiltonian vector fields.
The {\em curvature} of the perturbation is
\begin{equation} \label{RK}
 R_K = \d K + \{ K, K \}/2 \in \Omega^2(S, C^\infty(X)) \end{equation} 
where $ \{ K, K \} \in \Omega^2(S, C^\infty(X))$ is the two-form
obtained by combining wedge product and Poisson bracket, see
McDuff-Salamon \cite[Lemma 8.1.6]{ms:jh}.  Given a map $u : S \to X$,
define
\begin{equation} \label{cr} 
\olp_{J,K} u := (\d u + \widehat{K} )^{0,1} \in \Omega^{0,1}(S, u^*
TX).\end{equation}
The map $u$ is {\em (J,K)-holomorphic} if $\olp_{J,K} u = 0 $.

For Hamiltonian-perturbed pseudoholomorphic maps the action and energy
are related by an equality involving a curvature correction term.
Suppose that $C$ is equipped with a compatible metric and $X$ is
equipped with a tamed almost complex structure and perturbation $K$.
The {\em $K$-energy} of a map $u: C \to X$ is
\[ E_K(u) :=  \hh \int_C | \d u + \widehat{K}(u) |_J^2 \]
where the integral is taken with respect to the measure determined by
the metric on $C$ and the integrand is defined as in \cite[Lemma
  2.2.1]{ms:jh}.  If $\olp_{J,K}u = 0$, then the $K$-energy differs
from the symplectic area $A(u) := \int_C u^* \omega$ by a term
involving the curvature from \eqref{RK}:
\begin{equation} \label{energyarea}
 E_K(u) = A(u) + \int_C R_K(u) .\end{equation}
In particular, if the curvature vanishes then the area is
non-negative.  In the case of a strip 
\[C = \R \times [0,1] = \{ (s,t) \ | \ s \in \R, t \in [0,1] \}.\]
and Hamiltonian perturbation $H \in C^\infty([0,1] \times X)$ let $K$
denote the perturbation one-form $K = - H \d t$ and let $E_H := E_K$.
If $u: \R \times [0,1] \to X$ has limits $x_\pm : [0,1] \to X$ as
$s \to \pm \infty$ then the energy-area relation \eqref{energyarea}
becomes
$ E_H(u) = A(u) - \int_{[0,1]} (x_+^* H - x_-^* H) \d t .$

Floer trajectories are solutions to the perturbed pseudoholomorphic
equation with a translational notion of equivalence.  Let
$\olp_{J,H} = \olp_{J,K}$ be the corresponding perturbed
Cauchy-Riemann operator from \eqref{cr}.  A map $u: C \to X$ is {\em
  $(J,H)$-holomorphic} if $\olp_{J,H} u =0$.  A {\em perturbed
  pseudoholomorphic strip} for Lagrangians $L_0,L_1$ is a finite
energy $(J,H)$-holomorphic map $u: \R \times [0,1] \to X$ with
$\R \times \{ b \} \subset L_b, b =0,1$.  An {\em isomorphism} of
Floer trajectories $u_0,u_1: \R \times [0,1] \to X$ is a translation
$\phi: \R \times [0,1] \to \R \times [0,1]$ in the $\R$-direction such
that $\phi^* u_1 = u_0$.  Denote by
\[\M(L_0,L_1) := \Set{ u: \R \times [0,1] \to X \ | \begin{array}{c}
  \olp_{J,H} u = 0 \\ u(\R \times \{b \}) \subset L_b, b \in \{ 0,1 \}
  \\ E_H(u) < \infty \end{array} } / \R \]
the moduli space of isomorphism classes of Floer trajectories of
finite energy, with its quotient topology.

Floer trajectories with Hamiltonian perturbation correspond to
unperturbed pseudoholomorphic maps with a perturbed boundary
condition.  Let $H \in C^\infty([0,1] \times \R, X)$ be a
time-dependent Hamiltonian and let
$J \in \Map([0,1],\J_{\tau}(X,\omega))$ be a time-dependent almost
complex structure.  Suppose that $L_0,L_1$ are Lagrangians such that
$\varphi_1(L_0) \cap L_1$ is transversal.  There is a bijection between
$(J_t,H_t)$-holomorphic Floer trajectories $u: \R \times [0,1] \to X$
with boundary conditions $L_0,L_1$ and
$(\varphi_{1-t}^{-1})^* J_t$-holomorphic Floer trajectories with boundary
conditions $\varphi_1(L_0),L_1$ obtained by mapping each $(L_0,L_1)$
trajectory $(s,t) \mapsto u(s,t)$ to the
$(\varphi_1(L_0),L_1)$-trajectory given by
$(s,t) \mapsto \varphi_{1-t}(u(s,t))$ \cite[Discussion after
(7)]{fhs:tr}.

In order to relate bimodules defined using different perturbations we
also consider Hamiltonian perturbations on surfaces with strip-like
ends.  A {\em surface with strip like ends} consists of a surface with
boundary $S$ equipped with a complex structure $j: TS \to TS$, and a
collection of embeddings
  \[ \kappa_e : \pm(0,\infty) \times [0,1] \to S, \quad e = 0,\lldots,
  n \]
  such that $\kappa_e^* j$ is the standard almost complex structure on
  the strip, and the complement of the union of the images of the maps
  $\kappa_e$ is compact.  Any such surface has a canonical
  compactification $\ol{S}$ with the structure of a compact surface
  with boundary obtained by adding a point at infinity along each
  strip like end and taking the local coordinate to be the exponential
  of $ \pm 2 \pi i \kappa_e $.

  Hamiltonian-perturbed pseudoholomorphic maps are in bijection with
  holomorphic sections of the trivial bundle with respect to a
  non-product almost complex structure.  Given a surface with
  strip-like ends let $ E := S \times X $ denote the product
  considered as a fiber bundle over $S$ with fiber $X$.  Following
  \cite[(8.1.3)]{ms:jh}, let $\pi_X: E \to X$ denote the projection on
  the fiber. In local coordinates $s,t$ on $S$ define $K_s,K_t$ by
  $ K = K_s \d s + K_t \d t$.  Let
\[ \omega_{E} = \pi_X^* \omega - \pi^*_X \d K_s \wedge \d s - \pi_X^* \d
K_t \wedge \d t + (\partial_t K_s - \partial_s K_t) \d s \wedge \d t
.\]
The form $\omega_{E}$ is closed, restricts to the two-form
$\omega$ on any fiber, and defines the structure of a symplectic fiber
bundle on $E$ over $S$.  Consider the splitting
$TE \cong \pi_X^* TX \oplus (S \times \R^2) .$
Let $j_S: T {S} \to T{S}$ denote the standard complex
structure on ${S}$.  Define an almost complex structure on $E$ by
\[ J_E : T E \to TE, \quad (v,w) \mapsto ( ( J \hat{K} - \hat{K} j_{S} )w +
Jv, j_{S} w ) \]
where $\hat{K} \in \Omega^1({S},\Vect(X))$ is the
Hamiltonian-vector-field-valued one-form associated to $K $. A smooth
map $u: {S} \to X$ is $(J,K)$-holomorphic if and only if the
associated section $(\on{id} \times u): {S} \to E$ is
$J_E$-holomorphic \cite[Exercise 8.1.5]{ms:jh}.  Let
$ \ti{L}_i = (\partial {S})_i \times L_i$ the fiber-wise Lagrangian
submanifolds of $E$ defined by $L_i$.  Then $u: {S} \to X$ has
boundary conditions in $(L_i, i = 1,\lldots, m)$ if and only if
$\on{id} \times u: {S} \to E$ has boundary conditions in
$(\ti{L}_i, i = 1,\lldots, m)$. 

\section{Clean intersections} 

In this section we extend the above results to the case that the union
of the cleanly-intersecting Lagrangians is rational using stabilizing
divisors.  We have in mind especially the case that the two
Lagrangians are rational and equal.  In the particular case of
diagonal boundary conditions, we show that the Floer cohomology is the
singular cohomology with Novikov coefficients.  

We recall some terminology for clean intersections.  A pair
$L_0,L_1 \subset X$ of submanifolds intersect {\em cleanly} if
$L_0 \cap L_1$ is a smooth manifold and
$ T(L_0 \cap L_1) = TL_0 \cap TL_1 .$ Floer homology for clean
intersections was constructed in Pozniak \cite{po:cl} and also
Schm\"aschke \cite[Section 7]{schmaschke} under certain monotonicity
assumptions, with the Floer differential counting pseudoholomorphic
strips.  The definition of Floer cohomology in the clean intersection
case is a count of configurations of pseudoholomorphic strips, disks,
spheres, and Morse trees as in, for example, Biran-Cornea
\cite[Section 4]{bc:ql}.  Let $L_k, k \in \{ 0,1 \}$ be compact
Lagrangian branes equipped with Morse-Smale pairs $(F_k, G_k)$.  In
particular, the critical set $\crit(F_k) \subset L_k$ of each Morse
function is finite.  Let
\[ \ul{P}_k = (P_{\Gamma,k} = (J_{\Gamma,k}, F_{\Gamma,k})),k \in \{
  0,1 \} \] 
  be regular perturbation data giving rise to moduli spaces of
  pseudoholomorphic treed disks with boundary in $L_k$.  Consider a a
  Hamiltonian with flow
\[ H_{01} \in C^\infty(X \times [0,1]), \quad \varphi^{01}_t: X \to X
\] 
so that $\varphi^{01}_1(L_0) \cap L_1$ is a clean intersection and
$\varphi^{01}_1(L_0) \cup L_1$ is rational.  That is, some power of
the line-bundle-with-connection $\widetilde{X}$ is trivializable over
$L_0 \cup L_1$.  This includes the special cases of equality
$\varphi^{01}_1(L_0) = L_1$ if $L_0$ is rational.  Let
  \[ F_{01}: \varphi^{01}_1(L_0) \cap L_1 \to \R \]
be a Morse function.  By the Morse lemma, the critical set
\[\cI(L_0,L_1;H_{01}) := \crit(F_{01}) = \{l \in L_0 \cap L_1 \ | \ \d F_{01}(l) = 0 
\} \]
is necessarily finite.  Choose a generic metric $G_{01}$ on
$\varphi^{01}_1(L_0) \cap L_1$, and let
\[ \varphi^{01}_t: L_0 \cap L_1 \to L_0 \to L_1 \]
be the time $t$ flow of $-\grad(F_{01}) \in \Vect(L_0 \cap L_1)$.
Denote the stable and unstable manifolds of $F_{01}$:
\[ W_x^\pm = \Set{ l \in L_0 \cap L_1 \ | \lim_{t \to \pm \infty}
  \varphi^{01}_t(l) = x } .\]
We assume that $(F_{01},G_{01})$ is {\em Morse-Smale}, that is, the
stable and unstable manifolds meet transversally
\[ T_l W_x^+ + T_l W_y^- = T_l (\varphi^{01}_1(L_0) \cap L_1), \quad
\forall l\in W_x^+ \cap W_y^-, \ x,y \in \crit(F) .\]
The critical set admits a natural grading map
\[i: \cI(L_0,L_1;H_{01}) \to
\Z_{g} \]
obtained by adjusting the Maslov index of paths from $T_xL_0$ to
$T_xL_1$ in $T_x X$ for any $x \in \crit(F_{01})$ by the index
$i(x) = \dim(W_x^-)$.

The space of Floer cochains is then generated by the finite set of
critical points on the given Morse function on the clean intersection:
\[ CF(L_0,L_1;H_{01}) = \bigoplus_{x \in \cI(L_0,L_1;H_{01})} \Lambda
{x} \]
with $\Z_g$-grading induced by
the grading on $\cI(L_0,L_1;H_{01})$.

The structure maps for the Fukaya bimodule count configurations
containing perturbed pseudoholomorphic strips and gradient segments
for the Morse function on the intersection.  Given a stable strip
$C_0$ with boundary markings $z_-,z_+$ let $w_1,\lldots,w_k \in C_0$
denote the nodes appearing in any non-self-crossing path between $z_-$
and $z_+$.  Define a {\em treed strip}
\[C =  C_0 \sqcup \bigsqcup_{i = 1}^k [0,\ell(w_i)] / \sim \] 
by replacing each node $w_i$ by a segment $T_i \cong [0,\ell(w_i)]$ of
length $\ell(w_i)$.  Denote by 
\[T = T_1 \cup \lldots \cup T_k \quad S = \ol{C - T} \] 
the {\em tree resp. surface part} of $C$.  A perturbed pseudoholomorphic strip is then
a map from $C = S \cup T$ that is $J$-holomorphic on the surface part
and a $F_{01}$ resp. $F_0$ resp. $F_1$ gradient trajectory on each
segment in $T$.  See Figure \ref{treedstrip}.
\begin{figure}[h!] 
\begin{picture}(0,0)%
\includegraphics{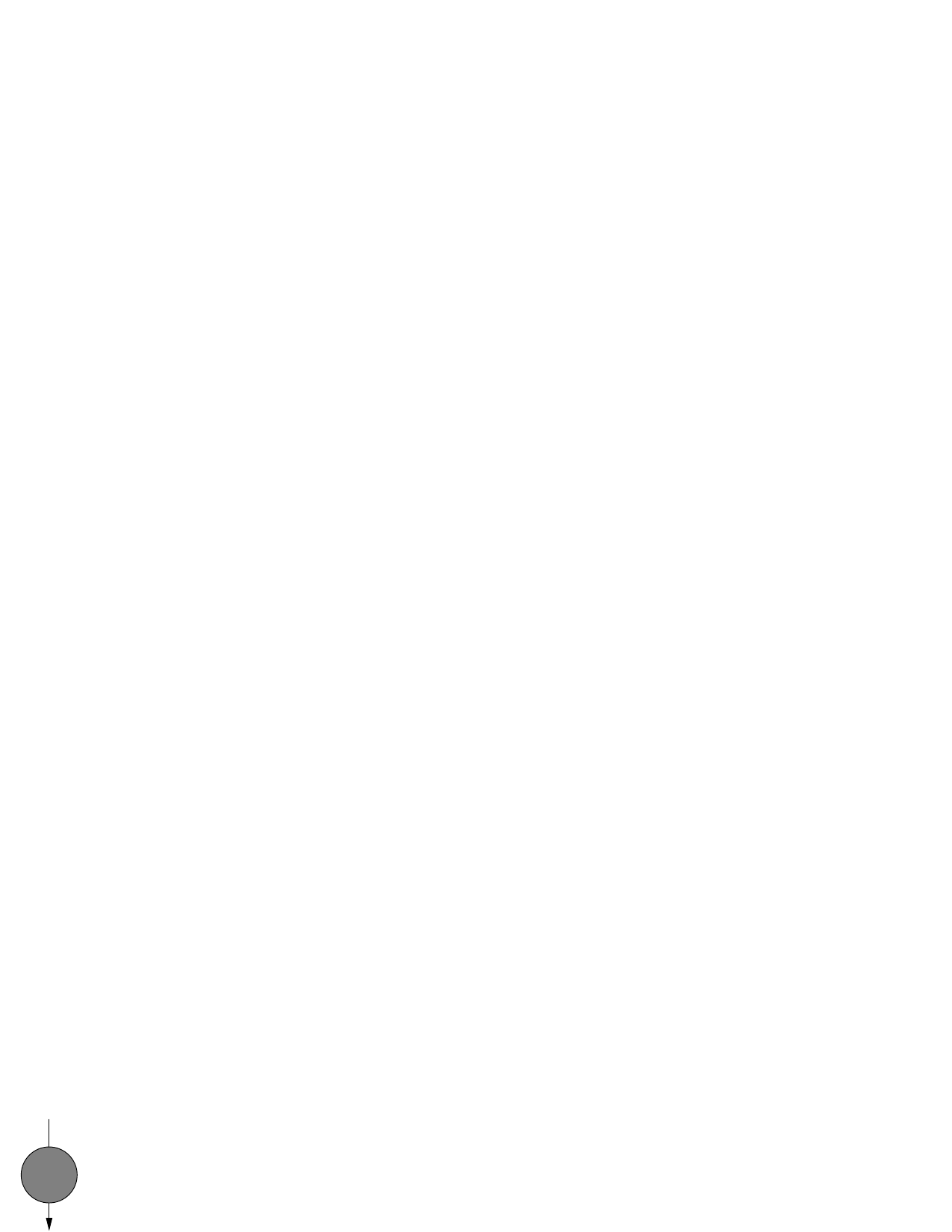}%
\end{picture}%
\setlength{\unitlength}{3947sp}%
\begingroup\makeatletter\ifx\SetFigFont\undefined%
\gdef\SetFigFont#1#2#3#4#5{%
  \reset@font\fontsize{#1}{#2pt}%
  \fontfamily{#3}\fontseries{#4}\fontshape{#5}%
  \selectfont}%
\fi\endgroup%
\begin{picture}(891,1224)(3974,-3373)
  \put(4557,-2414){\makebox(0,0)[lb]{\smash{{\SetFigFont{12}{14.4}{\rmdefault}{\mddefault}{\updefault}{$\varphi^{01}_1(L_0)
            \cap L_1$}%
        }}}}
  \put(4570,-3279){\makebox(0,0)[lb]{\smash{{\SetFigFont{12}{14.4}{\rmdefault}{\mddefault}{\updefault}{$\varphi^{01}_1(L_0)
            \cap L_1$}%
        }}}}
  \put(3539,-2800){\makebox(0,0)[lb]{\smash{{\SetFigFont{12}{14.4}{\rmdefault}{\mddefault}{\updefault}{$\varphi^{01}_1(L_0)$}%
        }}}}
  \put(4850,-2782){\makebox(0,0)[lb]{\smash{{\SetFigFont{12}{14.4}{\rmdefault}{\mddefault}{\updefault}{$L_1$}%
        }}}}
\end{picture}%
\caption{A treed strip with Lagrangian boundary conditions}
\label{treedstrip}
\end{figure}
Fix {\em thin parts} of the universal curves: a neighborhood
$\ol{{\T}}_{\Gamma}^{\thin}$ of the endpoints and a neighborhood
$\ol{{\S}}_{\Gamma}^{\thin}$ of the markings and nodes.  In the
regularity construction, these neighborhoods must be small enough so
that either a given fiber is in a neighborhood of the boundary, where
transversality has already been achieved, or otherwise each segment
and each disk or sphere component in a fiber meets the complement of
the chosen thin parts.  For an integer $l \ge 0$ a {\em
  domain-dependent perturbation} of $F$ of class $C^l$ is a $C^l$ map
\begin{equation} \label{FGam2}  F_{\Gamma}: \ol{{\T}}_{\Gamma}
 \times ( \varphi_1^{01}(L_0) \cap L_1) \to \R \end{equation}
equal to the given function $F$ away from the endpoints:
\[ 
F_{\Gamma,01} | \ol{{\T}}_{\Gamma}^{\thin,01} = \pi_2^* F_{01}, \quad
F_{\Gamma,k} | \ol{{\T}}_{\Gamma}^{\thin,k} = \pi_2^* F_{k}, \quad k
\in \{ 0, 1 \}\]
where $\pi_2$ is the projection on the second factor in \eqref{FGam2}.
A {\em domain-dependent almost complex structure} of class $C^l$ for
treed disks of type $\Gamma$ is a map from the two-dimensional part
$\ol{{\S}}_{\Gamma} $ of the universal curve $\ol{{\U}}_{\Gamma}$ to $
\J_\tau(X)$ given by a $C^l$ map
\[ J_{\Gamma} : \ \ol{{\S}}_{\Gamma} \times X \to \End(TX) \]
gives as the product of pull-backs of maps 
\[ J_{\Gamma(v)} : \ \ol{{\S}}_{\Gamma(v)} \times X \to \End(TX) \]
and equal to the given $J_D$ near the nodes and boundary:
\[ J_\Gamma | \ol{{\S}}_\Gamma^{\thin} = 
\pi_2^* J_D .\]
A {\em perturbed pseudoholomorphic strip} for the pair $(L_0,L_1)$
consists of a treed disk $C$ and a map $u: C = S \cup T \to X$ such
that
\vskip .1in
\begin{enumerate} 
\item[] {\rm (Boundary condition)} The Lagrangian boundary condition
  holds $u ( (\partial C )_b ) \subset L_b$ for $b \in \{ 0,1 \}$.
\vskip .1in
\item[] {\rm (Surface equation)} On the surface part $S$ of $C$ the map
  $u$ is $J$-holomorphic for the given domain-dependent almost complex
  structure: if $j$ denotes the complex structure on $S$ then
\[ J_{\Gamma,u(z),z} \ \d u_S = \d u_S \ j. \] 
\vskip .1in
\item[] {\rm (Boundary tree equation)} On the boundary tree part $T
  \subset C$ the map $u$ is a collection of gradient trajectories:
  \[ \dds u |_{T_b} = -\grad_{ F_{\Gamma, b,(s,u(s))} }(u |_{T_b}) \]
  where $s$ is a local coordinate with unit speed and $b$ is one of
  the symbols $0$, $1$ or $01$.  Thus for each edge $e \in
  \Edge_{-}(\Gamma)$ the length of the trajectory is given by the
  length $u |_{e \subset T}$ is equal to $\ell(e)$.
\end{enumerate} 
For each combinatorial type $\Gamma$, the moduli space
$\M_\Gamma(L_0,L_1)$ of treed strips is locally cut out as the zero
set of a Fredholm map as in \eqref{linop}.

To obtain regular moduli spaces we introduce a stabilizing divisor.
Given a stabilizing divisor $D \subset X - (L_0 \cup L_1)$, a stable
strip $u:C \to X$ is {\em adapted} if and only if {\rm (Stable surface
  axiom)} $C$ is a stable marked strip; and {\rm (Leaf axiom)} each
interior leaf $T_e$ lies in $u^{-1}(D)$ and each component of
$u^{-1}(D)$ contains an interior leaf $T_e$.  Let $\ol{\M}(L_0,L_1,D)$
denote the set of isomorphism classes of stable adapted Floer
trajectories to $X$, and by $\M_\Gamma(L_0,L_1,D)$ the subspace of
combinatorial type $\Gamma$.  Compactness and transversality
properties of the moduli space of Floer trajectories in the case of
clean intersection, including exponential decay estimates, can be
found in \cite{wo:gdisk} and \cite{schmaschke}. The necessary gluing
result can be found in Schm\"aschke \cite[Section 7]{schmaschke}.

In order to define the structure maps in the Fukaya bimodules, we
assume that the union of Lagrangians is equipped with the following
further structure:
\begin{enumerate}
\item {\rm (Local system)} Let $y_0,y_1$ denote the local
  systems on $L_0,L_1$ assumed as part of their brane structures.  We
  suppose that $L_0 \cup L_1$ is equipped with a local system
  $\rho \in \cR(L_0 \cup L_1) $ restricting to the given local systems
  on $y_0,y_1$.  Such a local system is determined by an
  isomorphism of the corresponding flat line bundles on the
  intersection $L_0 \cap L_1$, but this isomorphism is not unique.
\item {\rm (Relative spin structure)} We suppose that the embedding
  $L_0 \cup L_1 \to X$ is equipped with a relative spin structure,
  restricting to the given relative spin structures on $L_0,L_1$. 
\end{enumerate}

Given relative spin structures as above, the construction of
orientations proceeds along the lines of Fukaya-Oh-Ohta-Ono
\cite[Chapter 10]{fooo}, see also Wehrheim-Woodward \cite{orient}.  We
may ignore the constraints at the interior markings
$z_1,\lldots, z_m \in \on{int}(S)$, since the tangent spaces to these
markings and the linearized constraints $\d u(z_i) \in T_{u(z_i)}D$
are even dimensional and oriented by the given complex structures.  At
any regular element $(u:C \to X) \in \M(L_0,L_1,D)$ the tangent space
to the moduli space of treed disks is the kernel of the linearized
operator
\[ T_u \M(L_0,L_1,D) \cong \ker(\ti{D}_u) .\]
The operator $\ti{D}_u$ is canonically homotopic via family of
operators $\ti{D}_u^t, t \in [0,1]$ to the operator
$0 \oplus D_u \oplus \dds$ given by zero on the variations of complex
structure on the domain, $D_u$ on the variations of map $S \subset C$,
and $\dds$ on the variations of map on the tree part $T \subset C$.
The deformation $\ti{D}_u^t, t \in [0,1]$ of operators induces an
family of determinant lines over the interval, necessarily trivial,
and so (by taking a connection on this family) an identification of
determinant lines
\begin{equation} \label{split1} \det(T_{u} \M(L_0,L_1,D)) \to \det(T_C
  \M_\Gamma ) \otimes \det(D_{{u}}) \end{equation}
well-defined up to isomorphism.  (Here $D_u$ denotes the linearized
operator subject to the constraints which require the attaching points
of edges mapping to critical points to map to the corresponding
unstable manifolds of the Morse function.)  In the case of nodes of
$S$ mapping to intersection points $x \in L_0 \cap L_1$ the
determinant line $\det(D_{u})$ is oriented by ``bubbling off
one-pointed disks'', see \cite[Theorem 44.1]{fooo} or \cite[Equation
(36)]{orient}.  That is, for each intersection point
$x \in L_0 \cap L_1$ choose a path of Lagrangian subspaces
\begin{equation} \label{gammax} \gamma_x:[0,1] \to \Lag(T_{x} X),
  \quad \gamma_x(0) = T_x L_0 \quad \gamma_x(1) = T_x L_1 .
\end{equation} 
Let $S$ be the unit disk with a single boundary marking
$1 \in \partial S$.  The path $\gamma_x$ defines a totally real
boundary condition on $S$ on the trivial bundle with fiber $T_x X$,
equipped with a Cauchy-Riemann operator $D_x$ acting on the space of
$W^{k,p,-\delta}$ sections of $T_x X$ with totally real boundary
conditions, for some Sobolev weighting $\delta > 0$ sufficiently small
so that the operator $D_x$ is Fredholm.  Let $\det(D_x)$ denote the
determinant line for the Cauchy-Riemann operator $D_x$ with boundary
conditions $\gamma_x$.  Denote by
\[ i(x) = \dim(\ker(D_x)) - \dim(\coker(D_x)) \in \Z \]
the index of the operator $D_x$.  Let $\DD^+_{x} = \det(D_{x}^+)$ and
let $\DD^-_{x}$ be the tensor product of the determinant line
$\det(D_k^-)$ for the once-marked disk with
$\det(T_x L_0) \cong \det(T_x L_1)$.  Because the once-marked disks
with boundary conditions $\gamma_{x_k}$ and $\gamma_{\ol{x_k}}$ glue
together to a trivial problem on the disk with index $T_{x_k}L $,
there is a canonical isomorphisms
\[\DD^-_{x_k} \otimes \DD^+_{x_k} \to \R .\]
The orientations for the intersection points are {\em coherent} if the
above isomorphism is orientation preserving with respect to the
standard orientation on $\R$.  The orientation for a treed strip $u$
is determined by an isomorphism \label{familytransversality2}
 \begin{equation} \label{split2} \det(D_{{u}}) \cong \DD^+_{{x}_0}
   \otimes \DD^-_{x_1} \otimes \lldots \otimes \DD^-_{x_d}.
 \end{equation}
 The isomorphism \eqref{split2} is determined by degenerating the
 surface with strip-like ends to a nodal surface. Thus each end
 $\eps_e, e \in \mE(S_i)$ of a component $S_i$ with a node $q_k$
 mapping to a intersection point is replaced by a disk $S_{i^\pm(k)}$
 with one end attached to the rest of the surface by a node $q_k^\pm$.
 After combining the orientations on the determinant lines on
 $S_{i^\pm(k)}$ with coherent orientations on the tangent spaces
 to the stable manifolds $W_{x_k}^\pm$ in the case of broken edges or
 semi-infinite edges, one obtains an orientation on the determinant
 line of the parameterized operator $\det(\ti{D}_u)$ and so
 orientations on the regularized moduli spaces $\M_\Gamma(L_0,L_1,D)$.
 In particular the zero-dimension component of the moduli space
 inherits an orientation map
\[ \eps:  \M(L_0,L_1,D)_0 \to \{ +1, -1 \} \]
comparing the constructed orientation to the canonical orientation of
a point.  

The structure maps in the Fukaya bimodules are defined by weighted
counts of perturbed-holomorphic treed strips.  The weighting is by
powers $q^{A(u)}$ of the formal variable $q$ with exponent given by
the vertical symplectic area $A(u)$ for $u \in \M(L_0,L_1)$; while
these powers are non-negative for any particular Fukaya bimodule the
homotopy invariance maps will require possibly negative powers, so
that the homotopy type is only invariant over the Novikov field
$\Lambda$ rather than the Novikov ring.  As before let $\sigma(u)$
denote the number of interior markings; $\eps(u)$ is the orientation
sign while $y(u)$ is the holonomy of the local system.  Define
\begin{equation} \label{coboundary3} CF(L_0,L_1;H_{01}) = \bigoplus_{l 
    \in \cI(L_0,L_1;H_{01})} \Lambda l \end{equation}
Counting Floer trajectories defines operations
\[ \mu^{d|e}: CF(L_0)^{\otimes d} \otimes CF(L_0,L_1;H_{01}) \otimes
CF(L_1)^{\otimes e} \to CF(L_0,L_1;H_{01}) \]
by 
\begin{multline} {x^0_{1}} \otimes \lldots \otimes {x^0_{d}} \otimes
  {x} \otimes {x^1_{1}} \otimes \lldots \otimes {x^1_{e}} \\ \mapsto
  \sum_{ n,u \in
    {\M}_{d|e,n}(x^0_{1},\lldots,x^0_{d},x,x^1_{1},\lldots,x^1_{e},y)_0}
  (-1)^{\heartsuit + \diamondsuit} \eps(u) y(u) (n!)^{-1} q^{A(u)}
  {y} \end{multline}
where $\diamondsuit$ is defined in \eqref{change}

\begin{theorem} \label{bimodthm} For admissible collections
  $\ul{P} = (P_\Gamma)$ of perturbation data the maps
  $(\mu^{d|e})_{d,e \ge 0}$ induce on $CF(L_0,L_1;H_{01})$ the
  structure of a strictly unital \ainfty
  $(CF(L_0), CF(L_1))$-bimodule.
\end{theorem} 

As in the case of \ainfty algebras, the relation \eqref{bimod} follows
from a study of the ends of the one-dimensional moduli space.  The
configurations $u : C \to X$ representing endpoints of the
one-dimensional moduli spaces arise when one of the segments $e$ in
the configuration becomes length $\ell(e)$ infinity, and so broken.
The first term in \eqref{bimod} arises when the broken edge $e$
connects a strip $S_1 \subset S$ to a treed disk configuration
$S_2 \subset S$ attached at a point where $t = 1$; the second term
when a broken edge $e, \ell(e) = \infty$ connects a strip $S_2$ to a
treed disk configuration $S_1$ attached at a point where $t = 0$; and
the third when the broken edge $e, \ell(e) = \infty$ connects two
strips $S_1,S_2$.  The sign computation is similar to that for \ainfty
algebras, and left to the reader.

\section{Morphisms}

In the remainder of this section we show that up to \ainfty homotopy
the Fukaya bimodule is independent of all choices, including the
choice of Hamiltonian perturbation.  In particular, if a Lagrangian is
Hamiltonian displaceable then its Fukaya bimodule is homotopy
equivalent to the trivial bimodule.  The necessary morphisms of
\ainfty bimodules are given by counting {\em parametrized treed
  strips}.  Let $C$ be a disk with distinguished boundary markings
$z_-,z_+ \in \partial C$.  A {\em parametrization} of $(C,z_-,z_+)$ is
a holomorphic isomorphism
\[ \phi: C - \{ z_-, z_+ \} \to \R \times [ 0,1 ] .\]
A {\em $(d|e,n)$-marking} of a parametrized strip is a collection of
$d$ resp. $e$ resp $n$ markings on $\R \times \{ 0 \}$ resp.
$\R \times \{ 1 \}$ resp. in $\R \times (0,1)$. In the treed version
of this moduli space, the $(d|e,n)$-marking becomes a collection of
leaves, $d+e$ on the boundary and $n$ in the interior.  The moduli
space of $(d|e,n)$-leafed treed parametrized strips has a natural
compactification $\ol{\M}_{d|e,n,1}$ by stable treed parametrized
strips, in which unparametrized strips are allowed to bubble off the
ends.  This moduli space is equipped with a continuous map
$ \ol{\M}_{d|e,n,1} \to \ol{\M}_{d|e,n} $ forgetting the
parametrization on quilted strip components.  The fibers of the
forgetful map are canonically oriented so that the positive
orientation corresponds to moving the quilting to the left, and so the
orientation of $\ol{\M}_{d|e,n}$ induces an orientation on
$\ol{\M}_{d|e,n,1}$.

Regularization of the space of pseudoholomorphic parametrized treed
strips uses a divisor in the total space of the fibration
$\R \times [0,1] \times X$.  Let
\[J_e \in \J(X,\omega), \quad e \in \{ 0,1 \} \]
be compatible almost complex structures stabilizing for
$\varphi_{H_e,1}(L_0) \cup L_1$.  Let $\varphi_{H_e,1-t}^* J_e$ denote the
corresponding time-dependent almost complex structures, and
$\sigma_{k,e} : X \to \widetilde{X}^k$ are asymptotically
$J_e$-holomorphic, uniformly transverse sequences of sections with the
property that 
\[D'_{e} = \sigma_{k,e}^{-1}(0), \quad e \in \{ 0 ,1 \} \]
are stabilizing for $\varphi_{H_e,1}(L_0) \cup L_1$ for $k$
sufficiently large.  The pull-backs $\phi_{e,1-t}^* \sigma_{k,e}$ are
then $\phi_{e,1-t}^* J_e$-holomorphic, and any $(J_e,H_e)$-holomorphic
strip with boundary in $(L_0, L_1)$ meets
$\phi_{e,1-t}^{-1}({S} \times D'_{e})$ in at least one point.

These divisors extend over the parametrized strip, so that any
Hamiltonian-perturbed pseudoholomorphic strip meets the extended
divisor.  Denote by $\ti{E} \to E$ the pull-back of
$\widetilde{X} \to X$ to the fibration, equipped with the almost
complex structure induced by the given almost complex structure on
$E$.  Recall the following from Charest-Woodward \cite[Theorem
6.1]{cw:traj}:

\begin{lemma}  
  Let ${S}$ be a surface with strip-like ends, let $L_b \subset X$ be
  rational Lagrangians associated to the boundary components
  $(\partial {S})_b$, and suppose that stabilizing divisors $D'_e$ for
  the ends $e = 1,\lldots,n$ of ${S}$ have been chosen as zero sets of
  asymptotically holomorphic sequences of sections $\sigma_{e,k}$ for
  $k$ sufficiently large.  There exists a asymptotically
  $J_E$-holomorphic, uniformly transverse sequence
  $\sigma_k: E \to \ti{E}^k$ with the property that for each end $e$,
  the pull-back $ \kappa_e^* \sigma_k( \cdot + s, \cdot, \cdot)$
  converges in $C^\infty$ uniformly on compact subsets to
  $\phi_{e,1-t}^* \sigma_{e,k}$ as $s \to \pm \infty$.  The zero set
  $D_E = \sigma_k^{-1}(0)$ is approximately holomorphic for $k$
  sufficiently large, asymptotic to
  $(1 \times \phi_{e,1-t})^{-1} ( \R \times [0,1] \times D'_e)$ for
  each end $e = 1,\lldots, n$, and stabilizing for holomorphic disks
  or spheres in the fibers of $E \to S$. \end{lemma}

\begin{proof} 
  We include the proof for completeness.  Let
  $\H = \{ z \in \C \ | \ \on{Im}(z) \ge 0 \}$ denote complex
  half-space.  Let $\ol{E}$ be the fiber bundle over $\ol{{S}}$
  with fiber $X$ defined by gluing together
  $ U_0 = {S} \times X$ and $U_e= \H \times X ,e = 1,\dots, n$
  using the transition maps $\kappa_e \times \phi_{H_e,1\pm t}$ on
  $\H - \{ 0 \} \cong \R \times [0,1]$ from $U_0$ to $U_e$.  Denote
  the projections over $U_e$ to $X$ by $\pi_{X,e}$.  The two-forms
  $\omega_{E}$ on $U_0$, and $\pi_{X,e}^* \omega$ on $U_e$ glue
  together to a two-form $\omega_E$ on $E$, making $\ol{E}$ into a
  symplectic fiber bundle.  The fiber-wise symplectic form $\omega_E$
  above may be adjusted to an honest symplectic form on $E$ by adding
  a pull-back from the base, and furthermore adjusted so that the
  boundary conditions have rational union.  For the first claim, since
  $\omega_E$ is fiber-wise symplectic, there exists a symplectic form
  $\nu \in \Omega^2( \ol{{S}})$ with the property that
  $\omega_E + \pi^* \nu$ is symplectic, where
  $\pi: \ol{E} \to \ol{{S}}$ is the projection.  The almost complex
  structure $J_E$ is compatible with $\omega_E + \pi^* \nu$, and equal
  to the given almost complex structures $J_B \oplus J_e$ on the ends.
  For the second claim, let $\ti{{S}} \to \ol{{S}}$ be a
  line-bundle-with-connection whose curvature is
  $\nu \in \Omega^2(\ol{{S}})$.  Let $\ti{E} \to E$ be a
  line-bundle-with-connection whose curvature is
  $\omega_E + \pi^* \nu$.  Denote by $\ti{L}_i$ the closure of the
  image of $(\partial {S})_i \times L_i$ for $i = 0,1$.  Fix
  trivializations of $\ti{E}$ over
  $\ti{L}_{e,0} \cap \ti{L}_{e,1} \cong L_{e,0} \cap L_{e,1}$ for each
  end $e$.  By assumption, the line bundle $\ti{E}$ is trivializable
  over $L_i$, hence also $\ti{L}_i$ by parallel transport along the
  boundary components.  Let $e_k, k = 0,1$ be ends connected by a
  connected boundary component labelled by $L_i$.  For any
  $p_k \in \phi_{H_{e},1}(L_0) \cap L_1$ the parallel transport
  $T(p_0,p_1) \in U(1)$ from $p_0$ to $p_1$ is independent of the
  choice of path.  Indeed, any two paths differ up to homotopy by a
  loop which has trivial holonomy by assumption.  After perturbation
  of the connection and curvature on $\ti{E}$, we may assume that the
  parallel transports $T(p_0,p_1)$ are rational for all choices of
  $(p_0,p_1)$.  After taking a tensor power of $\ti{E}$, we may assume
  that the parallel transports $T(p_0,p_1)$ are trivial, hence
  $\ti{E}$ admits a covariant constant section $\tau$ over the union
  $\ti{L}_e$.

  Donaldson's construction \cite{don:symp} implies the existence of a
  symplectic hypersurface in the total space of the fibration.  We
  show that the hypersurface $D_E \subset E$ may be taken to equal the
  pullback of one of the given ones $D'_e, e \in \{ 0, 1 \}$ on the
  ends, as follows.  Let $\sigma_{m,b}: X \to \widetilde{X}^k$ be an
  asymptotically holomorphic sequence of sections concentrating on
  $L_b$.  Let $\sigma_{e,k}: X \to \widetilde{X}^k$ be an
  asymptotically holomorphic sequence of sections concentrating on
  $\phi_{H_e,1}(L_{e,0}) \cup L_{e,1}$ and $\sigma_{e,m}$ an
  asymptotically holomorphic sequence of sections concentrating on
  $L_{e,m}$, $m \in \{ 0, 1 \}$, both asymptotic to the given
  trivializations on the Lagrangians themselves.  For each point
  $z \in \ol{{S}}$, let $\sigma_{z,k}: \ol{{S}} \to \ti{{S}}^k$ denote
  the Gaussian asymptotically holomorphic sequence of sections of
  $\ti{{S}}^k$ concentrated at $z$ as in \eqref{approxhol}.  We may
  assume that the images of the strip-like ends are disjoint.  Let
  $V_i$ be disjoint open neighborhoods
  $(\partial {S})_i - \cup_e \on{Im}(\kappa_e)$ in ${S}$.  For each
  $p = (z,x) \in \ti{L}_i$, let $ \sigma_{x,k}$ be either equal to
  $\sigma_{e,k}$, for $z \in \on{Im}(\phi_e)$ or otherwise equal to
  $\sigma_{i,k}$ if $b$ lies in $V_i$.  Let $P_k$ be a set of points
  in $\partial \ol{{S}}$ such that the balls of $g_k$-radius $1$ cover
  $\partial \ol{{S}}$ and any two points of $P_k$ are at least
  distance $2/3$ from each other, where $g_k$ is the metric defined by
  $k \nu$.  The desired asymptotically-holomorphic sections are
  obtained by taking products of asymptotically-holomorphic sections
  on the two factors: Write
  $\theta(z,x) \tau(z,x) = \sigma_{x,k}(x) \boxtimes \sigma_{z,k}(z)$
  so that $\theta(x,z) \in \C$ is the scalar relating the two
  sections.  Define
\begin{equation} \label{approxhol} \sigma_k = \sum_{p \in P_k}
  \theta(z,x)^{-1} \sigma_{x,k} \boxtimes \sigma_{z,k} .\end{equation}
Then by construction the sections \eqref{approxhol} are asymptotically
holomorphic, since each summand is.

Recall from \cite{don:symp} that a sequence $(s_k)_{k \ge 0}$ is {\em
  uniformly transverse} to $0$ if there exists a constant $\eta$
independent of $k$ such that for any $x \in X$ with $|s_k(x)| < \eta$,
the derivative of $s_k$ is surjective and satisfies
$| \nabla s_k(x)| \ge \eta$.  The sections $\sigma_k$ are
asymptotically $J_E$-holomorphic and uniformly transverse to zero over
$\partial \ol{{S}} \times X$, since the sections $\sigma_{i,k}$ and
$\sigma_{e,k}$ are uniformly transverse to the zero section. Hence
$\sigma_k$ is also uniformly transverse over a neighborhood of
$\partial {S}$.  Pulling back to $E$ one obtains an asymptotically
holomorphic sequence of sections of $\ti{E} | {S}$ that is informally
transverse in a neighborhood of infinity, that is, except on a compact
subset of $E$.  Donaldson's construction \cite{don:symp} although
stated only for compact manifolds, applies equally well to non-compact
manifolds assuming that the section to be perturbed is uniformly
transverse on the complement of a compact set.  The resulting sequence
$\sigma_{E,k}$ is uniformly transverse and consists of asymptotically
holomorphic sections asymptotic to the pull-backs of $\sigma_{e,k}$ on
the ends.  The divisor $D_E = \sigma_{E,k}^{-1}(0)$ is approximately
holomorphic for $k$ sufficiently large and equal to the given divisors
$ \pm (0,\infty) \times D_e$ on the ends, by construction, and
concentrated at $L_b$, over each boundary component
$(\partial {S})_b$.  The stabilizing properties follow. 
\end{proof}

A perturbation scheme similar to the one for Floer trajectories makes
the moduli spaces transverse.  We restrict to the case
$S \cong \R \times [0,1]$ Choose a tamed almost complex structure
$J_E \in \J_\tau(E, \omega_E + \pi_B^* \nu)$ leaving $D_E$ invariant,
so that $D_E$ contains no holomorphic spheres, each holomorphic sphere
meets $D_E$ in at least three points, and each disk with boundary in
$\ti{L}_{0} \cup \ti{L}_{1}$ meets $D_E$ in at least one point.  Since
$D_E$ is only approximately holomorphic with respect to the product
complex structure, the complex structure $J_E$ will not necessarily be
of split form, nor will the projection to ${S}$ necessarily be
$(J_E,j_{S})$-holomorphic away from the ends.  Furthermore, choose
domain-dependent perturbations $F_{\Gamma}$ of the Morse functions
$F_e$ on $\phi_{H_e,1}(L_0) \cap L_1$, so that $F_\Gamma$ is a
perturbation of $F_e$ on the segments that map to
$\phi_{H_e,1}(L_0) \cap L_1$.

Domain-dependent perturbations give a regularized moduli space of
adapted treed strips.  These are maps
$u: C \cong \R \times [0,1] \to E$, homotopic to sections, with the
given Lagrangian boundary conditions and mapping the positive
resp. negative end of the strip to the positive resp. negative end of
$E$.  As before, there is a compactified moduli space
$\ol{\M}(L_0,L_1,\ul{D},\ul{P})$ with a single parametrized strip
component, and also including disk, sphere, and unparametrized strip
components.  For admissible perturbations $\ul{P}$ given as
collections $(P_{\Gamma,E} = (F_{\Gamma,E},J_{\Gamma,E}))$ on $E$, the
moduli space $\ol{\M}(L_0,L_1,\ul{D},\ul{P},(x_e))$ of perturbed maps
to $E$ with boundary in $\ti{L}_0 \cup \ti{L}_1$ and limits
$(x_e), e \in \{ 0 , 1 \} $ has zero and one-dimensional components
that are compact and smooth with the expected boundary.  In particular
the boundary of the one-dimensional moduli spaces
${\M}_1(L_0,L_1,\ul{D},\ul{P}, (x_e))$ are $0$-dimensional strata
$\M_\Gamma(L_0,L_1,\ul{D},\ul{P}, (x_e))$ corresponding to either a
perturbed pseudoholomorphic strip bubbling off on end, or a disk
bubbling off the boundary.

Counting rigid elements of the moduli spaces of parametrized treed
strips defines morphisms between bimodules.  Given two Hamiltonian
perturbations $H_{01}', H_{01}''$ such that the intersections
$\varphi^{01,'}_1(L_0) \cap L_1$ and $\varphi^{01,''}_1(L_0) \cap L_1$
are clean, consider a Hamiltonian perturbation
$H = H_{s} \d s + H_{t} \d t$ equal to $H_{01}' \d t$ for $ s \gg 0$
and to $H_{01} \d t''$ for $s \ll 0$.  Define operations
\begin{multline} 
  \phi^{d|e}: CF(L_0)^{\otimes d} \otimes CF(L_0,L_1;H_{01}') \otimes
  CF(L_1)^{\otimes e} \to CF(L_0,L_1;H_{01}'') \\ {z^0_{1}}
  \otimes \ldots \otimes {z^0_{d}} \otimes {z} \otimes
  {z^1_{1}} \otimes \ldots \otimes {z^1_{e}} \mapsto \\ \sum_{ u
    \in
    \ol{\M}_{d|e,1}(z^0_{1},\ldots,z^0_{d},z,z^1_{1},\ldots,z^1_{e},y)_0}
  (-1)^{\heartsuit + \diamondsuit} \eps(u) y(u) (\sigma(u)!)^{-1}
  q^{A(u)} {y}
\end{multline} 
where the cochain groups involved are defined using $\Lambda$
coefficients; the values $A(u)$ are not necessarily positive because
of the additional term in the energy-area relation \eqref{energyarea}.

\begin{proposition} \label{phi00} For admissible collections of
  perturbation data, the maps $ (\phi^{d|e})_{d,e \ge 0}$
  form a strictly unital morphism of \ainfty bimodules from
  $CF(L_0,L_1;H_{01}')$ to $CF(L_0,L_1;H_{01}'')$.
\end{proposition}

\begin{proof} The proof is essentially the same as that for \ainfty
  algebra morphisms in Theorem \ref{samedegree}.  The true boundary
  components of $\ol{\M}_{d|e,1}(\ul{z}_0,\ul{z}_1,y)_1$ consist of
  configurations with a broken segment in which either treed strip has
  broken off or a treed disk has broken off.  The former case
  corresponds to one of the first two terms in \eqref{morphism} while
  the latter corresponds to the last two terms.  The first term in
  \eqref{morphism} (in which $\mu$ appears before $\phi$) has an
  additional sign coming from the definition of the orientation on
  $\ol{\M}_{d|e,1}$ as an $\R$-bundle over $\ol{\M}_{d|e}$, so that
  the orientation of the fiber corresponds to composing
  parametrization with a translation in the positive direction; this
  means that the positive orientation on the gluing parameter for the
  boundary components corresponding to terms of the first type becomes
  identified with the negative orientation on these fibers, giving
  rise to the additional sign.  The degree of the morphism is zero,
  which causes those contributions to the sign in \eqref{morphism} to
  vanish.  This leaves the contributions from $\aleph$, which are
  similar to those dealt with before and left to the reader.
 \end{proof}

 \section{Homotopies}
\label{twice} 

In this section we show that the morphisms of bimodules introduced
above satisfy a gluing law of the type found in topological field
theory.  The construction uses a moduli space of {\em twice}
parametrized treed strips defined as follows.  Let $C$ be a
holomorphic strip, that is, disk with markings $z_-, z_+$ on the
boundary.  A {\em twice-parametrization} is pair
$\ul{\phi} = (\phi_0,\phi_1)$ of holomorphic isomorphisms
\[ \phi_k: C - \{ z_-, z_+ \} \to \R \times [ 0,1 ], \quad k \in \{ 0
, 1 \} .\]
An {\em isomorphism} of a twice-parametrized disks
$(C_l, \ul{\phi}_l), l \in \{ 1, 2 \}$ is a holomorphic isomorphism
$C_1$ to $C_2$ that intertwines parametrizations
$\phi_{k,l}, k \in \{ 0,1 \}$ for $l \in \{ 1,2 \}$.  Each
parametrization is equivalent to an affine structure on
$C - \{z_+ \} $, and in this way a twice-parametrized strip is
equivalent to a twice-quilted disk in the language of Chapter
\ref{homsec}, but with a distinguished incoming boundary marking.  In
particular, the moduli space of twice-parametrized treed strips with
additional boundary markings and interior markings has a compact
moduli space with locally toric singularities.

In the compactification, parametrized or unparametrized strips can now
bubble off on either end.  Suppose that we have chosen Hamiltonian
perturbations $H_{01}^a, a = 0,1,2$ so that the intersections
$\phi_t^a(L_0) \cap L_1$ are clean where $\phi_t^a$ denotes the time
$t$ flow of $H_{01}^a$.  We extend the Hamiltonian perturbation over
the universal twice-parametrized strip and choose generic
domain-dependent almost complex structures.  The weighted counts of
twice-parametrized strips define operations
\begin{multline} 
  \tau^{d|e}: CF(L_0)^{\otimes d} \otimes CF(L_0,L_1;H_{01}^0) \otimes
  CF(L_1)^{\otimes e} \to CF(L_0,L_1;H_{01}^2)[-1] \\
  (z^0_{1},\ldots,z^0_{d},z,\ldots,z^1_{1},\ldots,z^1_{e},y) \\
  \mapsto \sum_{ u \in
    {\M}_{d|e,2}(z^0_{1},\ldots,z^0_{d},z,z^1_{1},\ldots,z^1_{e},y)_0}
  (-1)^{\heartsuit + \diamondsuit} \eps(u) y(u) (\sigma(u)!)^{-1}
  q^{A(u)} {y}.
\end{multline}

\begin{theorem} Suppose that admissible perturbation data
  $\ul{P}^{ab} = (P_\Gamma^{ab})$ for once-parametrized strips have been
  chosen giving rise to morphisms of \ainfty bimodules
\[\phi_{ab}: CF(L_0,L_1;H_{01}^a) \to CF(L_0,L_1;H_{01}^b ) , \quad 0
\leq a < b  \leq 2 .\]
For any admissible collection of perturbations $\ul{P} = (P_\Gamma)$
extending the given collections $\ul{P}^{ab} = (P_\Gamma^{ab})$ over
the universal curves over moduli spaces of twice-parametrized strips,
the resulting operations $(\tau^{d|e})_{d,e \ge 0}$ define a homotopy
of morphisms of \ainfty bimodules
\[ \phi_{12} \circ \phi_{01} \simeq_{\tau} \phi_{12}  \in 
\Hom(CF(L_0,L_1;H_{01}^0), CF(L_0,L_1;H_{01}^2)) .\]
\end{theorem}

\begin{proof} 
  The statement is a consequence of the description of
  twice-parametrized strips, similar to that of the existence of
  homotopies between \ainfty functors defined by quilted disks in
  Theorem \ref{twicequilted}.  The components of the boundary of the
  moduli space of twice-quilted strips
  $
  \ol{\M}_{d|e,2}(z^0_{1},\ldots,z^0_{d},z,z^1_{1},\ldots,z^1_{e},y)_1$
  that do {\em not} involve the parametrized strip breaking, or the
  shaded region disappearing, correspond the terms on the left-hand
  side of \eqref{morphism}.  The strata where the twice-parametrized
  strip breaks into once-parametrized strips correspond to the
  contributions to the composition $\phi_{12} \circ \phi_{01}$, while
  the components where the parametrized strip vanishes give the
  identity morphism of \ainfty modules.  The sign computation is the
  same.
\end{proof}

For any $b_0 \in {MC}(L_0), b_1 \in {MC}(L_1)$ we denote by
$ HF(L_0,L_1; H_{01}; b_0,b_1)$ the cohomology of the operator
$\mu^1_{b_0,b_1}$.  It follows as in Lemma \ref{uptogauge} that any
morphism of \ainfty bimodules corresponding to difference choices of
perturbations $H_{01}, H_{01}'$ induces a map
$$ HF(L_0,L_1; H_{01}; b_0,b_1) \to HF(L_0,L_1; H_{01}'; b_0, b_1
) $$
of cohomology groups, functorially.  As a result,
$HF(L_0,L_1; H_{01}; b_0,b_1)$ is independent of the choice of the
choice of Hamiltonian perturbation.  On the other hand, for the case
$L_0 = L_1 = L $ the \ainfty bimodule $CF(L_0,L_1;0)$ is equal to the
span of $x \neq x^{\whitet}, x^{\greyt}$ in $CF(L)$ itself considered
as an \ainfty bimodule over $CF(L)$, so that using Lemma
\ref{nochange} we have 
$$ HF(L,L; b; b) \cong HF(L,b) .$$

\begin{corollary} \label{nondispl} If a compact rational Lagrangian
  brane $L$ in a compact rational symplectic manifold $X$ is
  Hamiltonian displaceable, then $HF(L,b)$ vanishes for every
  $b \in {MC}(L)$.
\end{corollary} 

\begin{proof} Let $\phi: X \to X$ be a Hamiltonian diffeomorphism with
  $\phi(L) \cap L = \emptyset$.  Then the homotopy equivalences
  $CF(L) \simeq CF(L,L) \simeq CF(L,\phi(L)) = \{ 0 \}$ imply that
  $HF(L,b) = \{ 0 \}$ for all $b \in {MC}(L)$.
\end{proof}

\chapter{Broken Fukaya algebras}

In this section we introduce a version of the Fukaya algebra for
Lagrangians in {\em broken} symplectic manifolds along the lines of
{\em symplectic field theory} as introduced by
Eliashberg-Givental-Hofer \cite{egh:sft}.  The symplectic manifold is
degenerated, through neck stretching, to a {\em broken} symplectic
manifold.  A sequence of pseudoholomorphic curves with respect to the
degenerating almost complex structure converges to a {\em broken
  pseudoholomorphic map}: a collection of pseudoholomorphic curves in
the pieces as well as maps to the neck region.  In the version that we
consider here introduced by Bourgeois \cite{bourg:mb}, these
components are connected by gradient trajectories of possibly finite
or infinite length.

\section{Broken curves} 

First we describe the kind of domains that appear in the particular
kind of broken limit we will consider, following
Bourgeois-Eliashberg-Hofer-Wysocki-Zehnder \cite{bo:com}.

\begin{definition} \label{brokencurves} Let $n,m,s \ge 0$ be integers.
A {\em level of a broken curve} with $n$ boundary
  markings, $m$ interior markings, and $s$ sublevels consists of
\begin{enumerate} 
\item a sequence 
$ C= (C_1,\lldots, C_s) $ 
of treed nodal curves with boundary, called {\em sublevels}; in our
situation only the first piece $C_1$ is allowed to have non-empty
boundary:
$ \partial C_2 = \lldots \partial C_s = \emptyset .$
\item interior markings 
$\ul{z}_i^\pm \subset C_i, \quad i = 1,\lldots, s .$
\item a collection of {\em (possibly broken) finite and semi-infinite
  edges attached to boundary points} that are intervals 
$I_1,\lldots, I_m$
connecting boundary points in components of $C_i$ for
$i = 1,\lldots, s$ with lengths $\ell(I_i)$; since in our situation
only $C_1$ is allowed to have non-empty boundary, these edges only
connecting components of $C_1$; and
\item a sequence of {\em intervals}
$I^i_1,\lldots, I^i_{s(i)}$
attached to interior points in $C_i$ with finite lengths
$\ell(I^i_k)$ independent of $k$;
 \end{enumerate}
 Out of the data above one constructs a topological space $C$ by
 removing the nodes and gluing in the intervals
 $I^i_1,\lldots, I^i_{s(i)}$.
\begin{figure}[ht]
\begin{picture}(0,0)%
\includegraphics[height=2in]{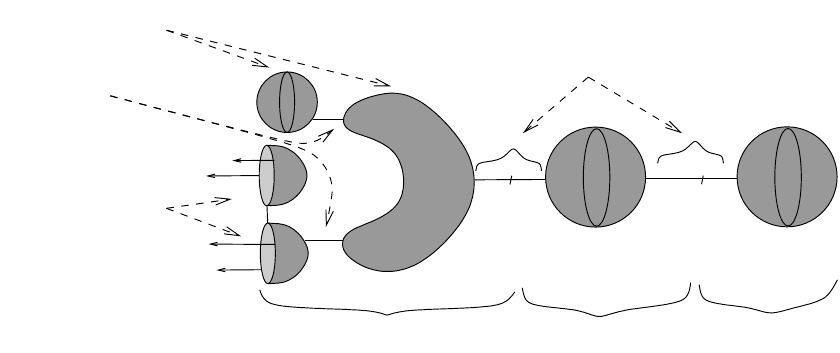}%
\end{picture}%
\setlength{\unitlength}{3947sp}%
\begingroup\makeatletter\ifx\SetFigFont\undefined%
\gdef\SetFigFont#1#2#3#4#5{%
  \reset@font\fontsize{#1}{#2pt}%
  \fontfamily{#3}\fontseries{#4}\fontshape{#5}%
  \selectfont}%
\fi\endgroup%
\begin{picture}(6711,2809)(545,-2261)
\put(2098,-2201){\makebox(0,0)[lb]{first level (two sublevels) }%
}
\put(4442,-2180){\makebox(0,0)[lb]{\smash{{{second level}%
}}}}
\put(5567,-2196){\makebox(0,0)[lb]{\smash{{{third level}%
}}}}
\put(4151, -214){\makebox(0,0)[lb]{\smash{{{broken Morse trajectories}%
}}}}
\put(1276,-1261){\makebox(0,0)[lb]{\smash{{{disks}%
}}}}
\put(976,-1086){\makebox(0,0)[lb]{\smash{{{holomorphic}%
}}}}
\put(1051,89){\makebox(0,0)[lb]{\smash{{{holomorphic spheres}%
}}}}
\put(560, -286){\makebox(0,0)[lb]{\smash{{{unbroken Morse}%
}}}}
\put(774,-421){\makebox(0,0)[lb]{\smash{{{trajectories}%
}}}}
\put(1276,-911){\makebox(0,0)[lb]{\smash{{{treed}%
}}}}
\end{picture}%
\caption{A broken disk} 
\label{bdisk} 
\end{figure} 
A {\em broken curve} with $k$ levels is obtained from $k$ curves at a
single level $C_1,\lldots,C_k$ by gluing together the endpoints of the
semi-infinite interior edges at infinity, as in Figure \ref{bdisk}.
The {\em combinatorial type} of a broken curve $C$ is the graph
$\Gamma$ obtained by gluing together the types
$\Gamma_1,\ldots,\Gamma_k$ of the levels, with the partition into
sub-graphs remembered.  A broken disk $C$ is defined similarly to a
broken curve but the first level $C_1$ of $C$ is a disjoint union of
treed disk and sphere components with connected boundary and segments
attached, while the other pieces have surface parts that are spheres;
furthermore, the combinatorial type $\Gamma(C)$ is a tree.  Weightings
are defined as in Definition \ref{wdef}: each semi-infinite edge
$e \in \Edge_{\rightarrow}(\Gamma(C))$ is assigned a weight
$\rho(e) \in [0,\infty]$.  A broken curve $C$ is {\em stable} if it
has only finitely many automorphisms $\phi: C \to C$, except for
automorphisms of infinite length segments $C_i \cong \R$ with one
weighted end $e_- \in \mE(C)$ and one unweighted end $e_+ \in \mE$.
\end{definition}  

\section{Broken maps} 

A broken map is a map from a broken curve into a broken symplectic
manifold, defined as follows.

\begin{definition} \label{bsymp} 
\begin{enumerate} 
\item \label{bsympz} {\rm (Broken symplectic manifold)} Let $X$ be a
  compact rational symplectic manifold.  Let $Z \subset X$ be a
  coisotropic hypersurface separating $X \backslash Z$ into components
  $X_{\subset}^\circ,X_{\supset}^\circ$.  Suppose that $Z$ has null
  foliation
\[\ker(\omega | Z) \subset TZ\] 
that is a circle fibration over a symplectic manifold $Y$; that is,
there exists a {\em Reeb vector field} $v \in \Vect(Z)$ taking values
in the null foliation whose flow defines an action of $S^1$ on $Z$ so
that $Y := Z/S^1 $ is a smooth manifold.  By the coisotropic embedding
theorem, a neighborhood of $Z$ in $X$ is symplectomorphic to
$(-\eps,\eps) \times Z$ for $\eps > 0$ small equipped with a
symplectic structure arising from a connection.  By a construction of
Lerman \cite{le:sy2} the unions \llabel{unions} \label{unionsp}
\[ X_\subset :=  X_\subset^\circ \cup Y, 
\quad X_\supset := X_\supset^\circ \cup Y \]
have the structure of symplectic submanifolds.  Let $N_\pm \to Y$
denote the normal bundle of $Y$ in $X_\subset,X_\supset$, and $N_\pm
\oplus \ul{\C}$ the sums with the trivial bundle $\ul{\C} = Y \times
\C$.  Denote by $\P(N_+ \oplus \ul{\C}) \cong \P(N_- \oplus \ul{\C}) $
the projectivized normal bundle, where the isomorphism is induced from
\[ \P(N_+ \oplus \ul{\C}) \cong \P( (N_+ \oplus \ul{\C}) \otimes N_-)) = \P(
 N_- \oplus \ul{\C}) .\]
 The {\em broken symplectic manifold} arising from the triple $
 (X_{\subset}, X_{\supset}, Y) $ is the topological space 
 \[\XX = X_{\subset} \cup_Y X_{\supset} \]
 obtained by identifying the copies of $Y$ in $X_{\subset}$ and
 $X_{\supset}$.  Thus $\XX$ is a stratified space and the link of $Y$
 in $\XX$ is a disjoint union of two circles.  The space $Y$ comes
 equipped with an isomorphism of normal bundles
 \begin{equation} \label{niso} (TX_\subset)_Y/TY \cong (
   (TX_{\supset})_T/TY)^{-1} .\end{equation}
\item {\rm (Multiply broken symplectic manifold)} For an integer $m
  \ge 1$ define the {\em $m-1$-broken symplectic manifold}
\[ \XX[m] = X_{\subset} \cup_Y \P(N_\pm \oplus \ul{\C}) \cup_Y \P(N_\pm \oplus \ul{\C}) \cup_Y
  \lldots \cup_Y X_{\supset} \]
where there are $m-2$ copies of $\P(N_\pm \oplus \ul{\C})$ called {\em broken
  levels}.  Define
\begin{equation} \label{pieces} \XX[m]_0 = X_{\subset}, \ \ \XX[m]_1 = \P(N_\pm \oplus \ul{\C}), \ \ ,\lldots,
\XX[m]_m = X_{\supset} .\end{equation}
There is a natural action of $\C^\times$ on $\P(N \oplus\C)$ given by
scalar multiplication on each projectivized normal bundle:
\[ \C^\times \times \P(N_\pm \oplus \ul{\C}) \to \P(N_\pm \oplus \ul{\C}), \quad
(z,[n,w]) \mapsto z [n,w] := [zn,w] .\]
The fixed points of the $\C^\times$ action are the divisors at $0$ and $\infty$:
\[ \P(N_\pm \oplus \ul{\C})^{\C^\times} = \{ [n,0] \} \cup \{ [0,w] \} \]
where $n$ reps. $w$ ranges over vectors in $N_\pm$ resp. $\ul{\C}$
\item An almost complex structure $J \in \J(\R \times Z)$ is of {\em
  cylindrical form} if there exists an almost complex structure $J_Y$
  on $Y$ such that the projection $\pi_Y : \R \times Z \to Y$ is
  almost complex and $J$ is invariant under the $\C^\times$-action on
  $\R \times Z$ induced from the embedding in $\P(N_\pm \oplus \ul{\C})$
  given by 
\[ \C^\times \times \P(N_\pm \oplus \ul{\C}) \to \P(N_\pm \oplus \ul{\C}), \quad
  s \exp( i t) (s_0,z) = (s_0 + s, \exp(it)z) .\]
That is,
\[D \pi_Y J = J_Y D \pi_Y, \quad  J \in \J(\R \times Z)^{\C^\times} .\]
Denote by $\J^{\on{cyl}}(\R \times Z)$ and for any $\eps > 0$ denote
by $\J^{\on{cyl}}((0,\infty) \times Z)$ the image of
$\J^{\on{cyl}}(\R \times Z)$ under restriction.  Denote by
\[ \J(\XX) = \J(X_{\subset}^\circ) \times_{\J^{\on{cyl}}((0,\infty)
  \times Z)} \J(X_{\supset}^\circ) \]
the fiber product consisting of tamed almost complex structures of
cylindrical form on the neck region.  (Note that this definition differs from
the one in \cite[Section 2.1]{bo:com}, which does not require the
invariance of the almost complex structure under the Reeb flow and so
does not suffice for our purposes.)  \llabel{reebflow}  \label{reebflowp}
\end{enumerate} 
\end{definition} 

\begin{definition} \label{bmaps} {\rm (Broken maps)} Let
  $\XX = X_{\subset} \cup_Y X_{\supset} $ be a broken symplectic
  manifold as above, and $L \subset X_{\subset}$ a Lagrangian disjoint
  from $Y$.  Let $J \in \J(\XX)$ be an almost complex structure on of
  cylindrical form, $H$ be a Morse function on $Y$ and $(F,G)$ a
  Morse-Smale pair on $L$.  A {\em broken map} to $\XX$ with boundary
  values in $L$ consists of:
\begin{enumerate} 
\item {\rm (Broken curve)} a broken curve $C = (C_0,\lldots, C_p)$;
\item {\rm (Broken map)} a map $u: C \to \XX$, that is, a collection of
  maps  (notation from \eqref{pieces}) 
\[u_k: C_k \to \XX[p]_k, \quad k = 0,\lldots, p ;\]
\item {\rm (Framings at infinity)} for each pair of interior nodes
  $w_\pm \in C$ connected by an interior gradient trajectory
  $T_e \subset C$, a framing $\tau(w_\pm): T_{w_\pm} S \to \C$ of the
  tangent spaces $T_{w_\pm} S$ to the surface $S$;
\end{enumerate}
satisfying the following conditions:
\begin{enumerate} 
\item {\rm (Pseudoholomorphicity)} On the two-dimensional part $S
  \subset C$, the map $u$ is $J$-holomorphic, that is, $ \olp_{J}
  (u |_S) = 0 .$
\item {\rm (Gradient flow in the Lagrangian)} On the one-dimensional
  part $T_{\white} \subset C$ connecting boundary nodes, $u$ is a segment of
  a gradient trajectory on each interval component for the Morse
  function $F$ on $L$:
\[ \left(\ddt + \grad_F \right) (u | T_{\white}) = 0 .\]
\llabel{lastline} \label{lastlinep}
\item {\rm (Intersection multiplicity)} If a pseudoholomorphic map $u:
  C \to X$ has isolated intersections with an almost complex
  codimension two submanifold $Y \subset X$ then at each point $z \in
  u^{-1}(Y)$ there is a positive {\em intersection multiplicity}
  $\mu(u,z) \in \Z_{> 0}$ describing the winding number of a small loop
  counterclockwise around $Y$:
\[ \mu(u,z) = [u( z + r \exp( i
\theta)) |_{\theta \in [0,2\pi]}] \in \pi_1(U - (U \cap Y)) \cong \Z\]
where $ U$ is a contractible open neighborhood of $z$ and $r$ is
sufficiently small so that $u( z + r \exp(  i \theta)) \in U$ for
all $\theta \in [0,2 \pi]$.
\item {\rm (Gradient flow in the manifold)} On the one-dimensional
  part $T_{\black} \subset C$ connecting interior nodes, $u$ is a segment of
  a gradient trajectory on each interval component
\[ \left(\ddt + \grad_H \right) (u | T_{\black}) = 0 ;\]
\end{enumerate} 
and satisfying the following:
\begin{enumerate} 
\item {\rm (Matching condition for multiplicities)} For any pair of
  nodes $w_\pm$ of the domain $C$ connected by an interior trajectory,
  the intersection multiplicities $\mu(w_\pm)$ of the map $u$ with the
  hypersurface $Y$ are equal:
  \[ \mu(w_-) = \mu(w_+) .\]
\item {\rm (Matching condition for framings)} The local framings
  satisfy the following condition: Choose local coordinates $z_\pm$ on
  a neighborhood of $w_\pm$ inducing the given framings $\tau(w_\pm)$
  on $T_{w_\pm}C$.  Let $X_\pm \subset \XX[k]$ denote the components
  receiving the components $S_\pm \subset C$ containing $w_\pm$ and
  choose local coordinates $(z_1^\pm,\ldots, z_n^\pm)$ so that
  $Y = \{ z_1^\pm = 0 \}$ and choose local coordinates on $X_\pm$
  compatible with the isomorphism of normal bundles \eqref{niso}.
  Then in local coordinates on $X_\pm$ the map
  $z_\pm \mapsto u_1(z_\pm)$ has leading order term
  \begin{equation} \label{fmatch} u_1(z_\pm) \sim z_\pm^{\mu(w_\pm)}
    . \end{equation}
\end{enumerate}
We also define the following notions:
\begin{enumerate} 
\item {\rm (Isomorphisms of broken maps)} An isomorphism between
  broken maps
\[u_i: C_i \to \XX[k], \ i  \in \{  0,1 \}\] 
is an isomorphism of domains $\phi: C_0 \to C_1$ together with an
element $g \in (\C^\times)^{k-1}$ such that $u_1 \circ \phi = g u_0$,
and so that the framings $\tau_1(w_\pm)$ are equivalent up to
simultaneous rotation:
$\tau_1(w_\pm) = \zeta D_{w_\pm} \phi \tau_0(w_\pm)$ for some
$\zeta \in \C^\times$.  Note that the leading order condition
\eqref{fmatch} determines the framings $\tau(w_\pm)$ up to an
$\mu(w_\pm)$-order root of unity.  Since two framings related by
simultaneous rotation are considered equivalent, there are
$\mu(w_\pm)$ inequivalent framings allowed at each node connected by
an internal edge.
\item{} {\rm (Combinatorial type)} The {\em combinatorial type}
  $\Gamma$ of a broken map $u: C \to \XX$ is the combinatorial type of
  the underlying curve $C$, but with the additional data of the
  homology class $u_{i,*}[C_i]$ of each component $C_i$ (as a
  labelling of the vertices) and the intersection multiplicities
  $m(z_i) \in \Z_{\ge 0}$ with the stabilizing divisor $D$ at each
  attaching point $z_i$ of an edge or interior node.  Let $\Gamma$ be
  a type with $n$ leaves (corresponding to trajectories of the Morse
  function on the Lagrangian) and $l$ broken Morse trajectories on the
  degenerating divisor.  An {\em admissible labelling} for a $\Gamma$
  is a collection $\ul{l} \in \cI(L)^{n+1}$ such that whenever the
  corresponding label is $x^{\greyt}$ resp. $x^{\greyt}$
  resp. $x^{\whitet}$ or the corresponding leaf has weight $0$
  resp. $[0,\infty]$ resp. $\infty$.
\end{enumerate}
Denote by $\ol{\M}(L,\XX,\DD)$ the union over types, and by 
$\M(L,\XX,\DD)$ the locus of types formally of top dimension where 
there are at most two levels $C_0,C_1 \subset C$ and each edge 
$T_e, e \in \Edge(\Gamma)$ not connecting two different levels has 
finite and non-zero length.    This ends the Definition.
\end{definition} 

\begin{remark} In contrast to the unbroken case, configurations 
  $u: C \to \XX$ with two levels are not positive codimension since 
  there is no gluing construction which produces a broken map.  On the 
  other hand, configurations $u$ with a neck piece $u_i$ may be glued 
  to broken maps in two different ways, depending on whether that neck 
  piece $u_i$ is glued with the piece $u_0$ or $u_p$ mapping to 
  $X_\subset$ or $X_\supset$.\end{remark}

\label{weighted} 

Broken maps may be viewed as pseudoholomorphic maps of curves with
cylindrical ends, by the removal of singularities argument explained
in Tehrani-Zinger \cite[Lemma 6.6]{tz}.  This leads to a natural
notion of convergence in which the moduli space of broken maps of any
given combinatorial type is compact.  The compactness statement is
essentially a special case of compactness in symplectic field theory
\cite{bo:com}, \cite{abbas:com}, although the particular set-up here has not been considered before.  First we recall terminology for
the type of cylindrical ends we consider.  First we introduce notation
for the symplectic manifolds with cylindrical ends: Let $X^\circ_\pm$
denote the manifold obtained by removing the divisor $Y$, or more
generally, for the intermediate pieces
$\P(N_\pm \oplus \ul{\C})^\circ \cong \R \times Z$ the manifold
obtained by removing the divisors at zero and infinity, isomorphic to
$Y$.  We identify a neighborhood of infinity in $\P(N_\pm \oplus C)$
with $\R_{ > 0 } \times Z$ with the almost complex structure induced
from a connection on $Z$ and the given almost complex structure on
$Y$.

Recall that the notion of Hofer energy for symplectizations of contact
manifolds with fibrating null-foliations, which is a special case of a
more general definition for stable Hamiltonian structures in
\cite{bo:com}.  Let $X = \R \times Z$, where $Z$ is equipped with
closed two-form $\omega_Z \in \Omega^2(Z)$ with fibrating
null-foliation and connection form $\alpha \in \Omega^1(Z)$.

\begin{definition}  {\rm (Action and energy)}  
\begin{enumerate} 
\item {\rm (Horizontal energy)} The {\em horizontal energy} of a
  holomorphic map $ u = (\phi,v): (C,j) \to (\R \times Z,J)$ is
  (\cite[5.3]{bo:com})
\[ E^h(u) = \int_C v^* \omega_Z .\]
\item {\rm (Vertical energy)} The {\em vertical energy} of a
  holomorphic map $ u = (\phi,v): (C,j) \to (\R \times Z,J)$ is
  (\cite[5.3]{bo:com})
\begin{equation} \label{alphaen}
E^v(u) = \sup_{\zeta} \int_C (\zeta \circ \phi) \d \phi \wedge v^*
\alpha \end{equation}
where the supremum is taken over the set of all non-negative
$C^\infty$ functions 
\[\zeta: \R \to \R, \quad  \int_\R \zeta(s) \d s = 1 \]
with compact support.
\item {\rm (Hofer energy)} The {\em Hofer energy} of a holomorphic map
  $ u = (\phi,v): (C,j) \to (\R \times Z,J)$ is (\cite[5.3]{bo:com})
  is the sum
\[ E(u) = E^h(u) + E^v(u) .\]
\item {\rm (Generalization to manifolds with cylindrical ends)}
  Suppose that $X^\circ$ is a symplectic manifold with cylindrical end
  modelled on $\R_{> 0} \times Z$.  \llabel{Zfix} \label{Zfixp} The
  vertical energy $E^v(u)$ is defined as before in \eqref{alphaen}.
  The Hofer energy $E(u)$ of a map $u: C^\circ \to X^\circ$ from a
  surface $C^\circ$ with cylindrical ends to $X^\circ$ is defined by
  dividing $X^\circ$ into a compact piece $X^{\on{com}}$ and a
  cylindrical end \llabel{Rfix} \label{Rfixp} diffeomorphic to
  $\R_{> 0} \times Z$.  Then we set
\[ E(u) = E(u | X^{\on{com}} ) + E(u | \R_{\ge 0} \times Z) .\]
\end{enumerate} 
\end{definition} 

A compact, Hausdorff moduli space of broken maps may be obtained by
imposing an energy bound as well as a stability condition.  As in
previous cases, the moduli space so obtained will be regularized later
using Cieliebak-Mohnke perturbations \cite{cm:trans}, in the cases of
low expected dimension.

\begin{definition} {\rm (Stable broken maps)}  
  A broken map $u :C \to \XX[k]$ is {\em stable} if it has only
  finitely many automorphisms $\phi$, except for automorphisms of
  infinite length segments $C_i \cong \R$ with one weighted end and
  one unweighted end.  This means in particular at least one component
  at each level $u_i: C_i \to \XX[p]$ is not a trivial cylinder.
\end{definition}

%CWb
\begin{theorem} \label{sftcompact}
  (c.f. Bourgeois-Eliashberg-Hofer-Wysocki-Zehnder \cite{bo:com}) Any
  sequence of finite energy stable broken pseudoholomorphic maps
  $u_\nu: C_\nu \to \XX[k]^\circ$ with bounded Hofer energy
  $\sup_{\nu} E(u_\nu) < \infty $ has a Gromov convergent subsequence
  to a stable limit, and any such convergent sequence has a unique
  limit.  \label{equant}
\end{theorem} 

\begin{proof}[Sketch of proof]
We will not give a complete proof but
  rather indicate how the proof can be adapted from the published
  treatments of sft compactness in
  Bourgeois-Eliashberg-Hofer-Wysocki-Zehnder \cite{bo:com}, Abbas
  \cite{abbas:com}, and Cieliebak-Mohnke \cite{cm:com}.  In the
  case that the almost complex structure is domain-independent, and
  preserves the horizontal subspace, Theorem \ref{sftcompact} with the
  Hofer energy bound is essentially a special case of the compactness
  result in symplectic field theory \cite[Section 5.4]{bo:com} (with
  further details and corrections in Abbas \cite{abbas:com} and
  alternative approach given in \cite{cm:com}) with the additional
  complication of Lagrangian boundary conditions.  Since the
  Lagrangian $L$ is compact in $X^\circ_\subset$, the Lagrangian
  boundary conditions do not affect any of the arguments.  Thus the
  question is to extend these results to domain-dependent almost
  complex structures, and the essential point is that our
  domain-dependent almost complex structures are domain-independent
  near the punctures.

  Our particular setup corresponds to the case of relative stable maps
  in Ionel-Parker \cite{io:rel} and Li-Ruan \cite{liruan:surg}, as
  explained in Bourgeois et al. \cite[Remark 5.9]{bo:com}.  In
  particular, asymptotic convergence follows from asymptotic
  convergence for holomorphic maps to $Y$; energy quantization for
  disks in $X_{\subset}$ implies energy quantization for finite energy
  holomorphic maps of half-cylinders to $X_{\subset}^\circ$, where the
  boundary of the cylinder maps to the Lagrangian $L$.  Energy
  quantization for holomorphic maps of spheres to $Y$ implies energy
  quantization for maps of holomorphic spheres to
  $\P(N_\pm \oplus \ul{\C})$: there exists a constant $\hbar > 0$ such
  that any holomorphic map $\P(N_\pm \oplus \ul{\C})$ with non-trivial
  projection to $Y$ has energy at least $\hbar$.  Matching of
  intersection multiplicities is \cite[Remark 5.9]{bo:com},
  Tehrani-Zinger \cite[Lemma 6.6]{tz}: By removal of singularities,
  there is a one-to-one correspondence between finite energy
  holomorphic curves in $X^\circ_\pm$ resp. $\XX[k]^\circ$ and those
  in $X_\pm$ resp. $\XX[k]$ that are not contained in the divisor $Y$
  resp. divisors at zero and infinity.  Thus the intersection
  multiplicity is the degree of the cover of the Reeb orbit at
  infinity.  It suffices to show that on each tree or surface part of
  the domain, a subsequence converges to some limit in the Gromov
  sense.  Since each domain is stable, each surface part has a unique
  hyperbolic metric so that the boundary is totally geodesic, see
  Abbas \cite[I.3.3]{abbas:com}.  Denote by
  $r_\nu: C_\nu \to \R_{> 0}$ the injectivity radius.  The argument of
  Bourgeois et al. \cite[Chapter 10]{bo:com}, see also Abbas
  \cite{abbas:com}, shows that after adding finitely many sequences of
  points to the domain we may assume that the domain $C_\nu$ converges
  to a limit $C$ such that the first derivative
  $\sup | \d u_\nu |/r_\nu$ is bounded with respect to the hyperbolic
  metric on the surface part, and with respect to the given metric on
  the tree part.  Thus there exists a limiting map
  $u: C^\times \to \XX$ on the complement $C^\times$ of the nodes so
  that on compact subsets of the complement of the nodes a subsequence
  of $u_\nu$ converges to $u$ in all derivatives.  Removal of
  singularities and matching conditions then follows from the
  corresponding results for holomorphic maps: the matching condition
  for nodes mapping into the cylindrical end is simply the matching
  condition for the maps to $Y$, in addition to matching of
  intersection degrees which is immediate from the description as a
  winding number.  Convergence on the tree part of the domain follows
  from uniqueness of solutions to ordinary differential equations.
  The extension to sequences with bounded area is \cite[Lemma
  9.2]{bo:com}, or rather, the extension of that Lemma to curves with
  Lagrangian boundary conditions for Lagrangians not meeting the neck
  region, for which the proof is the same.
\end{proof}

\begin{remark} \label{expdec} {\rm (Exponential decay)} Any
  holomorphic map $u: C^\circ \to X^\circ$ with finite Hofer energy
  converges {\em exponentially fast} to a Reeb orbit along each
  cylindrical end: In coordinates $s,t$ on the cylindrical end
  diffeomorphic to $\R_{> 0} \times Z$, there exists a constant $C$
  and constants $s_0,s_1 > 0$ such that for $s > s_1$ the distance
  $ \dist( u(s,t), ( \mu (s - s_0),\gamma(t))) $ is bounded by
  $C \exp( - s) $.  This follows from the correspondence with
  holomorphic maps to the compactification in e.g.  Tehrani-Zinger
  \cite[Lemma 6.6]{tz}.
\end{remark}

\section{Broken perturbations} 

In order to achieve transversality we introduce stabilizing divisors
satisfying a compatibility condition with the degeneration and
introduce domain-dependent almost complex structures and Morse
functions.  By a broken divisor we mean a divisor that arises from
degeneration of a divisor in the original manifold via neck
stretching.

\begin{definition} \label{bdiv}  {\rm (Broken divisors)}  
A {\em broken divisor} for the broken almost complex manifold $\XX :=
X_{\subset} \cup_Y X_{\supset}$ consists of a pair 
\[\DD = (D_{\subset},D_{\supset}), \quad D_{\subset} \subset X_\subset, \ D_{\supset}
\subset X_\supset\] 
of codimension two almost complex submanifolds
$D_{\subset}, D_{\supset}$ such that each intersection
\[ D_{\subset} \cap Y = D_{\supset} \cap Y = D_Y \]  
is a codimension two almost complex submanifold $D_Y$ in $Y$.  Given a
broken divisor $\DD =
(D_{\subset},D_{\supset})$ as above we obtain a divisor
\[D_N := \P( N_\pm | D_Y \oplus \ul{\C}) \subset \P(N_\pm \oplus \ul{\C}) .\]
We suppose that each $[D_{\subset},D_{\supset}]$ is dual to a large
multiple of the symplectic class on $X_\pm$, that is, 
\[ [D_{\subset}] =
k [\omega_{\subset}], \quad [D_{\supset}] = k[\omega_\supset] .\]  
Then 
\[ [D_N] = k \pi_Y^* [\omega_Y] \]
where $\pi_Y$ is projection onto $Y$, and as a result does not
represent a multiple of any symplectic class on
$\P(N_\pm \oplus \ul{\C})$.  Thus the divisor $D_N$ can be disjoint
from non-constant holomorphic spheres in $\P(N_\pm \oplus \ul{\C})$,
namely the fibers.  However, holomorphic spheres whose projections to
$Y$ are non-constant automatically intersect $D_N$.
\end{definition}  

As in the unbroken case, transversality uses almost complex structures
equal to a fixed almost complex structure on the stabilizing divisor.
We introduce the following notations.  For a symplectic manifold
$X^\circ$ with cylindrical end, denote by $\J(X^\circ)$ the space of
tamed almost complex structures on $X^\circ$ that are of cylindrical
form on the end.  Given
$J_{\DD} = ( J_{D_{\subset}}, J_{D_{\supset}} )\in \J(\XX)$, denote by
$\J(\XX , J_{\DD})$ the space of almost complex structures in
$\J(\XX)$ that agree with $J_{\DD}$ on $D_{\subset},D_{\supset}$:
\begin{equation}\label{bstab} \J(\XX, J_{\DD}) = \Set{
    (J_\subset,J_\supset) \in \J(\XX)  \ | \ J_\subset
    |D_{\subset} = J_{D_\subset}, \ J_\supset | D_{\supset} =
    J_{D_\supset} } .\end{equation} 
Fix a tamed almost complex structure $J_{\DD}$ such that
$D_{\subset},D_{\supset}$ contains no non-constant
$J_{\DD}$-holomorphic spheres of any energy and any holomorphic sphere
meets $D_{\subset},D_{\supset}$ in at least three points, as in
\cite[Proposition 8.14]{cm:trans}.  By \cite[Proposition
8.4]{cw:traj}, for any energy $E> 0$ there exists a contractible open
neighborhood $\J^*(\XX,J_{\DD}, E)$ of $J_{\DD}$ agreeing with
$J_{\DD}$ on $D_\supset, D_\subset$ with the property that
$D_{\subset},D_{\supset}$ still contains no non-constant holomorphic
spheres and any holomorphic sphere of energy at most $E$ meets
$D_{\subset},D_{\supset}$ in at least three points.  Denote the fiber
product
\[ \J^*(\XX,J_\DD,E) := \J^*(X_{\subset},J_{D_{\subset}}, E) \times_{\J^{\on{cyl}}((0,\infty) \times Z)}
\J^*(X_{\supset},J_{D_{\supset}}, E) .\]

Given a broken divisor define perturbation data for a broken
symplectic manifold as before, but we also perturb the Morse function
on the separating hypersurface $Y = X_\subset \cap X_\supset$.  For
base almost complex structures $J_{D,\pm}$ agreeing on $Y$, a {\em
  perturbation datum} for type $\Gamma$ of broken maps is a datum
\[ P_\Gamma = (J_\Gamma,F_\Gamma,G_\Gamma,H_\Gamma) \]
where 
\[ J_\Gamma: \ \ol{\S}_\Gamma \to \J(\XX, J_{\DD}), \quad
F_\Gamma: \ol{\T}_{\Gamma,\white} \to C^\infty(L) \]
\[ G_\Gamma: \ \ol{\T}_{\Gamma,\white} \to \G(L), \quad H_\Gamma:
\ol{\T}_{\Gamma,\black} \to C^\infty(Y) \]
and $J_\Gamma$ is equal to the given almost complex structures
$J_{D_{\subset}}, J_{D_{\supset}}$ on $D_{\subset},D_{\supset}$ and
satisfies
the (Locality Axiom).  

In order to define perturbation data we note that the domain of a
broken curve is not necessarily stable because of Morse trajectories
of infinite length.  However, given a broken curve $C$ we obtain a
stable broken curve $f(C)$ by collapsing unstable components and a
perturbation system $P_\Gamma$ for curves of such type by pulling back
$P_{f(\Gamma)}$ under the stabilization map $C \to \U_{f(C)}$.  Given
a broken map $u: C \to \XX[k]$, a {\em trivial cylinder} is a map to
some intermediate piece $\P(N_\pm \oplus \C)$ projecting to a constant
map to $Y$.  Given a type $\Gamma$ of broken disk and a perturbation
datum $P_\Gamma$, an {\em adapted broken map} is a map
$u: C \to \XX[k]$ from a broken weighted treed disk $C$ to $\XX[k]$
for some $k$ such that each interior leaf $e \subset C$ maps to $\DD$
and each component of $u^{-1}(\DD) $ contains an interior leaf
$e \subset T$.  An adapted broken map $u: C = S \cup T \to \XX$ is
{\em stable} if each level $S_i$ of the surface part $S$ the union of
at least one stable curve $S_{i,k} \subset S_i$ and a collection of
trivial cylinders $S_{i,l} \cong S^2, D\pi \circ \d u |S_{i,l} =0 $.
The coherence, regularity, and stabilizing conditions on perturbation
data from the unbroken case in Definition \ref{coherent} generalize
naturally.  A perturbation system is {\em admissible} if it satisfies
these three conditions. 

The following generalizes the compactness and transversality results
for Fukaya algebras to the broken case:

\begin{theorem} \label{broken} Let $\Gamma$ be an uncrowded type of
  adapted pseudoholomorphic broken treed disk of expected dimension at
  most one and suppose that admissible perturbation data $P_{\Gamma'}$
  have been chosen for all boundary strata
  $\ol{\U}_{\Gamma'} \subset \ol{\U}_\Gamma$.  There exists a comeager
  subset of the space of admissible perturbation data $P_\Gamma$ equal
  to the given perturbation data on lower-dimensional strata such that
\begin{enumerate} 
\item 
{\rm (Transversality)} every element of $\M_\Gamma(\XX,L,\DD)$ is
  regular;
\item {\rm (Compactness)} the closure $\ol{\M}_\Gamma(\XX,L,\DD)$ is
  compact and contained in the adapted uncrowded locus;
\item 
{\rm (Tubular neighborhoods)} each uncrowded stratum
$\M_\Gamma(\XX,L,\DD)$ of dimension zero has a tubular neighborhood of
dimension one in any adjoining uncrowded strata of one higher
dimension;
\item {\rm (Orientations)} the uncrowded strata $\M_\Gamma(\XX,L,\DD)$
  of formal dimension at most one are equipped with orientations
  satisfying the standard gluing signs for inclusions of boundary
  strata as in the unbroken case; in particular we denote by
  $\eps(u) \in \{ \pm 1 \}$ the orientation sign associated to
  elements $u$ of the zero-dimensional moduli spaces
  $\M(\XX,L,\DD)_0$.
\end{enumerate} 
\end{theorem}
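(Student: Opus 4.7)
The plan is to follow the same three-step strategy used for the unbroken case in Theorem \ref{main}, adapted to the weighted Sobolev setting of Remark \ref{linops} and combined with the SFT compactness result Theorem \ref{sftcompact}. I will work inductively on the partial order on combinatorial types generated by (Cutting edges), (Collapsing edges), (Making an edge/weight finite or non-zero), and (Forgetting tails), so that perturbation data on all lower strata are already in place by the coherence axiom, and only the interior extension needs to be chosen.

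For transversality, I would introduce, for each local trivialization $\ol{\M}_\Gamma^i$ of the universal broken treed disk, a local universal moduli space $\M_\Gamma^{\univ,i}(\XX,L,\DD)$ cut out by the section $\olp_\Gamma$ combining the $J_\Gamma$-Cauchy-Riemann operator on surface components, the shifted gradient operator for $F_\Gamma$ on boundary trees in $L$, and the shifted gradient operator for $H_\Gamma$ on interior trees in $Y$, all acting on the weighted Sobolev bundle from \eqref{moregen} with Sobolev weight $\lambda \in (0,1)$. Surjectivity of the linearization $D\olp_\Gamma$ for fixed map and variable perturbation follows component by component: on each non-constant surface component one argues as in the unbroken case using \cite[Lemma 6.5, Proposition 6.10]{cm:trans} plus unique continuation away from intersections with $\DD$; on each non-constant trajectory in $Y$ the variation of $H_\Gamma$ gives pointwise surjectivity, as for the unbroken $F_\Gamma$; on ghost components and nodal matching conditions the difference map $\delta$ of \eqref{diffs} is again a surjection on the tree-typed underlying graph. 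The Sard–Smale theorem on each stratum then yields a comeager set $\cP^{\reg}_\Gamma(\XX,L,\DD)$ of regular perturbations agreeing with the previously chosen data on the boundary and taking values in $\J^{**}(\XX,J_\DD, n(\Gamma_i)/k)$.

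For compactness, I would apply Theorem \ref{sftcompact} to a sequence in $\M_\Gamma(\XX,L,\DD)$ of fixed energy, extracting a stable broken limit $u: \hat{C} \to \XX$, and then show the three adapted conditions pass to the limit exactly as in the proof of Theorem \ref{compthm}. The stabilizing condition on $J_\Gamma$ forces any sphere bubble of bounded energy to meet $\DD$ in enough points to be stable, so the limit domain is the given domain; positivity of intersection then upgrades convergence of markings. Spherical bubbles contained in $D_Y$ must lie in fibers by the (Sphere property), which rules out multiply-covered configurations in the neck. The uncrowded condition is preserved because crowded strata have expected dimension strictly less than one, so by the (Ghost-marking-independent) axiom, forgetting all but one marking on any maximal ghost component places the limit in an uncrowded stratum of negative expected dimension — a contradiction. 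Breaking into additional neck levels is handled analogously: the level structure of the limit is part of the combinatorial type, and matching of intersection multiplicities at the new nodes is automatic from Theorem \ref{sftcompact}.

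Tubular neighborhoods and orientations follow by the same gluing and sign conventions as in \cite{charest:clust}, \cite{cw:floer}, together with the exponential decay remark after Theorem \ref{sftcompact} which controls the cylindrical-end gluing on the neck. For (One-leaf forgetful morphisms) with a single incoming edge, the pair $(F,G)$ is already Morse-Smale, so one chooses $P_\Gamma$ to be the pull-back of $P_{f(\Gamma)}$ under the forgetful map that drops the leaf, as in the unbroken divisorial construction preceding \eqref{diveq}; this is regular because variations of the intersection point of the boundary with $W^-(x_1)$ suffice to kill the cokernel. The main obstacle will be the transversality step at broken-node matching: one must verify that the linearized operator in weighted Sobolev spaces remains Fredholm of the expected index and that variations of $P_\Gamma$ supported away from nodes and cylindrical ends suffice to kill cokernel elements at points where $\d u \neq 0$, since perturbations must vanish on the stabilizing divisor $\DD$ and on the neck region where $J_\Gamma$ is constrained to be of cylindrical form. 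This is where the choice of Sobolev weight $\lambda \in (0,1)$ from Remark \ref{linops}, lying in the gap of the spectrum of the asymptotic operator at the Reeb orbits, is essential, and where the identification of $\ker/\coker$ with those on the compactification allows reduction to the unbroken transversality argument of Theorem \ref{main}.
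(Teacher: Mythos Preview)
Your overall architecture matches the paper's closely, and the compactness, orientation, and forgetful-map parts are essentially the same. The gap is in the transversality step, precisely at the point you flag as the ``main obstacle'' but do not resolve.

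The issue is not primarily the Fredholm property in weighted Sobolev spaces (that is handled by Remark \ref{linops} and the identification with the compactified operator, as you say). The issue is the sphere components mapping entirely to a neck piece $\P(N_\pm \oplus \ul{\C})$. On such a component the domain-dependent almost complex structure $J_\Gamma$ is constrained to be of \emph{cylindrical form}: $\C^\times$-invariant and projecting to a fixed $J_Y$. So the unbroken argument from Theorem \ref{main}, which uses arbitrary variations of $J$ near a point where $\d u \neq 0$, does not apply directly, and your proposed ``reduction to the unbroken transversality argument'' does not go through for these components. The paper treats these components by a separate case analysis: if the projection $\pi_Y \circ u$ is non-constant, then the \emph{horizontal} part of $\d u$ is nonzero somewhere, and one can find an infinitesimal cylindrical-type variation $K$ (a $\C^\times$-invariant $J$-antilinear endomorphism vanishing on the vertical subbundle) so that $K \, \d u \, j$ pairs nontrivially with any given cokernel element $\eta$; if the projection is constant, the component is a multiple cover of a fiber $\P^1 \hookrightarrow \P(N_\pm \oplus \ul{\C})$ and is automatically regular. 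Without this dichotomy, your Sard--Smale argument does not establish surjectivity of the universal linearized operator on neck-level components, and the comeager set of regular perturbations is not obtained.

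A secondary point: for tubular neighborhoods at strata with a Morse trajectory of length zero joining two sublevels, the paper invokes a new gluing result (Theorem \ref{gluing} in Section \ref{getting}) rather than just the unbroken gluing of \cite{cw:floer}; this is where the exponential decay you cite is actually used, but the gluing itself is not a direct citation of the unbroken case.
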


\begin{proof}[Sketch of Proof] We describe the differences in the
  proof from the unbroken case in Theorem \ref{compthm} and Theorem
  \ref{main}.  Let $u_\nu: C_\nu \to \XX$ be a sequence of adapted
  maps of the given combinatorial type $\Gamma$.  By Theorem
  \ref{sftcompact}, there exists a limit $u: C \to \XX$ in the Gromov
  sense that is a broken map.  We claim that the limit is adapted and
  uncrowded.  The proof of this claim is the same as that of Theorem
  \ref{compthm}: The stabilization condition on the divisor in
  \eqref{bstab} implies that any sphere bubble resp. disk bubble $C_i$
  appearing in the limit $C$ has finitely many but at least three
  resp. one interior intersection points $u^{-1}(\DD) \cap C_i$ with
  the stabilizing divisor $\DD$.  Furthermore, by preservation of
  intersection multiplicity $\# u(C).(\DD) $ with the divisor, each
  maximal ghost component $C_i \subset C$ mapping to the divisor $\DD$
  must contain at least one endpoint $z_j \in C_i$ of an interior
  leaf.  Such a component $C_i$ must be adjacent either to at least
  two non-ghost components $C_j,C_k$, a single non-ghost component
  $C_j$, or adjacent to two tree segments $T_j,T_j \subset C$.  Strata
  of maps with a component with a point with intersection multiplicity
  two, or mapping the node to the divisor are codimension at least
  two, and so these strata do not occur in the limit.  Hence the $C_i$
  contains at most one marking, so the limit is of uncrowded type.

  Transversality as in the unbroken case in Theorem \ref{main} is an
  application of Sard-Smale as in Charest-Woodward \cite{cw:traj} on
  the universal space of maps.  However, the divisor $\pi^{-1}(D_Y)$
  does not intersect every non-trivial map to $\P(N_\pm \oplus \C)$ in
  finitely many points, that is, the divisor on the neck pieces does
  not represent a multiple of the symplectic class.  Any map contained
  in the divisor must be a multiple of a fiber and these maps are
  automatically transversal.  We introduce a universal moduli space as
  follows.  Let $\Gamma$ be a type of broken map $u: C \to \XX$, and
  $\Gamma_0$ the stabilization of the underlying type of curve $C$.
  We begin by covering the universal treed broken disk
  $\U_{\Gamma_0} \to \M_{\Gamma_0}$ by local trivializations
  $\U^i_{\Gamma_0} \to \M^i_{\Gamma_0}, i = 1,\lldots, N$.  For each
  local trivialization consider a moduli space defined as follows.
  Let $\Map^{k,p}_\Gamma(C,\XX,L,\DD)$ denote the space of maps of
  class $k\ge 1,p \ge 1, kp > 2$ (with respect to some connections on
  $C, X_\subset, X_\supset)$ making $L$ totally geodesic) mapping the
  boundary of $C$ into $L$, the interior markings into $\DD$, and
  constant (or constant after projection to $Y$, if the component maps
  to a neck piece) on each disk with no interior marking.  Let
  $\ol{\U}_{\Gamma_0}^{\thin}$ be a small neighborhood of the nodes
  and attaching points in the edges $\ol{\U}_{\Gamma}$, so that the
  complement in each edge and surface component is open.  Let
  $\cP_\Gamma^l(\XX,L,\DD)$ denote the space of perturbation data
  $P_\Gamma = (J_\Gamma,F_\Gamma,G_\Gamma,H_\Gamma)$ of class $C^l$
  for $\Gamma_0$ equal to the given pair $(J,F,G,H)$ on
  $\ol{\U}_{\Gamma_0}^{\thin}$, and such that the restriction of
  $P_{\Gamma}$ to $\ol{\U}_{\Gamma'}$ is equal to $P_{\Gamma'}$, for
  each boundary type $\Gamma'$.  For any broken curve $C$ of type
  $\Gamma$ we obtain perturbation data on $C$ by identifying the
  stabilization $C^{\on{st}}$ of $C$ with a fiber of the universal
  tree disk $\ul{\U}_{\Gamma_0}$.  Let $l \gg k$ be an integer and
\[ \bB^i_{k,p,l,\Gamma} := {\M}^i_{\Gamma_0} \times
\Map^{k,p}_\Gamma(C,\XX,L,\DD) \times {\cP}^l_\Gamma(\XX,L,\DD) .\]
Consider the map given by the local trivialization
\[ {\M}^{\univ,i}_{\Gamma_0} \to \J(S), \ m \mapsto j(m).\]
Let $S^{\on{nc}} \subset S$ be the union of disk and sphere components
on which the map is non-constant.  Let $m(e)$ denote the function
giving the intersection multiplicities with the stabilizing divisor,
defined on edges $e$ corresponding to intersection points, and
consider the fiber bundle $\E^i = \E^i_{k,p,l,\Gamma}$ over  
$\bB^i_{k,p,l,\Gamma}$ given by  
\begin{multline} \label{CST} (\E^i_{k,p,l,\Gamma})_{m, u, J } \subset  
\Omega^{0,1}_{j,J,\Gamma}(S^{\on{nc}}, (u |_S)^* T\XX)_{k-1,p}
\\ \oplus \Omega^1(T_{\white}, (u |_{T_{\white}})^* TL)_{k-1,p} \oplus \Omega^1(T_{\black}, (u  
|_{T_{\black}})^* TY)_{k-1,p}
\end{multline}
the space of $0,1$-forms with respect to $j(m),J$ that vanish to  
order $m(e)-1$ at the node or marking corresponding to each contact  
edge $e$.  The Cauchy-Riemann and shifted gradient operators applied  
to the restrictions $u_S$ resp. $u_T$ of $u$ to the two resp. one  
dimensional parts of $C = S \cup T$ define a $C^q$ section  
\begin{multline} \label{olp3} 
\olp_\Gamma: \bB_{k,p,l,\Gamma}^i \to \cE_{k,p,l,\Gamma}^i,\\ \quad (m,u, 
J,F_\Gamma,H_\Gamma) \mapsto \left(\olp_{j(m),J} u_S , \left( \dds +
    \grad_{F_\Gamma} \right)u_{T_{\white}}, \left( \dds +
\grad_{H_\Gamma} \right)u_{T_{\black}} \right) \end{multline}
where 
\begin{equation} \label{olp4} 
\olp_{j(m),J} u := \hh (J \d u_S - \d u_S j(m)),
\end{equation} 
and $s$ is a local coordinate with unit speed.  The {\em local  
  universal moduli space} is  
\[{\M}^{\univ,i}_{\Gamma}(\XX,L,\DD) = \olp^{-1} \bB^i_{k,p,l,\Gamma} \]
where $\bB^i_{k,p,l,\Gamma}$ is embedded as the zero section.  This  
subspace is cut out transversally: by \cite[Lemma 6.5, Proposition  
6.10]{cm:trans}, the linearized operator is surjective on the
two-dimensional part of the domain mapping to $X_\pm$ on which $u$ is
non-constant, while at any point $z$ in the interior of an edge in $C$
with $\d u(z) \neq 0$ the linearized operator is surjective by a
standard argument.  Furthermore, the matching conditions at the nodes
are cut out transversally, by an inductive argument given in the
unbroken case.  Furthermore, for any map to a neck piece $u: \P^1 \to
\P(N_\pm \oplus \C)$ whose projection to $Y$ is non-constant, the
linearized operator is surjective.  Let $\eta \in \Omega^{0,1}(u^* T
(\P(N_\pm \oplus \ul{\C}))$ be a one-form on one of the intermediate
broken pieces $S_i$ such that $\eta$ lies in the cokernel of the
universal linearized operator
\[D_{u,J}(\xi,K) = D_u \xi + \hh K Du j \]
defining the tangent space to the universal moduli space.  Variations
of tamed almost complex structure of cylindrical type are
$J$-antilinear maps
\[K: T \P(N_\pm \oplus \ul{\C}) \to T \P(N_\pm \oplus \ul{\C})\] 
that vanish on the vertical subbundle and are $\C^\times$-invariant.
Since the horizontal part of $D_z u$ is non-zero at some $z \in C$, we
may find an infinitesimal variation $K$ of almost complex structure
{\em of cylindrical type} by choosing $K(z)$ so that $K(z) D_zu j(z)$
is an arbitrary $(j(z),J(z))$-antilinear map from $T_z C$ to
$T_{u(z)} \P(N_\pm \oplus \ul{\C})$.  Choose $K(z)$ so that
$K(z) D_zu j(z)$ pairs non-trivially with $\eta(u(z))$ and extend
$K(z)$ to an infinitesimal almost complex structure $K$ by a cutoff
function.  Finally, suppose that $u: C_i \to \P(N_\pm \oplus \ul{\C})$
is a component that projects to a point in $Y$.  Such maps are
automatically multiple covers of a fiber of
$\P(N_\pm \oplus \ul{\C}) \to Y$ and so automatically regular.  By the
implicit function theorem, ${\M}^{\univ,i}_{\Gamma}(\XX,L,\DD)$ is a
Banach manifold of class $C^q$, and the forgetful morphism
\[\varphi_i: {\M}^{\univ,i}_{\Gamma}(\XX,L,\DD)_{k,p,l} \to
\cP_{\Gamma}(\XX,L,\DD)_l \]
is a $C^q$ Fredholm map.  Let
${\M}^{\univ,i}_{\Gamma}(\XX,L,\DD)_d \subset {\M}^{\univ,i}_{\Gamma}(\XX,L,\DD) $
denote the component on which $\varphi_i$ has Fredholm index $d$. By
the Sard-Smale theorem, for $k,l$ sufficiently large the set of
regular values $\cP^{i,\reg}_{\Gamma}(\XX,L,D)_l$ of $\varphi_i$ on
${\M}^{\univ,i}_{\Gamma}(\XX,L,\DD)_d$ in
$\cP_{\Gamma}(\XX,L,\DD)_l$ is comeager.  Let
\[ \cP^{l,\reg}_{\Gamma}(\XX,L,\DD)_l = \cap_i
\cP^{i,l,\reg}_{\Gamma}(\XX,L,\DD)_l .\]
A standard argument shows that the set of smooth domain-dependent
$\cP^{\reg}_{\Gamma}(\XX,L,\DD)$ is also comeager.  Fix
$(J_\Gamma,F_\Gamma) \in \cP^{\reg}_{\Gamma}(\XX,L,\DD)$.  By elliptic
regularity, every element of ${\M}^i_{\Gamma}(\XX,L,\DD) $ is
smooth. The transition maps for the local trivializations of the
universal bundle define smooth maps
$ {\M}^i_{\Gamma}(\XX,L,\DD) |_{ {\M}^i_{\Gamma} \cap
  {\M}^j_{\Gamma}} \to {\M}^j_{\Gamma}(\XX,L,\DD) _{{\M}^i_{\Gamma}
  \cap {\M}^j_{\Gamma}} .$
This construction equips the space
\[ {\M}_{\Gamma}(\XX,L,\DD) = \cup_i {\M}^i_{\Gamma}(\XX,L,\DD) \]
with a smooth atlas.  Since $\M_{\Gamma_0}$ is Hausdorff and
second-countable, so is $\M_\Gamma(\XX,L,\DD)$ and it follows that
$\M_\Gamma(\XX,L,\DD)$ has the structure of a smooth manifold.

Existence of orientations and tubular neighborhoods for codimension
one strata involving broken Morse trajectory is similar to that for
the unbroken case.  However, for strata corresponding to a trajectory
of length zero, there is a new gluing result necessary which is proved
in Chapter \ref{getting}.
\end{proof} 

\begin{remark} {\rm (True and fake boundary components)} 
  The formally-codimension-one strata of $\ol{\M}(L,\XX,\DD)$ are of
  the following types:
\begin{enumerate} 
\item Strata of maps 
\[ u: C = C_0 \cup C_1 \to \XX[1] \] 
such that one component $C_0' \subset C_0$ has a boundary edge $e$ of
length $\ell(e)$ zero.  See Figure \ref{bdisk3}.
\begin{figure}[ht]
\begin{picture}(0,0)%
\includegraphics{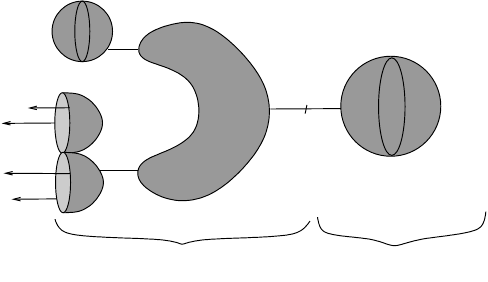}%
\end{picture}%
\setlength{\unitlength}{3947sp}%
\begingroup\makeatletter\ifx\SetFigFont\undefined%
\gdef\SetFigFont#1#2#3#4#5{%
  \reset@font\fontsize{#1}{#2pt}%
  \fontfamily{#3}\fontseries{#4}\fontshape{#5}%
  \selectfont}%
\fi\endgroup%
\begin{picture}(3899,2248)(2182,-2261)
\put(2598,-2201){\makebox(0,0)[lb]{\smash{{{first level (two sublevels) }%
}}}}
\put(4942,-2180){\makebox(0,0)[lb]{\smash{{{second level}%
}}}}
\end{picture}%
\caption{Broken disk with a boundary node}
\label{bdisk3}
\end{figure}
\item Strata of maps 
  \[ u: C = C_0 \cup C_1 \cup C_2 \to
  \XX[2] \]  
  that is, with three levels $C_0,C_1,C_2$ connected by broken
  gradient trajectories of $H$.  See Figure \ref{bdisk}.
\item Strata of maps $u: C = C_0 \cup C_1 \to \XX[1]$ with two broken
  segments.  See Figure \ref{bdisk4}.
\end{enumerate} 

\begin{figure}[ht]
\begin{picture}(0,0)%
\includegraphics{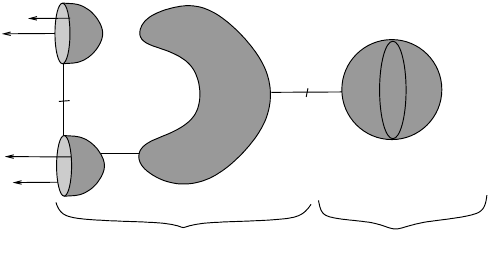}%
\end{picture}%
\setlength{\unitlength}{3947sp}%
\begingroup\makeatletter\ifx\SetFigFont\undefined%
\gdef\SetFigFont#1#2#3#4#5{%
  \reset@font\fontsize{#1}{#2pt}%
  \fontfamily{#3}\fontseries{#4}\fontshape{#5}%
  \selectfont}%
\fi\endgroup%
\begin{picture}(3906,2103)(2175,-2261)
\put(2598,-2201){\makebox(0,0)[lb]{\smash{{{first level (two sublevels) }%
}}}}
\put(4942,-2180){\makebox(0,0)[lb]{\smash{{{second level}%
}}}}
\end{picture}%
\caption{Broken disk with a broken boundary trajectory}
\label{bdisk4}
\end{figure}

Of these three types, the first two are {\em fake} boundary types in
the sense that the strata $\M_\Gamma(\XX,L,\DD)$ do not represent
points in the topological boundary of the union of 
top dimensional 
strata $\ol{\M}(\XX,L,\DD)$. 
 In the first type, one can either make
the length $\ell(e)$ of the gradient trajectory $e$ of $H$ finite and
non-zero or deform the node $w \in C_i \cap C_j$ connecting the two
disk components $C_i,C_j \subset C$; this shows that any stratum
$\M_\Gamma(\XX,L,\DD)$ of this type $\Gamma$ is in the closure
$\ol{\M}_{\Gamma_k}(\XX,L,\DD), k \in \{ 1, 2 \}$ of two strata of top
dimension.  In the second case one can make the length
$\ell(e_k), k \in \{ 1 ,2 \}$ of either the first gradient trajectory
$e_1$ connecting the first and second levels of $u$ or the second
$e_2$ connecting the second and third levels of $u$ finite, but not
both (since the total length $\ell(e_1) + \ell(e_2)$ must be
infinite).  The last type is a true boundary component since the only
deformation is that which deforms the length $\ell(e)$ of the
trajectory $u| T_e$ to a finite real number $\ell(e) < \infty$.
\end{remark} 

Denote by $\M(\XX,L,\DD,\ul{l})_d$ the locus of maps expected 
dimension equal to $d$.  In particular $\M(\XX,L,\DD,\ul{l})_0$
denotes the space of {\em rigid maps} that are expected dimension zero 
in top-dimensional strata.  Using the regularized moduli spaces of
broken maps we define the composition maps of the broken Fukaya
algebra
\[ \mu^{n}: CF(\XX,L)^{\otimes n} \to CF(\XX,L) \]
on generators by 
\begin{equation} \label{bfuk} \mu^{n}({l_1},\lldots,{l_n}) = \sum_{u
    \in {\M}(\XX,L,\DD,\ul{l})_0} (-1)^{\heartsuit} (\sigma(u)!)^{-1}
  y(u) q^{E(u)} \eps(u) {l_0} \end{equation}
where $ \heartsuit = {\sum_{i=1}^n i|l_i|} $.

\begin{theorem} \label{yields2} {\rm (Broken Fukaya algebra)} For any
  admissible perturbation system $\ul{P} = (P_\Gamma)$ the maps
  $(\mu^{n})_{n \ge 0}$ satisfy the axioms of a convergent \ainfty
  algebra $CF(\XX,L)$ with strict unit.  If the perturbations
  $P_\Gamma$ for types $\Gamma$ with one leaf satisfy the forgetful
  axiom \eqref{fp} then the maps $\mu^1$ satisfy the weak divisor
  axiom \eqref{diveq}.  The homotopy type of $CF(\XX,L)$ is
  independent of all choices up to convergent homotopy equivalence.
\end{theorem} 

The statement of the theorem follows from the transversality and
compactness properties of the moduli space of adapted maps of expected
dimension at most one in Theorem \ref{broken}.  The parts of the map
in the pieces of $\XX$ other than $X_{\subset}$ do not affect the sign
computation, since all these components are spheres.  To prove
homotopy invariance one may either repeat the arguments of Chapter
\ref{hinv} replacing quilted treed disks with broken quilted treed
disks (which are obtained from quilted disks by replacing each
component curve with a broken curve).  Alternatively, in Theorem
\ref{same} below we show that the broken Fukaya algebra is homotopy
equivalent to an unbroken one.

\section{Broken divisors} 

In the rest of the chapter we show that broken stabilizing divisors
exist.  The result is an analog of a relative version of Bertini's
theorem:

\begin{lemma} Let $X$ be a smooth complex projective variety equipped
  with an ample line bundle $\mE$ and $i:Y \hookrightarrow X$ a smooth
  subvariety of codimension one.  After replacing $\mE$ with a tensor
  power, the following holds: For a given $s_Y \in H^0(i^* \mE )$
  cutting out a smooth divisor $D_Y$ on $Y$ there exists a section
  $s \in H^0(\mE)$ restricting to $s_Y$ and cutting out a smooth
  divisor on $X$.
\end{lemma} 

\begin{proof} Let $\mE(Y)$ denote the sheaf of sections vanishing on
  $Y$.  The exact sequence of sheaves
  $ 0 \to \mE(Y) \to \mE \to i_* i^* \mE \to 0 $ induces a long exact
  sequence of cohomology groups including the sequence
\[ 0 \to H^0(\mE(Y)) \to H^0(\mE) \to H^0(i_* i^* \mE ) \to
H^1(\mE(Y)) \to \lldots .\]
By Kodaira vanishing (see for example Griffiths-Harris \cite[Theorem
5, p. 159]{gh}) $H^1(\mE(Y))$ vanishes for sufficiently positive $\mE$
and furthermore $\mE(Y)$ is generated by its global sections.  By the
long exact sequence $H^0(\mE) \to H^0(i_* i^* \mE )$ is surjective.
Generation by global sections implies that the set of $s$ restricting 
to $s_Y$ transverse to the zero section is open and dense.  Compare 
with Bertini \cite[II.8.18]{ha:ag}.
\end{proof}

The symplectic version of relative Bertini is obtained by a
modification of Donaldson's argument in \cite{don:symp}.  Let
$\widetilde{X} \to X$ be a line-bundle with connection $\alpha$ over
$X$ whose curvature two-form $\curv(\alpha)$ satisfies
$\curv(\alpha)= (2\pi/i) \omega$.  Since our symplectic manifolds are
rational we may always assume the existence of such a line bundle
after taking a suitable integer multiple $k \omega$ of the symplectic
form $\omega$.

\begin{definition} {\rm (Asymptotically holomorphic sequences of sections)}  
Let $(s_k)_{k \ge 0}$ be a sequence of sections of $\widetilde{X}^k \to X$.
\begin{enumerate} 
\item The sequence $(s_k)_{k \ge 0}$ is {\em asymptotically
  holomorphic} if there exists a constant $C$ and integer $k_0$ such
  that for $k \ge k_0$,
\begin{equation} \label{asymhol}
 | s_k | + | \nabla s_k| + | \nabla^2 s_k | \leq C, \quad |\olp s_k| +
 | \nabla \olp s_k| \leq C k^{-1/2} .\end{equation} 
\item The sequence $(s_k)_{k \ge 0}$ is {\em uniformly transverse} to
  $0$ if there exists a constant $\eta$ independent of $k$ such that
  for any $x \in X$ with $|s_k(x)| < \eta$, the derivative of $s_k$ is
  surjective and satisfies $| \nabla s_k(x)| \ge \eta$.  
\end{enumerate} 
In both definitions the norms of the derivatives are evaluated using
the metric $g_k = k \omega( \cdot , J \cdot)$.
\end{definition}

\begin{theorem} \label{ddexists} Suppose that
  $X_{\subset}, X_{\supset}, Y$ as above are equipped with line
  bundles $\ti{X}_{\subset}, \ti{X}_{\subset}, \ti{Y}$ with
  connections with curvatures have cohomology classes
  $ [\omega_\subset], [\omega_\supset], [\omega_Y]$ such that
\[ \ti{X}_\subset |_Y \cong \ti{Y} \cong \ti{X}_{\supset} |_Y.\]%
For $k \gg 0$ there exist approximately holomorphic codimension two
submanifolds $D_{\subset,k},D_{\supset,k} \subset X_\pm$ representing
$k[\omega_{\subset}], k[\omega_{\supset}]$ such that
\[D_{\subset,k} \cap Y = D_{\supset,k} \cap Y = D_{Y,k} \] 
for some sequence $D_{Y,k}$ that is also 
is asymptotically holomorphic represents $k[\omega_Y]$. 
\end{theorem}

The proof will be given after two lemmas below.

\begin{lemma} \label{extlem} {\rm (Extension of asymptotically
    holomorphic sequences)} Let $X$ be an integral symplectic manifold
  equipped with a compatible almost complex structure,
  $\widetilde{X} \to X$ a line bundle with connection whose curvature
  is the symplectic form and $Y \subset X$ an almost complex (hence
  symplectic) submanifold.  Denote by $\widetilde{Y} \to Y$ the
  restriction of $\widetilde{X}$ to $Y$.  Given any asymptotically
  holomorphic sequence $s_{Y,k}$ of sections of
  $\widetilde{Y}^k \to Y$, there exists an asymptotically holomorphic
  sequence $s_{k}$ of sections of $\widetilde{X}^k \to X$ such that
  $s_{k,\pm} | Y = s_{Y,k}$.
\end{lemma}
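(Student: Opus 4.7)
The plan is to construct the extension by pulling back $s_{Y,k}$ via a retraction from a tubular neighborhood of $Y$, then cutting off. First I would choose a tubular neighborhood $U$ of $Y$ in $X$ together with a smooth retraction $\pi : U \to Y$ that is almost complex to leading order along $Y$; such a retraction can be built by exponentiating in the $\omega$-orthogonal (equivalently $J$-invariant) normal direction using a metric compatible with $J$, so that $|\bar\partial \pi|_g = O(\dist(\cdot, Y))$. Since $U$ deformation retracts onto $Y$, the line bundle $\widehat{X}|_U$ is (non-canonically) isomorphic as a Hermitian line bundle with connection to the pull-back $\pi^* \widetilde{Y}$ up to a unitary gauge transformation that is the identity over $Y$; in particular I may choose the isomorphism so that the difference $\beta = \nabla_{\widehat{X}} - \pi^* \nabla_{\widetilde{Y}}$ is a Hermitian one-form on $U$ vanishing on $Y$ and satisfying $|\beta|_g = O(\dist(\cdot, Y))$ (the normal component encodes the curvature $\omega$ along directions transverse to $Y$).

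Given such an identification, I would set $\tilde s_k := \pi^* s_{Y,k}$, viewed as a section of $\widehat{X}^k|_U$; this automatically satisfies $\tilde s_k|_Y = s_{Y,k}$. Next, I would fix a cutoff function $\chi : X \to [0,1]$ supported in $U$, equal to $1$ in a smaller neighborhood of $Y$, and define $s_k := \chi \tilde s_k$. Because $\chi$ is fixed in the $g$-metric, its $g_k$-derivatives decay like $k^{-1/2}$, and the pointwise bounds $|s_k|, |\nabla s_k|, |\nabla^2 s_k| \leq C$ in the $g_k$-metric follow directly from the corresponding bounds on $s_{Y,k}$ together with the chain rule applied to $\pi$ (which is smooth and bounded in the original metric).

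The content lies in checking the asymptotic holomorphicity bounds $|\bar\partial s_k|, |\nabla \bar\partial s_k| \leq Ck^{-1/2}$. On the support of $\chi$ one computes
\[
\bar\partial s_k = (\bar\partial \chi)\, \tilde s_k + \chi \cdot \bigl( (\bar\partial \pi)(\nabla_Y s_{Y,k}) + k \beta^{0,1} \cdot \pi^* s_{Y,k} + \pi^* \bar\partial_Y s_{Y,k}\bigr),
\]
and the task is to show each term is $O(k^{-1/2})$ in the $g_k$-norm. The first and third terms are immediate from the choice of $\chi$ and the hypothesis on $s_{Y,k}$. The middle two terms are dangerous: $\bar\partial \pi$ and $\beta^{0,1}$ both vanish on $Y$ and are $O(\dist(\cdot, Y))$ in the $g$-metric, which translates to $O(k^{-1/2})$ in $g_k$ after rescaling; however the factor of $k$ in $k\beta^{0,1}$ (from taking the $k$-th tensor power of $\widehat{X}$) threatens to cancel this gain.

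The main obstacle is precisely this $k \beta^{0,1}$ term, and handling it will drive the construction of the isomorphism $\widehat{X}|_U \cong \pi^*\widetilde{Y}$. The fix is to take $U$ to have $g_k$-width $O(1)$, i.e. ordinary width $O(k^{-1/2})$, so that $|\beta^{0,1}|_g = O(k^{-1/2})$ and $|k\beta^{0,1}|_{g_k} = |k \beta^{0,1}|_g = O(k^{1/2})$ — still not enough. Thus I would instead choose the identification more carefully: by solving a $\bar\partial$-problem normal to $Y$ (using the normal bundle of $Y$ in $X$ with its holomorphic-to-leading-order structure), one picks a gauge in which $\beta^{0,1}$ vanishes on $Y$ to second order in $\dist(\cdot, Y)$, giving $|\beta^{0,1}|_g = O(\dist(\cdot, Y)^2)$ and hence $|k \beta^{0,1}|_{g_k} = O(k^{-1/2})$ on a tubular neighborhood of ordinary width $O(k^{-1/2})$. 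This is the standard Auroux-style construction of approximately holomorphic local trivializations, and once this gauge is fixed the remaining estimates, including the one on $\nabla \bar\partial s_k$, follow by differentiating the identity above.
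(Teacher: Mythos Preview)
Your approach has a genuine gap at the step where you claim to choose a unitary gauge in which $\beta^{0,1}$ vanishes to second order along $Y$. This is impossible: the curvature of $\beta$ is $\omega - \pi^*\omega_Y$, which restricts to the nondegenerate fiberwise symplectic form on the normal bundle $N$. A unitary gauge change adds $i\,d\theta$ with $\theta$ real, and the normal model computation shows that cancelling the leading term $\tfrac{1}{4}z_1\,d\bar z_1$ of $\beta^{0,1}$ would require $\theta = \tfrac{i}{2}|z_1|^2 + \cdots$, which is purely imaginary. So $\beta^{0,1}$ can only be made to vanish to first order, giving $|k\beta^{0,1}\cdot\tilde s_k|_{g_k} = O(1)$ rather than $O(k^{-1/2})$. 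Shrinking the tubular neighborhood to $g$-width $O(k^{-1/2})$ does not help and in fact makes the cutoff term $(\bar\partial\chi)\tilde s_k$ contribute $O(1)$ as well, contradicting your earlier claim that $\chi$ is fixed in the $g$-metric.

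The paper's proof resolves exactly this issue by multiplying the pulled-back section by the Gaussian factor $\exp(-k|z_1|^2/4)$, where $z_1$ is a normal coordinate. This is, in effect, the non-unitary ``gauge'' your $\bar\partial$-problem is asking for: for the model normal connection $\tfrac{k}{4}(z_1\,d\bar z_1 - \bar z_1\,dz_1)$ the Gaussian is exactly holomorphic in the fiber direction, so the dangerous $k\beta^{0,1}$ term is cancelled to leading order rather than merely damped. The remaining error comes only from the deviation of $J$ and the connection from the model, which is $O(|z_1|)$ and yields the required $O(k^{-1/2})$ bound. The Gaussian also decays fast enough that the cutoff can be taken at $g$-width $k^{-1/6}$, where its contribution is exponentially small in $k$. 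Your reference to Auroux-style trivializations is in the right direction --- those local models are precisely Gaussians --- but the Gaussian must appear explicitly in the section, not be absorbed into a Hermitian gauge.
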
 

\begin{proof} We construct an extension by multiplying the given
  sections over the hypersurface by Gaussians in the normal direction.
  We may identify $X$ near $Y$ with the normal bundle $N$ of $Y$ in
  $X$ on a neighborhood $U$ of the zero section.  Let $J_N: N \to N$
  be the complex structure, $\pi: N \to Y$ denote the projection, $P$
  the frame bundle, and $k = \rank(N)$.  Identify
  $N \cong P \times_{U(k)} \C^k$ via the associated bundle
  construction.  The linearization $\widetilde{X}$ on a neighborhood
  of $Y$ admits an isomorphism
  $ \widetilde{X}^k | U \cong \pi^* \widetilde{Y}^k $, using parallel
  transport along the normal directions to $Y$.  We may assume that
  the connection on $\ti{X}$ in the normal direction is induced via
  the associated bundle construction from a one-form
  \[ \alpha_k = \sum_{i=1}^{k} \frac{k}{4} ( z_i \d \ol{z}_i -
  \ol{z}_i \d z_i) \in \Omega^1(\C^k) \]
  where $z_1,\ldots, z_k$ are coordinates on $\C^k$.  Let
  $\phi: N \to \R_{\ge 0}$ denote the norm function, induced from the
  function $(z_1,\ldots, z_k) \to \frac{1}{2} \sum_{i=1}^k |z_i|^2$ on
  $\C^k$.    Define a Gaussian
  sequence
  \[ s_{k,\pm} = (\pi^* s_{Y,k}) \exp( - k \phi^2/4) \]
  in a neighborhood of the divisor $Y$.  After multiplication by a
  cutoff function supported in a neighborhood of size $k^{-1/6}$ of
  $Y$, the section extends by zero to all of $X$.  The bound
\[ | s_k | + | \nabla s_k| + | \nabla^2 s_k | \leq C \]
follows immediately from the fact that the derivatives of the Gaussian
are bounded, and the derivatives are with respect to the Levi-Civita
connection for the metric $g_k$.  The bound
\[ |\olp s_k|
+ | \nabla \olp s_k| \leq C k^{-1/2} \]
follows from the fact that the Gaussian is holomorphic to leading
order as in \cite[(10)]{don:symp}: Consider the splitting $TN$ into
the vertical part $T^{\on{ver}}N \cong N$ and its orthogonal
complement $T^{\on{hor}}N$.  The difference between the almost complex
structures $J_0 = \pi^* J_Y \oplus J_N$ and $J$ is the graph of a map
 as on \cite[Section
2]{don:symp} so that 
\[
\mu: \Lambda^{1,0} TN \to \Lambda^{0,1} N, \quad \olp_{J} s =
\olp_{J_0} s - \mu ( \partial_{J_0} s ) .\]
We may then estimate the failure of $s_k$ to be holomorphic by
\begin{eqnarray*} \olp_J s_k &=&  (\olp_J - \olp_{J_0}) s_k + \olp_{J_0} s_k \\
&=&  - \mu ( \partial_{J_0} s_k) + (\olp_{ \pi^* J_Y} \pi^* s_{Y,k} )
    e^{- k \phi^2/4}
  + \pi^* s_{Y,k} \frac{\mu}{2} ( k \alpha^{1,0} ) e^{- k \phi^2/4}
  .\end{eqnarray*}
Taking norms with respect to the metric induced by $J, k\omega$ we have
\begin{equation} \label{first0}  | \olp_J s_k| \leq C k^{-1/2} 
   \phi^2 e^{- k  \phi^2/4}  + \sup
| \olp_{J_Y} s_{Y,k} | \leq C k^{-1/2}.
 \end{equation} 
Similarly 
\begin{eqnarray}
| \nabla \olp_J s_k | &\leq& C ( |
  \nabla \mu| | \partial_{J_0} s_k | + | \mu| | \nabla ( k \alpha^{1,0}
  e^{- k \phi^2/4})  | | s_{Y,k} | +
  \sup | \nabla \olp_{\pi^* J_Y} \pi^* s_{Y,k}|  \nonumber \\
  \label{second0}  &\leq& C k^{-1/2} ( \phi + \phi^3 ) e^{- k \phi^2/4} + C \sup | \nabla 
  \olp_Y s_{Y,k}| \leq C k^{-1/2} \qedhere 
\end{eqnarray} 
\end{proof} 

\begin{lemma} \label{conclem} Continuing the assumptions of the
  previous lemma, for any $p \in X - Y $ with $d_k(p,Y)\ge k^{-1/2}$,
  with $\codim(Y) = 2$, there exists an approximately section
  $s_{p,k}$ satisfying the estimates \eqref{first0} and
  \eqref{second0} with the property that $s_{p,k}$ vanishes on $Y$.
\end{lemma}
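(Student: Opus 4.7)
The plan is to construct $s_{p,k}$ by modifying Donaldson's standard peaked section so that it vanishes along $Y$, using the tubular neighborhood structure already exploited in Lemma \ref{extlem}. In a neighborhood of the image of $Y$ under the exponential map on the normal bundle, identify $X$ with the normal bundle $N \to Y$ so that $\widehat{X}^k \cong \pi^*\widetilde{Y}^k$ to leading order, with $z_1$ a holomorphic fiber coordinate and $Y = \{z_1 = 0\}$. Outside this tubular neighborhood, $Y$ is absent, and the construction reduces to the usual Donaldson peaked section. The idea is to multiply the standard Gaussian by the holomorphic factor $z_1$ (which vanishes on $Y$), normalized so that $|s_{p,k}(p)|$ is bounded below independently of $k$.

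Concretely, first I would split into two regimes according to the $g_k$-distance $d_k(p,Y)$. If $d_k(p,Y) \ge k^{1/6}$ (so $p$ is far enough that the Gaussian $\sigma_{p,k}^{\mathrm{std}}(z) = e^{-k|z-p|^2/4}$ is super-polynomially small on $Y$), I would take $s_{p,k} = \chi(z)\,\sigma_{p,k}^{\mathrm{std}}(z)$ where $\chi$ is a smooth cutoff equal to $1$ outside a tiny neighborhood of $Y$ and vanishing on $Y$; the Gaussian decay guarantees that the non-holomorphic contributions coming from $\olp\chi$ are $O(k^{-N})$ for all $N$, so \eqref{first0}-\eqref{second0} follow at once from the corresponding estimates for $\sigma_{p,k}^{\mathrm{std}}$. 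If instead $k^{-1/2} \le d_k(p,Y) \le k^{1/6}$, I would work in local normal-bundle coordinates $(z_1,\ldots,z_n)$ centered at $p$ and take
\[
  s_{p,k}(z) = \frac{z_1}{z_1(p)}\,\sigma_{p,k}^{\mathrm{std}}(z),
\]
again multiplied by a cutoff to extend by zero outside the coordinate chart.

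Second, I would verify \eqref{first0}-\eqref{second0}. The factor $z_1$ is holomorphic with respect to the product almost complex structure on $N$, and the difference between the ambient $J$ and this product structure is measured by the same tensor $\mu$ appearing in the proof of Lemma \ref{extlem}. Thus $\olp s_{p,k}$ picks up a contribution bounded by $|\mu|\cdot|z_1/z_1(p)|\cdot|\sigma_{p,k}^{\mathrm{std}}|$ plus the $\olp\sigma_{p,k}^{\mathrm{std}}$ terms already controlled by \eqref{first0}, exactly as in the estimates \eqref{first0}-\eqref{second0} of Lemma \ref{extlem}. The $C^0$-bound on $s_{p,k}$ uses $|z_1| \le |z_1(p)| + |z - p|$, so that $|z_1/z_1(p)| \le 1 + |z-p|/|z_1(p)|$; the Gaussian $e^{-k|z-p|^2/4}$ forces $|z - p| \lesssim k^{-1/2}\log k$ on the effective support, while the hypothesis $d_k(p,Y) \ge k^{-1/2}$ yields $|z_1(p)| \gtrsim k^{-1}$, so the ratio is bounded by a uniform polynomial in $\log k$ which is absorbed into the Gaussian decay after adjusting constants.

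The main obstacle is the tension between two conflicting requirements: $s_{p,k}$ should be peaked at $p$ (with $|s_{p,k}(p)|$ bounded below independently of $k$) while also vanishing identically on $Y$. Under the hypothesis $d_k(p,Y) \ge k^{-1/2}$, the point $p$ can lie much closer to $Y$ than the natural width $\sim 1$ of the Gaussian in $g_k$-units, so the normalizing factor $1/z_1(p)$ is potentially as large as $k$, and the gradient $\partial_{z_1}(z_1/z_1(p))$ in the $g_k$-metric picks up an extra factor of $k^{-1/2}\cdot k = k^{1/2}$ that must be controlled. The delicate step is showing that this apparent blow-up is compensated by the Gaussian concentration of $\sigma_{p,k}^{\mathrm{std}}$ near $p$, together with the linear behavior of $z_1$ near $p$, so that the combined expression satisfies the $C^0$ and $C^2$ bounds in \eqref{asymhol} uniformly in $p$ and $k$. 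Handling the transition between the two regimes $d_k(p,Y) \sim k^{1/6}$ is routine given a partition of unity in the $g_k$-metric.
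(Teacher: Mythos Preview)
Your approach is essentially the paper's: both split at a threshold $g_k$-distance from $Y$, use the unmodified Gaussian in the far regime (the paper simply observes that the standard Donaldson cutoff already has support missing $Y$, rather than introducing your extra cutoff $\chi$), and in the near regime take $s_{p,k}(z) = (z_1/z_1(p))\,\sigma^{\mathrm{std}}_{p,k}(z)$, citing Auroux \cite{auroux:remark} for this trick. One quantitative wrinkle: your claim that $|z_1/z_1(p)|$ is bounded by a polynomial in $\log k$ actually needs $|z_1(p)| \gtrsim k^{-1/2}$ rather than the $k^{-1}$ you derive from the stated hypothesis---and indeed the paper's own proof imposes exactly $|p_1| > k^{-1/2}$, so there is a harmless mismatch between the lemma's hypothesis and its proof that you have inherited.
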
 

\begin{proof} We modify the construction of perturbations so that the
  sections on the hypersurface are unchanged.  If
  $d_k(p,Y) \ge k^{-1/6}$, then the previously chosen locally Gaussian
  section satisfies the required properties. Indeed in this case
  $s_{k,p}$ vanishes on $Y$ since $Y$ is outside the support of the
  cutoff function.  So it suffices to assume that $p$ is in the
  intermediate region
\begin{equation} \label{intermediate}
d_k(p,Y) \in (k^{-1/2}, k^{-1/6}) .\end{equation}   
So fix $p' \in Y$ and choose a local Darboux chart $(z_1,\lldots,z_n)$
near $p'$ so that $Y$ is described locally by $z_1 = 0$ and
$\olp z_1(p')= 0$.  Let $p$ lie in this Darboux chart, satisfying the
estimate \eqref{intermediate}. Let $p_1 \neq 0$ denote the first
coordinate of the point $p$.  Given an approximately holomorphic
sequence $s'_{p,k}$ with sufficiently small support (for example, a
Gaussian $s(z) = \exp( - k |z - p|^2)$) the section
$s_{p,k}(z) = s'_{p,k} (z) z_1/p_1$ is also approximately holomorphic,
uniformly in $p$ as long as $|p_1| > k^{-1/2}$.  Indeed the bound
\[ | s_{k,p} | + | \nabla s_{k,p}| + | \nabla^2 s_{k,p} | \leq C \]
is immediate, and uniform if $|p_1| > k^{-1/2}$ since $s'_{p,k}$ is
Gaussian in $k^{1/2} z_1$.  The bound 
\[ |\olp s_{k,p}| + | \nabla \olp s_{k,p}| \leq C k^{-1/2} \]
follows from the fact that $z_1/p_1$ is holomorphic to leading order;
see Auroux \cite[Proof of Proposition 3]{auroux:remark} where similar
approximately holomorphic sections were used to simplify Donaldson's
construction \cite{don:symp}. 
\end{proof} 

\begin{proof}[Proof of Theorem \ref{ddexists}]
  We first perturb on the hypersurface, and then use the special
  perturbations in the previous lemma to perturb away from the
  hypersurface so that the restriction is unchanged.  Let
  $\XX = X_{\subset} \cup_Y X_{\supset}$ be a broken symplectic
  manifold.  The sections $s_{\subset,k}, s_{\supset,k}$ from Lemma
  \ref{extlem} are already asymptotically holomorphic and uniformly
  transverse in a neighborhood of size $Ck^{-1/2}$ around $Y$.  Recall
  that in Donaldson's construction \cite{don:symp}, one has for each
  $k$ a collection of subsets $V_0 \subset \lldots V_N = X$, where $N$
  is independent of $k$, and one shows that given a combination of the
  local Gaussians that is approximately holomorphic and transverse
  section $s_{k,i}$ on $V_i$, that one can adjust the coefficients of
  the locally Gaussian functions so that the section is approximately
  holomorphic and transverse over $V_{i+1}$.  Here we may use sections
  $s_{p,k}$ in Lemma \ref{conclem} for $p$ of distance at least
  $k^{-1/2}$ to achieve transversality off of $Y$.  More precisely, in
  Donaldson's construction \cite[p. 681]{don:symp} taking $V_0$ to be,
  rather than empty, a neighborhood of size $k^{-1/2}$ around $Y$.
  Thus only the initial step of Donaldson's construction is different.
\end{proof}

\begin{proposition} \label{expdim2} For any type $\Gamma$ with
  $k_{\white} \ge 1$ disk components of the surface part
  resp. $k_{\black}$ sphere components with $l$ levels joined by $e$
  cylindrical ends, $m_{\white}$ Morse trajectories in $L$,
  $m_{\black}$ interior edges, and limits $\ul{l}$ along the $n$
  semi-infinite edges mapping to the Lagrangian, the expected
  dimension of the moduli space $\ol{\M}_\Gamma(\XX,L,\DD,\ul{l})$ of
  adapted broken maps of combinatorial type $\Gamma$ limits $\ul{l}$
  with no tangency conditions at the divisor is given by
\begin{multline} 
  \dim T_{[u]} \ol{\M}_\Gamma(\XX,L,\DD,\ul{l}) = (k_{\white} -
  m_{\white}) \dim(L) +(k_{\black} - m_{\black}) \dim(X) + I(u) -
  \dim(W_{l_0}^+) \\ - \sum_{i=1}^n \dim(W_{l_i}^-) +n - 3 - \sum_{i=1}^e (\dim(Y) + 4s_i)  - 2(l-2) \end{multline}
where $s_i+1$ are the multiplicities of $u$ at the intersection points
with $Y$.
\end{proposition} 

\begin{proof}   By construction we have an isomorphism of the tangent
  space 
with the kernel of the linearized operator and the cokernel vanishes:
\[    T_{[u]} \ol{\M}_\Gamma(\XX,L,\DD,\ul{l})  \cong
\ker(\ti{D}_u), 
\quad \{ 0 \} = \coker(\ti{D}_u) .\]
We apply Riemann-Roch for Cauchy-Riemann operators on surfaces with
boundary, \cite[Appendix]{ms:jh} to compute the index
\[\on{Ind}(\ti{D}_u) =   \dim T_{[u]} \ol{\M}_\Gamma(\XX,L,\DD,\ul{l}) .\]
The operator $\ti{D}_u$ is the direct sum of operators $D_{u,S}$ on
the surface part $S \subset C$, $D_{u,T}$ on the tree part
$T \subset C$, and an operator on the tangent space to the moduli
space of treed disks that can be deformed to zero without changing the
index.  Write $S$ as the union of a disk components
$S_{\white,i},i = 1,\ldots, k_{\white}$ and sphere components
$S_{\black,j}, j = 1,\ldots, k_{\black}$.  By Riemann-Roch
\[ \Ind(D_{u,S}) = k_{\white} \dim(L) + k_{\black} \dim(X) + I(u) .\]
Each boundary resp. interior edge has index $\dim(L)$ resp. $\dim(X)$,
since a gradient trajectory is determined by its value at any point.
However, there are two matching conditions at the ends of any such
edge, so that the semi-infinite edges contribute corrections
$ \dim(W_{l_i}^-)$ appearing from the constraints from the
semi-infinite edges, while the boundary resp. interior edges at any
fixed level contribute corrections $ \dim(L)$ resp.  $\dim(X)$.  The
tangent space to the moduli space of stable disks contributes $n - 3$.
The edges connecting levels contribute
$ \sum_{i=1}^e (\dim(Y) + 4s_i)$ from the matching and tangency
conditions for the Morse trajectories in $Y$.  The factor $2(l-2)$ is
the dimension of the group of fiber-wise automorphisms of the neck
pieces mapping to $\P(N_\pm \oplus \C)$.
\end{proof}

\section{Reverse flips} 

Now we specialize to the case that the symplectic manifold is obtained
by a small simple reverse flip or blow-up.  A symplectic manifold $X$
is obtained from a {\em smooth reverse flip} if the local model
$\ti{V}$ in \eqref{sflip} has positive weights $\mu_i$ all equal to
$1$, that is,
\[ (\mu_i > 0) \implies (\mu_i = 1) . \]   
We say that $X$ is obtained from a small reverse {\em simple} flip or
blow-up if and only if the local model $\ti{V}$ in \ref{sflip} has all
weights equal to $\pm 1$.  In this case there exists an embedded
projective space $\P^{n_+ - 1} $ in $X$ and a tubular neighborhood of
$\P^{n_+ - 1}$ in $X$ symplectomorphic to a neighborhood of the zero
section in $\mO(-1)^{\oplus n_-}$ where $n_+ + n_- = n + 1$ and
$\mO(-1) \to \P^{n_+ - 1}$ is the tautological bundle.

We apply a degeneration argument so that after degeneration, the mmp
transition is given by variation of symplectic quotient.  Let
$\mO(-1)_1^{n_-}$ denote the unit sphere bundle in $\mO(-1)^{n_-}$.
Then $\mO(-1)_1^{n_-}$ is circle-fibered coisotropic fibering over the
projectivized bundle
$\P(\mO(-1)^{n_-}) = \P^{n_+ - 1} \times \P^{n_- - 1} . $ The variety
$X$ degenerates to a broken manifold $(X_{\subset}, X_{\supset})$
where $X_{\subset}$ is a toric variety
  \[X_{\subset} \cong \on{Bl}_E \P(\mO(-1)^{n_-} \oplus \ul{\C}) \]
  where $E \cong \P^{n_- - 1}$ is the exceptional locus of the flip.
  The other piece is a disjoint union
  \[ X_{\supset} = X_{\supset}' \sqcup X_{\supset}'', \quad
  X_{\supset} ' = \P(\mO(-1)^{n_-} \oplus \ul{\C}) , \quad
  X_{\supset}'' = \on{Bl}_E X . \]
  We call $X_{\subset}$ resp. $X_{\supset}$ the {\em exceptional}
  resp. {\em remainder} piece of $X$.  The space $X_{\subset}$ may
  also be realized as a symplectic quotient
  \begin{equation} \label{realized} X_{\subset} = ( \C^{n_-}_{-1}
    \oplus {\C}^{n_+}_{+1} \oplus {\C} ) \qu (U(1) \times_{\Z_2}
    U(1)) \end{equation}
  where the action on $\C^{n_-}_{-1}$ is with weight $(-1,1)$ and on
  $\C^{n_+}_{+1}$ with weight $(+1,1)$, and on the last factor of $\C$
  with weight $(0,-2)$.  The flip is obtained by variation of git
  quotient in the above local model.  The unstable locus changes from
  $\{ 0\} \oplus {\C}^{n_+} $ to $\C^{n_-} \oplus \{ 0 \}$.  It
  follows that under the flip the exceptional locus in $X_-$
\[ (\{ 0\} \oplus
{\C}^{n_+})) \qu \C^\times \cong \P^{n_+ - 1}\]
is replaced by the exceptional locus in $X = X_{+}$
\[ (\C^{n_-} \oplus \{ 0 \}) \qu \C^\times \cong \P^{n_- - 1} .\]

The toric piece is a symplectic quotient as explained in Chapter \ref{trans}.  Denote by
$T = (S^1)^{n+2} / (S^1 \times S^1)$
the residual torus acting on $X_{\subset}$.  The canonical moment map
for the action of $(S^1)^{n+2}$ on $\C^{n+2}$ induces a moment map
$\Phi_\subset : X_{\subset} \to \t^\dual .$ 
The moment polytope may be written in terms of the normal vectors
$\nu_k$ as
\[ P_{\subset} := \Phi_{\subset}(X_\subset) = \{ \mu \ |\ \lan \mu,
\nu_k \ran \ge c_k, \ k = 1,\lldots, n+2 \} \]
where the normal vectors $\nu_k$ are the projections of minus the
standard basis vectors in $\R^{n+2} = \on{Lie}((S^1)^{n+2})$: Using
the parametrization
\[(S^1)^{n} \cong T , \quad (z_1,\lldots, z_{n}) \mapsto
[z_1,\lldots, z_{n},1,1]\]
and letting $\eps_j$ be minus the standard basis vectors, we have
\begin{equation} \label{firstnus}
 \nu_1 := \eps_1, \ \nu_2 := \eps_2, \lldots, \nu_{n} := \eps_{n}
 \in \t^\dual .\end{equation}
On the other hand, from the description of the weights in
\eqref{realized} we have
\begin{equation} \label{lastnus}
 \nu_{n+1} := \eps_1 + \lldots + \eps_{n_-} - \eps_{n_- +1} - \lldots -
 \eps_{n}, \ \nu_{n+2} := - \eps_1 + \lldots - \eps_{n_-}
 .\end{equation}
We assume 
\[ c_1 = c_2 = \ldots c_{n} = 0, \ c_{n+1} = \eps, c_{n+2} \gg 0 \]
where the constant $\eps > 0$ represents the size of the exceptional
divisor.

The regular Lagrangian described in Definition \ref{tori} and referred
to in Theorem \ref{result} is a toric moment fiber: Define
\begin{equation} \label{regulartor} \lambda = \eps(1,\lldots, 1)/ (n_+
  - n_-), \quad L = \Phi_{\subset}^{-1}(\lambda) .\end{equation}
By the Cho-Oh classification in Proposition \ref{chooh}, $L$ is
regular in the sense of \ref{tori}.  For example, if
$n_+ = 2, n_- = 1$ then the corresponding transition is a blow-down of
curve in a surface.  The moment polytope has normal vectors
$\eps_1, \eps_2, \eps_1 - \eps_2, - \eps_1$.  The moment polytope is
\[ \{ (\lambda_1,\lambda_2 ) \ | \ \lambda_1 \ge 0, \lambda_2 \ge 0, 
\lambda_1 - \lambda_2 \ge \eps, \lambda_1 \leq c_4 \} \]
where $c_4 \gg 0$.  Then $L$ is the fiber over $(\eps,\eps)$.  See
Figure \ref{blowup} where the point $\lambda$ is shown as a shaded dot
inside the moment polytope, which is a trapezoid.
\begin{figure} 
\includegraphics[height=1in]{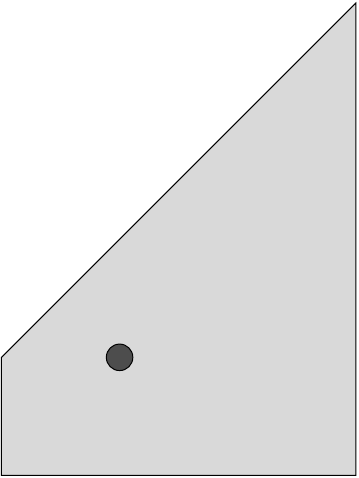}
\caption{The polytope for the blow-up of the projective plane}
\label{blowup}
\end{figure} 

In order to study the broken Fukaya algebra we introduce a Morse
function on the separating hypersurface of standard form.  The pieces
$X_{\subset},X_{\supset}$ are joined by the disconnected hypersurface
\[Y = Y_0 \cup Y_\infty \quad Y_0 \cong Y_\infty \cong \P^{n_+ - 1}
\times \P^{n_- - 1} \]
  namely the divisors in $X_\subset$ at $0$ and $\infty$.  Let
  \[ H: Y_0 \cong Y_\infty \to \R , \quad y \mapsto \lan \Phi_Y(y),
  \xi \ran
\] 
be a Morse function obtained as the pairing of the moment map
$\Phi_Y : Y \to \R^{n_- + n_+}$ with a generic vector $\xi$, which we
take to be $((1,\lldots, n_-), (1,\lldots, n_+))$.  For
$1 \leq i \leq k$ let
\[ [\eps_i] := [0,\lldots, 1, 0 ,\lldots, 0] \in \P^{k-1} \]
denote the point whose homogeneous coordinates are all zero except for
the $i$-th coordinate.  The critical points of $H$ are the fixed
points for the torus action
\begin{equation} \label{critH} \crit(H) = \Set{ ([\eps_{i_-}],
    [\eps_{i_+}]) \in \P^{n_- - 1} \times \P^{n_+ - 1}, \quad i_\pm
    \leq n_\pm } .\end{equation}
Consider the one-parameter subgroup
$\C^\times \to (\C^\times)^{n_- + n_+}$ generated by $\xi$ inside the
standard torus of dimension $n_- \times n_+$ acting on
$\P^{n_- - 1} \times \P^{n_+ - 1}$.  The stable manifolds consist of
those points that flow to $([\eps_{i_-}],[\eps_{i_+}])$ under the
$\C^\times$-action:
\[ W_{([\eps_i], [\eps_j])}^- = \Set{ (z_1,z_2) \in \P^{n_- - 1}
  \times \P^{n_+ - 1} \ | \ \lim_{z \to 0} z(z_1,z_2) =
  ([\eps_{i_-}],[\eps_{i_+}]) } .\]
The Morse cycles are those points $(z_-,z_+)$ such that the homogeneous
coordinates of $z_\pm$ above index $i_\pm$ vanish.  That is, 
\[ W_{([\eps_{i_-}], [\eps_{i_+}])}^- \cong \P^{i_- - 1} \times
\P^{i_+ - 1}, \quad \text{for} \ i_- \leq n_-, i_+ \leq n_+ .\]
For example, when $n_- = 2, n_+ = 1$ the transition corresponds to the
\label{lastline}  
blow-down of a curve in a surface, $Y_0 \cong Y_\infty \cong \P^1$ and
any Morse cycle is either all of $\P^1$ or a point.

In order to study the broken Fukaya algebra, we choose the standard
complex structure on the toric piece.  This suffices to achieve
transversality:

\begin{proposition} \label{std}There exist admissible perturbation
  data $\ul{P} = (P_\Gamma)$ for the broken manifold
  $\XX = (X_{\subset},  X_{\supset}) $ with the properties that
\begin{enumerate}
\item the almost complex structure $J_{\subset,\Gamma} $ on
  $X_{\subset} \subset \XX$ is domain-independent and equal to the standard
  $T_\C$-invariant complex structure from \eqref{tgit}; and
\item the Morse function $H_\Gamma$ on $Y_0 \cup Y_\infty$ is
  domain-independent and equal to a component of the moment map on
  $Y_0 \cong Y_\infty \cong \P^{n_+ - 1} \times \P^{n_- - 1}$ as in
  \eqref{critH}.
\end{enumerate} 
\end{proposition}

We first prove a result on holomorphic spheres in toric varieties:

\begin{lemma} \label{reglem} Let $X$ be a smooth compact toric variety 
  with torus-invariant prime boundary divisors $D_i, i = 1,\ldots, N$. 
  Let $D_i, i \in I$ be a collection of prime boundary divisors with 
  the following property:
\begin{itemize} 
\item[] For each $D_i, i \in I$ there exists a collection of linearly 
  equivalent divisors $D_j, j \in I(i)$ with the property that 
  $\cap D_j, j \in I(i) = 0$. 
\end{itemize} 
Let $u: \P^1 \to X$ be a holomorphic sphere not contained in the union
$\cup_{i \notin I} D_i$.  Then $ H^1(\P^1, u^* TX) = \{ 0 \}$ 
and the 
evaluation map at any point is a submersion.
\end{lemma}

\begin{proof} We write $X$ as a git quotient of $\C^N$, so that the 
  $i$-th factor of $T\C^N$ descends to $\mO(D_i)\subset TX$.  Thus up 
  to the addition of a trivial vector bundle we have 
$$ TX \cong \bigoplus_{i=1}^N  \mO(D_i) $$
and it suffices to show that the higher cohomology of each 
$H^1(\P^1, u^* \mO(D_i))$ vanishes.  If $i \notin I$, then $u(\P^1)$
is not contained in $D_i$ and so the intersection number 
$ u(\P^1).D_i = \deg( u^* \mO(D_i))$ is positive.  On the other hand,
if $i \in I$ then $D_i$ is linearly equivalent to some 
$D_j, j \in I(i)$ not containing $u(\P^1)$, so $\deg( u^* \mO(D_i))$
is positive in this case as well.  Hence $H^1( \P^1, u^* \mO(D_i))$
vanishes, as claimed. 
\end{proof}

\begin{remark} 
  The toric piece $X_{\subset}$ is Fano. Indeed by Kleiman's criterion 
  \cite{kleiman} it suffices to check that the anticanonical degree of 
  holomorphic curves $v: S \to X_{\subset}$ is positive,
  $v. K^{-1} > 0$.  Since any such holomorphic curve degenerates to a 
  union of rational torus-invariant holomorphic curves, it suffices to 
  check the condition on torus-invariant rational curves.  By Remark 
  \ref{toricpiece} $X_{\subset}$ is a $\P^1$-bundle over 
  $\P^{n_- - 1} \times \P^{n_+ -1}$.  The irreducible invariant 
  holomorphic curves lie in either a fiber or in the divisors 
  $D_0, D_\infty$ isomorphic to $\P^{n_- -1} \times \P^{n_+ - 1}$. The 
  restriction of the canonical bundle to $D_0, D_\infty$ is 
  $\mO(n_- \pm 1) \otimes \mO(n_+ \mp 1)$, and the claim follows. 
\end{remark}

\begin{proof}[Proof of Proposition \ref{std}] By the git presentation
  \eqref{realized} the exceptional piece $X_{\subset}$ is a toric
  variety.  Any holomorphic sphere contained in a broken configuration
  mapping to $X_{\subset}$ is not contained in $Y_0 \cup Y_\infty $.
  The remaining prime boundary divisors in $X_{\subset}$ are those
  which fiber over a prime boundary divisor of
  $Y_0 \cong Y_\infty \cong \P^{n_+ - 1} \times \P^{n_- - 1}$.  These
  are of two types: the prime boundary divisors in $\P^{n_+ - 1}$ and
  those in $\P^{n_- - 1}$.  In each case, each prime boundary divisor
  $D_i$ is linearly equivalent to all the remaining prime boundary
  divisors of its type, and the intersection of all such boundary
  divisors is empty.  By Lemma \ref{reglem}, any holomorphic sphere in
  $X_\subset$ not contained in $Y = Y_0 \cup Y_\infty$ 
is regular and 
  the evaluation map at any point is a submersion.

  We claim that perturbation data with almost complex structure equal
  to the standard almost complex structure on the toric piece and
  standard Morse function on the separating hypersurface are
  admissible. \llabel{singledisk} As in Cho-Oh \cite{chooh:toric}, all
  holomorphic disks (necessarily given by the Blaschke products in
  \eqref{blaschke}) are regular, and by the previous paragraph, all
  the spheres in the middle piece are regular as well.
 The same holds after
  imposing constraints at the interior leaves, that is, requiring the
  markings to map to stable manifolds of $H$ as long as the interior
  markings lie on distinct points on the disk component, since the
  Blaschke description \eqref{blaschke} shows that the moduli space of
  such disks is transversally cut out by varying the positions of the
  roots $a_{i,j}$.  Note that the only rigid configurations are those
  with a single interior special point on the disk component, since
  the roots $a_{i,j}$ may always be chosen arbitrarily.
  Configurations $u: C \to \XX$ where two interior nodes $w_1,w_2$ lie
  on a ghost bubble $S_v \subset S \subset C$ are {\em not}
  transversally cut out, but automatically deform to configurations
  $u': C\to \XX$ with the two nodes $w_1',w_2'$ at distinct points of
  the disk component $S_{v'} \subset S$, and so never lie on a
  component $\M_\Gamma(\XX,L)$ of the moduli space of broken treed
  disks of expected dimension $\dim(\M_\Gamma(\XX,L)) \leq 1$ at most
  one.  Thus the moduli spaces of expected dimension at most one are
  regular on the piece mapping to $X_\subset$.  A repeat of the
  previous arguments shows that generic admissible such perturbation
  data on $X_\supset$ make the moduli spaces $\M_\Gamma(\XX,L)$ of
  expected dimension at most one regular.
\end{proof}

The following is the main result of this section; it states that the
broken Floer theory of the Lagrangian torus is unobstructed and
non-trivial.

\begin{theorem} \label{unobs} Let $L \subset X_{\subset}$ be a regular
  Lagrangian brane that is a Lagrangian torus orbit in the toric piece
  $X_{\subset}$.  If $\ul{P} = (P_\Gamma)$ is a collection of
  admissible perturbations such that each almost complex structure is
  constant equal to the standard complex structure on $X_{\subset}$
  and the Morse function on $Y_0 \cong Y_\infty$ for the nodes
  attaching to the disk components is the standard one on
  $Y_0 \cong Y_\infty \cong \P^{n_- - 1} \times \P^{n_+ - 1}$ then
  \begin{enumerate}
  \item the broken Fukaya algebra $CF(\XX,L)$ is a projectively flat
    for any local system ${y}$ and some $b(y)$ depending on ${y}$,
    that is, $\mu_{y}^{0,b(y)}(1)$ is a multiple of the strict
    identity and so 
%CW
$(m_{1,y}^{b(y)})^2 = 0$; and
  \item there exist local systems
    ${y}_i \in \RR(L) , i =1,\ldots, n_+ - n_- $ such that the Floer
    cohomology is non-vanishing:
   %CW
 $ H(m_{1,Y}^{b(y)}) = H(L) \neq 0 .$
\end{enumerate} 
\end{theorem}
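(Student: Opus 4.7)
The plan is to combine the $T$-invariance of the moduli problem in the exceptional toric piece $X_\subset$ with an explicit computation of the leading-order disk potential, then appeal to the machinery of Section~2 to turn critical points into Floer non-triviality.

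For claim (1), weak unobstructedness: since the perturbation data use the standard integrable complex structure on $X_\subset$ and $L$ is an orbit of the residual torus $T$ acting on $X_\subset$, every index-zero broken configuration contributing to $\mu^0(1)$ carries a residual $T$-symmetry on its outgoing boundary evaluation cycle. As $T$ acts transitively on $L$, the only Morse cycle reachable is the top one, forcing $\mu^0(1) = W\, x_M^{\blackt}$ for some $W \in \Lambda_{>0}$ (positive $q$-valuation by positivity of area). The Fano property of the toric $X_\subset$ ensures that every non-constant holomorphic disk bounding $L$ has strictly positive Maslov index. The final lemma of the Maurer--Cartan moduli space subsection then yields $b_0 := W\, x_M^{\greyt} \in \widetilde{MC}(L)$, i.e.\ $\mu_{b_0}(1) \in \Lambda e_L$, and hence $(\mu^1_{b_0})^2 = 0$. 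The same argument persists after twisting by any local system $\rho$, establishing projective flatness.

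For claim (2), non-vanishing Floer cohomology: deform further by a local system $\rho \in \RR(L) \cong (\Lambda^\times)^{\dim L}$ and define the potential $\WW(\rho) e_L := \mu^0(1)$ as in \eqref{WW}. The Cho--Oh/Blaschke classification of $T$-invariant holomorphic disks in toric manifolds, together with our choice of $J_\Gamma$ standard on $X_\subset$, identifies Maslov-two disks bounded by $L$ inside $X_\subset$ with the primitive disks, one per facet of the moment polytope $P_\subset$. The regularity of $L$ (Definition \ref{critlag}) ensures that the $n$ primitive disks lying in the open toric piece $U$ share a common minimal area $A_0$, while the remaining primitive (meeting $\overline U \setminus U$, corresponding in the broken picture to a configuration passing through $Y$) has area strictly greater than $A_0$. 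Combined with $\hbar$-quantization on the broken pieces, this identifies the leading-order potential as
\[
 \WW_0(y) \;=\; q^{A_0} \sum_{i=1}^{n} y^{\nu_i},
\]
with explicit form determined by the normals \eqref{firstnus}--\eqref{lastnus}. A direct analysis of this Laurent polynomial produces exactly $m = \dim QH(X_+) - \dim QH(X_-)$ non-degenerate critical points in $(\Lambda^\times)^{\dim L}$. By the formal implicit function theorem for Novikov rings (the theorem immediately preceding Section~3), each critical point of $\WW_0$ lifts to a critical point $\rho$ of the full potential $\WW$. Since $L$ is a torus, $H^\ast(L;\Lambda)$ is generated as an algebra by $H^1(L;\Lambda)$, and the proposition of the Divisor Equation subsection then gives $H(\mu^1_\rho;\Lambda) \cong H^\ast(L;\Lambda) \neq 0$.

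The main obstacle is the classification feeding into $\WW_0$: ruling out all broken configurations of total area at most $A_0$ other than the $n$ primitives supported in $U$. In particular, one must verify that no configuration combining a low-area disk in $X_\subset$ with non-trivial components in $X_\supset$ or in the intermediate broken levels $\P(N_\pm \oplus \ul{\C})$, connected by Morse trajectories through $Y$, can compete with the Blaschke primitives at leading $q$-order. This relies on the regularity of $L$ (so that the ``through-$Y$'' primitive has area strictly exceeding $A_0$), the $\hbar$-energy quantization on the non-toric pieces (Theorem \ref{sftcompact}), and the coherence and stabilization of the broken perturbation system provided by Theorem \ref{broken}.
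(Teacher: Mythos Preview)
Your proposal is correct and, for part~(2), essentially identical to the paper's argument: compute the leading-order potential from the primitive Maslov-two disks (the paper writes it out explicitly using \eqref{firstnus}--\eqref{lastnus} and solves to find critical points at $y_1 = \cdots = y_{n_+} = -y_{n_++1} = \cdots = -y_{n-1}$ with $y_1^{n_+ - n_-} = (-1)^{n_-}$), then lift via the formal implicit function theorem.

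For part~(1) your route differs in presentation. You argue via direct $T$-symmetry: the interior constraints on the Morse cycles $\P^{i}\times\P^{j}\subset Y$ are $T$-invariant, so the constrained disk moduli space carries a free $T$-action of rank $\dim L$, and hence a zero-dimensional stratum can only occur when the root output has degree zero. The paper instead converts the same toric phenomenon into an explicit Maslov-index inequality: meeting $\P^i\times\P^j$ at an interior marking forces $\codim(\P^i\times\P^j)$ additional zeros in the Blaschke product \eqref{blaschke}, contributing that many units of Maslov index, whence $I(u)-2\ge\sum_i(\deg(x_i)+2)$; combined with the expected-dimension formula $I(u)-\sum_i(\deg(x_i)+2)+\deg(l)-2=0$ this forces $\deg(l)=0$. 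Your argument is more conceptual; the paper's is more self-contained in that it does not require separately verifying that the $T$-action on $X_\subset$ descends compatibly to the broken Morse cycles on $Y$. Your flagging of the ``main obstacle'' (excluding multi-level broken configurations at area $\le A_0$) is apt --- the paper asserts the leading-order potential is ``as in the toric case'' without elaboration, relying implicitly on the same regularity condition and area-positivity of non-toric pieces that you name.
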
 

\begin{proof} The argument for weak unobstructedness of the Floer
  theory is a dimension count using the classification of disks in
  \eqref{blaschke}.  Let
  \begin{multline}
u = (u_{\subset}, u_1,\ldots, u_k, u_{\supset}): C_\subset \cup
  C_1 \cup \ldots \cup C_k \cup C_\supset \\
\to X_\subset \cup \P(N_\pm
  \oplus \C) \oplus \ldots \P(N_\pm \oplus \C) \cup X_{\supset} \end{multline}
  be a broken disk contributing to $\mu_0(1)$, that is, with no
  boundary leaves, with intersection multiplicities $\ul{s}$ with the
  hypersurface $Y$ and mapping to the unstable manifolds of
  $x_i, i =1,\ldots, m$ at the intersection points with $Y$.  Each
  such constraint raises the required Maslov index of $u_\subset$ by
  $2$, so that 
  \begin{equation} \label{expdimeq} I(u_\subset) + \sum_{i=1}^m (2 -
    \deg(x_i) - 2 s_i ) + \deg(l) - 2 = 0 \end{equation}
  where $m$ is the number of points mapping to $Y$.  On the other
  hand, the requirement that $u_\subset(z_k)$ meet $\P^i \times \P^j $
  at a given point $z_k \in S_\subset$ implies that for any $l > i$,
  the component $u_l$ has a zero at $z_k$; the number of such
  constraints is the degree of $x_i$.  Furthermore, if $u_0$ is the
  component whose vanishing corresponds to intersections with $Y$ then
  the intersection multiplicity condition also requires that $u_0$
  have a zero of order $s_i$ at $z_k$.  Each of these zeroes
  $u_\subset^{-1}(Y)$ contributes two to the Maslov index
  $I(u_\subset)$ of $u_\subset$.  For the moduli space to contain some
  element $[u]$ it must hold that 
\begin{equation}\label{bigI} I(u_\subset) \ge \sum_{i=1}^m (\deg(x_i) + 2s_i)
  .\end{equation}
Combining \eqref{bigI} with \eqref{expdimeq} it follows that 
\[ \deg(l) \leq 2 - 2m \]
with $\deg(l) = 2$ only if $m = 0$ and $I(u_\subset) = 0$.  From the
Blaschke classification \eqref{blaschke} the requirement
$I(u_\subset) = 0$ implies that $u_\subset |S_\subset$ is constant.
No such disks contribute to $\mu^0_{y}(1)$ since any configuration
contributing to $\mu^0_{y}(1)$ must have at least one non-constant
component.  Hence, $\deg(l) < 2 - 2m \leq 0$.  Since $\deg(l) \ge 0$,
we have $\deg(l) =0$.  By assumption, there is a unique element
$l \in \cI^{\on{geom}}(L)$ of degree zero, the geometric unit
$l = x^{\blackt}$.  It follows that $CF(\XX, L)$ is projectively flat,
that is, $\mu^0_{y}(1)$ is a multiple of the geometric unit
$x^{\blackt}$ for any local system ${y}$.  As in \ref{Wx}, for similar
reasons only configurations with no disks contribute to the sum
\eqref{diff} which becomes
$ \mu^{1}_{y}(x^{\greyt}) = x^{\whitet} - x^{\blackt}$.  Hence if
$\mu^1_{y}(0) = W({y}) x^{\blackt}$ then
$W({y})x^{\greyt} \in \tilde{MC}(L)$.

Local systems for which the Floer theory is non-trivial are found by
computing the critical points of the potential.  The leading order
terms in the potential $\WW(y)$ are as in the toric case and can be
read off from \eqref{firstnus}, \eqref{lastnus}: If $\eps$ denotes the
size of the symplectic class in the exceptional locus
$H^2(\P^{n_- -1 }) \cong \R$ then
\[ \WW(y_1,\lldots, y_{n-1}) = q^\eps \left(y_1 + \lldots + y_{n-1} +
  \frac{y_1 \lldots y_{n_+}}{y_{n_+ + 1} \lldots y_{n-1}}  \right) +
  \text{higher order} \]
where the higher order terms have $q$-exponent at least $\eps$.
Indeed, each of these terms corresponds to a Maslov index two disk in
the toric piece $X_\subset$ with a single exception: In the case of a
blow-up $n_- = 1, n_+ = n$ there is also a contribution from the disk
meeting the exceptional divisor $\P^{n_+ - 1}$ which, in the broken
limit, consists of a disk in $X_{\subset}$ meeting $Y_0$ and a
holomorphic sphere mapping to a fiber of
$X_{\supset}' \to \P^{n_+ -1}$; the corresponding contribution to the
potential is $q^{\eps} y_1 \ldots y_{n-1}$.  The leading order potential
\[ \WW_0(y_1,\lldots,y_{n-1}) = q^{\eps} \left( y_1 + \lldots +  
y_{n-1} + \frac{y_1 \lldots
  y_{n_+}}{y_{n_+ + 1} \lldots y_{n-1}} \right) \]
has partial derivatives for $i \leq n_+$ resp. $I > n_+$ 
\[
q^{-\eps} y_i \partial_{y_i} \WW_0(y_1,\lldots, y_{n-1}) = y_i + \
\text{ resp. - } \   \frac{y_1
  \lldots y_{n_+}}{y_{n_+ + 1} \lldots y_{n-1}} .\]
Setting all partial derivatives equal to zero we obtain 
\[ y_1 = \lldots = y_{n_+} = - y_{n_+ + 1} = \lldots = - y_{n-1} \]
and 
$ y_1^{n_+ - n_-} = (-1)^{n_-} .$
Hence $\WW_0(y)$ has a non-degenerate critical point at certain roots
of unity.  One can solve for a critical point of
$\WW = \WW_0 + \ \textrm{higher order}$ using the Theorem \ref{ift} by
varying the local system.  For a comeager subset of perturbations, the
one-leaf moduli spaces admit forgetful maps as in Lemma \ref{fexist}.
Thus the critical points give local systems $y \in \RR(L)$ for the
broken Fukaya algebra $CF(\XX,L)$ for which there exists Maurer-Cartan
solutions $b \in MC(L)$ for which the Floer cohomology $HF(L,b)$ is
non-vanishing by Proposition \ref{critW}.
\end{proof} 

\chapter{The break-up process}

In this Chapter we show that the Fukaya algebra is homotopy equivalent
to the broken Fukaya algebra obtained in the sft limit, combined with
a family of broken Fukaya algebras in which the sum of the lengths of
the Morse trajectories connecting the pieces in the building varies.

\section{Varying the length}

We first introduce a version of the broken Fukaya algebra where the
gradient segments in the separating hypersurface sum to a finite,
prescribed length parameter.  The broken Fukaya algebras
$CF(\XX,L,\varsigma)$ for various choices of length parameter
$\varsigma$ will be \ainfty homotopy equivalent.  For $\varsigma = 0$,
we will relate the broken Fukaya algebra with the unbroken one, while
the case $\varsigma = \infty$ is computationally more tractable since
any gradient trajectory of infinite length must pass through a
critical point.

\begin{definition} \label{brokencurves2} Let $n,m,s \ge 0$ be
  integers.  A {\em $\varsigma$-broken disk} with $s$ sublevels
  consists of a treed disk $C = S \cup T $ and a decomposition of the
  surface part $S = S_0 \cup \ldots \cup S_s$ into {\em sublevels} of
  possibly disconnected surfaces $S_0,\ldots, S_s$ where only the
  first piece $S_0$ is allowed to have non-empty boundary, that is,
  $ \partial S_1 = \lldots = \partial S_s = \emptyset, $ and
  satisfying the following conditions:
\begin{enumerate} 
\item {\rm (Same Length Axiom)} the edges $T_e \subset T$ connecting
  adjacent components $S_i$ to $S_{i+1}$ have the same length
  $\ell_i$;
\item {\rm (Non-adjacent sublevel axioms)} there are no edges
connecting non-adjacent components $S_i, S_j, |i - j| \ge 2$; 
\item {\rm (Total distance axiom)} for any component $S_i$ define the
  {\em distance} to $S_0$ by summing the lengths of the segments
  connecting $S_0$ and $S_i$:
  \[ \dist(S_0,S_i) = \sum_{j=1}^{i-1} \ell_i .\]
  Then the distance between $S_0$ and the last component $S_s$ is
  $\varsigma$.
\end{enumerate} 
\end{definition}  

To achieve regular perturbations of the moduli spaces we introduce the
notation of broken disks adapted to a broken Donaldson hypersurface.
Let $\M_\Gamma^\varsigma(\XX, L, \DD)$ denote the moduli space of
stable adapted $\varsigma$-broken weighted treed disks with boundary
in $L$ of type $\Gamma$.  Counting adapted broken maps with lengths
$\varsigma$ as in \eqref{bfuk} defines a $\varsigma$-broken Fukaya
algebra $CF(\XX,L,\varsigma)$, by the same arguments as in the case
$\varsigma = \infty$ treated in the previous Chapter.  To prove that
the broken Fukaya algebra is independent of the length, we construct
morphisms $CF(\XX,L,\varsigma_0) \to CF(\XX,L,\varsigma_1)$ using
treed quilted disks where the total distance from the first to last
level varies over the quilting.  We also choose a function which
specifies the length of the gradient trajectories connecting the
pieces of the broken map.  Choose a continuous function
\[ \ell: \U_{n,1} \to [\varsigma_0,\varsigma_1] \]
on the moduli space of quilted disks as follows.  Require
that
\[ ( d(z) = \infty ) \implies ( \ell(z) = \varsigma_0  ) \]
\[ ( d(z) = -\infty ) \implies (\ell(z) = \varsigma_1 ) \]
and furthermore $\ell$ is constant on any disk or sphere component in
a fiber of $\U_{n,1}$.  A {\em quilted
  $[\varsigma_0,\varsigma_1]$-broken disk} is obtained from a quilted
disk by replacing each disk or sphere with a broken disk or sphere
with the property that on any broken disk or sphere component $C_i$,
the distance from the first to last sublevel is $\ell(C_i)$.  Given
perturbation data $\ul{P}^0$ and $\ul{P}^1$ with respect to metrics
$G^0,G^1 \in \G(L)$ over unquilted treed disks for $D^0$ resp. $D^1$,
a {\em perturbation morphism} $\ul{P}^{01}$ is defined as in Chapter \ref{qdisks}.  A {\em pseudoholomorphic quilted
  $[\varsigma_0,\varsigma_1]$-broken treed disk} $u: C \to X$ of
combinatorial type $\Gamma$ is a continuous map from a quilted treed
$[\varsigma_0,\varsigma_1]$-broken disk $C$ that is smooth on each
component, $J_\Gamma^{01}$-holomorphic on the surface components,
$F_\Gamma^{01}$-Morse trajectory with respect to the metric
$G_\Gamma^{01}$ on each boundary tree segment of disk type
$e \in \Edge_{\white}(T)$, and an $H_\Gamma^{01}$-gradient trajectory
on the tree segments of sphere type $e \in \Edge_{\black}(\Gamma)$.
The moduli space of quilted broken disks $\ol{\M}_{n,1}(\XX,L)$ then
has the same transversality and compactness property as in the
unquilted case.

Counting broken quilted holomorphic disks defines homotopy
equivalences of the broken Fukaya algebras for various choices.  Given
an admissible perturbation morphism $\ul{P}^{01}$ from $\ul{P}^0$ to
$\ul{P}^1$, define
\begin{multline}
  \phi^n: CF(L;\ul{P}^0,\varsigma_0)^{\otimes n} \to
  CF(L; \ul{P}^1,\varsigma_1) \\ (x_1,\lldots,x_n) \mapsto
  \sum_{x_0,u \in {\M}^{[\varsigma_0,\varsigma_1]}_\Gamma(L,D,x_0,\lldots,x_n)_0}
  (-1)^{\heartsuit} \eps(u) ( \sigma(u)!)^{-1} q^{ E(u)}
  y(u) x_0
\end{multline}
where the sum is over quilted disks in strata of dimension zero with
$x_1,\lldots,x_n$ incoming labels.  Similarly counting twice-quilted
disks gives homotopy equivalences between the various morphisms.
Thus the homotopy type of $CF(\XX,L,\varsigma)$ and non-vanishing of
the broken Floer cohomology is independent of all choices (including
the choice of $\varsigma$) up to homotopy equivalence. 

\section{Breaking a symplectic manifold} 

Next we study the relationship between the broken Fukaya algebra and
the unbroken Fukaya algebra.  We recall some terminology from
Bourgeois-Eliashberg-Hofer-Wysocki-Zehnder \cite{bo:com}.

\begin{definition} \label{neckstretched}
\begin{enumerate}
\item {\rm (Neck-stretched manifold)} Let $X$ be a closed almost
  complex manifold, and $Z \subset X$ a separating real hypersurface
  of the form in Definition \ref{bsymp} \eqref{bsympz}.  Let $X^\circ$
  denote the manifold with boundary obtained by cutting open $X$ along
  $Z$.  Let $Z', Z''$ denote the resulting copies of $Z$.  For any
  $\tau > 0$ let
\[ X^\tau = X^\circ \cup \bigcup_{ Z'' = \{ - \tau \} \times Z, \{ \tau \}
  \times Z = Z' } [-\tau,\tau] \times Z \]
obtained by gluing together the ends $Z', Z''$ of $X^\circ$ using a
neck $[-\tau,\tau] \times Z$ of length $2\tau$. 
\end{enumerate} 
\end{definition} 

To achieve compactness and gluing, we restrict to almost complex
structures that are of standard form on the neck region as in the
following:

\begin{definition} \label{cform} {\rm (Cylindrical almost complex structures)} Let $\pi_Y: Z \to Y $ be a circle bundle over a symplectic
  manifold $Y$ and $X = \R \times Z$.  Let 
\[\pi_\R: X \to \R, \quad  \pi_Z: X
  \to Z, \quad \pi: X \to Y\] 
  the projections onto factors resp. onto $Y$ and
  $\ker(D\pi) \subset TX$ the vertical subspace.  Let
  $\omega_Z = \pi_Y^* \omega_Z \in \Omega^2(Z)$ denote the pullback of
  the symplectic form $\omega_Y$ to $Z$.  Then
\begin{equation} \label{V}
 V = \ker (D \pi_Y) \oplus \ker (D \pi_Z) \subset TX \end{equation}
is a rank two subbundle complementary to 
\[ H = \ker(\alpha) \subset \pi_Z^* TZ \subset TX .\]
The $\R$ action by translation on $\R$ and $U(1)$ action on $Z$
combine to a smooth $\C^\times \cong \R \times U(1)$ action on $X$.
An almost complex structure $J$ on $X$ is called {\em cylindrical} if
and only if
\begin{enumerate} 
\item $ J \ppt = v $, with $t$ the coordinate on $\R$ and
\item $J$ is invariant under the $\C^\times$-action. 
\end{enumerate} 
\end{definition} 

\begin{remark} Any cylindrical almost complex structure induces an
  almost complex structure $J_Y$ on $Y$ by projection, so that
  $ D \pi (J w) = J_Y D \pi w $ for any $w \in TX$.  Since $J$
  preserves the vertical component $\ker (D \pi)$ this formula defines
  $J_Y D \pi w$ independent of the choice of $w$.  There are
  isomorphisms of complex vector bundles
\begin{equation} \label{splitT}
 TX \cong H \oplus V, \quad H \cong \pi_Y^* TY, \quad V \cong X
 \times \C .\end{equation}
\end{remark} 

\begin{remark} \label{stretchJ} The neck-stretched manifold $X^\tau$
  is diffeomorphic to $X$ by a family of diffeomorphisms given on the
  neck region by a map $(-\tau,\tau) \times Z \to (-\eps,\eps) \times
  Z$ equal to the identity on $Z$ and a translation in a neighborhood
  of $\pm \tau$.  Given an almost complex structure $J$ on $X$ that is
  of cylindrical form on $(-\eps,\eps) \times Z$, we obtain an almost
  complex structure $J^\tau$ on $X^\tau$ by using the same cylindrical
  almost complex structure on the neck region.  Via the diffeomorphism
  $X^\tau \to X$ described above, we obtain an almost complex
  structure on $X$ also denoted $J^\tau$.
\end{remark}

\section{Breaking perturbation data}

The next lemmas allow us to develop a perturbation scheme for broken
pseudoholomorphic maps compatible with degeneration.  

\begin{lemma} \label{gluelem} {\rm (Symplectic sums for pairs)}
  Suppose that $X_{\subset},X_{\supset}$ are symplectic manifolds both
  containing a symplectic hypersurface
  $Y \subset X_{\subset},X_{\supset}$ such that the normal bundles
  $N_\subset,N_\supset$ of $Y$ in $X_{\subset},X_{\supset}$ are
  inverse, that is, there exists an isomorphism
  $N_\subset \cong N_\supset^{-1}$.  Suppose furthermore that
  $D_{\subset},D_{\supset} \subset X_{\subset},X_{\supset}$ are
  codimension two symplectic submanifolds such that
  $D_{\subset},D_{\supset} \cap Y = D_Y$ for some $D_Y \subset Y$.
  Then the symplectic sum
\[D = D_{\subset} \#_{D_Y} D_{\supset}\] 
is naturally a symplectic submanifold of
$X = X_{\subset} \#_Y X_{\supset}$ preserved by a compatible almost
complex structure of cylindrical form on the neck region.
\end{lemma}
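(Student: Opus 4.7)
The plan is to verify the statement in two steps: first constructing the symplectic sum $D$ as an embedded symplectic submanifold of $X$, then checking that the neck-stretching almost complex structures preserve it. Both steps will reduce to choosing tubular neighborhoods compatibly.

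First I would check that the symplectic sum $D_\subset \#_{D_Y} D_\supset$ is well-defined. Since $D_\subset,D_\supset$ are codimension two symplectic submanifolds of $X_\subset,X_\supset$ meeting $Y$ in $D_Y$, I would arrange (by a small perturbation if necessary, though transversality should already be available from the hypotheses) that $D_\subset \pitchfork Y$ and $D_\supset \pitchfork Y$. Transversality plus the identification $D_\subset \cap Y = D_\supset \cap Y = D_Y$ gives canonical isomorphisms $N_{D_Y \subset D_\subset} \cong N_{Y \subset X_\subset}|_{D_Y} = N_Y^+|_{D_Y}$ and $N_{D_Y \subset D_\supset} \cong N_Y^-|_{D_Y}$. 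Since $N_Y^+$ and $N_Y^-$ are inverse line bundles on $Y$, their restrictions to $D_Y$ are inverse, so the symplectic sum $D := D_\subset \#_{D_Y} D_\supset$ is defined in the standard sense (Gompf, McCarthy--Wolfson).

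Second, I would construct the embedding $D \hookrightarrow X$. Choose a tubular neighborhood $\nu(Y \subset X_\subset) \cong D(N_Y^+)$ of $Y$ in $X_\subset$, and do this so that its restriction to $D_Y$ is a tubular neighborhood of $D_Y$ inside $D_\subset$, i.e. so that $D_\subset \cap \nu(Y \subset X_\subset) \cong D(N_Y^+|_{D_Y}) \subset D(N_Y^+)$. Such a compatible choice exists by the relative tubular neighborhood theorem in the symplectic category (coisotropic embedding, applied to the pair $(Y,D_Y)$). Do the same for $X_\supset$. The symplectic sum $X = X_\subset \#_Y X_\supset$ glues annular regions in $D(N_Y^\pm)$ via a fibrewise inversion, and by compatibility this restricts to the gluing defining $D$. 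Hence $D$ embeds symplectically in $X$.

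Third, I would verify that $J_t$ preserves $TD$. Outside the neck region, $J_t$ agrees with the base compatible almost complex structures on $X_\subset,X_\supset$, which preserve $TD_\subset$ and $TD_\supset$ because these are almost complex submanifolds. On the neck $[-\tau,\tau]\times Z$, the almost complex structure is cylindrical: it projects to a fixed $J_Y$ on $Y$ and sends $\partial_t$ to the Reeb vector field. Since $D_Y$ is an almost complex submanifold of $Y$, its circle-bundle preimage $Z_D \subset Z$ is $J$-invariant in the horizontal directions, and the neck part of $D$ is exactly $[-\tau,\tau]\times Z_D$, which is preserved by the cylindrical structure (the $\partial_t$ and Reeb directions both lie in $T D$ there). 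The same argument applies to the perturbed structures $J_\eps$ of \eqref{Jeps}, since the perturbation involves the horizontal lift of a Hamiltonian vector field on $Y$ that is tangent to $D_Y$ (or can be chosen so, by pulling back $H$ from a Morse function compatible with $D_Y$).

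There is no real obstacle here — the content is entirely in choosing the symplectic tubular neighborhoods compatibly with the pair $(Y,D_Y) \subset (X_\pm,D_\pm)$, which is standard, and in observing that the cylindrical form of the almost complex structures on the neck automatically preserves the cylindrical subbundle coming from $D_Y$.
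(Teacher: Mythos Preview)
Your approach is essentially the same as the paper's: both identify the key issue as choosing symplectic tubular neighborhoods of $Y$ in $X_\subset, X_\supset$ that are simultaneously compatible with the divisors $D_\subset, D_\supset$. Where you invoke a ``relative tubular neighborhood theorem in the symplectic category'' as a black box, the paper carries out this step explicitly via a Moser argument: one chooses metrics near $Y$ making $D_\subset, D_\supset$ totally geodesic (so geodesic exponentiation carries $N_Y^\pm|_{D_Y}$ into $D_\pm$), compares the resulting local model form $\omega_0$ (built from a connection on $N_Y^\pm$) with the given $\omega_1$, observes that $\omega_1 - \omega_0$ vanishes on $D \cup Y$, uses transversality of $D \cap Y$ to deformation-retract a neighborhood onto $(D \cup Y)$ and thereby produce a primitive $\beta$ vanishing there, and then runs the Moser flow to obtain a symplectomorphism fixing $D$ pointwise. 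This is exactly the content of the ``relative'' statement you cite, so your outline is correct but the paper supplies the actual mechanism.

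Your third step, verifying that the cylindrical $J_t$ preserves $TD$ on the neck, is more explicit than the paper's, which leaves this as essentially by construction. One caution there: for the perturbed structures $J_\eps$, the perturbed Reeb field $v_\eps$ involves the horizontal lift of the Hamiltonian vector field of $H$ on $Y$, which need not be tangent to $D_Y$ for an arbitrary Morse function. Your parenthetical ``or can be chosen so'' is the correct fix, but it is an additional constraint on $H$ (or on the perturbation), not automatic from the setup.
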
 

\begin{proof} The statement is an application of symplectic local
  models as in Lerman \cite{le:sy2} and Gompf \cite{go:ss}.  Choose a
  metric on $X_{\subset},X_{\supset}$ near $Y$ so that
  $D_{\subset},D_{\supset}$ is totally geodesic in
  $X_{\subset},X_{\supset}$, as in \cite[Lemma 6.8]{milnor:hcobord};
  this can be done in stages so that the metrics on $Y$ (considered as
  submanifolds of $X_{\subset},X_{\supset}$) agree.  The geodesic
  exponential map
  $N_{\subset} \to X_{\subset},N_\supset \to X_{\supset}$ identifies a
  neighborhood of the zero section in $N_\subset, N_\supset$
  restricted to $ D_Y$ with a neighborhood of $D_Y$ in
  $D_\subset, D_\supset$.  Choose a unitary connection $\alpha_+$ on
  $N_\subset$, let $\alpha_-$ denote the dual connection on
  $N_\supset$ and let $\rho: N \to \R_{\ge 0}$ denote the norm
  function.  Let $\pi: N \to Y$ denote the projection.  The two forms
\[\pi^* \omega_Y + \d (\alpha_+, \rho) \in \Omega^2(N_\subset), \quad  \pi^* \omega_Y -
\d (\alpha_-, \rho) \in \Omega^2(N_\supset) \]
are symplectic in a neighborhood of the zero section and the punctured
normal bundles $N_\subset^\circ, N_\supset^\circ$ may be glued
together to form the symplectic sum.  

To globalize this procedure we show that the symplectic normal forms
may be chosen so that the identification of the divisors is preserved.
The constant rank embedding theorem in Marle \cite{ma:so} identifies a
neighborhood of $D_Y$ in $D$ with $N | D_Y$ symplectically.  Consider
the identification of $N$ with a neighborhood of $Y$ on
$X_{\subset},X_{\supset}$ which maps $N |D_Y $ to $D = D_{\subset}$ or
$D = D_{\supset}$.  Let $\omega_1$ denote the pull-back of the
symplectic form on $X_{\subset},X_{\supset}$ and $\omega_0$ the
symplectic form induced from a connection on $N$.  We have
$\omega_1 - \omega_0 | (D \cup Y) = 0$.  The intersection $D \cap Y$
is transverse, so there exists a deformation retract of a neighborhood
$U$ of $Y$ to $(D \cup Y) \cap U$ in $X_{\subset}$ or $X_{\supset}$.
The standard homotopy formula produces a one-form
$\beta \in \Omega^1(U)$ such that
\[\d \beta = \omega_1 - \omega_0, \quad \beta | ((D \cup Y)
\cap U ) = 0 .\]
Then $\omega_t = t \omega_1 + (1-t) \omega_0$ satisfies
$\ddt \omega_t = \d \beta$.  Define a vector field
\[v_t \in \Vect(U), \quad \iota(v_t) \omega_t = \beta .\]  
Then $v_t$ vanishes on $(D \cup Y) \cap U$ and defines a
symplectomorphism from $\omega_0 $ to $\omega_1$ in a neighborhood of
$Y$ equal to the identity on $D$.  Thus the gluing of local models
induces an identification of the divisors as well.  The almost complex
structure on $N$ preserves $N|D_Y$ and induces a cylindrical almost
complex structure on the symplectic sum preserving $D$.
\end{proof}

\begin{lemma}  Suppose that $L \subset X_{\subset}$ is a rational Lagrangian. 
There exists a family of divisors $D^\tau$ on $X$ so that the image of
$L$ in $X$ is exact in the complement of $D^\tau$, and $D^\tau$ degenerates
as $t \to 0$ to a broken symplectic manifold $\DD$ where
$D_{\subset},D_{\supset}$ are divisors in $X_{\subset},X_{\supset}$ so
that $L$ is exact in the complement of $D_\subset$.
\end{lemma}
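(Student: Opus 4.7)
The plan is to construct the family $D_t$ by the symplectic sum construction of Lemma \ref{gluelem}, applied to a broken stabilizing divisor $\DD = (D_\subset, D_\supset)$ with the added property that $L$ is exact in $X_\subset - D_\subset$. The parameter $t$ will be the neck-length parameter of the symplectic sum: for $t > 0$ one obtains a divisor $D_t \subset X$ in the smoothed manifold, and as $t \to 0$ the divisor $D_t$ degenerates to $\DD$ inside the broken manifold $X_\subset \cup_Y X_\supset$. Thus the core task is to upgrade the existence result Theorem \ref{ddexists} to produce a broken divisor with controlled behavior near $L$.

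To construct $\DD$ with the exactness property, I would first fix an asymptotically holomorphic, uniformly transverse sequence of sections $\sigma_{Y,k}$ of $\widetilde{Y}^{\otimes k}$ whose zero set is $D_Y \subset Y$. On the $X_\subset$ side, I would combine the relative extension from Lemma \ref{extlem} (which extends $\sigma_{Y,k}$ to an asymptotically holomorphic sequence on $X_\subset$ in a tubular neighborhood of $Y$) with the Auroux-Gayet-Mohsen construction of sections concentrated on the rational Lagrangian $L$; the latter is the standard ingredient in Cieliebak-Mohnke's proof that a rational Lagrangian admits a stabilizing divisor in whose complement it is exact. On the $X_\supset$ side, it suffices to apply Lemma \ref{extlem} directly. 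Then the usual Donaldson uniform-transversality perturbation (as in Theorem \ref{ddexists}) does not disturb the restriction to $Y$ and can be carried out in a way that preserves the concentration on $L$, producing smooth symplectic hypersurfaces $D_\subset \subset X_\subset$ and $D_\supset \subset X_\supset$ with $D_\subset \cap Y = D_\supset \cap Y = D_Y$, with $L$ disjoint from $D_\subset$ and exact in $X_\subset - D_\subset$.

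With $\DD$ in hand, Lemma \ref{gluelem} produces, for each sufficiently small neck parameter $t > 0$, a symplectic hypersurface $D_t \subset X$ as the symplectic sum of $D_\subset$ and $D_\supset$ along $D_Y$. By construction $[D_t] = k[\omega]$, and the family degenerates to $\DD$ as $t \to 0$ in the sense that $(X, D_t) \rightsquigarrow (X_\subset \cup_Y X_\supset, D_\subset \cup_{D_Y} D_\supset)$ under neck stretching along $Z$. Since $L \subset X_\subset^\circ$ lies at uniformly positive distance from the neck region for all small $t$, it is automatically disjoint from $D_t$.

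The main technical point is the exactness of $L$ in $X - D_t$, which I expect to be the hardest step. The strategy is a Mayer-Vietoris argument for the cover
\[
X - D_t = (X_\subset - D_\subset) \cup (\text{neck} - D_t^{\text{neck}}) \cup (X_\supset - D_\supset).
\]
Since $[D_t] = k[\omega]$, one has a global primitive $\beta_t \in \Omega^1(X - D_t)$ of $\omega$; any two such primitives differ by a class in $H^1(X - D_t)$. Choose a primitive $\beta_\subset$ of $\omega$ on $X_\subset - D_\subset$ with $\beta_\subset|_L$ exact (which exists by Step 2). The obstruction to modifying $\beta_t$ so that $\beta_t|_{X_\subset^\circ - D_\subset^\circ} = \beta_\subset|_{X_\subset^\circ - D_\subset^\circ}$ lies in $H^1$ of the neck region $(-t,t) \times (Z - D_Z)$, which is isomorphic to $H^1(Z - D_Z)$; any such correction is a closed one-form on $X - D_t$ supported away from a neighborhood of $L$, so pulls back trivially to $L$. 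Consequently $[\beta_t|_L] = [\beta_\subset|_L] = 0 \in H^1(L)$, and $L$ is exact in the complement of $D_t$ as required. Equivalently, any disk $u \colon (D^2, \partial D^2) \to (X - D_t, L)$ may be homotoped rel boundary into $X_\subset - D_\subset$ up to the addition of spheres in $X - D_t$ (whose symplectic areas are integer multiples of $1/k$ lying in the periods already handled by rationality), so $\int u^*\omega = 0$.
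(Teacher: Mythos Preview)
Your overall strategy matches the paper's: start from a broken stabilizing divisor $\DD=(D_\subset,D_\supset)$ with $L$ exact in $X_\subset - D_\subset$, and apply the symplectic-sum-for-pairs Lemma \ref{gluelem} to obtain the family $D_t$ degenerating to $\DD$. The paper's proof is in fact extremely terse: it simply notes that the normal bundles of $D_Y$ in $D_\subset,D_\supset$ are inverse (being restrictions of $N_Y^\pm$), so Lemma \ref{gluelem} applies and the glued divisor is almost complex for each $J_\eps$. It does not spell out the exactness claim at all.

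Your attempt to justify exactness is the one place where your argument wobbles. The Mayer--Vietoris sketch is not quite right: the obstruction to modifying $\beta_t$ to agree with $\beta_\subset$ on $X_\subset^\circ - D_\subset^\circ$ is the cokernel of the restriction $H^1(X-D_t)\to H^1(X_\subset^\circ - D_\subset^\circ)$, not $H^1$ of the neck, and the assertion that ``any such correction \ldots\ pulls back trivially to $L$'' is not justified. Likewise, the alternative claim that any disk in $(X-D_t,L)$ can be homotoped rel boundary into $X_\subset - D_\subset$ up to spheres is not obvious. The clean argument, implicit in the section-based construction you already invoked in Step~2, is this: the divisors are cut out by asymptotically holomorphic sections $s_\subset, s_\supset$ restricting to a common $s_Y$ on $Y$, and the symplectic sum glues these to a section $s_t$ of $\widetilde{X}^k$ with $D_t = s_t^{-1}(0)$. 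Since $L$ lies in $X_\subset^\circ$ away from the gluing region, $s_t$ agrees with $s_\subset$ near $L$. The primitive $\beta_t$ of $\omega$ on $X-D_t$ may be taken to be (a constant multiple of) the imaginary part of $s_t^{-1}\nabla s_t$, which therefore coincides with $\beta_\subset$ near $L$; exactness of $\beta_\subset|_L$ then gives exactness of $\beta_t|_L$ directly.
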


\begin{proof} We first construct a suitable Donaldson hypersurface in
  the broken symplectic manifold.  Let $D_\subset \subset X_{\subset}$
  be a divisor making $L$ exact in the complement, and such that the
  intersection $D_\subset \cap Y = D_Y$ is a Donaldson hypersurface
  for $Y$.  We may assume that $D_\subset, D_\supset$ are the zero set
  of some element in an asymptotically holomorphic sequence of
  sections covariant constant on $L$.  By Theorem \ref{ddexists},
  there exists a Donaldson hypersurface $D_\supset \subset
  X_{\supset}$ with $Y \cap D_\supset = D_Y$.  We now apply the gluing
  procedure of Lemma \ref{gluelem}.  Since the normal bundles
  $N_{\subset}, N_{\supset}$ of $Y$ in $X_{\subset},X_{\supset}$ are
  inverses, the restrictions $N_{\subset}, N_{\supset} | D_Y$ are also
  inverses.  Then $(Y,D_Y)$ has a symplectic tubular neighborhood of
  the form $(N_{\subset} , N_\subset | D_Y)$ resp $(N_\supset$ resp.
  $ N_{\supset} | D_Y)$, where the latter has symplectic structure
  induced by the choice of connection as in the Lemma \ref{gluelem}.
  Furthermore, $D$ is almost complex with respect to every element of
  the family $J^\tau$ by construction, and is the zero set of some
  element in an asymptotically holomorphic sequence of sections of
  $\ti{X}^k$.  The connections on the bundles $\ti{X}_\subset,
  \ti{X}_\supset^k$ glue together to a connection on $\ti{X}^k$ making
  $\omega$ exact in the complement of $D$, and for which the section
  defining $D$ is covariant constant on $L$. Then $L$ is exact in the
  complement $X - D$ by a standard argument using Stokes' theorem, c.f. 
\cite[Section 3]{cw:traj}. 
\end{proof} 

We now construct a system of perturbations for the breaking process.
A {\em breaking disk} is a treed holomorphic disk $u: C \to X^{\tau}$
with a {\em breaking parameter} $\tau \in [0,\infty]$ such that if
$\tau < \infty$ resp.  $\tau = \infty$ then the disk is unbroken
resp. broken.  The compactified moduli space of breaking disks of type
$\Gamma$ is denoted $\ol{\M}_\Gamma$ with a universal curve
$\ol{\U}_\Gamma$ that has surface part $\ol{\S}_\Gamma$ and tree part
$\ol{\T}_\Gamma$.  Breaking quilted disks and twice-quilted disks can
be defined similarly.

\begin{definition}  {\rm (Perturbation data)} 
  A {\em perturbation datum} for breaking curves of type $\Gamma$ is a
  datum $ P_\Gamma = (J_\Gamma,F_\Gamma,G_\Gamma,H_\Gamma)$ consisting
  of maps
\[ J_{\Gamma}: \ol{\S}_\Gamma \to \J^\tau(X), 
\quad
\left( \begin{array}{l}  
F_{\Gamma}:
\ol{\T}_{\Gamma,\white} \to C^\infty(L) \\  G_\Gamma:
\ol{\T}_{\Gamma,\white} \to \G(L) \end{array} \right), \quad H_\Gamma:
\ol{\T}_{\Gamma,\black} \to C^\infty(Y) .\]
\end{definition} 

In particular, suppose that $J_\Gamma$ is a collection of
domain-dependent almost complex structures for disks equal to a fixed
cylindrical structure $J_{\R \times Z}$ in a tubular neighborhood of
the separating hypersurface $Z \subset X$.  Then by gluing in lengths
of neck $\tau$ we obtain a family $J_\Gamma^\tau$ of domain-dependent
almost complex structures for breaking disks.  

Associated to any morphism of combinatorial types is a morphism of
perturbation data, as before.  However in this case there is a new
kind of morphism $\Gamma \to \Gamma'$ of combinatorial types
corresponding to {\em gluing of cylindrical end symplectic manifolds},
that is, making the neck length $\tau$ in \ref{bsymp} finite.  In this
case a collection of perturbations $\ul{P} = (P_\Gamma)$ is {\em
  coherent} if, in addition to the usual conditions in the coherence
axioms, we require that the restriction of $P_{\Gamma'}$ to
$\U_\Gamma$ is equal to $P_\Gamma$ whenever a neck length $\tau$ is
set to infinity.  Given a map $u: C = S \cup T \to X$ we obtain an
element
\begin{multline} 
  \olp u = \left(  J_\Gamma \circ \d (u | S) \circ j + \d (u | S) ,
    \dds u | T_{\black}  +
  \grad_{F_\Gamma} u | T_{\black} ,
  \dds u | T_{\white}  + \grad_{H_\Gamma} u | T_{\white}  \right)\\
  \in \Omega^{0,1}(S, (u|S)^* TX) \times \Omega^1(T_{\black}, (u|T_{\black})^*
  TX) \times \Omega^1(T_{\white}, (u |T_{\white})^* TL). \end{multline}
 We say that $u$ is $P_\Gamma$-holomorphic if $\olp u = 0$.

\begin{definition} 
  Given a perturbation datum an {\em adapted breaking map
    $u: C = S \cup T \to X$} of type $\Gamma$ is a breaking disk $C$
  with breaking parameter $\tau \in [0,\infty)$ together with an
  adapted map $C \to X$ that is $P_\Gamma$-holomorphic for
  $\tau < \infty$, or an adapted map $C \to \XX$ for $\tau = \infty$
  that is a broken $P_\Gamma$-holomorphic map. 
\end{definition}

For generic breaking perturbations, the moduli space of adapted
breaking maps is cut out transversally, by a repeat of the Sard-Smale
arguments described in the proof of Theorem \ref{main}.  

%CWstart 
A theorem analogous to Theorem \ref{sftcompact} holds in the case of a 
breaking symplectic manifold.  Compare with, for example,
\cite[Theorem 1.1]{cm:com}.  The proof is similar, but allowing for 
domain-dependent almost complex structures away from the nodes. 

\begin{theorem} \label{sftcompact2} Let $\ul{P} = (P_\Gamma)$ be a 
  coherent collection of perturbation data for breaking disks.  Let 
  $\tau_\nu \to \infty$ be a sequence of neck lengths.  Any sequence 
  of adapted stable maps $u_\nu: C_\nu \to X^{\tau_\nu}$ holomorphic 
  with respect to $J_\Gamma$ with bounded energy converges, after 
  passing to a subsequence, to a stable adapted broken map 
  $u: C \to \XX[k]$, for some $k$, with the same index. 
\end{theorem} 
%CWend 

\section{Getting back together} 
\label{getting} 

We have tubular neighborhoods of the strata as in Theorem \ref{main},
in particular, a tubular neighborhood of the boundary corresponding to
the breaking.  This gluing result is similar but slightly different
from that in Bourgeois-Oancea \cite{bourg:auto}.  Much more
complicated gluing theorems in symplectic field theory have been
proved in Hutchings-Taubes \cite{ht:gl2} and Hofer-Wysocki-Zehnder
(see e.g. \cite{ho:sc}) both of which involve obstructions arising
from multiple branched covers of Reeb orbits.  Here any such cover
corresponds to a fiber of a bundle with projective line fibers, and so
one has transversality automatically (although one also has to achieve
transversality with the diagonal at the nodes).  Recall from
Definition \ref{neckstretched} that $C^{\delta_1,\ldots,\delta_k}$
resp. $ X_\delta := X^{|\ln(\delta)|}$ are obtained from $C, \XX$ by
gluing in necks of length $|\ln(\delta_i)|$ resp. $|\ln(\delta)|$ at
each node of $C$ resp. the divisor $Y$.

\begin{theorem} \label{gluing2} \label{bij} Suppose that
  $u: C \to \XX$ is a $0$-regular broken map with multiplicities
  $\mu_1,\ldots,\mu_k$ at the separating hypersurface $Y \subset \XX$.
  Then there exists $\delta_0 > 0$ such that for each gluing parameter
  $\delta \in (0,\delta_0) $ there exists an unbroken map
  $u_\delta: C^{\delta/\mu_1,\ldots, \delta/\mu_k} \to X_{\delta}$,
  with the property that $u_\delta$ depends smoothly on $\delta$ and
  $\lim_{\delta \to 0} [u_\delta] = [u]$.  For any energy bound $E >0$
  there exists $\delta_0$ such that for $ \delta < \delta_0$ the
  correspondence $[u] \mapsto [u_\delta]$ defines a bijection between
  the moduli spaces $\M^{< E}_\Gamma(X_{\delta},L)$ and
  $\M^{< E}_\Gamma(\XX,L)$ for any combinatorial type $\Gamma$.
\end{theorem}

\begin{proof} 
  As for other gluing theorems in pseudoholomorphic curves the proof
  is an application of a quantitative version of the implicit function
  theorem for Banach manifolds.  The steps are: construction of an
  approximation solution; construction of an approximate inverse to
  the linearized operator; quadratic estimates; application of the
  contraction mapping principle, and surjectivity of the gluing
  construction.  We describe the proof of this gluing result in the
  following simple case: Let $C$ be a broken curve with two sublevels
  $C_+,C_-$.  A standard gluing procedure creates, for any small {\em
    gluing parameter} $\delta \in \C$, a curve $C^\delta$ obtained by
  removing small disks $U_\pm$ around the node in the surface part
  $w \in S \subset C$ and gluing using a map given in local
  coordinates by $z \mapsto \delta/z$.  That is, $C^\delta$ is
  obtained by replacing $S$ with
  $S^\delta - (U_+ - U_-)/ (z \sim \delta/z)$ and leaving the tree
  part the same.

  \vskip .1in \noindent {\em Step 0: Fix local trivializations of the
    universal treed disk and the associated families of complex
    structures and metrics on the domains and targets.}  Let
  $u_-: C_- \to X_- := X_\subset$ and $u_+: C_+ \to X_+ := X_\supset$
  with $C_\pm = S_\pm \cup T_\pm$ be treed holomorphic disks agreeing
  at a point $u_+(w_{+-}) = u_-(w_{-+}) \in Y$.  Let $\Gamma_\pm$
  denote the combinatorial types of $u_\pm$ and let
\[{{\U}}_{\Gamma_\pm}^i \to {{\M}}_{\Gamma_\pm}^i \times S_\pm, , i =
1,\lldots, l \]
be local trivializations of the universal treed disk, identifying each
nearby fiber with $(C_\pm^\circ,\ul{z},\ul{w})$ such that each point
in the universal treed disk is contained in one of these local
trivializations.  We may assume that $\M_{\Gamma_\pm}^i$ is identified
with an open ball in Euclidean space so that the fiber $C_\pm^\circ$
corresponds to $0$.  Similarly, we assume we have a local
trivialization of the universal bundle near the glued curve giving
rise to a family of complex structures
\begin{equation} \label{localtriv2} {\M}_{\Gamma}^i \to \J(S^\delta) \end{equation}
of complex structures on the two-dimensional locus
$S^\delta \subset C^\delta$ that are constant on the neck region.  We
consider metrics on the punctured curves $C_\pm^\circ$ that are
cylindrical on the neck region given as the image of the end
coordinates 
\[ \eps^C_\pm:  \pm [0,\infty) \times S^1 \to C_\pm^\circ .\]
Introduce cylindrical ends for $X_- := X_\subset, X_+ := X_\supset$ so
that the embeddings
\[ \eps^X_\pm: \ \mp [0,\infty) \times Z \]
are isometric.  Both the glued target $X_{\delta^\mu}$ and glued domain
$C^\delta$ are defined by removing the part of the end with
$|s| > |\ln(\delta)| $ and identifying
$(s,t) \sim (s- |\ln(\delta)|,t)$ for
$(s,t) \in  (0, |\ln(\delta)|) \times S^1 $ resp.
$(s,t) \sim (s- |\mu\ln(\delta)|,t)$ for
$(s,t) \in  (0, |\ln(\delta)| \times Z) $.

{\vskip .1in \noindent \em Step 1: Define an approximate solution by
  gluing together the two solutions using a cutoff function.} 
 Choose  
a cutoff function  
\begin{equation} \label{beta} 
\beta \in C^\infty(\R, [0,1]), \quad  
\begin{cases} 
\beta(s) = 0 & s \leq 0 \\ \beta(s) = 1 &  s \ge  
1 \end{cases}. \end{equation}
We may suppose by Remark \ref{expdec} and a shift in coordinates that
the maps $u_\pm$ are asymptotic to $(\mu s, t^\mu z)$ for some
$z \in Z$.  The maps $u_\pm^\pm$ considered locally as maps to
$X^\circ$ are asymptotic to the trivial cylinder $(\mu s,t^\mu z) $:
\[ \lim_{s \to \infty} d(u_+(s,t), (\mu s, t^\mu z)) = \lim_{s \to
  -\infty} d(u_-(s,t), ( \mu s, t^\mu z)) = 0 .\]
Denote by $ \exp_x: T_x X \to X$ geodesic exponentiation with respect
to the given cylindrical metric on the neck region.  Let $\zeta_\pm$
be the section related by geodesic exponentiation in cylindrical
coordinates to the maps $u_\pm$
\[ u_\pm(s,t) = \exp_{(\mp \mu s,t^\mu z)} (\zeta_\pm(s,t)) .\] %
Define $u^{\pre}_\delta$ to be equal to $u_\pm$ away from the neck
region, while on the neck region of $C^\delta$ with coordinates $s,t$
define
\begin{multline} \label{preglued} u^{\pre}_\delta(s,t) = \exp_{(\mu 
    s,t^\mu z)} ( \zeta^\delta(s,t)), \\ \quad 
\zeta^\delta(s,t) = \beta(-s) \zeta_-(- s + |\ln(\delta)|/2, t) +
  \beta(s ) \zeta_+(s - |\ln(\delta)|/2,t)) .
\end{multline}
In other words, one translates $u_+,u_-$ by some amount $|\ln(\delta)|$,
and then patches them together using the cutoff function and geodesic
exponentiation. 

{\vskip .1in \noindent \em Step 2: Define a map between suitable
  Banach spaces whose zeroes describe pseudoholomorphic curves near to
  the approximate solution.}  Denote by
\[  (s,t) \in [-|\ln(\delta)|/2, |\ln(\delta)|/2] \times S^1 \] 
the coordinates on the neck region in $C^\delta$ created by the
gluing.  Let $\lambda \in (0,1)$ be a {\em Sobolev weight}.  Define a
{\em Sobolev weight function}
\[ \kappa^\delta_\lambda: C^\delta \to [0,\infty), \quad 
 (s,t) \mapsto 
  \beta(|\ln(\delta)|/2 - |s|) p \lambda ( |\ln(\delta)|/2 - |s| ) \]
where $ \beta(|s| - |\ln(\delta)|/2) p \lambda (|s| - |\ln(\delta)|/2) 
) $ is by definition zero on the complement of the neck region.
We will also use similar weight functions on the punctured curves 
\[ \kappa_\lambda^{\pm}: C_\pm^\circ \to [0,\infty), \quad (s,t)
\mapsto \beta(|s|) p \lambda |s| \]
Pseudoholomorphic maps near the pre-glued solution are cut out locally
by a smooth map of Banach spaces.  Given a smooth map
$u: C^\delta \to X$, let 
\[ \Omega^0(C^\delta, u^* TX, T_{w_\pm} Y) = \Omega^0(S^\delta,
(u|S)^*TX, T_{w_\pm} Y) \oplus \Omega^0(T, (u|T)^* TL) \]
the space of infinitesimal deformations of the map, preserving the
condition $u(w_\pm) \in Y$.  Given an element and sections 
\[ m \in \M_\Gamma^i, \quad \xi_S: S^\delta \to u_S^* TX, \quad \xi_T
: T \to u_T^* TL, \quad \xi = (\xi_S,\xi_T) \]
define as in Abouzaid \cite[5.38]{ab:ex} a norm based on the
decomposition of the section into a part constant on the neck and the
difference:
\begin{multline} \label{1pl2} 
\Vert (m,\xi) \Vert_{1,p,\lambda}^p :=
   \Vert m \Vert^p + \Vert \xi_S \Vert^p_{1,p,\lambda}
 + \Vert \xi_T \Vert^p_{1,p,\lambda} \\ 
\Vert \xi_S \Vert^p_{1,p,\lambda} := 
\Vert (\xi_S(0,0)) \Vert^p +
    \int_{C^\delta} ( \Vert \nabla \xi_S \Vert^p  \\  + \Vert \xi_S - \beta(|
    \ln(\delta)|/2 - |s|) \cT^u ( \xi_S(0,0) ) \Vert^p ) 
    \exp( \kappa_\lambda^\delta) \d \Vol_{C^\delta} 
\end{multline}
where $\cT^u$ is parallel transport from $u^{\pre}(0,t)$ to 
$u^{\pre}(s,t)$ along $u^{\pre}(s',t)$.  Denote by 
$\Omega^0(C^\delta, u^* TX, T_w Y)_{1,p,\delta}$  the space of 
$W^{1,p}_{\loc}$ sections with finite norm \eqref{1pl2}; these are 
sections whose difference from a covariant-constant $TY$-valued 
section on the neck has an exponential decay behavior governed by the 
Sobolev constant $\lambda$.  Pointwise geodesic exponentiation defines 
a map (using Sobolev multiplication estimates)
\begin{equation}
  \exp_{u_\delta^{\pre}}: \Omega^0(C^\delta, (u_\delta^{\pre})^*
  TX_{\delta^\mu})_{1,p,\lambda} \to \Map_{1,p}(C^\delta,X_{\delta^\mu}) \end{equation}
where $\Map_{1,p}(C^\delta,X_{\delta^\mu})$ denotes maps of class
$W_{1,p}^{\loc}$ from $C^\delta$ to $X_{\delta^\mu}$.  Similarly for
the punctured surfaces we have Sobolev norms
\begin{multline} \label{1pl22} \Vert (m,\xi) \Vert_{1,p,\lambda} :=
   \Vert m \Vert^p + \Vert \xi_S \Vert_{1,p,\lambda}
+ \Vert \xi_T \Vert_{1,p,\lambda}, \\ 
\Vert \xi_S \Vert_{1,p,\lambda} :=  \Vert \xi(0,0) \Vert^p +
    \int_{C^\delta} ( \Vert \nabla \xi \Vert^p +  \\
  \left. \Vert \xi_S - \beta(|s|) \cT^u \xi(0,0) \Vert^p ) \exp(
    \kappa_\lambda^\pm) \d \Vol_{C^\circ_\pm} \right)^{1/p}.
\end{multline}
Geodesic exponentiation defines maps 
\begin{equation}
  \exp_{u_\delta^{\pre}}: \Omega^0(C^\circ_\pm, (u_\delta^{\pre})^*
  TX)_{1,p,\lambda} \to \Map_{1,p,\lambda}(C^\circ_\pm,X_\pm^\circ) \end{equation}
where, by definition, $\Map_{1,p,\lambda}(C^\circ_\pm,X_\pm^\circ)$ is
the space of $W_{1,p}^{\loc}$ maps from $C^\circ_\pm$ to $X_\pm$ that
differ from a Reeb orbit at infinity by an element of
$\Omega^0(C^\circ_\pm,X_\pm^\circ)_{1,p,\lambda}$ (which may vary at
infinity because of the inclusion of constant maps on the end in the
Banach space).  Since the surface part $S^\delta$ satisfies a uniform
cone condition and the metrics on $X_{\delta^\mu}$ are uniformly
bounded, one has uniform Sobolev embedding estimates and
multiplication estimates.

Results on elliptic operators on manifolds with cylindrical ends in
Lockhart-McOwen \cite{loc:ell} imply that the linearized
Cauchy-Riemann operators are Fredholm. For the closed manifolds
$C_\pm$, we have linearized operators
\[ D_{u_\pm}:  \Omega^0(C_\pm, u_\pm^* TX)_{1,p} \to
\Omega^{0,1}(C_\pm, u_\pm^* TX)_{0,p} \]
where by definition $\Omega^0(C_\pm, u_\pm^* TX)_{1,p}$ consists of
those sections $\xi$ satisfying various constraints such as tangency
to the divisor at infinity $\xi(w_\pm) \in TY$, see \eqref{CST}.  In the case of the
cylindrical end manifolds $u_\pm^\circ: C_\pm^\circ \to X$, the
assumption $\lambda \in (0,1)$ on the Sobolev decay constant implies
that the linearized operators
\[ D_{u_\pm}^{\circ} : 
\Omega^0(C_\pm^\circ,
u_\pm^* TX^\circ_\pm)_{1,p,\lambda} 
\to \Omega^{0,1}(C_\pm^\circ,
u_\pm^* TX^\circ_\pm)_{1,p,\lambda} \]
are Fredholm.  As in \cite[4.18]{ab:ex} there is an inclusion
\begin{equation} \label{extmap} \ker(D_{u_\pm}^\circ) \to
  \ker(D_{u_\pm}) 
\end{equation}
defined by removal of singularities at infinity: The space
$\Omega^0(C_\pm^\circ, u_\pm^* TX^\circ_\pm)_{1,p,\lambda} $ includes
into $\Omega^0(C_\pm, u_\pm^* TX_\pm)_{1,p,\lambda} $ because of the
exponential decay estimates while conversely elliptic regularity
implies any element of $ \ker(D_{u_\pm})$ is smooth near $w_\pm$ and
so decays exponentially in logarithmic coordinates; note that a change
to the constant term on the neck corresponds to a change to the
derivative of the map at the point at infinity. 
 Since any translation 
on the neck extends to a diffeomorphism of the domain, any element 
$\xi_\pm \in \ker(\ti{D}_{u_\pm})$ is equivalent mod 
$\ker(\ti{D}_{u_\pm})$ to a section taking values in $T_y Y$ at 
infinity.  
By the regularity assumption the fiber products 
\begin{equation} \label{trancut} \ker(\ti{D}_{u_-^\circ}) \times_{T_y 
    Y} \ker(\ti{D}_{u_+^\circ}) \cong \ker(\ti{D}_{u_-}) \times_{T_y 
    Y} \ker(\ti{D}_{u_+}) 
\end{equation}
are transversally cut out. 

The space of pseudoholomorphic maps near the pre-glued solution is cut
out locally by a smooth map of Banach spaces.  For a $0,1$-form
$\eta \in \Omega^{0,1}( C^\delta, u^* TX)$ define
\[ \Vert \eta \Vert_{0,p,\lambda} = \left( \int_{C^\delta} \Vert \eta 
\Vert^p \exp( \kappa_\lambda^\delta) \d \Vol_{C^\delta} \right)^{1/p} .\]
Parallel transport using an almost-complex connection defines a map
\[ \cT_{u_\delta^{\pre}}(\xi) :\ \Omega^{0,1}(C^\delta,
(u_\delta^{\pre})^*TX)_{0,p,\lambda} \to \Omega^{0,1}(C,
(\exp_{u_\delta^{\pre}}(\xi))^*TX)_{0,p,\lambda} .\]
Define
\begin{multline}
 \cF_\delta: \M_\Gamma^i \times \Omega^0(C^\delta, (u_\delta^{\pre})^*
 TX)_{1,p} \to \Omega^{0,1}(C^\delta, (u_\delta^{\pre})^* TX)_{0,p}
 \\ (m_S,m_T,\xi_S,\xi_T) \mapsto \left( \cT_{u_\delta^{\pre}}
(\xi_S)^{-1} \olp_{J_\Gamma,
   j(m_S)} \exp_{u_\delta^{\pre}} (\xi_S), \
 ( \ddt +
 \grad_{m_T}(H_\Gamma))(\xi_T) \right) 
 .\end{multline}
Treed pseudoholomorphic maps close to $u_\delta^{\pre}$ correspond to
zeroes of $\cF_\delta$.  In addition, because we are working in the
adapted setting, our curves $C^\delta$ have a collection of interior
leaves $e_1,\lldots, e_n$.  We require that the constant maps
on the interior edges have values 
\[ (\exp_{u_\delta^{\pre}} (\xi) )(T_{e_i}) \subset D, \quad i =
1,\lldots, n .\]
By choosing local coordinates near the attaching points
$z_i = e_i \cap S$, these constraints may be incorporated into the map
$\cF_\delta$ to produce a map
\begin{multline} 
 \cF_\delta: \M_\Gamma^i \times 
\Omega^0(C_\de, 
 (u_\delta^{\pre})^* TX)_{1,p} 
\\ \to \Omega^{0,1}(C^\delta,
 (u_\delta^{\pre})^* TX)_{0,p}
 \oplus \bigoplus_{i=1}^n T_{u(z_i)}
 X/T_{u(z_i)} D \end{multline}
whose zeroes correspond to {\em adapted} pseudoholomorphic maps near
the preglued map $u_\delta^{\pre}$.

{\vskip .1in \noindent \em Step 3: Estimate the failure of the approximate
  solution to be an exact solution.}  The one-form $ \cF_\delta(0) $
has contributions created by the cutoff function as well as the
difference between $J_{u_\pm}$ and $J_{u_\delta^{\pre}}$:
\begin{eqnarray*} \Vert \cF_\delta(0) \Vert_{0,p,\lambda} 
&=& 
\Vert \olp \exp_{(\mu s,t^\mu z)} ( \beta(-s) \zeta_-(- s +
  |\ln(\delta)|/2, t) \\ && + \beta(s ) \zeta_+(s - |\ln(\delta)|/2,t))
  \Vert_{0,p,\lambda}\\ 
&=& \Vert ( D \exp_{(\mu s,t^\mu z) } ( \d \beta(-s) \zeta_-(- s +
 |\ln(\delta)|/2, z) \\ && + \d \beta(s ) \zeta_+(s -
  |\ln(\delta)|/2,t)) + 
   \\ &&
 ( \beta(-s) \d \zeta_-(- s +
  |\ln(\delta)|/2, z) \\ && + \beta(s ) \d \zeta_+(s - |\ln(\delta)|/2,t))
  )^{0,1} \Vert_{0,p,\lambda}.
\end{eqnarray*}
Holomorphicity of $u_\pm$ implies an
estimate
\begin{multline} \nonumber
 \Vert ( ( \beta(-s) \d \zeta_-(- s + |\ln(\delta)|/2, z) + \beta(s
) \d \zeta_+(s - |\ln(\delta)|/2,t)) )^{0,1} \Vert_{0,p,\lambda} \\ \leq
C e^{ - |\ln(\delta)| (1- \lambda)} = C \delta^{1 - \lambda } ,\end{multline}
c.f. Abouzaid \cite[5.10]{ab:ex}.  Similarly from the terms involving
the derivatives of the cutoff function and exponential convergence of
$\zeta_\pm $ to $0$ we obtain an estimate
\begin{equation} \label{zeroth} \Vert \cF_\delta(0) \Vert_{0,p,\lambda} < C \exp( - |
\ln(\delta)| ( 1- \lambda)) = C \delta^{1 - \lambda} \end{equation}
with $C$ independent of $\delta$.  

{\vskip .1in \noindent \em Step 4: Construct a uniformly bounded right
  inverse for the linearized operator of the approximate solution from
  the given right inverses of the pieces.}  Given an element
$\eta \in \Omega^{0,1}( C^\delta, (u^{\pre})^* T(\R \times Z))_{0,p}$,
one obtains elements
\[ \ul{\eta} = (\eta_-, \eta_+) \in
\Omega^{0,1}(C_\pm^\circ, u_\pm^* TX_\pm^\circ) \] 
by multiplication with the cutoff function and parallel transport
$\cT^{u_\pm}$ to $u_\pm$ along the path
$\exp_{(\mu s, t^\mu z)} ( \rho (\zeta^\delta(s,t) + (1- \rho)
\zeta_\pm(s,t))), \rho \in [0,1] $.   Define 
\[ \eta_+ = \cT^{u_+}  \beta(s - 1/2) \eta, \quad \eta_- = \cT^{u_-}
\beta(1/2 - s)
\eta .\]
Since the fiber product \eqref{trancut} is transversally cut out, for
any $ \eta_\pm \in \Omega^{0,1}(C,u_\pm^*T X)_{0,p,\lambda} $ there
exists
\[ (\xi_+,\xi_-) \in \Omega^0(C_\pm^\circ,u^*
TX_\pm^\circ)_{1,p,\lambda}, \quad \ti{D}_{u_\pm^\circ} \xi_\pm =
\eta_\pm, \quad \xi_+(w_{+-}) = \xi_-(w_{-+})\]
where $w_{\pm \mp} \in C_\pm$ are the nodal points considered as the
points at infinity in $C_\pm^\circ$ and furthermore equal at infinity
to an element
\[ \xi_\infty \in T_{u_\pm(\infty)} Y \]
and at the markings constrained to lie at the divisor satisfying the
constraints
\[ \xi_\pm(z_i) \in T_{u_\pm(z_i)} D_i . \] 
Define $Q^\delta \eta$ equal to $( \xi_-, \xi_+)$ away from
$[- |\ln(\delta)|/2, |\ln(\delta)|/2] \times Z$ and on the neck region
by patching these solutions together using a cutoff function
\begin{multline} Q^\delta \eta := \beta \left( - s - \qq |\ln(\delta)|
    \right)
(  (\cT^{u_-})^{-1} \xi_- - \cT^u \xi_\infty) \\ +
  \beta \left(  s + \qq | \ln(\delta)| \right)  ( ( \cT^{u_+})^{-1}
  \xi_+ - \cT^u \xi_\infty) \\ + \cT^u
  \xi_\infty \in \Omega^{0,1}(C^\delta, (u^{\pre}_{\delta})^*
  TX)_{1,p,\lambda}
    \end{multline}
    where $\cT^\delta_\pm$ denotes parallel transport from $u_\pm$ to
    $u^\pre$
 along the path 
\[ \exp_{(\mu s, t^\mu z)} ( \rho (\zeta^\delta(s,t) + (1- \rho) 
    \zeta_\pm(s,t))), \rho \in [0,1] . \]
   Since 
\[  \eta = (\cT^{u_-})^{-1}  \eta_- + (\cT^{u_+})^{-1} \eta_+ \] 
we have
\begin{eqnarray*} 
 \Vert \ti{D}_{u_{\pre}^\delta} Q^\delta \eta - \eta \Vert_{1,p,\lambda}
 &=& \Vert \ti{D}_{u^{\pre}_\delta} Q^\delta \eta -  (\cT^{u_-})^{-1} \ti{D}_{u_-^\delta}
 \xi_- \\ &&  - (\cT^{u_+})^{-1} \ti{D}_{u_+^\delta} \xi_+
 \Vert_{1,p,\lambda} \\ &\leq& C \exp( (1 - \lambda) | \ln(\delta)/4|) 
 \Vert 
 \eta \Vert_{0,p,\lambda}  \\ && + C \Vert \d \beta( s - |\ln(\delta)|/4) 
 Q^\delta_- \ul{\eta} \Vert_{0,p,\lambda} \\ && + C \Vert \d \beta( -s - |\ln(\delta)|/4) Q^\delta_+
 \ul{\eta}\Vert_{0,p,\lambda} .\end{eqnarray*}
Here the first term arises from the difference between
$ \ti{D}_{u^\pre_\delta} $ and $(\cT^{u_\pm} )^{-1} \ti{D}_{u_\pm}
\cT^{u_\pm}$ and the second from the derivative $\d
\beta$ of the cutoff function
$\beta$.  The difference in the exponential factors
\[ \kappa_\lambda^\pm = \kappa_\lambda^\delta \exp( \pm 2s \lambda), \quad 
\mp s \ge |\ln(\delta)|/2 \] 
in the definition of the Sobolev weight functions implies that
possibly after changing the constant $C$, we have
\[ \Vert \d \beta( s - |\ln(\delta)|/4) Q^\delta_\pm \eta
\Vert_{1,p,\lambda} < C e^{-  \lambda |\ln(\delta)|/2} =
C\delta^{\lambda/2} .\]
Hence one obtains an estimate as in Fukaya-Oh-Ohta-Ono
\cite[7.1.32]{fooo}, Abouzaid \cite[Lemma 5.13]{ab:ex}: for some
constant $C > 0$, for any $\delta > 0$
\begin{equation} \label{first} \Vert \ti{D}_{ u^{\pre}_\delta } Q^\delta -
  \on{Id} \Vert < C \min( \delta^{\lambda/2} , \delta^{(1 - \lambda)/4})
  .\end{equation}
It follows that for $\delta$ sufficiently large an actual inverse may
be obtained from the Taylor series formula
\[ \ti{D}_{u^{\pre}_\delta}^{-1} = ( Q^\delta \ti{D}_{u^{\pre}_\delta})^{-1} Q^\delta =
\sum_{k \ge 0} (I - Q^\delta \ti{D}_{u^{\pre}_\delta})^k Q^\delta .\]

{\vskip .1in \noindent \em Step 5: Obtain a uniform quadratic estimate for the
  non-linear map}.  After redefining $C > 0$ we have for all
$m,\xi,m_1, \xi_1$
\begin{equation} \label{second} \Vert D_{m,\xi} \cF_\delta (m_1,\xi_1)
  - \ti{D}_{u^{\pre}_\delta} (m_1, \xi_1) \Vert \leq C \Vert m, \xi
  \Vert_{1,p,\lambda} \Vert m_1, \xi_1
  \Vert_{1,p,\lambda}. \end{equation}
To prove this we require some estimates on parallel transport.  Let
\[ \cT_{z}^{{\delta},x}(m,\xi): \Lambda^{0,1} T_z^* {C}_\delta
\otimes T_x X \to \Lambda^{0,1}_{{j}^{{\delta}}(m)} T_z^* {C}_\delta
\otimes T_{\exp_x(\xi)} X \]
denote pointwise parallel transport.  Consider its derivative
\[ D\cT_{z}^{{\delta},x}(m,\xi,m_1,\xi_1;\eta) = \nabla_t |_{t = 0}
\cT_z^{\delta,x} m + t m_1, \xi + t\xi_1) \eta .\]
For a map $u: C \to x$ we denote by $D\cT_{u}^\pre$ the corresponding
map on sections at $u^\delta_{\pre}$.  By Sobolev multiplication (for
which the constants are uniform because of the uniform cone condition
on the metric on $C^\delta$ and uniform bounds on the metric on
$X_{\delta^\mu}$) there exists a constant $c$ such that
\begin{equation} \label{Psiest} \Vert
  D\cT_{{u}}^{\delta}(m,\xi,m_1,\xi_1; \eta ) \Vert_{0,p,\lambda} \leq
  c \Vert (m,\xi) \Vert_{1,p,\lambda} \Vert (m_1, \xi_1)
  \Vert_{1,p,\lambda} \Vert \eta \Vert_{0,p,\lambda}.
\end{equation}
Differentiate the equation
\[ \cT_{{u}}^{{\delta}} (m,\xi) \cF_{\delta}(m,\xi) =
\olp_{{j}^\delta(m)}(\exp_{{u}^\delta_{\pre}}(\xi))) \]
with respect to $(m_1,\xi_1)$ to obtain
\begin{multline}
 D\cT_{{u}}^\delta(m,\xi,m_1,\xi_1, \cF_{\delta}(m,\xi) ) +
 \cT_{{u}}^{\delta}(m,\xi)( D \cF_{\delta}(m,\xi,m_1,\xi_1)) = \\ (D
 \olp )_{j^\delta(m),\exp_{u^\delta}(\xi)} (Dj^\delta
 (m,m_1),D\exp_{{u}^\delta} (\xi,\xi_1)) .\end{multline}
The pointwise inequality
\[ | \cF_\delta(m,\xi) | < c | \d
{\exp_{{u}^{\pre}_{\delta}(z)}(\xi)} | < c ( | \d {u}^{\pre}_{\delta}
| + | \nabla \xi | )
\]
holds for $m,\xi$ sufficiently small.  Together with the estimate \eqref{Psiest} we obtain 
a pointwise estimate
\[ | \cT_{u}^\delta(\xi)^{-1}
D\cT_{{u}_\pre^{\delta}}(m,\xi,m_1,\xi_1,\cF_{\delta}(m,\xi)) |
  \leq c (| \d {u}^\delta_{\pre} | + | \nabla \xi |) \, | ( m,\xi ) | \, |
  (\xi_1,m_1) | .\]
Hence
\begin{multline} \Vert  \cT_{u}^\delta(\xi)^{-1}
D\cT_{{u}}^\delta(m,\xi,m_1,\xi_1,\cF_\delta(m,\xi))
\Vert_{0,p,\lambda} \\ \leq c ( 1+ \Vert \d {u}^\delta
\Vert_{0,p,\lambda} + \Vert \nabla \xi \Vert_{0,p,\lambda} ) \Vert
(m,\xi) \Vert_{L^\infty} \Vert (\xi_1,m_1) \Vert_{L^\infty}
.\end{multline}
It follows that 
\begin{equation} \label{firstclaim}
  \Vert \cT_{u}^\delta(\xi)^{-1}
  D\cT_{{u}}^\delta(m,\xi,m_1,\xi_1,\cF_{\delta}(m,\xi))
  \Vert_{0,p,\lambda} \leq c \Vert (m,\xi) \Vert_{1,p,\lambda} \Vert
  (m_1,\xi_1) \Vert_{1,p,\lambda}
\end{equation}
since the $W^{1,p}$ norm controls the $L^\infty$ norm by the uniform
Sobolev estimates.  Then, as in McDuff-Salamon \cite[Chapter
  10]{ms:jh}, Abouzaid \cite{ab:ex} there exists a constant $c > 0$
such that for all $\delta$ sufficiently small, after another
redefinition of $C$ we have
\begin{multline} \label{secondclaim} \Vert \cT_{{u}}^\delta(\xi)^{-1}
\ti{D}_{\exp_{u^\delta_{\pre}}(\xi)}(D_m{j}^{{\delta}}(m_1),D_{\exp_{{{u}^\delta_{\pre}}} (\xi)} \xi_1)) - \ti{D}_{u_{\pre}^\delta} (m_1,\xi_1)
\Vert_{0,p,\lambda} \\ \leq C \Vert m,\xi \Vert_{1,p,\lambda} \Vert
m_1,\xi_1 \Vert_{1,p,\lambda} .\end{multline}
Combining these estimates completes the proof of claim \eqref{second}.

 {\vskip .1in \noindent \em Step 6: Apply the implicit function
   theorem to obtain an exact solution.}  Recall Floer's version of
 the Picard Lemma, \cite[Proposition 24]{floer:monopoles}).  The
 set-up is as follows.  Let $f : V_1 \to V_2$ be a smooth map between
 Banach spaces that admits a Taylor expansion
 $f(v) = f(0) + df(0)v + N(v)$ where $df(0): V_1 \to V_2 $ has a right
 inverse $G:V_2 \to V_1$ satisfying the uniform bound
\[ \Vert GN(u) - GN(v) \Vert \leq C( \Vert u\Vert  + \Vert v \Vert)\Vert 
u - v \Vert \]
for some constant $C$. Let $B(0, \eps)$ denote the $\eps$-ball 
centered at $0 \in V_1$ and assume that
\[\Vert Gf(0) \Vert \leq 1/8C .\]
The conclusion of the lemma is that  for $\eps< 1/4C$, the zero-set of $f^{-1}(0) \cap B(0,\eps)$ is a 
smooth submanifold of dimension $\dim(\Ker(df(0)))$ diffeomorphic to 
the $\eps$-ball in $\Ker(df(0))$. 

Floer's Picard lemma together with the estimates \eqref{zeroth},
\eqref{first}, \eqref{second} produce a unique solution
$m(\delta),\xi(\delta)$ to the equation
$\cF_\delta(m(\delta),\xi(\delta)) = 0$ for each $\delta$, such that
the maps $u(\delta) := \exp_{u_\delta^{\pre}}(\xi(\delta))$ depend
smoothly on $\delta$.  Note that Picard lemma itself does not give
that the maps $u_\delta$ are distinct, since each $u_\delta$ is the
result of applying the contraction mapping principle in a different
Sobolev space.

 {\vskip .1in \noindent \em Step 7: Surjectivity of the gluing
   construction.}  We show that the gluing construction gives a
 bijection.  Consider a  family of adapted maps 
\[ [u'_\delta: C(\delta) \to X_{\delta}], \quad  |\ln(\delta)| \to
\infty .\]
By the compactness result Theorem \ref{sftcompact}, there exists a
subsequence that converges to a broken map $u: C \to \XX$.  Note that
we do not require the domain $C(\delta)$ to be given by the same
gluing parameter as the gluing parameter $\delta$ for $X_\delta$.  By
definition of Gromov convergence the curve $C(\delta)$ is obtained
from $C$ using a gluing parameter $\delta_C$, which is a function of
the gluing parameter $\delta$ for the breaking of target to $\XX$ and
converges to zero as $\delta \to 0$.

 To prove the bijection we must show that any such family of maps is
 in the image of the gluing construction.  Since the implicit function
 theorem used to construct the gluing gives a unique solution in a
 neighborhood, it suffices to show that the maps $[u'_\delta]$ are
 close, in the Sobolev norm used for the gluing construction, to the
 approximate solution $u^\pre_\delta$ defined by \eqref{preglued}.

 We think of the map on the neck region is being composed of a
 horizontal and vertical part.  For the horizontal part of the map
 $\pi \circ u_\delta': C(\delta) \to Y$, the necessary estimate is a
 consequence of the standard result on pseudoholomorphic cylinders of
 small energy, see for example Frauenfelder-Zemisch \cite[Lemma
 3.1]{totreal}.  Denote by $A(l)$ the annulus
$$  A(l) =  [- l/2, l/2| \times S^1 .$$
Since there is no area loss in the limit, for any $\eps > 0$ there
exists $\delta' > \delta_C$ such that the restriction of
$\pi \circ u'_\delta$ to the annulus $A(|\ln(\delta')|/2)$ satisfies
the energy estimate of \cite[Lemma 3.1]{totreal}.  Thus
\begin{multline} \label{longcyl} \pi u'_\delta(s,t) = \exp_{\pi
    u^{\pre}(s,t)} \xi^h(s,t), \quad \Vert \xi^h(s,t) \Vert \leq \eps
  ( e^{ s - |\ln(\delta')|/2 } + e^{|\ln(\delta')|/2 - s} ) \\
  \quad s \in [-|\ln(\delta')|/2, |\ln(\delta')|/2] .\end{multline}
A similar estimate holds for the higher derivatives $D^k \xi^h(s,t)$
by elliptic regularity, for any $k \ge 0$.

For the vertical component we wish to compare the given family of maps
with the trivial cylinder, c.f. B. Parker \cite[Lemma
5.13]{parker:compact}. For $l < |\ln(\delta')|$, but still very large,
consider the $\C^\times$-bundle $P \to A(l)$ obtained from
$\R \times Z \to Y$ by pull-back under $\pi \circ u'_\delta | A(l)$.
The connection on $Z$ induces the structure of a holomorphic
$\C^\times$-bundle on $P$, which is necessarily holomorphically
trivializable.  That is, there exists a $\R \times S^1$-equivariant
diffeomorphism
\[ (\pi \times \varpi): P \to A(l) \times \R \times S^1 \]
mapping the complex structure $(u^{\pre}_\delta)^* J_\Gamma$ on $P$ to
the standard complex structure $J_{(\R \times S^1)^2} $ on the
right-hand-side.  We claim that the holomorphic trivialization may be
chosen to differ from the one given by parallel transport from the
trivial cylinder by an estimate similar to \eqref{longcyl}.  To see
this, note that the bundle $Z \to Y$ is an $S^1$ bundle and the almost
complex structure on $\R \times Z$ is induced from the almost complex
structures on the base and fiber and a connection, given as a one-form
in $\Omega^1(Z)^{S^1}$.  Over any subset $U \subset Y$ we may
trivialize $P|U \cong U \times S^1$ using geodesic exponentiation from
the fiber.  Since $u$ takes values in $\pi^{-1}(U)$, the pull-back
connection in the given trivialization is a one-form
$\alpha \in \Omega^1(U)$.  Any other trivialization of
$u^* (\R \times Z)_U$ is then given by a $\C^\times$-valued
gauge-transformation
\[ \exp(\zeta): U \to \C^\times , \zeta = (\zeta_s,\zeta_t) .\]  
The trivialization is holomorphic if the complex gauge transform of
the connection is trivial.  Thus we wish to solve an inhomogeneous
Cauchy-Riemann equation of the form
\[ \alpha = \alpha_s \d s + \alpha_t \d t = \exp(\zeta)^{-1} \d
\exp(\zeta) = \d \zeta_s + * \d \zeta_t.\]
Write the connection and infinitesimal gauge transformation in terms of its Fourier expansion 
\[
 \alpha(s,t) = \sum_{n \in \Z} \alpha_n(s) \exp( i n t) , \quad 
 \zeta(s,t) = \sum_{n \in \Z} \zeta_n(s) \exp( i n t) \]
 where we identify $S^1 \cong [0,2\pi]/ (0 \sim 2\pi)$.  The Fourier
 coefficients $\zeta_n, n \in \Z$ of $\zeta$ satisfy an equation
\begin{equation} \label{feq}
 \left(\dds - n \right)  \zeta_n(s) = \alpha_n(s) .\end{equation}
An explicit solution of \eqref{feq} is given by integration 
\[ \zeta_n(s) \exp(- n (s - s_0(n))) = \int_{s_0(n)}^s \alpha_n(s') \d
s' \]
so that the solution $\zeta_n(s)$ vanishes on $s_0(n)$.  We make a
careful choice of the Dirichlet condition $\zeta_n(s_0(n)) =0 $ for
the $n$-th Fourier coefficient $\zeta_n$ so that the solution
$\zeta(s,t)$ satisfies the same exponential decay condition
\eqref{longcyl} as the connection $\alpha$.  Define
\[ s_0(n) = \begin{cases}    l/2  & n > 0 \\
                                         0                     & n = 0
                                         \\ 
                                       - l/2  & n <
                                       0 \end{cases}.   \]
                                     Now the estimate on the neck
                                     region \eqref{longcyl} implies by
                                     integration
\begin{eqnarray*} 
  \Vert \zeta_n(s) \Vert  &=&  
\left\Vert \exp(-n (s - s_0(n))) \int_{s_0(n)}^s \int_{t \in S^1}
                              \alpha(s') \exp( -  i n t) \d t \d s'/
                              2\pi   \right\Vert 
\\ &\leq&  (1/2\pi) \begin{cases}  
\eps  (l/2 + s)  \exp( - ( |\ln(\delta')|/2 + s) )  & n < 0 \\ 
\eps \exp( -  (|\ln(\delta')| /2 - |s|)  & n = 0  \\
\eps  (l/2 - s)  \exp( - ( |\ln(\delta')|/2 - s ) )  & n > 0  
\end{cases}
.\end{eqnarray*}  
For $l$ sufficiently large absorb the prefactor $ (l/2 - |s|) $ at the
cost of weakening the exponential decay constant to some
$\rho \in (\lambda,1)$:
\[ (|\ln(l(\delta))|/2 - |s|) \exp( - (|\ln (\delta')|/2 - |s|) )
\leq \exp \left( - \rho (|\ln (\delta')|/2 - |s|) \right) .\]
Thus for $k = 0$ and any $\eps > 0$ we have for $l$ sufficiently large
the exponential decay holds:
\[ \Vert \zeta_{A(l)}\Vert_{k,2} \leq \eps \exp( - \rho( |
\ln(\delta')| /2 - l)) ) .\]
The same arguments applied to the uniform bound on the $k$-th
derivative proves the same estimate for the Sobolev $k,2$-norm for any
$k \ge 0$.  By Sobolev embedding one obtains a $C^{k - 2}$-estimate
for $\zeta(s,t)$ of the form: For any $\eps > 0$ there exists
$ l = l(\delta)$ sufficiently large so for $(s,t) \in A(l-1)$,
\[ \sup_{m \leq k-2} |D^m \zeta(s,t)| \leq C \eps \exp( - \rho
(|\ln(\delta')|/2 - |s|)) \]
where $C$ is a uniform-in-$\delta$ Sobolev embedding constant.  Thus
the holomorphic trivialization of the $\C^\times$-bundle $P$ is
exponentially small over the middle of the cylinder as claimed.

Having constructed a holomorphic trivialization, we may now compare
the given holomorphic cylinder with the trivial cylinder.  Write
\[ \varpi(p) = (\varpi_s(p), \varpi_t(p) ) \in \R \times S^1, \quad 
\forall p \in P . \]  
Since the complex structure is constant in the local trivialization
the difference between the given map and the trivial cylinder
\begin{equation} \label{diffmaps} (s,t) \mapsto (\mu s,t^\mu )^{-1}
  \varpi(u'_\delta(s,t)) = (\varpi_s(u'(s,t)) - \mu s, t^{-\mu}
  \varpi_t(u'_\delta(s,t)))
\end{equation}  
is also holomorphic.  By uniform convergence of $u'_\delta$ to $u$ on
compact sets, we have 
\[ (\pm |\ln(\delta)|/2 + \mu s,t^\mu)^{-1} u'_\delta( \pm
|\ln(\delta_C)|/2 + s, t) \to (0,1) \]
as $s \to \mp \infty$ in cylindrical coordinates on $X_\pm^\circ$.
Thus the difference 
\[ (\mu s, t^{\mu} )^{-1} \varpi( u'_\delta(s,t)) \]
is holomorphic and converges uniformly in all derivatives to the
constant map $ \pm ( |\ln(\delta)|/2 - \mu ( |\ln(\delta_C)|))$ on the
components of $A(l) - A(l-1) \cong [0,1] \times S^1$ as $\delta \to 0$
and $l \to \infty$.  Writing
\[ \partial (\mu s, t^{\mu} )^{-1} \varpi( u'_\delta(s,t)) =
\xi''_\delta(s,t) \]
the map $\xi''_\delta(s,t)$ is also holomorphic in $s,t$ and converges
to zero uniformly on the ends of the cylinder.  It follows from the
annulus lemma \cite[3.1]{totreal} that for any $\eps$, there exists $l$ sufficiently large
so that
\begin{equation} \label{integrand} \Vert \xi''_\delta(s,t) \Vert \leq
  \eps ( e^{ s - l/2 } + e^{- l/2 - s} ) .\end{equation}
In particular 
\[ u'_\delta(s,t) = (\mu (s - s_0), t^\mu t_0^{-1}) \xi_\delta'(s,t)
.\]
for some $(s_0(\delta),t_0(\delta)) \in \R \times S^1$ converging to
$(0,1)$ as $\delta \to 0$.  In particular the difference of lengths
\[ \mu |\ln(\delta_C)| - | \ln(\delta)| \to 0 \]
converges to zero.  That is, the gluing parameters $\delta_C$ for the
domains of $u_\delta$ satisfy $\delta \delta_C^{-\mu} \to 1$ as
$\delta \to 0$.

We now complete the proof that the given family of solutions is close
to the pre-glued solution.  Choose $\eps > 0$.
We write 
\
\[ C(\delta) = \exp_{C^{\delta_C}(m_\delta')}, \quad u'_\delta(s,t) = \exp_{u^\pre_\delta(s,t)} \xi_\delta'(s,t) \]
and claim that 
\[ \Vert (m'_\delta,\xi'_\delta) \Vert_{1,p,\lambda}^p< \eps \] 
for $\delta$ sufficiently small.  By assumption $m'_\delta$ converges 
to zero so for $\delta $ sufficiently small 
\begin{equation} \label{mest} \Vert m'_\delta \Vert^p < \eps / 2.  \end{equation}
Abusing notation we write $ \Vert \xi'_\delta |_{A(l(\delta))} \Vert_{1,p,\lambda} $ for the expression obtained by replacing the integral over $C^\delta$ in \eqref{1pl2} with $A(l(\delta))$ so that
\[ \Vert \xi'_\delta \Vert_{1,p,\lambda}^p = \Vert \xi'_\delta |_{A(l(\delta))} \Vert^p_{1,p,\lambda} +
 \Vert \xi'_\delta |_{ C^\delta -
    A(l(\delta)) } \Vert_{1,p,\lambda}^p .\] 
By uniform convergence
of $u'_\delta$ on compact sets, there exists $l(\delta)$ with
$|\ln(\delta)| - |\ln(l(\delta))| \to \infty$ such that
\begin{equation} \label{onehand} \Vert \xi'_\delta |_{ C^\delta -
    A(l(\delta)) } \Vert_{1,p,\lambda} < \eps/4 . \end{equation}
Since each holomorphic trivialization $\varpi_i$ differs from the trivialization of $P_i | U$ by an exponentially small factor on the middle of the neck, we have
\[ \Vert \xi'_\delta(0,0) \Vert < \eps/8 . \]
Write the trivial cylinder as a geodesic exponentiation from the
preglued solution
\[ (\ul{\mu}s, t^{\ul{\mu}}) = \exp_{u^\pre_\delta}( \xi_\delta^{\triv}(s,t)) .\]
The restriction of $\xi'_\delta$ to the neck region $A(l)$ has
$1,p,\lambda$-norm given by integrating the product of
\eqref{integrand} with the exponential weight function
$\kappa_\lambda^\delta$
\begin{eqnarray*}
 \Vert \xi'_\delta |_{A(l(\delta))} \Vert^p_{1,p,\lambda} 
&\leq& 
2^{p-1} \left( \Vert  \xi'_\delta  - \xi_\delta^{\triv} |_{A(l(\delta))} \Vert^p_{1,p,\lambda} 
+ 
\Vert \xi^{\triv}_\delta |_{A(l(\delta))} \Vert^p_{1,p,\lambda}  \right) \\ 
&\leq& 2^p \Vert   \xi''_\delta \Vert^p_{1,p,\lambda}
+  2^{p-1} \Vert  \xi^{\triv}_\delta |_{A(l(\delta))} \Vert^p_{1,p,\lambda}  
\\ 
&\leq&  2^p  \eps \left( e^{ - p \rho 
  ( | \ln( \delta') | - l ) + p \lambda ( | \ln(\delta)| - l )} /
 (\rho 
- \lambda) \right.
\\ &&  \left. +  e^{ p (\lambda - 1) ( | \ln(\delta') | - l)/2} / (1 - \lambda) \right) . 
\end{eqnarray*} 
For $ |\ln(\delta') - l|$ sufficiently large, the last expression is bounded by $\eps/4$, so 
\begin{equation} \label{otherhand} 
 \Vert \xi'_\delta |_{A(l)}
  \Vert_{1,p,\lambda}^p \leq \eps/4.\end{equation}
Combining \eqref{mest}, \eqref{onehand} and \eqref{otherhand}
completes the proof for the case of two levels joined by a single
node. 

The case of multiple levels joined by multiple nodes is similar.  We
choose gluing parameters $\delta_1' ,\ldots, \delta_l'$ assigned to
the hypersurfaces separating levels and gluing in necks
$Z \times [0, |\ln(\delta_i')|]$ to an obtain an unbroken symplectic
manifold $X_\delta$ with length $\sum_{i=1^l} | \ln(\delta_i')|$.  For
each node pair of nodes $w_\pm$ mapping to the $i$-th copy of $Y$ and
multiplicity $\mu(w_\pm)$ we assign a gluing parameter
$\delta(w_\pm) = (\delta_i')^{1/\mu(w_\pm)}$.  Then gluing in trivial
cylinders into the neck regions creates an approximate solution as
before.  Similar arguments to those for a single node show that the
approximate solution is nearby an actual family of solutions, and that
any family of solutions arises in this way.
\end{proof}

\begin{corollary} \label{eachE} Let $\ul{P}$ be an admissible
  perturbation system for broken disks.  For each $E > 0$, there
  exists $\tau_0$ such that if $\tau > \tau_0$ then
  $\M^{< E}(X,L,\ul{P}_\tau)_0$ is independent of $\tau$ and every
  element is regular.
\end{corollary}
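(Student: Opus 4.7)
The plan is to deduce the corollary directly from Corollary \ref{bij} together with the regularity transfer result in Theorem \ref{seqcompact}, once a finiteness statement for broken types of bounded energy is in place. First I would observe that by Gromov compactness, combined with rationality of the symplectic class and the stabilizing divisor condition (which forces each non-constant disk/sphere component to carry at least one interior marking whose number is bounded by $E$ divided by the quantum of area), there are only finitely many combinatorial types $\Gamma$ of stable adapted broken treed disks $u : C \to \XX$ with $E(u) < E$ and admissible labels $\ul{l},\ul{x}$. Corollary \ref{bij} then produces, for each such type, a threshold $\rho_0(\Gamma,\ul{l},\ul{x})$ above which there is a bijection $\M^{<E}_\Gamma(X,L,D,\ul{P}_\rho,\ul{l},\ul{x}) \cong \M^{<E}_\Gamma(\XX,L,\DD,\ul{l},\ul{x})$. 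Taking the maximum of these finitely many thresholds yields a single $\rho_0$ so that $\M^{<E}(X,L,\ul{P}_\rho)$ is in bijection with $\M^{<E}(\XX,L,\DD)$ for all $\rho > \rho_0$; since the latter does not involve $\rho$, the former is independent of $\rho$.

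For the regularity statement I would argue by contradiction. Suppose there exist $\rho_\nu \to \infty$ and elements $u_\nu \in \M^{<E}(X,L,\ul{P}_{\rho_\nu})$ with $D_{u_\nu}$ not surjective, i.e. $\coker(D_{u_\nu}) \neq \{0\}$. By Theorem \ref{sftcomp} a subsequence of the $u_\nu$ converges to a broken stable map $u : C \to \XX$ in $\M^{<E}(\XX,L,\DD)$, which is regular by the standing hypothesis on $\ul{P}$. Pick unit-norm cokernel elements $\eta_\nu$; then Theorem \ref{seqcompact} guarantees, after passing to a further subsequence, convergence of $\eta_\nu$ to a non-zero element of $\coker(D_u)$, contradicting the regularity of $u$. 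Hence beyond some (possibly larger) threshold $\rho_0$, every element of $\M^{<E}(X,L,\ul{P}_\rho)$ is cut out transversally, which together with the bijection of the previous paragraph proves the corollary.

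The main obstacle is establishing the uniformity of the threshold $\rho_0$ across the finite collection of broken types contributing to $\M^{<E}$; this depends on two ingredients that are already essentially in the paper but must be assembled carefully. First, one needs Gromov-type finiteness: the collection of homotopy classes of disks and spheres representable by adapted broken maps of energy at most $E$ is finite, which follows from the energy quantization in Theorem \ref{equant} and the adapted property forcing a bound on the number of interior markings via the degree of the stabilizing divisor. Second, one needs the transfer of regularity in Theorem \ref{seqcompact} to apply to configurations with treed parts and with nodes mapping into the degenerating divisor $Y$; here the Fredholm theory on the weighted Sobolev spaces of Remark \ref{linops}, in which the linearized operator on the broken side has the same kernel and cokernel as its unweighted counterpart on the glued side, is what makes the index-matching and cokernel-convergence assertions compatible with the matching conditions on intersection multiplicities at $Y$.
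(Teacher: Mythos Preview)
Your proof is correct and follows essentially the same approach as the paper. The regularity argument by contradiction via Theorem \ref{sftcomp} and Theorem \ref{seqcompact} is exactly what the paper does; for independence of $\rho$ you are simply more explicit than the paper, which leaves that part implicit (presumably relying on Corollary \ref{bij}), whereas you spell out the finiteness of contributing types/labels needed to obtain a uniform threshold.
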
 

\begin{proof}  
  By Theorem \ref{sftcompact2}, any sequence $[u_\nu]$ in
  $\M(X_{\tau_\nu}, L)$ with bounded energy and $\tau_\nu \to \infty$
  has a subsequence that converges to an element of $\M(\XX,L)$.  
If 
  $\xi_\nu$ is a sequence of elements in the cokernel of 
  $\ti{D}_{u_\nu}$ with norm one then after passing to a subsequence 
  one obtains an element in the cokernel of the limiting linearized 
  operator $\ti{D}_u$.  Since $\ti{D}_u$ is surjective by assumption,
  $\ti{D}_{u_\nu}$ is surjective for sufficiently large $\nu$ as well. 
\end{proof} 

\begin{remark} After a further small perturbation we may assume that 
  the moduli spaces $\M^{< E}(X,L,\ul{P}^\tau)_{\leq 1}$ of expected 
  dimension at most one are also regular.  This perturbation may be 
  chosen sufficiently small so that $\M^{< E}(X,L,\ul{P}^\tau)_{0}$ is 
  unchanged, up to a $C^0$-small bijection, by the perturbation.  As a 
  results, the composition maps defined by counts of elements of 
  $\M^{< E}(X,L,\ul{P}^\tau)_{0}$ satisfy the \ainfty axiom. 
\end{remark}

\section{The infinite length limit}

Using the gluing result of the previous chapter, we identify the
broken theory with the infinite length limit of the unbroken theory.

\begin{proposition} \label{limequiv} The composition maps $\mu^{n,\tau}$
  of the \ainfty algebra $CF(L,\ul{P}_\tau)$ have a limit as
  $\tau \to \infty$:
  \begin{equation} \label{limexists} \mu^{n,\infty} := \lim_{\tau \to
      \infty} \mu^{n,\tau} .\end{equation}
  The limit $CF(X,L,\ul{P}_\infty)$ is convergent-\ainfty-homotopy
  equivalent to $CF(L,\ul{P}_\tau)$ for any finite $\tau$.
\end{proposition} 

\begin{proof} First we show that the limit in \eqref{limexists}
  exists.  For any energy bound $E$, the terms in $\mu^{n,\tau}$ of
  order at most $q^E$ are independent of $t$ for $\tau >\tau_0 $ by
  Corollary \ref{eachE}.  The \ainfty axiom mod $q^E$ follows from 
  the \ainfty axiom for $\mu^{n,\tau}$ modulo $q^E$; since this holds
  for any $E$, the \ainfty axiom holds on the nose.

  Second we construct a strictly-unital,
  convergent-homotopy-equivalence from the limit to the Fukaya algebra
  for any finite neck length.  Recall that a count of quilted disks
  defines a homotopy equivalences
\[ \phi_{\tau}:  CF(L,\ul{P}_\tau) \to CF(L,\ul{P}_{\tau+1}), 
\quad \psi_{\tau}: CF(L,\ul{P}_{\tau+1}) \to CF(L,\ul{P}_\tau) .\]
We claim that for any energy bound $E$, the terms in $\phi_\tau$ with
coefficient $q^{E(u)}, E(u) < E$ vanish for sufficiently large $\tau$
except for constant disks.  Indeed, otherwise there would exist a
sequence of breaking quilted disks with arbitrarily large $\tau$ in a
component of the moduli space with expected dimension zero and bounded
energy.  By
%CW
Theorem \ref{sftcompact2}, the limit would be a broken quilted disk in
a component of the moduli space of expected dimension $-1$, a
contradiction.

The claim implies that there exist limits of the successive
compositions of the homotopy equivalences.  Consider the composition 
\[ \phi_{(n)} := \phi_\tau \circ \phi_{\tau+1} 
\circ \lldots \circ \phi_{\tau + n} : 
CF(L,\ul{P}_\tau) \to 
CF(L,\ul{P}_{\tau+n +1}). 
\]
The bijection in Corollary \ref{eachE} implies that the limit
\[
\phi = \lim_{n \to \infty} \phi_{(n)}: CF(L,\ul{P}_\tau) \to \lim_{n
  \to \infty} CF(L,\ul{P}_{\tau + n}) \]
exists.  Similarly the limit 
\[ \psi = \lim_{n \to \infty} \psi_{(n)}, \quad \psi_{(n)} := \psi_\tau \circ \psi_{\tau+1} 
\circ \lldots \circ \psi_{\tau + n} \]
exists. 
Since the composition of strictly unital morphisms is strictly unital, 
the composition $\psi$ is strictly unital mod terms divisible by $q^E$
for any $E$, hence strictly unital. 

The limiting morphisms are also homotopy equivalences.  Let $h_n,g_n$
denote the homotopies satisfying
\[\phi_{(n)} \circ \psi_{(n)} - \Id = \mu^1(h_{(n)}), 
\quad \psi_{(n)} \circ \phi_{(n)} - \Id = \mu^1(g_{(n)}), \]
obtained as in Definition \ref{homotopy} from the homotopies relating
$\phi_{\tau} \circ \psi_{\tau}$ and $\psi_{\tau} \circ \phi_{\tau}$.
In particular, $h_{(n+1)},g_{(n+1)}$ differ from $h_{(n)},g_{(n)}$ by
expressions involving counting {\em twice-quilted} breaking disks.
For any $E > 0$, for $\tau$ sufficiently large all terms in
$h_{(n+1)} - h_{(n)}$ are divisible by $q^E$.  It follows that the
infinite composition
\[ h = \lim_{n \to \infty} h_{(n)}, \quad g= \lim_{n \to \infty}
g_{(n)} \]
exists and gives a homotopy equivalence $ \phi \circ \psi$ resp. $\psi
\circ \phi$ and the identities.  
\end{proof}

\begin{theorem}   \label{same} Suppose that 
$\ul{P}_\tau$ converges to an admissible perturbation
  system $\ul{P}$ for broken treed disks as above.  Then the limit
  $\lim_{\tau \to \infty} CF(X,L,\ul{P}_\tau)$ is equal to the broken
  Fukaya algebra $CF(\XX,L)$, and in particular $CF(X,L,\ul{P}_\tau)$
  is homotopy equivalent to $CF(\XX,L)$.
\end{theorem}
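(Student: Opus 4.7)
The plan is to identify the higher composition maps of the limit $\lim_{\rho\to\infty} CF(L,\ul{P}_\rho)$ with those of the broken Fukaya algebra $CF(\XX,L)$ stratum by stratum, and then invoke Corollary \ref{limequiv} to obtain the homotopy equivalence with any finite $\rho$. Fix a combinatorial type $\Gamma$ of stable broken treed disk of expected dimension zero, with admissible labelling $\ul{l}=(l_0,\ldots,l_n)$. By Corollary \ref{bij}, for each energy bound $E > \sum_i (E_\Gamma)_i$ there exists $\rho_0$ such that for $\rho>\rho_0$ there is a bijection between $\M_\Gamma(\XX,L,\DD,\ul{l})$ and the corresponding moduli space $\M_{s(\Gamma)}(X,L,D,\ul{P}_\rho,\ul{l})$ of unbroken adapted treed disks of type $s(\Gamma)$, where $s(\Gamma)$ collapses each Morse trajectory on $Y$ to a node. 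The bijection is realized by the gluing construction (Theorem \ref{gluing2}) in one direction and by sft-compactness (Theorem \ref{sftcomp}) in the other.

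First I would check that each of the weights appearing in the definition of $\mu^{n,\rho}$ matches under the bijection. For the area, the energy of a glued map $u_\delta: C_\delta \to X^{1/\rho}$ equals the sum of Hofer energies of the pieces of $u:C\to\XX$ plus a contribution from the neck region; the neck contribution, by the construction of $\omega_\eps$ in \eqref{omeps} and the fact that the broken perturbation pulls back $\beta H \alpha$ from the Morse data on $Y$, is exactly the difference in values of $H$ along the limiting Morse trajectories, which vanishes for trajectories with matching endpoints as in the broken setup. Hence $q^{E([u_\delta])} = q^{E([u])}$. For the holonomy $\Hol_L$, the boundary of $u_\delta$ is (up to homotopy) the concatenation of the boundary segments of $u$ together with the boundary Morse trajectories of $u$, and the flat local system assigns the same holonomy to either. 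For orientations $\eps([u])$, the linearized operator $D_{u_\delta}$ decomposes into the operators $D_{u_\pm}$ on the pieces together with matching conditions at the nodes, and the second part of Theorem \ref{seqcompact} gives a canonical isomorphism of determinant lines; together with the compatibility of orientations on moduli of broken treed disks and the standard gluing sign convention in Theorem \ref{broken}, this ensures $\eps([u_\delta])=\eps([u])$. The combinatorial prefactors $(\sigma([u])!)^{-1}$ and $(-1)^\heartsuit$ only depend on the type $\Gamma$ and the labelling and are thus preserved.

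Combining these identities, each contribution to $\mu^{n,\rho}(\bra{l_1},\ldots,\bra{l_n})_{l_0}$ in total $q$-degree at most $E$ agrees with the corresponding contribution to the broken $\mu^n$ defined in Theorem \ref{yields2}, for $\rho$ sufficiently large. Taking $E\to\infty$ gives
$$\mu^{n,\infty} = \lim_{\rho\to\infty}\mu^{n,\rho} = \mu^n_{\text{broken}}$$
as maps $\widehat{CF}(L)^{\otimes n}\to \widehat{CF}(L)$, so the limit algebra $CF(L,\ul{P}_\infty)$ coincides with $CF(\XX,L)$. By Corollary \ref{limequiv}, the algebras $CF(L,\ul{P}_\rho)$ for any finite $\rho$ are convergent strictly-unital \ainfty homotopy equivalent to $CF(L,\ul{P}_\infty)=CF(\XX,L)$, completing the argument.

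The main obstacle I expect is step one: a careful bookkeeping of the bijection between moduli spaces, in particular showing that every unbroken configuration of total energy below $E$ really does arise from gluing a unique broken configuration, with no spurious unbroken solutions appearing for $\rho$ large. This requires that all broken configurations of the relevant type are regular (which was arranged in Theorem \ref{broken}) and a uniform bound on the size of the gluing neighborhood in the implicit function theorem of Theorem \ref{gluing2}, valid uniformly over the compact moduli space $\ol{\M}_\Gamma(\XX,L,\DD,\ul{l})$. The matching of areas and orientations through the gluing region, while conceptually straightforward from the perturbation setup, requires care because the neck perturbation $\eps H\alpha$ contributes explicitly to $\omega_\eps$ and hence potentially to $A(u_\delta)$; keeping track that this contribution is absorbed correctly by the matching-endpoints condition on the limiting Morse trajectories is the key technical point.
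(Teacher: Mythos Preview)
Your proposal is correct and follows essentially the same route as the paper: invoke the bijection of zero-dimensional moduli spaces from Corollary \ref{bij}, verify that the weights (area, holonomy, orientation sign, combinatorial factors) are preserved under the bijection, and then appeal to Corollary \ref{limequiv} for the homotopy equivalence with finite $\rho$.

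One small point: your justification for area preservation via a neck-contribution computation is more elaborate than necessary and the step ``vanishes for trajectories with matching endpoints'' is not quite the right reason. The paper simply observes that the bijection preserves the homology class of the map (and of its boundary in $H_1(L)$), from which $A([u])$ and $\Hol_L([\partial u])$ are automatically preserved; for orientations it cites the gluing results of \cite{orient}, noting that the Morse segment can be treated as a cylinder of Chern number zero as in \eqref{morsecylinder}. Your instinct that the area requires care is reasonable, but the cleaner argument is topological rather than via explicit neck contributions.
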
 

\begin{proof} By Theorem \ref{bij}, for any given energy bound $E> 0$
  there exists a bijection between the moduli spaces
  $\M^{<E}(X,L,\ul{P}_\tau)_0$ and $\M^{<E}(\XX,L,\ul{P}_\infty)_0$.
  These moduli spaces define the structure coefficients of the Fukaya
  algebras $CF(L,\ul{P}_\tau)$ for $\tau$ sufficiently large resp. the
  structure coefficients of $CF(\XX,L)$.  The bijection preserves the
  area $A(u)$ of each map $u : C \to X$ as well as the homology class
  $[\partial u] \in H_1(L)$ of the restriction $\partial u$ of the map
  $u$ to the boundary of $S$.  Hence the bijection preserves the
  holonomies of the local system $y(u)$.  It follows from the
  construction of coherence property \eqref{coho} on the orientation
  of the stable and unstable manifolds $W_x^\pm$ of the gradient flow
  $-\grad(F)$ on the separating hypersurface $Y$ that the bijection
  $\M(X,L,\ul{P}_\tau)_0 \to \M(\XX,L,\ul{P}_\infty)_0$ is orientation
  preserving.  The last statement follows from Corollary
  \ref{limequiv}.
\end{proof} 

\begin{proof}[Proof of Theorem \ref{result}]
  We combine the computation in the sft limit with the homotopy
  equivalence above.  Let $X$ be obtained by a small reverse flip or
  blow-up, $L \subset X$ a regular Lagrangian near the exceptional
  locus.  Let $\ul{P}_\tau$ be as in Theorem \ref{same}.  The limit
$CF(\XX,L) = \lim_{\tau \to \infty} CF(X,L,\ul{P}_\tau)$ 
has non-empty Maurer-Cartan space $MC(\XX,L)$ and Floer cohomology
$HF(\XX,L,y_i, b(y_i))$ non-trivial for $n_+ - n_-$ local systems
$y_i, i = 1,\ldots, n_+ - n_-$ and weakly bounding cochains $b(y_i)$,
by Theorems \ref{unobs} and \ref{same}.  The same holds for any finite
$\tau$, by Theorem \ref{mcmapthm}.
\end{proof}

\section{Examples}

\label{flexamples} 
In this chapter we describe Lagrangians associated to mmp transitions
in each of the examples of Chapter \ref{regl}.  We remark that the
results of Palmer-Woodward \cite{pw} show that the non-triviality of
the Floer cohomology in Propositions \ref{tnt}, \ref{reglab},
\ref{reglabm} below hold for times far away from the critical time,
but before the next transition in the mmp.

\subsection{A standard Lagrangian in a Darboux chart}

Standard Lagrangians in Darboux charts in compact symplectic manifolds
are unobstructed, although their Floer cohomology is often trivial.
Let  $x \in X$ and let 
$ q_1,p_1,\lldots, q_n,p_n \in C^\infty(U) $
be Darboux coordinates on an open neighborhood $U$ of $x \in X$ centered at $0$.

\begin{proposition} 
The standard Lagrangian tori
$  L = \{ ( p_j^2 + q_j^2) = c_j, \ j = 1,\lldots, n \} $
for some small constants $c_1,\lldots, c_n > 0 $ have $ MC(L)$ non-empty
and trivial Floer cohomology $HF(L,b) \cong \{ 0 \}$ for any
$b \in MC(L)$.
\end{proposition} 

\begin{proof}   Choose a sphere around $x$ consider the degeneration of $X$ by neck-stretching along the sphere 
to a broken symplectic manifold $(X_\subset, X_\supset)$.    The Lagrangian $L$ becomes a toric moment fiber in $X_{\subset} \cong \P^{n-1}$ and so has
  non-empty Maurer-Cartan space $MC(L)$.  On the other hand, $L$ is
  displaceable for $c_j$ sufficiently small and so has trivial Floer
  cohomology $HF(L,b)$ for $b \in MC(L)$.
\end{proof}

\subsection{Toric symplectic manifolds} 

Continuing Chapter \ref{trans} the following describe
Floer-non-trivial torus orbits in toric varieties, which give special
cases of the results in \cite{fooo:toric1}.

\begin{proposition} \label{tnt} Suppose that $X$ is a compact
  symplectic toric manifold, $X_t$ an mmp running with $t_0$ a
  singular time, $x \in X_{t_0}$ a singular point, and
  $\lambda_0 = \Phi(t_0)$ the moment image of the singular point.
  Then for $t = t_0 - \eps$ for $\eps$ small, the Lagrangian
  $L = \Phinv_{t_0 - \eps}(\lambda)$ is regular and so has
  non-vanishing Floer cohomology for some local system $y \in \RR(L)$
  and weakly bounding cochain $b(y)$.
\end{proposition} 

\begin{example} {\rm (Blow-up of a product of projective lines)}
  The disks in the case of blow-up of $\P^1 \times \P^1$ are shown in
  Figure \ref{threeprim}.  The image of each disk is one-dimensional, since
  the angular direction in each disk is tangent to the level sets of
  the moment map.  On the other hand, the image of each disk (shown
  roughly as dotted lines in the figure) is non-linear since the map
  from $V$ to $X = V \qu G$ is non-linear.  In Figure \ref{threeprim},
  the areas of the three Maslov-index-two disks of smallest area are
  all equal.
\end{example} 

\begin{example} Continuing Example \ref{dp} of del Pezzo surfaces, the
  points giving Floer non-trivial tori in Theorem \ref{result} are
  darkly shaded in Figure \ref{corners}.  The fiber over the
  medium-shaded point is also Floer-non-trivial.
\end{example} 

\subsection{Polygon spaces} 

Continuing Chapter \ref{polygons}, regular labellings of triangulated
polygons give rise to Floer-non-trivial torus orbits in polygon space;
see Nishinou-Nohara-Ueda \cite{nish:degen} and Nohara-Ueda
\cite{no:degen} for another approach to Floer-non-trivial Lagrangian
tori in these spaces.

\begin{proposition} \label{reglab} Let $\mu \in\R_{\ge 0}^{n-3}$ be a 
  regular labelling, and $\mu(t)$ the family of labellings obtained by 
  replacing each $\mu_i$ with $\mu_i - t$ which becomes singular at 
  first time $t = t_i$.  Then for $\eps > 0$ sufficiently small and 
  $t \in (t_i - \eps, t_i)$, any labelling $\mu(t)$ has the property 
  that $\Psi^{-1}(\mu)$ has non-trivial Floer cohomology 
  $HF(\Psi^{-1}(\mu),y,b) \neq 0$ for some local system $y$ and weakly 
  bounding cochain $b$. 
\end{proposition}

\subsection{Moduli spaces of flat bundles on punctured spheres}
By similar arguments, Theorem \ref{result} implies the existence of  Floer non-trivial
tori in representation varieties:

\begin{proposition} \label{reglabm} Suppose that $\mu_1,\lldots,\mu_n$
  are generic labels and $\cR(\mu_1,\lldots,\mu_n)$ is the
  corresponding moduli space of flat $SU(2)$-bundles on the $n$-holed
  two-sphere $\Sigma$.  Let $\cP = \{ P \}$ be a pants decomposition
  of $\Sigma$ with ordered boundary circles
  $C_1,\lldots, C_{n-3} \subset \Sigma$ and suppose that
  $\lambda = (\lambda_1,\lldots,\lambda_{n-3})$ is a regular labelling
  with looseness $t_1$ given by the first transition time $t_1$ in the
  mmp.  Then for $t_1$ sufficiently small the Goldman Lagrangian
  $\Psi^{-1}(\lambda)$ has non-trivial Floer cohomology, 
$HF(\Psi^{-1}(\lambda)) \cong H(\Psi^{-1}(\lambda)) \neq 0 $.
\end{proposition} 

An example of a labelling giving a Floer non-trivial torus is shown in 
Figure \ref{pantses}.

\begin{figure}[ht]
\includegraphics[height=2in]{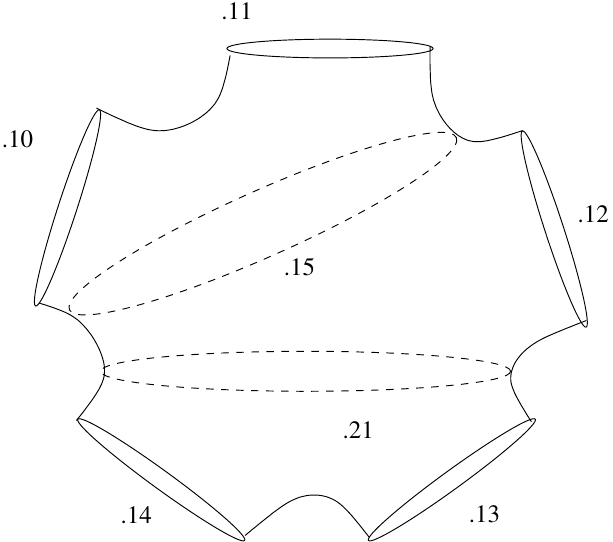}
\caption{A labelling giving a Floer-non-trivial torus}
\label{pantses}
\end{figure}

\bibliographystyle{amsalpha}
\def\cprime{$'$} \def\cprime{$'$} \def\cprime{$'$} \def\cprime{$'$}
\def\cprime{$'$} \def\cprime{$'$}
\def\polhk#1{\setbox0=\hbox{#1}{\ooalign{\hidewidth 
      \lower1.5ex\hbox{`}\hidewidth\crcr\unhbox0}}} \def\cprime{$'$}
\def\cprime{$'$} \def\cprime{$'$} \def\cprime{$'$}

\printindex
\end{document}